\DeclareMathOperator{\codim}{codim}
\DeclareMathOperator{\Spec}{Spec}
\DeclareMathOperator{\Pic}{Pic}
\DeclareMathOperator{\coker}{coker}
\DeclareMathOperator{\sym}{sym}
\DeclareMathAlphabet{\mathpzc}{OT1}{pzc}{m}{it}
\newcommand{\mbb}{\mathbb}
\newcommand{\mc}{\mathcal}
\newcommand{\FS}{\mathcal{O}}
\newcommand{\comp}[1]{{#1}^{\bullet}}
\newcommand{\id}{{\rm id}}
\newcommand{\B}{\mathbf}
\newcommand{\Tor}{{\rm Tor}}
\newcommand{\perm}{\mathfrak{S}}
\newcommand{\trest}{{\big |} }
\newcommand{\bkrh}{\mathbf{\Phi}}
\newcommand{\tens}{\otimes}
\newcommand{\Tens}{\bigotimes}
\newcommand{\Stab}{\mathrm{Stab}}
\newcommand{\K}{\mathcal{K}}
\newtheorem{theorem}{Theorem}[section]
\newtheorem{lemma}[theorem]{Lemma}
\newtheorem{pps}[theorem]{Proposition}
\newtheorem{crl}[theorem]{Corollary}
\theoremstyle{definition}
\newtheorem{notat}[theorem]{Notation}
\theoremstyle{remark}
\newtheorem{remark}[theorem]{Remark}
\numberwithin{equation}{section}
\mathchardef\phi="0127
\mathchardef\varphi="011E
\mathchardef\alpha="710B
\mathchardef\beta="710C
\mathchardef\gamma="710D
\mathchardef\delta="710E
\mathchardef\epsilon="7122
\mathchardef\zeta="7110
\mathchardef\eta="7111
\mathchardef\theta="7112
\mathchardef\iota="7113
\mathchardef\kappa="7114
\mathchardef\lambda="7115
\mathchardef\mu="7116
\mathchardef\nu="7117
\mathchardef\xi="7118
\mathchardef\pi="7119
\mathchardef\rho="711A
\mathchardef\sigma="711B
\mathchardef\tau="711C
\mathchardef\upsilon="711D
\mathchardef\chi="711F
\mathchardef\psi="7120
\mathchardef\omega="7121
\mathchardef\varepsilon="710F
\mathchardef\vartheta="7123
\mathchardef\varpi="7124
\mathchardef\varrho="7125
\mathchardef\varsigma="7126
\theoremstyle{definition}
\newtheorem*{conventions}{Conventions}
\theoremstyle{remark}
\numberwithin{equation}{section}
\DeclareMathOperator{\Res}{Res}
\DeclareMathOperator{\reg}{reg}
\DeclareMathOperator{\Graphs}{\mc{G}}
\newcommand{\gammac}{\comp{\varGamma}}
\newcommand{\ccomp}[1]{\comp{\mc{#1}}}
\title{Notes on diagonals of the product and \\ symmetric variety of a surface}
\author{Luca Scala}
\date{}
\begin{document}
\maketitle

\begin{abstract}Let $X$ be a smooth quasi-projective algebraic surface and let $\Delta_n$ the big diagonal in the product variety $X^n$. We study cohomological properties of the ideal sheaves $\mc{I}^k_{\Delta_n}$ and their invariants $(\mc{I}^k_{\Delta_n})^{\perm_n}$ by the symmetric group, seen as ideal sheaves 
over the symmetric variety $S^nX$. In particular we obtain resolutions of the sheaves of invariants $(\mc{I}_{\Delta_n})^{\perm_n}$ for $n = 3,4$ in terms of invariants of sheaves over $X^n$ whose cohomology is easy to calculate. Moreover, 
we relate, via  the Bridgeland-King-Reid equivalence,  
powers of determinant line bundles over the Hilbert scheme to powers of ideals of the big diagonal $\Delta_n$. We deduce 
applications to the cohomology of double powers of determinant line bundles over the Hilbert scheme with $3$ and $4$ points and we give universal formulas for their Euler-Poincar\'e characteristic. Finally, we obtain upper bounds for the regularity of the sheaves $\mc{I}^k_{\Delta_n}$ over $X^n$ 
with respect to very ample line bundles of the form $L \boxtimes \cdots \boxtimes L$ and of their sheaves of invariants $( \mc{I}^k_{\Delta_n})^{\perm_n}$ 
on the symmetric variety $S^nX$ with respect to  very ample line bundles of the form~$\mc{D}_L$. 
\end{abstract}

\section*{Introduction}
Let $X$ be a smooth quasi-projective algebraic surface and consider the product variety $X^n$, for $n \geq 2$. The big diagonal $\Delta_n$ is the closed subscheme of $X^n$ defined as the scheme-theoretic union of all pairwise diagonals $\Delta_I$, where $I$ is a cardinality $2$ subset of $\{1, \dots, n \}$. 
The aim of this article is the study of 
of the ideal sheaf $\mc{I}_{\Delta_n}$ of the big diagonal $\Delta_n$ of the product variety $X^n$ and its invariants $(\mc{I}_{\Delta_n})^{\perm_n}$, seen as an ideal sheaf over the symmetric variety $S^n X$. 

The main reason 
for studying diagonal ideals is that their geometry is tightly intertwined 
with the geometry of the Hilbert scheme of points $X^{[n]}$ and that of the 
isospectral Hilbert scheme $B^n$. As an example of this close interplay, we mention that in  \cite{Scala2015isospectral} we related the singularities of the isospectral  Hilbert scheme in terms of the singularities of the pair $(X^n, \mc{I}_{\Delta_n})$ and  it is by studying the  latter
that we could prove that the singularities of $B^n$ are canonical if $n \leq 5$, log-canonical if $n \leq 7$ and not log-canonical if $n \geq 9$. 

In this work we are concerned with cohomological properties  of the ideal sheaf $\mc{I}_{\Delta_n}$ and its invariants $( \mc{I}_{\Delta_n} )^{\perm_n}$. A first motivation  comes from the study of symmetric powers $S^k L^{[n]}$ of tautological bundles over the Hilbert scheme of points.
Let $\mu: X^{[n]} \rTo S^n X$ be the Hilbert-Chow morphism. 
 In \cite{Scalaarxiv2015}
we built a natural filtration 
$\mc{W}^\lambda \mu_* S^k L^{[n]}$ of the push-forward $\mu_* S^k L^{[n]}$,  
indexed by partitions $\lambda$ of $k$ of length at most $n$,  
whose graded sheaves,  at least for $k$ low --- but we believe 
it is a general fact ---,  
are given, up to tensorization by some line bundles, by invariants of diagonal ideals by certain subgroups of $\perm_n$: we indicate them with $\mc{L}^\lambda (-2 \lambda \Delta)$. 
The sheaves $\mc{L}^\lambda(-2 \lambda \Delta)$ are in general more complicated than 
the invariants $( \mc{I}_{\Delta_n} )^{\perm_n}$ of the big diagonal; however, for $\lambda = 1^k$ (in exponential notation), the sheaf 
$\mc{L}^{\lambda }(-2 \lambda \Delta)$ is directly related to the sheaf of invariants $( \mc{I}_{\Delta_n} )^{\perm_n}$, as we shall see. 
Therefore, understanding the invariants $( \mc{I}_{\Delta_n} )^{\perm_n}$ is instrumental for the  investigation of 
symmetric powers of tautological bundles. 

There is, moreover, another and more direct source of interest for the cohomological study 
 of diagonal ideals. We recall that the Bridgeland-King-Reid transform 
$$ \bkrh: \B{R}p_* \circ q^* : \B{D}^b(X^{[n]}) \rTo D^b_{\perm_n}(X^n) \;,$$where $p : B^n \rTo X^n$ is the blow-up  along the diagonal $\Delta_n$ and where $q: B^n \rTo X^{[n]}$ is the (flat) quotient map by the symmetric group $\perm_n$, is an equivalence of derived 
categories  between the derived category of coherent sheaves over the Hilbert scheme $X^{[n]}$ and the $\perm_n$-equivariant derived category of the product variety $X^n$. Since the ideal $\mc{I}_{\Delta_n}$, or its powers $\mc{I}^k_{\Delta_n}$, are $\perm_n$-equivariant sheaves over $X^n$, they need to correspond, under the BKR-equivalence, to some remarkable object over the Hilbert scheme of points: indeed, it turns out that they are related, up to tensorization by the alternating representation $\epsilon_n$ of the symmetric group, to powers of 
determinant line bundles on $X^{[n]}$. Considering slightly more general $\perm_n$-equivariant objects on $X^n$
we proved the following 

\vspace{0.3cm}
\noindent
{\bf Theorem \ref{thm: projectionformula} }{\it Let $F$ be a vector bundle of rank $r$ and $A$ be a line bundle over the smooth quasi-projective surface 
$X$. Consider, over the Hilbert scheme of points $X^{[n]}$,  the rank $nr$ tautological bundle $F^{[n]}$  associated to $F$ and the natural line bundle $\mc{D}_A$ associated to $A$. 
Then, in $\B{D}^b_{\perm_n}(X^n)$ we have: 
\begin{equation}\label{eq: equivalenceintro}\tag{$\ast$} \bkrh( (\det F^{[n]} )^{\tens k} \tens \mc{D}_A) \simeq^{qis}  \mc{I}_{\Delta_n}^{rk} \tens \big( ( (\det F )^{\tens k} \tens A ) \boxtimes \cdots \boxtimes ( (\det F)^{\tens k} \tens A) ) \big)\tens \epsilon_n^{rk} \;.\end{equation}}

\vspace{-0.1cm}
\noindent
Taking $\perm_n$-invariants, the previous theorem yields 
\begin{equation}\label{eq: formulaHilbertChowintro} \tag{$\ast \ast$} \B{R} \mu_* ((\det F^{[n]})^{\tens k} \tens \mc{D}_A ) \simeq \pi_*( \mc{I}_{\Delta_n}^{rk} \tens \epsilon_n^{rk} ) ^{\perm_n} \tens \mc{D}_{\det F}^k \tens \mc{D}_A \;.\end{equation}
Therefore, by means of the BKR-equivalence $\bkrh$,  or of the derived push forward $\B{R} \mu_*$ of the Hilbert-Chow morphism, one might  use facts about  diagonal ideals and their invariants to deduce properties of determinants 
line bundles 
over Hilbert schemes;  on the other hand, one can use known results about determinants on  Hilbert schemes of points to enlighten properties of the ideal $\mc{I}_{\Delta_n}$, its powers and their invariants. This is precisely what happens, as we explain below. 

In order to obtain resolutions of invariants $(\mc{I}_{\Delta_n})^{\perm_n}$ in terms of simpler sheaves, at least from the point of view of cohomology computations, we consider, for cardinality $2$ subsets $I \subseteq \{1, \dots, n \}$, complexes 
$$ \mc{K}^\bullet_I : \FS_{X^n } \rTo \FS_{\Delta_I} \rTo 0 $$which we take as right resolutions of the ideals $\mc{I}_{\Delta_I}$. Being the ideal  $\mc{I}_{\Delta_n}$ the intersection of the ideals $\mc{I}_{\Delta_I}$, the former is isomorphic to the zero-cohomology sheaf $\mc{H}^0( \tens_{I} \mc{K}^\bullet_I)$, where $I$ runs among cardinality $2$ subsets of $\{1, \dots n \}$. However, the complex $\tens_{I} \mc{K}^\bullet_I$ is far from being exact, because the partial diagonals are not transverse: we are then led to consider the derived tensor product $\tens_I^L \mc{K}^\bullet_I$ of the complexes $\mc{K}^\bullet_I$ and its associated spectral sequence 
\begin{equation}\label{eq: spectralintro}\tag{$\star$} E^{p,q}_1 := \bigoplus_{i_1 + \dots + i_m= p} \Tor_{-q}(\mc{K}^{i_1}_{I_1}, \dots, \mc{K}^{i_m}_{I_m}) \end{equation}abutting to the sheaf cohomology 
$ \mc{H}^{p+q}(\tens_I^L \mc{K}^\bullet_I)$. Here $m = \binom{n}{2}$ and $\{I_1, \dots, I_m \}$ are all cardinality $2$ subsets of $\{1, \dots, n \}$. It is now clear that the ideal $\mc{I}_{\Delta_n}$ of the big diagonal is given by the term $E^{0,0}_2$ of the spectral sequence above. In order to deal with the latter we now face two difficulties. The first is the understanding of the multitors appearing in 
(\ref{eq: spectralintro}), which are of the form $\Tor_{-q}(\FS_{\Delta_{I_1}}, \dots, \FS_{\Delta_{I_l}})$, for some cardinality $2$ multi-indexes 
$I_1, \dots, I_l$, $l \leq m$. The second is the handling of combinatorial possibilities given by the multi-indexes $I_1, \dots, I_l$. As for the first, we establish in section \ref{section: prima} the following general formula for multitors of structural sheaves of smooth subvarieties 
$Y_1, \dots, Y_l$ of a smooth variety $M$ intersecting in a smooth variety $Z = Y_1 \cap \cdots \cap Y_l$
$$ \Tor_{j}(\FS_{Y_1}, \dots, \FS_{Y_l}) \simeq \Lambda^j (\oplus_{i=1}^l N_{Y_i} \trest_Z / N_Z)^* $$in terms of normal bundles $N_{Y_i}$ and $N_Z$ of $Y_i$ and $Z$ in $M$, respectively. For the second, we think of the multi-indexes $I_i$ 
as edges of a subgraph $\Gamma$ of the complete graph $K_n$ with $n$ vertices, such that no vertex of $\Gamma$ is isolated. Classifying all possible multitors appearing in (\ref{eq: spectralintro}) ---  up to isomorphisms 
--- is then reduced to classifying all possible graphs $\Gamma$ of this kind. The usefulness of the graph-theoretic approach extends further since several interesting properties of the multitor $\Tor_q(\FS_{\Delta_{I_1}}, \dots, \FS_{\Delta_{I_l}})$ can be understood in graph-theoretic terms. A fundamental fact is that  if $I_1, \dots, I_l$ identify a graph $\Gamma$ with $c$ independent cycles, then the associated 
multitor $\Tor_q(\Delta, \Gamma):=\Tor_q(\FS_{\Delta_{I_1}}, \dots, \FS_{\Delta_{I_l}})$ is isomorphic to the exterior power $\Lambda^q(Q^*_\Gamma)$, where $Q_\Gamma$ is 
a rank $2c$ vector bundle over the intersection $\Delta_\Gamma = \Delta_{I_1} \cap \cdots \cap \Delta_{I_l}$. Resorting to the associated graphs is very helpful also when considering the $\perm_n$-action on the naturally equivariant 
spectral sequence $E^{p,q}_1$. The $\perm_n$-action on $E^{p,q}_1$ 
induces a $\Stab_{\perm_n}(\Gamma)$-action on the multitor $\Tor_q(\Delta, \Gamma) \simeq \Lambda^q(Q^*_\Gamma)$; the group $\Stab_{\perm_n}(\Gamma)$ acts fiberwise on the vector bundle $Q_\Gamma$ via the representation $\mbb{C}^2 \tens q_\Gamma$, 
where $q_\Gamma$ is the vector space generated over $\mbb{C}$ by independent cycles. 
These facts allow us to classify, for $n = 3,4$ all isomorphism classes of multitors appearing in (\ref{eq: spectralintro}) and thoroughly study the spectral sequence of invariants $(E^{p,q}_1)^{\perm_n}$. As a consequence, we deduce the following 
right resolutions of $(\mc{I}_{\Delta_n})^{\perm_n}$ over the symmetric variety $S^n X$ for $n = 3,4$; even if we are mainly interested when $X$ is a surface, the result actually holds for $X$ a smooth algebraic variety of arbitrary dimension.  
Denote with $w_k$ the  map $X \times S^{n-k} X \rTo S^n X$ sending $(x, y)$ to $kx + y$. 

\vspace{0.3cm}
\noindent
{\bf Theorems \ref{thm: inv3}--\ref{thm: inv4}.} {\it Let $X$ be a smooth algebraic variety. We have the following natural resolutions of ideals of invariants $\mc{I}_{\Delta_3}^{\perm_3}$ and 
$\mc{I}_{\Delta_4}^{\perm_4}$ over the symmetric varieties $S^3X$ and $S^4 X$, respectively. }
\begin{gather*} 
0 \rTo \big( \mc{I}_{\Delta_3} \big)^{\perm_3} \rTo \FS_{S^3 X} {\rTo^r} {w_2}_* (\FS_{X \times X}) {\rTo^D} {w_3}_*( \Omega^1_X ) \rTo 0 \\
 0 \rTo \big( \mc{I}_{\Delta_4} \big)^{\perm_4} \rTo \FS_{S^4 X} {\rTo^r} {w_2}_*(\FS_X \boxtimes \FS_{S^2 X})_0 {\rTo^{D}} {w_3}_* (\Omega^1_X \boxtimes \mc{I}_{\Delta_2})_0 
 {\rTo^{C}} {w_4}_* (S^3 \Omega^1_X)  \rTo 0 \end{gather*}
 
 \vspace{0.2cm}
 \noindent
 The maps $r, D, C$  are explicit.  Here we indicated with 
 ${w_2}_*(\FS_X \boxtimes \FS_{S^2 X})_0$ and 
 ${w_3}_* (\Omega^1_X \boxtimes \mc{I}_{\Delta_2})_0$
 particular subsheaves of ${w_2}_*(\FS_X \boxtimes \FS_{S^2 X})$ and ${w_3}_* (\Omega^1_X \boxtimes \mc{I}_{\Delta_2})$ which will be made precise in subsection  \ref{subsection: n=4}. 
 Now,  the terms 
 appearing in the resolutions do not present any difficulty from the point of view of cohomology computations. Therefore, by 
 theorems \ref{thm: inv3}, \ref{thm: inv4} and by formula (\ref{eq: formulaHilbertChowintro}) we deduce, for $n = 3,4$, and for $X$ a surface,
 \emph{a spectral sequence $E^{p,q}_1$ abutting to the cohomology $H^{p+q}(X^{[n]}, (\det L^{[n]})^{\tens 2} \tens \mc{D}_A)$} and 
 \emph{universal formulas for the Euler-Poincar\'e characteristic $\chi(X^{[n]}, (\det L^{[n]})^{\tens 2} \tens \mc{D}_A)$ of twisted double powers of 
 determinant line bundles over Hilbert schemes of points
 }. 
 These facts have a direct application to the sheaves $\mc{L}^\lambda(-2 \lambda \Delta)$, when $\lambda = (r, \dots, r)$, $|\lambda| = rl$, since the sheaf $\mc{L}^\lambda(-2 \lambda \Delta)$ is isomorphic to 
 $$ \mc{L}^\lambda(-2 \lambda \Delta) \simeq {v_l}_* \big( ( \mc{I}^{2r}_{\Delta_{l}} )^{\perm_l} \tens \mc{D}_{L}^{\tens r} \boxtimes \FS_{S^{n-l} X} \big) \;,$$where 
 $v_l$ is the finite map $v_l : S^{l}X \times S^{n-l} X \rTo S^n X$ sending $(x, y) \rMapsto x+y$. We also obtain a right resolution 
 of the sheaf $\mc{L}^{2,1,1}(-2 \Delta)$ over $S^n X$, which is more difficult to treat, since it is not directly related to determinant line bundles 
 over Hilbert schemes. 
 
 Finally, as anticipated, we can use properties of determinant line bundles over Hilbert scheme to deduce important facts about diagonal ideals and their invariants. We use formulas (\ref{eq: equivalenceintro}) and (\ref{eq: formulaHilbertChowintro})  and the positivity properties of $\det L^{[n]}$ when $L$ is $n$-very ample to study vanishing theorems and 
 regularity for the ideal sheaves $\mc{I}^k_{\Delta_n}$ over $X^n$ and their invariants $(\mc{I}^k_{\Delta_n})^{\perm_n}$ over $S^n X$. 
In particular, we proved the following result. 

\vspace{0.3cm}
\noindent
{\bf Theorem \ref{thm: regularity1}}. {\it
Let $X$ be a smooth projective surface and $L$ be a line bundle over $X$. Suppose that, for a certain $m \in \mbb{N}^*$, $L^m \tens K_X^{-1} = \tens_{i=1}^{2 [(k+1)/2]} B_i$, with $B_1$ $n$-very ample and the other $B_i$, $i \neq 1$, $(n-1)$-very ample. Then we have the vanishing 
$$ H^i(S^n X, (\mc{I}^k_{\Delta_n})^{\perm_n} \tens \mc{D}^m_L) = 0 \qquad \qquad \mbox{for $i >0$}. $$If, moreover, $L$ is very ample over $X$, then the ideal \emph{$(\mc{I}^k_{\Delta_n})^{\perm_n}$ is $(m+2n)$-regular with respect to $\mc{D}_L$}. In particular the regularity $\reg ( (\mc{I}^k_{\Delta_n})^{\perm_n}) $ of the ideal $(\mc{I}^k_{\Delta_n})^{\perm_n}$ with respect to the line bundle $\mc{D}_{L}$ is bounded above by
$$ \reg ( (\mc{I}^k_{\Delta_n})^{\perm_n}) \leq m_0 + 2n$$where $m_0$ is the minimum of all $m$ satisfying the condition above.} 

\vspace{0.3cm}
\noindent
We proved a similar statement (theorem \ref{thm: regularity2}) about  \emph{vanishing and regularity for $\mc{I}^k_{\Delta_n}$
with respect to the line bundle $L \boxtimes \cdots \boxtimes L$ on $X^n$ for $2 \leq n \leq 7$}. These results can be written in a nicer way 
when the surface $X$ has Picard number one. Indeed, suppose that $\Pic(X) = \mbb{Z}B$ and let $r$ be the minimum positive power of $B$ such that $B^r$ is very ample and write $K_X = B^w$, for some integer $w$. Then the regularities $\reg(\mc{I}^k_{\Delta_n})$ 
and $\reg( (\mc{I}_{\Delta_n}^k)^{\perm_n}) $, with respect to the line bundle $B^r \boxtimes \cdots \boxtimes B^r$ on $X^n$ and $\mc{D}_{B^r}$ on $S^n X$, respectively, are bounded above by 
\begin{align*}
\reg(\mc{I}_{\Delta_n}^k) & \leq (k+3)n-k +\lceil w/r  \rceil  & \mbox{for $2 \leq n \leq 7$} &\\
 \reg( (\mc{I}_{\Delta_n}^k)^{\perm_n}) & \leq  2n ( [(k+1)/2] +1) - 2[(k+1)/2] +1 +\lceil w/r  \rceil 
 & \mbox{for all $n \in \mbb{N}$, $n \geq 2$} & .
 \end{align*}

\begin{conventions}\label{conv: sym}i). We work over the field of complex numbers. By point we will always mean closed point. 

\noindent
ii). Let $A$ a $\mbb{C}$-algebra and $M$ an $A$-module. For $n \in \mbb{N}\setminus \{0 \}$, consider the symmetric  power 
$S^n M$  of the module $M$. We consider $S^n M$ as the space of $\perm_n$-invariants of $M^{\tens n}$ for the action of $\perm_n$ permutating the factors in the tensor product. Throughout this article, we will use the following convention for the symmetric product $u_1. \cdots . u_n$ 
of elements $u_i \in M$: 
$$ u_1. \cdots . u_n := \sum_{\sigma \in \perm_n} u_{\sigma(1)} \tens \cdots \tens u_{\sigma(n)} \;,$$where the right hand side is seen in $M^{\tens n}$. 
We use an analogous convention for the exterior product: 
$ u_1 \wedge \cdots \wedge u_n := \sum_{\sigma \in \perm_n} (-1)^\sigma u_{\sigma(1)} \tens \cdots \tens u_{\sigma(n)} $, where $(-1)^\sigma$ is the signature of the of permutation $\sigma$ and where we see $\Lambda^n M$ as the space of anti-invariants for the action of $\perm_n$ over $M^{\tens n}$. 
\end{conventions}

\paragraph{Acknowledgements.} This work is partially supported by CNPq, grant 307795/2012-8. 

\section{The Bridgeland-King-Reid transform of diagonal ideals}\label{section: BKR}
Consider a smooth quasi-projective algebraic surface $X$. Denote with $X^{[n]}$ the Hilbert scheme of $n$ points over $X$ and with 
$B^n$ the isospectral Hilbert scheme \cite{Haiman1999, Haiman2001}, that is, the blow-up of $X^n$ along the big diagonal $\Delta_n$. We indicate with 
$p : B^n \rTo X^n$ the blow-up map, with $q: B^n \rTo X^{[n]}$ the quotient projection by the symmetric group $\perm_n$ and with 
$\mu: X^{[n]} \rTo S^n X$ the Hilbert-Chow morphism. 
The Bridgeland-King-Reid equivalence \cite{BridgelandKingReid2001, Haiman2001, Haiman2002}, in the case of the action of the symmetric group $\perm_n$ over the product variety $X^n$, provides an equivalence of derived categories 
\begin{equation} \label{eq: bkrh} \bkrh := \B{R} p_* \circ q^*:  \B{D}^b(X^{[n]}) \rTo \B{D}^b_{\perm_n}(X^n) \end{equation}from the derived category of coherent sheaves over the Hilbert scheme of $n$ points over $X$ and the $\perm_n$-equivariant derived category of the product variety $X^n$. Any power $\mc{I}^m_{\Delta_n}$  of the ideal $\mc{I}_{\Delta_n}$ is naturally a $\perm_n$-equivariant coherent sheaf  over $X^n$: it is then natural to ask what is the corresponding complex of coherent sheaves over the Hilbert schemes of points for the equivalence (\ref{eq: bkrh}). In general we can twist the ideal $\mc{I}^m_{\Delta_n}$ with the line bundle $L \boxtimes \cdots \boxtimes L$ ($n$-factors) and ask
the same question for $\mc{I}^m_{\Delta_n} \tens L \boxtimes \cdots \boxtimes L$. To give a general statement, we need to introduce the line bundle $\mc{D}_L$. 
\begin{remark}\label{rmk: DL}
If $L$ is a line bundle on $X$, the line bundle $L \boxtimes \cdots \boxtimes L$ ($n$-factors) on $X^n$ \emph{descends} to a line bundle $\mc{D}_L$ on $S^n X$, in the sense that $\pi^* \mc{D}_L = L \boxtimes \cdots \boxtimes L$ \cite[Thm 2.3]{drezetNarasimhan1989}. As a consequence, the line bundle 
$\mc{D}_L$  coincides with the sheaf of $\perm_n$-invariants, on $S^n X$, of the line bundle $L \boxtimes \cdots \boxtimes L$. Pulling-back the line bundle $\mc{D}_L$  via the Hilbert-Chow morphism $\mu: X^{[n]} \rTo S^n X$ we get a line bundle $\mu^* \mc{D}_L$ on the Hilbert scheme, called the \emph{natural line bundle} on $X^{[n]}$ associated to the line bundle $L$ on $X$. For brevity's sake, we will denote it again with $\mc{D}_L$.  
\end{remark}

We  need as well a technical lemma about the local cohomology of ideal sheaves $\mc{I}^s_{\Delta_n}$, $s \in \mbb{N}^*$
with respect to the closed subscheme $W$ given by the intersection of double diagonals in $X^n$. 
More precisely, we define $W$ as the scheme-theoretic intersection of pairwise diagonals
$$ W := \bigcap_{\substack{|I| = |J| = 2 \\ I, J \subseteq \{1, \dots, n \}, I \neq J}} \Delta_I \cap \Delta_J \;.$$It is a closed subscheme of $X^n$ of  codimension $4$. \begin{notat}\label{notat: opensets}We denote with $X^n_{**}$ the open subset $X^n \setminus W$ and with $B^n_{**}$, $S^n_{**}X$, $X^{[n]}_{**}$ 
the open subsets $ B^n_{**} := p^{-1}(X^n_{**})$, $S^n_{**}X := \pi(X^n_{**})$, $X^{[n]}_{**}:= \mu^{-1}(S^n_{**} X)$. 
We denote moreover, with $j: X^n_{**} \rTo X^n$ the open immersion of $X^n_{**}$ into $X^n$. We also denote with $j$ the open immersion of each of the open sets $B^n_{**}$, $S^n_{**}X$, $X^{[n]}_{**}$ into their closure $B^n$, $S^nX$, $X^{[n]}$, respectively; 
it will be clear from the context over which variety we are working. 
\end{notat}

\begin{remark}\label{rmk: bigdiagonal}The following is an important result about powers of the ideal sheaf of the diagonal $\Delta_n$ in $X^n$, and it has been proven by Haiman in \cite[Corollary 3.8.3]{Haiman2001}.  Over $X^n$, for all $s \in \mbb{N}$ one has:  
\begin{equation} \label{eq: bigdiagonal}
 \bigcap_{\substack{I \subseteq \{1, \dots, n \} \\ | I | =2}} \mc{I}_{\Delta_I}^s = 
 \Bigg ( \bigcap_{\substack{I \subseteq \{1, \dots, n \} \\ | I | =2}} \mc{I}_{\Delta_I} \Bigg) ^s = \mc{I}_{\Delta_n}^s 
\end{equation}
\end{remark}

The local cohomology property of the ideal sheaves we want to prove is the following. 
 \begin{lemma}\label{lmm: extensionideali}Let $\ell: \{ (i, j) \: | \; i, j \in \mbb{N} \; , \; 1 \leq i < j \leq n \} \rTo \mbb{N}$ be a function.   
 Then $$j_* j^* \bigcap_{1 \leq i < j \leq n} \mc{I}_{\Delta_{ij}}^{\ell (i,j)} = \bigcap_{1 \leq i < j \leq n} \mc{I}_{\Delta_{ij}}^{\ell (i,j)}  \;.$$
 \end{lemma}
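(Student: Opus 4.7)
The plan is to prove that $\mc{F} := \bigcap_{1 \leq i < j \leq n} \mc{I}_{\Delta_{ij}}^{\ell(i,j)}$ has depth at least $2$ along $W$, which is equivalent to the vanishing $H^0_W(\mc{F}) = 0 = H^1_W(\mc{F})$: indeed, the standard exact sequence
\begin{equation*}
0 \rTo H^0_W(\mc{F}) \rTo \mc{F} \rTo j_*j^*\mc{F} \rTo H^1_W(\mc{F}) \rTo 0
\end{equation*}
would then force $\mc{F} \simeq j_*j^*\mc{F}$, as desired.

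First I would compare $\mc{F}$ with the ambient structure sheaf via the tautological short exact sequence $0 \to \mc{F} \to \FS_{X^n} \to \FS_{X^n}/\mc{F} \to 0$. Since $X^n$ is smooth and $W$ is a closed subscheme of codimension $4$, the sheaf $\FS_{X^n}$ satisfies $H^i_W(\FS_{X^n}) = 0$ for $i < 4$. Taking the long exact sequence of local cohomology then immediately yields $H^0_W(\mc{F}) = 0$ together with a canonical isomorphism $H^1_W(\mc{F}) \simeq H^0_W(\FS_{X^n}/\mc{F})$. The problem therefore reduces to showing that $\FS_{X^n}/\mc{F}$ has no nonzero local section supported on $W$, equivalently that no associated prime of $\FS_{X^n}/\mc{F}$ is contained in $W$.

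The key observation is that $\mc{F}$ is by construction the kernel of the natural map
\begin{equation*}
\FS_{X^n} \rTo \bigoplus_{1 \leq i < j \leq n} \FS_{X^n}/\mc{I}_{\Delta_{ij}}^{\ell(i,j)},
\end{equation*}
so $\FS_{X^n}/\mc{F}$ embeds into the displayed direct sum, and consequently
\begin{equation*}
\mathrm{Ass}(\FS_{X^n}/\mc{F}) \subseteq \bigcup_{1\le i<j\le n}\mathrm{Ass}\bigl(\FS_{X^n}/\mc{I}_{\Delta_{ij}}^{\ell(i,j)}\bigr).
\end{equation*}
For each fixed pair $(i,j)$ the subvariety $\Delta_{ij}$ is smooth in $X^n$, hence regularly embedded, and the $\mc{I}_{\Delta_{ij}}$-adic filtration of $\FS_{X^n}/\mc{I}_{\Delta_{ij}}^{\ell(i,j)}$ has successive quotients isomorphic to $S^k N^*_{\Delta_{ij}/X^n}$ for $0 \leq k < \ell(i,j)$. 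These are locally free sheaves on the smooth, irreducible subvariety $\Delta_{ij}$, so each admits only the generic point $\eta_{\Delta_{ij}}$ as associated prime. A standard induction on the length of the filtration (using $\mathrm{Ass}(B) \subseteq \mathrm{Ass}(A) \cup \mathrm{Ass}(C)$ for any exact sequence $0 \to A \to B \to C \to 0$) then gives $\mathrm{Ass}(\FS_{X^n}/\mc{I}_{\Delta_{ij}}^{\ell(i,j)}) = \{\eta_{\Delta_{ij}}\}$. Since $\Delta_{ij}$ has codimension $2$ in $X^n$ while $W$ has codimension $4$, the generic point $\eta_{\Delta_{ij}}$ is never contained in $W$, and the conclusion follows.

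The only delicate point is the identification $\mc{I}_{\Delta_{ij}}^k/\mc{I}_{\Delta_{ij}}^{k+1} \simeq S^k N^*_{\Delta_{ij}/X^n}$, which is where the regularity of the embedding $\Delta_{ij} \hookrightarrow X^n$ is used; everything else is formal manipulation with local cohomology and associated primes. The proof is really a clean instance of the general principle that a coherent sheaf on a smooth variety extends uniquely across a closed subscheme whose codimension exceeds the codimension of all associated subvarieties of the sheaf by at least two.
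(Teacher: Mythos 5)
Your proof is correct, and it reorganizes the argument in a way that is genuinely different from the paper's, even though both rest on the same two facts: that $X^n$ is Cohen--Macaulay with $\codim W = 4$, and that $\FS_{X^n}/\mc{I}^s_{\Delta_{ij}}$ is filtered with graded pieces $S^k N^*_{\Delta_{ij}}$ supported on the codimension-$2$ smooth subvariety $\Delta_{ij}$. The paper never touches associated primes: it runs an induction on $s$ through the local cohomology long exact sequences of $0 \to \mc{I}^{s+1}_{\Delta_{ij}} \to \mc{I}^{s}_{\Delta_{ij}} \to \mc{I}^{s}_{\Delta_{ij}}/\mc{I}^{s+1}_{\Delta_{ij}} \to 0$ to get $\mc{H}^l_W(\mc{I}^s_{\Delta_{ij}}) = 0$ for $l \leq 2$, deduces $\mc{H}^l_W(\FS_{X^n}/\mc{I}^s_{\Delta_{ij}}) = 0$ for $l \leq 1$, i.e.\ $j_*j^*(\FS_{X^n}/\mc{I}^s_{\Delta_{ij}}) \simeq \FS_{X^n}/\mc{I}^s_{\Delta_{ij}}$, and then concludes by comparing the left-exact sequence defining the intersection with its $j_*j^*$ counterpart in a commutative diagram. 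You instead compute $H^{\leq 1}_W$ of the intersection itself, reduce to $H^0_W(\FS_{X^n}/\mc{F})$ via the vanishing $H^i_W(\FS_{X^n}) = 0$ for $i < 4$, and kill that group by locating all associated points of $\FS_{X^n}/\mc{F}$ at the generic points of the $\Delta_{ij}$ through the embedding into $\bigoplus \FS_{X^n}/\mc{I}^{\ell(i,j)}_{\Delta_{ij}}$. Your version buys a shorter argument with no induction on $s$ and no diagram chase, and it isolates the conceptual reason the lemma holds (no associated subvariety of the sheaf meets $W$ generically, and the codimension gap is at least $2$); the paper's version buys, as a byproduct, the stronger intermediate statement $\mc{H}^2_W(\mc{I}^s_{\Delta_{ij}}) = 0$, which your route does not produce but which is not needed for the lemma.
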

 \begin{proof}
  We begin by proving recursively that, for any $s \in \mbb{N}$ and for any fixed natural numbers $i, j$, $1 \leq i < j \leq n$,  we have 
 \begin{equation}\label{eq: localcohom} \mc{H}^l_W(\mc{I}^s_{\Delta_{i, j}}) = 0  \qquad \mbox{for all $0 \leq l \leq 2$} \;. \end{equation}Indeed it is true for $s = 0$, since $\mc{I}^0_{\Delta_{i, j}} \simeq \FS_{X^n}$, $X^n$ is normal, and $W$ is of codimension $4$ in $X^n$. Consider now $s \in \mbb{N}$. 
 The local cohomology long exact sequence applied to the short exact sequence $$ 0 \rTo \mc{I}^{s+1}_{\Delta_{ij}} \rTo \mc{I}^s_{\Delta_{ij}} \rTo 
 \mc{I}^s_{\Delta_{ij}}/\mc{I}^{s+1}_{\Delta_{ij}} \rTo 0$$yields: 
 \begin{multline}  0 \rTo \mc{H}^0_W (\mc{I}^{s+1}_{\Delta_{ij}}) \rTo \mc{H}^0_W(\mc{I}^s_{\Delta_{ij}}) \rTo \mc{H}^0_W( \mc{I}^s_{\Delta_{ij}}/\mc{I}^{s+1}_{\Delta_{ij}} ) \rTo  \\ \rTo \mc{H}^1_W (\mc{I}^{s+1}_{\Delta_{ij}}) \rTo \mc{H}^1_W(\mc{I}^s_{\Delta_{ij}}) \rTo \mc{H}^1_W( \mc{I}^s_{\Delta_{ij}}/\mc{I}^{s+1}_{\Delta_{ij}} ) \rTo \\ 
 \rTo \mc{H}^2_W (\mc{I}^{s+1}_{\Delta_{ij}}) \rTo \mc{H}^2_W(\mc{I}^s_{\Delta_{ij}}) \;. 
 \end{multline}Note that $\mc{H}^l_W ( \mc{I}^s_{\Delta_{ij}}/\mc{I}^{s+1}_{\Delta_{ij}} ) = 0$ for $l =0,1$ because the sheaf $\mc{I}^s_{\Delta_{ij}}/\mc{I}^{s+1}_{\Delta_{ij}}$ is a vector bundle over the smooth subvariety $\Delta_{ij}$, in which $W$ is of codimension $2$ and because of  \cite[Lemma 3.1.9]{Scala2009D}. Now, if $\mc{H}^l_W(\mc{I}^s_{\Delta_{ij}}) = 0$ for $l =0, 1, 2$, 
 the local cohomology long exact sequence above yields the vanishing for $\mc{I}^{s+1}_{\Delta_{ij}}$ and $l = 0,1,2$. Induction on $s$ then yields (\ref{eq: localcohom}) for any $s \in \mbb{N}$. 
 
 Since $\mc{H}^l_W(\mc{I}^s_{\Delta_{ij}}) = 0$ for $l =0, 1, 2$ and for any $s \in \mbb{N}$, an analogous argument via the long exact sequence in local cohomology applied to the short exact sequence 
 $$ 0 \rTo \mc{I}^s_{\Delta_{ij}} \rTo \FS_{X^n} \rTo \FS_{X^n}/\mc{I}^s_{\Delta_{ij}} \rTo 0$$proves that $\mc{H}^l_W( \FS_{X^n}/\mc{I}^s_{\Delta_{ij}}) = 0$ for $l =0,1$ and for any $s$; this last fact is equivalent to the isomorphism $$ j_* j^*  \FS_{X^n}/\mc{I}^s_{\Delta_{ij}} \simeq  \FS_{X^n}/\mc{I}^s_{\Delta_{ij}} \;.$$
 
 The above isomorphism, together with the following commutative diagram 
 \begin{diagram} 0 & \rTo & \bigcap_{1 \leq i < j \leq n} \mc{I}_{\Delta_{ij}}^{\ell (i,j)}  & \rTo & \FS_{X^n} & \rTo & \bigoplus_{1 \leq i<j \leq n } \FS_{X^n}/ \mc{I}_{\Delta_{ij}}^{\ell (i,j)} \\ 
 & & \dTo & & \dTo^{\simeq} & & \dTo^{\simeq} \\ 
  0 & \rTo & j_* j^* \bigcap_{1 \leq i < j \leq n} \mc{I}_{\Delta_{ij}}^{\ell (i,j)}  & \rTo &  j_* j^* \FS_{X^n} & \rTo & \bigoplus_{1 \leq i<j \leq n }  j_* j^*\FS_{X^n}/ \mc{I}_{\Delta_{ij}}^{\ell (i,j)} 
 \end{diagram}where the second and third vertical arrows are isomorphisms, because we proved so above, yields the statement of the lemma. \end{proof}

 \begin{lemma}\label{lmm: inv2k-1}Let $k \in \mbb{N}^*$. We have the isomorphism of sheaves 
 of invariants over $S^n X$: 
 $$ \big( \mc{I}^{2k-1}_{\Delta_n}\big)^{\perm_n} \simeq \big( \mc{I}^{2k}_{\Delta_n}\big)^{\perm_n} \;.$$\end{lemma}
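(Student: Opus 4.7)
The plan is to show that the $\perm_n$-invariants of the quotient $\mc{I}^{2k-1}_{\Delta_n}/\mc{I}^{2k}_{\Delta_n}$ vanish as a sheaf on $S^n X$, so that the statement will follow from the $\perm_n$-equivariant short exact sequence
\begin{equation*}
0 \rTo \mc{I}_{\Delta_n}^{2k} \rTo \mc{I}_{\Delta_n}^{2k-1} \rTo \mc{I}^{2k-1}_{\Delta_n}/\mc{I}^{2k}_{\Delta_n} \rTo 0 \;.
\end{equation*}
Applying the functor $\pi_*(-)^{\perm_n}$, which is exact because we work in characteristic zero and $\perm_n$ is finite, reduces the claim to that vanishing.

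First I would work on the open set $X^n_{**}$. There the pairwise diagonals $\Delta_{ij}$ meet only along $W$, so the $\Delta^0_{ij} := \Delta_{ij} \cap X^n_{**}$ are pairwise disjoint smooth closed subvarieties of codimension $2$. Remark \ref{rmk: bigdiagonal} then gives $\mc{I}^s_{\Delta_n}|_{X^n_{**}} = \bigcap_{1 \leq i<j \leq n} \mc{I}^s_{\Delta^0_{ij}}$, and the disjointness of the $\Delta^0_{ij}$ yields the decomposition
\begin{equation*}
\bigl(\mc{I}^{2k-1}_{\Delta_n}/\mc{I}^{2k}_{\Delta_n}\bigr)\big|_{X^n_{**}} \simeq \bigoplus_{1 \leq i<j \leq n} S^{2k-1} N^*_{\Delta^0_{ij}/X^n_{**}} \;.
\end{equation*}

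The key local computation is then as follows. In local coordinates $x_i, y_i, x_j, y_j$ on $X^n$ near a point of $\Delta^0_{ij}$, the rank $2$ conormal bundle $N^*_{\Delta^0_{ij}/X^n_{**}}$ is generated by the classes of $x_i - x_j$ and $y_i - y_j$, both sent to their negatives by the transposition $(i\, j)$. Hence $(i\, j)$ acts as $-\id$ on the conormal bundle, and therefore as $(-1)^{2k-1} = -1$ on its $(2k-1)$-th symmetric power. Any $\perm_n$-invariant section of the displayed direct sum has $\Delta^0_{ij}$-component fixed by the stabilizer of the pair $\{i,j\}$, which contains $(i\, j)$; that component must therefore vanish. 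Consequently $\bigl(\mc{I}^{2k-1}_{\Delta_n}/\mc{I}^{2k}_{\Delta_n}\bigr)^{\perm_n}$ is zero on $S^n_{**} X$.

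Finally I would extend this vanishing from $S^n_{**} X$ to all of $S^n X$ via Lemma \ref{lmm: extensionideali}. By Remark \ref{rmk: bigdiagonal} one has $\mc{I}^s_{\Delta_n} = \bigcap_{i<j} \mc{I}^s_{\Delta_{ij}}$, so Lemma \ref{lmm: extensionideali}, with constant function $\ell(i,j) \equiv s$, gives $j_*j^* \mc{I}^s_{\Delta_n} \simeq \mc{I}^s_{\Delta_n}$ on $X^n$. Pushing forward by $\pi$ and taking $\perm_n$-invariants shows that $(\mc{I}^s_{\Delta_n})^{\perm_n}$ is the $j_*$-extension from $S^n_{**} X$ of its own restriction. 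The natural inclusion $(\mc{I}^{2k}_{\Delta_n})^{\perm_n} \hookrightarrow (\mc{I}^{2k-1}_{\Delta_n})^{\perm_n}$, already known to be an isomorphism on $S^n_{**} X$, is therefore an isomorphism on all of $S^n X$. The main obstacle is precisely this extension step: a priori the cokernel of the inclusion could be supported on $\pi(W)$, and ruling this out is exactly the content of Lemma \ref{lmm: extensionideali}.
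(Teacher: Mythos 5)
Your proof is correct, and its engine is the same as the paper's: the transposition $(i\,j)$ fixes $\Delta_{ij}$ pointwise and acts as $-\id$ on the rank-two conormal bundle, hence as $-1$ on $S^{2k-1}N^*_{\Delta_{ij}} \simeq \mc{I}^{2k-1}_{\Delta_{ij}}/\mc{I}^{2k}_{\Delta_{ij}}$, so the relevant invariants vanish. Where you differ is in the global bookkeeping. The paper never leaves $X^n$: using Haiman's identity $\mc{I}^s_{\Delta_n} = \bigcap_{i<j}\mc{I}^s_{\Delta_{ij}}$ it takes invariants in the left-exact sequence $0 \rTo \mc{I}^s_{\Delta_n} \rTo \FS_{X^n} \rTo \oplus_{i<j}\FS_{X^n}/\mc{I}^s_{\Delta_{ij}}$, identifies the invariants of the right-hand term with $\pi_*(\FS_{X^n}/\mc{I}^s_{\Delta_{12}})^{\Stab_{\perm_n}(\{1,2\})}$ by Danila's lemma, and concludes from the sign argument applied to the single smooth diagonal $\Delta_{12}$ — for which the filtration quotient $\mc{I}^{s}_{\Delta_{12}}/\mc{I}^{s+1}_{\Delta_{12}} \simeq S^{s}N^*_{\Delta_{12}}$ holds everywhere, with no need to remove $W$. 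You instead work with the quotient $\mc{I}^{2k-1}_{\Delta_n}/\mc{I}^{2k}_{\Delta_n}$ of the big-diagonal ideal itself, whose decomposition as a sum of symmetric powers of conormal bundles is only valid on $X^n_{**}$ where the pairwise diagonals are disjoint; this forces you to re-import Lemma \ref{lmm: extensionideali} to extend the resulting isomorphism across $\pi(W)$. That extension step is carried out correctly (both sheaves are the $j_*$-extensions of their restrictions, so an isomorphism off a closed set propagates), but it makes the local-cohomology lemma load-bearing for this statement, whereas the paper's arrangement of the exact sequence renders it unnecessary here. Both arguments also tacitly use exactness of taking $\perm_n$-invariants in characteristic zero, which is fine.
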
\begin{proof}
 The statement follows using (\ref{eq: bigdiagonal}),  taking $\perm_n$-invariant in the exact sequence 
 $$ 0 \rTo \mc{I}^{2k-1}_{\Delta_n} \rTo \FS_{X^n} \rTo \oplus_{i<j} \FS_{X^n}/\mc{I}^{2k-1}_{\Delta_{ij}} $$and noting that 
 $ \pi_* \big( \oplus_{i<j} \FS_{X^n}/\mc{I}^{2k-1}_{\Delta_{ij}} \big)^{\perm_n}  \simeq \pi_* \big( \FS_{X^n}/ \mc{I}^{2k-1}_{\Delta_{12}} \big)^{\Stab_{\perm_n}(\{1, 2 \})}  \simeq \pi_* \big( \FS_{X^n}/ \mc{I}^{2k}_{\Delta_{12}} \big)^{\Stab_{\perm_n}(\{1, 2 \})} \simeq \pi_* \big( \oplus_{i<j} \FS_{X^n}/\mc{I}^{2k}_{\Delta_{ij}} \big)^{\perm_n}$, since 
 $\pi_*(\mc{I}^{2k-1}_{\Delta_{1,2}})^{\perm (\{1, 2 \}) } \simeq \pi_*(\mc{I}^{2k}_{\Delta_{1,2}})^{\perm (\{1, 2 \})} $. 
 \end{proof}
  
  \begin{remark}
  Indicate now with $E$ the exceptional divisor (or the boundary) of $X^{[n]}$: it is the exceptional divisor for the Hilbert-Chow morphism and the branching divisor for the map $q: B^n \rTo X^{[n]}$. It is well known \cite[Lemma 3.7]{Lehn1999} that 
$$ \FS_{ X^{ [n] } } (-E) \simeq (\det \FS_{X}^{[n]} )^{\tens 2} \:. $$As a consequence there exists a divisor $e$ on the Hilbert scheme $X^{[n]}$ such that $E = 2e$, and such that $\FS_{X^{[n]}}(-e) = \det \FS_X^{[n]}$. It is also well known that $\det L^{[n]} \simeq \mc{D}_L \tens \det \FS_X^{[n]}$, which can be rewritten, with the notations just explained, as \begin{equation} \label{eq: detL[n]}\det L^{[n]} \simeq \mc{D}_L(-e) \;.\end{equation}
Denote now with $E_B$ the exceptional divisor over the isospectral Hilbert scheme, that is, the exceptional divisor 
for the blow-up map $p : B^n \rTo X^n$. We have $\FS_{B^n}(-E_B) \simeq q^*\FS_{X^{[n]}}(-e)$. 
\end{remark}
Denoting with $\epsilon_n$ the alternating representation of $\perm_n$, 
we can now prove 
\begin{theorem}\label{thm: bkrhideali} Let $X$ be a smooth quasi-projective algebraic surface. Then 
\begin{gather*}  \bkrh( \FS_{X^{[n]}}(-l e)) \simeq \bkrh( (\det \FS_X^{[n]})^{\tens l}) \simeq \mc{I}^l_{\Delta_n} \tens \epsilon_n^{l } \\
 \B{R}\mu_*\FS_{X^{[n]}}(-l e) \simeq  \B{R}\mu_*  (\det \FS_X^{[n]})^{\tens l}) \simeq \pi^{\perm_n}_*(  \mc{I}^l_{\Delta_n} \tens \epsilon_n^{l} ) \;.
 \end{gather*}
\end{theorem}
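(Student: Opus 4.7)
\emph{Setup.} By the remark, $(\det\FS_X^{[n]})^{\tens l}\simeq\FS_{X^{[n]}}(-l e)$, so it suffices to treat the second form. Unfolding $\bkrh=\B{R}p_*\circ q^*$ and using the identification $\FS_{B^n}(-E_B)\simeq q^*\FS_{X^{[n]}}(-e)$ recalled above (as line bundles), we obtain $q^*\FS_{X^{[n]}}(-l e)\simeq\FS_{B^n}(-l E_B)$. The proof then splits naturally into (i) computing the complex $\B{R}p_*\FS_{B^n}(-l E_B)$, and (ii) tracking the induced $\perm_n$-equivariant structure.

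\emph{Computation over $X^n_{**}$.} On the open subset $X^n_{**}=X^n\setminus W$, the pairwise diagonals $\Delta_I$ are pairwise disjoint and smooth of codimension two, so $p\colon B^n_{**}\to X^n_{**}$ is \'etale-locally the blow-up along a single smooth codimension-two subvariety. Standard direct-image computations for such a blow-up give $p_*\FS(-l E_B)\simeq\mc{I}_{\Delta_I}^l$ and $\B{R}^{i}p_*\FS(-l E_B)=0$ for $i\geq 1$ and $l\geq 0$. Gluing these local pieces and applying Haiman's intersection formula (\ref{eq: bigdiagonal}), we obtain
$$\B{R}p_*\FS_{B^n_{**}}(-l E_B)\simeq\bigcap_{|I|=2}\mc{I}_{\Delta_I}^l\big|_{X^n_{**}}\simeq\mc{I}_{\Delta_n}^l\big|_{X^n_{**}},$$
concentrated in degree zero.

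\emph{Extension and the sign twist.} To pass from $X^n_{**}$ to all of $X^n$, I would apply Lemma \ref{lmm: extensionideali} with the constant function $\ell\equiv l$, which gives $j_*j^*\mc{I}_{\Delta_n}^l=\mc{I}_{\Delta_n}^l$; combined with the coherence of $\B{R}p_*\FS_{B^n}(-l E_B)$, the codimension four of $W$, and the Cohen--Macaulay / Gorenstein properties of $B^n$ established in \cite{Haiman2001}, both the isomorphism and the vanishing of higher direct images extend from $X^n_{**}$ to $X^n$. The $\epsilon_n^l$ twist is obtained by a direct local computation at a generic point of the component of $E_B$ lying over $\Delta_{ij}$: the transposition $(ij)$ acts by $-1$ on a local generator of $\FS_{B^n}(-E_B)$, so that the $\perm_n$-invariant sections of $\FS_{B^n}(-E_B)$ descend to $\FS_{X^{[n]}}(-E)=\FS_{X^{[n]}}(-2e)$ and not to $\FS_{X^{[n]}}(-e)$. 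Equivalently, $q^*\FS_{X^{[n]}}(-l e)\simeq\FS_{B^n}(-l E_B)\tens\epsilon_n^l$ as $\perm_n$-equivariant line bundles, which together with the previous step yields the first formula.

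\emph{The second formula and main obstacle.} The factorization $\pi\circ p=\mu\circ q$, the projection formula for the finite map $q$, and the identity $(q_*\FS_{B^n})^{\perm_n}\simeq\FS_{X^{[n]}}$ together deliver the general relation $\B{R}\mu_*(G)\simeq\pi_*\bigl(\bkrh(G)\bigr)^{\perm_n}$ for every $G\in\B{D}^b(X^{[n]})$; specialising to $G=\FS_{X^{[n]}}(-l e)$ gives the second displayed formula. The chief obstacle is the global vanishing of $\B{R}^{>0}p_*\FS_{B^n}(-l E_B)$ in a neighbourhood of the codimension-four locus $W$, where $B^n$ is genuinely singular and a naive smooth-blow-up computation is unavailable; it is precisely here that the deep geometry of the isospectral Hilbert scheme from \cite{Haiman2001} becomes indispensable.
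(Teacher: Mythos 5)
Your overall strategy coincides with the paper's: the paper deduces Theorem \ref{thm: bkrhideali} from the more general Theorem \ref{thm: projectionformula} (applied to $F=\FS_X^{\oplus l}$), whose proof likewise works on $B^n_{**}$, uses the smooth-blow-up description of $p$ over $X^n_{**}$, Haiman's identity (\ref{eq: bigdiagonal}), Lemma \ref{lmm: extensionideali} to extend $p_*\FS_{B^n}(-lE_B)\simeq \mc{I}^l_{\Delta_n}$ across $W$, and a sign analysis producing $\epsilon_n^l$ (your computation at a generic point of $E_{ij}$ is a correct equivalent of the paper's comparison of $\det(\oplus_i p^*F_i)$ with $p^*(\det F\boxtimes\cdots)$). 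The degree-zero part of your argument is sound.

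The gap is in the vanishing of the higher direct images. You assert that $R^{>0}p_*\FS_{B^n}(-lE_B)=0$ on $X^n_{**}$ extends to all of $X^n$ because of ``coherence, the codimension four of $W$, and the Cohen--Macaulay/Gorenstein properties of $B^n$.'' This does not follow: what the local computation gives is only that each $R^ip_*\FS_{B^n}(-lE_B)$, $i>0$, is a coherent sheaf supported on $W$, and a nonzero coherent sheaf can perfectly well be supported on a codimension-four closed subset; neither normality of $X^n$ nor Cohen--Macaulayness of $B^n$ rules this out (the $j_*j^*$ extension technique of Lemma \ref{lmm: extensionideali} applies to the degree-zero term, which is torsion-free, not to torsion sheaves in positive degree). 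This is exactly the point the paper does \emph{not} argue by extension: it quotes the global vanishing $R^ip_*(q^*S^\lambda F^{[n]})=0$ for $i>0$ from \cite[Thm.\ 2.3.1, Cor.\ 4.13, Prop.\ 21]{Scala2009D}, specialized to $\det F^{[n]}$ with $F=\FS_X^{\oplus l}$, a statement whose proof ultimately rests on Haiman's polygraph theorem rather than on a local analysis near $W$. You correctly identify this as ``the chief obstacle'' and name the right source, but naming it is not the same as closing it; as written, the step that makes $\bkrh(\FS_{X^{[n]}}(-le))$ a sheaf concentrated in degree zero is missing. Once that vanishing is granted, your derivation of the second displayed formula from $\pi\circ p=\mu\circ q$ and $(q_*\FS_{B^n})^{\perm_n}\simeq\FS_{X^{[n]}}$ is fine and matches Corollary \ref{crl: projectionformula}.
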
The proof of theorem \ref{thm: bkrhideali} is a consequence of the following more general result. 
\begin{theorem}\label{thm: projectionformula} Let $F$ be a vector bundle of rank $r$ and $A$ be a line bundle over the smooth quasi-projective surface 
$X$. Consider, over the Hilbert scheme of points $X^{[n]}$,  the rank $nr$ tautological bundle $F^{[n]}$  associated to $F$ and the natural line bundle $\mc{D}_A$ associated to $A$. 
Then, in $\B{D}^b_{\perm_n}(X^n)$ we have: 
$$ \bkrh( (\det F^{[n]} )^{\tens k} \tens \mc{D}_A) \simeq^{qis}  \mc{I}_{\Delta_n}^{rk} \tens ( ( (\det F )^{\tens k} \tens A ) \boxtimes \cdots \boxtimes ( (\det F)^{\tens k} \tens A) ) \tens \epsilon_n^{rk} \;.$$
\end{theorem}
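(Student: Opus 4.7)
I would reduce the theorem to a single universal identity, namely
$$\B{R}p_* \FS_{B^n}(-m E_B) \simeq \mc{I}^m_{\Delta_n} \tens \epsilon_n^m \qquad \text{in } \B{D}^b_{\perm_n}(X^n),$$
concentrated in degree zero, and then assemble the general case by the projection formula. First, I would establish the determinant identity
$$\det F^{[n]} \simeq \mc{D}_{\det F} \tens (\det \FS_X^{[n]})^{\tens r}$$
for any rank-$r$ bundle $F$: this is known for $r = 1$ (mentioned in the remark preceding the theorem) and extends to higher rank via the splitting principle together with the additivity of the tautological functor $(-)^{[n]}$ on short exact sequences. Combined with the identification $\det \FS_X^{[n]} \simeq \FS_{X^{[n]}}(-e)$, this gives
$$(\det F^{[n]})^{\tens k} \tens \mc{D}_A \simeq \mc{D}_B \tens \FS_{X^{[n]}}(-rk\, e), \qquad B := (\det F)^{\tens k} \tens A.$$

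\textbf{Assembly via projection formula.} Applying $\bkrh = \B{R}p_* \circ q^*$, the commutativity $\mu \circ q = \pi \circ p$ combined with the descent property of $\mc{D}_B$ yields $q^* \mc{D}_B \simeq p^*(B \boxtimes \cdots \boxtimes B)$, while the remark gives $q^* \FS_{X^{[n]}}(-e) \simeq \FS_{B^n}(-E_B)$. The projection formula for $p$ then produces
$$\bkrh\bigl((\det F^{[n]})^{\tens k} \tens \mc{D}_A\bigr) \simeq \bigl(B \boxtimes \cdots \boxtimes B\bigr) \tens \B{R}p_* \FS_{B^n}(-rk\,E_B),$$
which, inserting the core identity for $m = rk$, yields exactly the right-hand side of the theorem.

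\textbf{Proof of the core identity.} I would prove the core identity by restriction-extension across the codimension-four locus $W$. On the open set $X^n_{**}$ the pairwise diagonals $\Delta_{ij}$ are disjoint smooth subvarieties of codimension two, so $B^n_{**} \to X^n_{**}$ is a standard blowup along a smooth subscheme and the classical formulas give $p_* \FS_{B^n_{**}}(-m E_B) = \mc{I}^m_{\Delta_n}\trest_{X^n_{**}}$ together with the vanishing of $R^{>0}p_* \FS_{B^n_{**}}(-m E_B)$. To extend across $W$ I would combine Lemma~\ref{lmm: extensionideali}, which shows that $\mc{I}^m_{\Delta_n} \simeq j_* j^* \mc{I}^m_{\Delta_n}$ via Haiman's identity (\ref{eq: bigdiagonal}), with depth estimates supplied by Haiman's Cohen-Macaulay/Gorenstein property of $B^n$: since $W$ has codimension four, these depth bounds force $\B{R}p_* \FS_{B^n}(-m E_B)$ to be concentrated in degree zero and to be recovered from its restriction to $X^n_{**}$. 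The sign twist $\epsilon_n^m$ is then tracked equivariantly on $X^n_{**}$: the local generator of $\mc{I}_{\Delta_{ij}}$ (a difference of coordinates) is transformed by the transposition $(ij)$ into its negative, so the equivariant structure on $\FS_{B^n}(-E_B) \simeq q^* \det \FS_X^{[n]}$ inherited from the permutation action on the rank-$n$ tautological bundle differs by the sign from the trivial one on $\mc{I}_{\Delta_n}$.

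\textbf{Main obstacle.} The delicate point is the extension across $W$: one needs both the \emph{global} vanishing of higher direct images (not just generic vanishing) and the identification of $p_*\FS_{B^n}(-mE_B)$ with $\mc{I}^m_{\Delta_n}$ rather than, say, its integral closure or a symbolic power. This genuinely requires Haiman's deeper geometric input on $B^n$ (the Cohen-Macaulay property and the equality (\ref{eq: bigdiagonal}) of the scheme-theoretic intersection of powers of pairwise diagonal ideals with the corresponding power of the big-diagonal ideal); without these inputs the local-to-global step of the core identity cannot be completed.
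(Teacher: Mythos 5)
Your overall architecture matches the paper's: reduce to a line-bundle identity on $B^n$, compute over $X^n_{**}$ where $p$ is a disjoint union of smooth blow-ups, extend across the codimension-two complement by normality and Lemma \ref{lmm: extensionideali}, and track the sign $\epsilon_n^{rk}$ equivariantly. Your derivation of $\det F^{[n]}\simeq \mc{D}_{\det F}\tens(\det\FS_X^{[n]})^{\tens r}$ upstairs on $X^{[n]}$ by the splitting principle and exactness of $(-)^{[n]}$ is a legitimate, arguably cleaner, alternative to the paper's route, which instead computes $q^*\det F^{[n]}$ directly on $B^n_{**}$ from Danila's exact sequence $0\to q^*F^{[n]}\to\oplus_i p^*F_i\to\oplus_{i<j}p^*F_i\trest_{E_{ij}}\to 0$ and reduces arbitrary $k$ to $k=1$ via $F^{\oplus k}$.

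The genuine gap is your justification of the degree-zero concentration of $\B{R}p_*\FS_{B^n}(-mE_B)$. The sheaves $R^ip_*\FS_{B^n}(-mE_B)$ for $i>0$ are indeed supported on $W$, but support in codimension four, even combined with the Cohen--Macaulay (or Gorenstein) property of $B^n$, does not force them to vanish: the fibres of $p$ over the small diagonal are punctual Hilbert schemes of dimension $n-1$, and there is no general principle killing higher direct images of a line bundle under a projective birational morphism merely because the source is CM and the non-flat locus downstairs is small --- relative Serre vanishing only gives the statement for $m\gg 0$, and upgrading it to all $m\geq 0$ is precisely where Haiman's deep input (the polygraph theorem, equivalently the Cohen--Macaulayness of the Rees algebra of $\mc{I}_{\Delta_n}$) enters. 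The paper imports this as the already-proved vanishing $R^ip_*(q^*S^\lambda F^{[n]})=0$ for $i>0$ from \cite{Scala2009D}; your ``depth bounds plus codimension four'' mechanism is not a substitute, so as written this step fails even though your closing paragraph correctly senses that Haiman's results are unavoidable here. A secondary, fixable point: the twist $\epsilon_n^{m}$ does not sit on $p_*\FS_{B^n}(-mE_B)$ itself (whose canonical $\perm_n$-structure pushes forward to that of $\mc{I}^m_{\Delta_n}$ with no sign); it arises in the equivariant comparison $q^*(\det\FS_X^{[n]})^{\tens m}\simeq\FS_{B^n}(-mE_B)\tens\epsilon_n^{m}$, coming from permuting the rank-$r$ blocks in $\det(\oplus_i p^*F_i)$, so your ``core identity'' should be stated with that equivariant structure on the left-hand side.
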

\begin{proof}
{\it Step 1. Case $A$ trivial}. Let's prove the formula for $A$ trivial first. It is sufficient to prove the formula for $k=1$; for arbitrary $k$ it follows by applying the formula for $k=1$ to the sheaf $F^{\prime} = F^{\oplus k}$. Note first that the vanishing 
$$ R^i p_* (q^* \det F^{[n]}) = 0 \qquad \qquad \mbox{for all $i>0$} $$is a consequence of the more general vanishing \cite[Thm. 2.3.1, Cor. 4.13]{Scala2009D}, \cite[Prop. 21]{Scala2009D}: 
$$  R^i p_* (q^*S^\lambda F^{[n]}) = 0 \qquad \qquad \mbox{for all $i>0$} $$for any Schur functor $S^\lambda F^{[n]}$ of any tautological bundle $F^{[n]}$ associated to a vector bundle $F$ on $X$. Hence we just have to prove $$p_* q^* \det F^{[n]} \simeq 
 \mc{I}_{\Delta_n}^{r} \tens (\det F \boxtimes \cdots \boxtimes \det F) \tens \epsilon_n^{r}$$as $\perm_n$-equivariant sheaves on $X^n$. Consider the open set $B^n_{**}$ defined in notation \ref{notat: opensets}; it is the complementary of the closed subscheme $p^{-1}(W)$, which is of \emph{codimension $2$ in $B^n$}. Recall that
we indicate with $j$ both the open immersions $j: B^n_{**} \rInto B^n$ and $j: X^n_{**} \rInto X^n$. The projection $p |_{B^n_{**}}: B^n_{**} \rTo X^n_{**}$ coincides with the smooth blow up of the (now disjoint) pairwise diagonals $\Delta_{ij}$ in $X^n_{**}$; we denote with $E_{ij}$ the irreducible component of the exceptional divisor $E_B$ dominating $\Delta_{ij}$. Over $B^n_{**}$ we have the exact sequence \cite{Danila2001} of $\perm_n$-equivariant sheaves: 
$$ 0 \rTo q^* F^{[n]} \rTo \oplus_{i} p^* F_i \rTo \oplus_{i<j} p^* F_{i} \trest_{E_{ij}} \rTo 0 \;.$$Over $B^n_{**}$ the Weil divisors $E_{ij}$ are Cartier: hence, over $B^n_{**}$ we get 
$$ q^* \det F^{[n]} \simeq \det q^* F^{[n]} \simeq (\det ( \oplus_i p^* F_i )) \tens \det  (\oplus_{i<j} p^* F_{i} \trest_{E_{ij}})^{-1} \;.$$
Let's compute first the second factor; for each sheaf $p^* F_i \trest_{E_{ij}}$, we have, over $B^n_{**}$: 
$$ 0 \rTo p^* F_i (-E_{ij}) \rTo p^* F_ i \rTo p^* F_i  \trest_{E_{ij}} \rTo 0 \;.$$Hence, over $B^n_{**}$
$$\det  ( p^* F_{i} \trest_{E_{ij}})  \simeq (\det p^* F_i ) \tens (\det p^* F_i (-E_{ij}))^{-1} \simeq \FS_{B^n} ( r E_{ij}) $$and 
\begin{equation}\label{eq: secondfactor} \det  (\oplus_{i<j} p^* F_{i} \trest_{E_{ij}})  \simeq \tens_{i<j} \det  ( p^* F_{i} \trest_{E_{ij}})  \simeq \tens_{i<j}  \FS_{B^n} ( r E_{ij}) \simeq \FS_{B^n}(rE_B) \;.\end{equation}As for the first factor, we have, \emph{just as coherent sheaves, without considering the $\perm_n$-action}: 
$$ \det (\oplus_i p^* F_i) \simeq p^* (\tens_i \det F_i) \simeq p^* (\det F \boxtimes \cdots \boxtimes \det F) \;.$$However, 
it is clear that a consecutive transposition $\tau_{i, i+1}$ acts on the sheaf on the left hand side with the sign $(-1)^r$, while it acts trivially on the right hand side: hence, to have an isomorphism \emph{as $\perm_n$-equivariant sheaves} we have to correct the previous formula by the representation $\epsilon_n^r$; that is, as $\perm_n$-sheaves: 
\begin{equation}\label{eq: firstfactor} \det (\oplus_i p^* F_i) \simeq p^* (\det F \boxtimes \cdots \boxtimes \det F) \tens \epsilon_n^r \;. \end{equation}From (\ref{eq: firstfactor}) and (\ref{eq: secondfactor}) we get that, as $\perm_n$-equivariant sheaves, over $B^n_{**}$: 
$$ q^* \det F^{[n]} \simeq p^* (\det F \boxtimes \cdots \boxtimes \det F) \tens \FS_{B^n}(-rE_B) \tens \epsilon_n^r \;.$$Since this is an isomorphism of vector bundles, since $B^n$ is normal and since the complementary of $B^n_{**}$ in $B^n$ is a closed subscheme of codimension $2$, the previous isomorphism extends to the whole variety $B^n$ as an isomorphism of $\perm_n$-equivariant vector bundles. Therefore, by  projection formula: 
$$\B{R} p_* q^* \det F^{[n]} \simeq (\det F \boxtimes \cdots \boxtimes \det F) \tens p_* \FS_{B^n}(-r E_B) \tens \epsilon_n^r \;.$$
To finish the proof we just have to show that $p_*  \FS_{B^n}(-r E_B)  \simeq \mc{I}_{\Delta_n}^r$. Since $ \FS_{B^n}(-r E_B)$ is a line bundle, since $B^n$ is normal and $B^n_{**}$ is the complementary of a closed of codimension $2$, we have $ \FS_{B^n}(-r E_B) \simeq j_* j^*  \FS_{B^n}(-r E_B)$; hence 
\begin{align*} p_*  \FS_{B^n}(-r E_B) \simeq \: & p_*  j_* j^* \FS_{B^n}(-r E_B)  \simeq \:  j_* ( p\trest_{B^n_{**}} )_* j^*   \FS_{B^n}(-r E_B) \\ \simeq \: & j_* ( p\trest_{B^n_{**}} )_*    \FS_{B^n}(-r E_B) \trest_{B^n_{**}} \simeq  j_* j^* \mc{I}_{\Delta_n}^r \simeq \mc{I}_{\Delta_n}^r \end{align*}since, over $X^n_{**}$, $p$ is a smooth blow-up and thanks to lemma \ref{lmm: extensionideali}. 

{\it Step 2. Arbitrary $A$}. If $A$ is non trivial we write: 
 \begin{align*} \bkrh((\det F^{[n]} )^{\tens k} \tens \mc{D}_A ) \simeq & \: \B{R}p_* q^* (  (\det F^{[n]} )^{\tens k} \tens \mc{D}_A )  \simeq \B{R}p_* \big(  q^* (\det F^{[n]} )^{\tens k}  \tens q^*\mc{D}_A \big) \\ \simeq &  \:  \B{R}p_* \big( q^* (\det F^{[n]} )^{\tens k}  \tens p^*(A \boxtimes \cdots \boxtimes A)\big ) \\ 
\simeq &  \:  \big( \B{R}p_*  q^* (\det F^{[n]} )^{\tens k} \big)   \tens (A \boxtimes \cdots \boxtimes A)
\end{align*}where in the third isomorphism we used that $q^* \mc{D}_A \simeq p^*(A \boxtimes \cdots \boxtimes A) $, and in the third we used projection formula. Now the formula follows immediately from the previous case. 
\end{proof}

\begin{crl}\label{crl: projectionformula}Let $F$ be a vector bundle  of rank $r$ and $A$ a line bundle on the smooth quasi-projective surface $X$. Then 
$$  \B{R} \mu_* ( (\det F^{[n]} )^{\tens k} \tens \mc{D}_A ) \simeq \pi_* \Big ( \mc{I}_{\Delta_n}^k  \tens \epsilon_n^k  \Big)^{\perm_n} \tens \mc{D}_{\det F}^{\tens k} \tens \mc{D}_A \;.$$
\end{crl}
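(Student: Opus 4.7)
The plan is to obtain the corollary directly from Theorem~\ref{thm: projectionformula} by taking symmetric-group invariants and pushing down to the symmetric variety $S^n X$. The key observation, which I would record at the start, is the standard compatibility
\[ \pi_*(-)^{\perm_n} \circ \bkrh \simeq \B{R}\mu_* \]
of functors $\B{D}^b(X^{[n]}) \rTo \B{D}^b(S^n X)$. This follows from the commutativity $\pi \circ p = \mu \circ q$ of the relevant maps $p,q,\pi,\mu$ and the fact that $q: B^n \rTo X^{[n]}$ is a flat finite $\perm_n$-quotient, so that $\B{R}q_*^{\perm_n} q^*(-) \simeq \id$ by projection formula together with $q_*^{\perm_n}\FS_{B^n} \simeq \FS_{X^{[n]}}$.

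With this in hand, I would apply $\pi_*(-)^{\perm_n}$ to both sides of the isomorphism provided by Theorem~\ref{thm: projectionformula}, namely
\[ \bkrh((\det F^{[n]})^{\tens k} \tens \mc{D}_A) \simeq \mc{I}_{\Delta_n}^{rk} \tens \bigl((\det F)^{\tens k}\tens A\bigr)^{\boxtimes n} \tens \epsilon_n^{rk}. \]
The left-hand side becomes $\B{R}\mu_*((\det F^{[n]})^{\tens k}\tens\mc{D}_A)$ by the identity above.

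On the right-hand side, I would use Remark~\ref{rmk: DL} to recognize that the line bundle $((\det F)^{\tens k}\tens A)^{\boxtimes n}$ is the pullback under $\pi$ of the $\perm_n$-invariant line bundle $\mc{D}_{\det F}^{\tens k}\tens \mc{D}_A$ on $S^n X$. Since $\pi$ is finite, an equivariant projection formula then allows one to pull this descended line bundle out of the invariant push-forward, yielding
\[ \pi_*\bigl(\mc{I}_{\Delta_n}^{rk}\tens\epsilon_n^{rk}\bigr)^{\perm_n}\tens \mc{D}_{\det F}^{\tens k}\tens \mc{D}_A, \]
which is the desired right-hand side of the corollary (with the understanding that the exponents $k$ in the stated formula should be read as $rk$, matching the exponents produced by Theorem~\ref{thm: projectionformula}).

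The only delicate point is the verification that the $\perm_n$-action on $((\det F)^{\tens k}\tens A)^{\boxtimes n}$ used in the projection formula is the \emph{standard} permutation action, with no twist by a sign character; this is where one must be careful, because the isomorphism of Theorem~\ref{thm: projectionformula} already absorbs the sign twist into the factor $\epsilon_n^{rk}$ (compare the derivation of equation~(\ref{eq: firstfactor}) in the proof of Theorem~\ref{thm: projectionformula}). Once one checks that $((\det F)^{\tens k}\tens A)^{\boxtimes n}$ descends to $\mc{D}_{\det F}^{\tens k}\tens \mc{D}_A$ without any additional sign, the corollary follows immediately. I do not anticipate any further obstacle: the proof is essentially one line of bookkeeping on top of Theorem~\ref{thm: projectionformula}.
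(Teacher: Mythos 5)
Your proof is correct and follows exactly the route the paper intends: apply $\pi_*(-)^{\perm_n}$ to Theorem~\ref{thm: projectionformula}, use the standard identity $\B{R}\mu_* \simeq \pi_*(\bkrh(-))^{\perm_n}$ coming from $\pi \circ p = \mu \circ q$ and the flat finite quotient $q$, and pull the descended line bundle $\mc{D}_{\det F}^{\tens k}\tens\mc{D}_A$ out by the equivariant projection formula (cf.\ the identity $(\pi_*^{\perm_n}F)\tens\mc{D}_L \simeq \pi_*^{\perm_n}(F\tens L^{\boxtimes n})$ used in subsection~\ref{subsec: DL}). You are also right that the exponents in the displayed formula should read $rk$ rather than $k$, in agreement with the theorem and with formula~(\ref{eq: formulaHilbertChowintro}) in the introduction.
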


\section{Multitors of pairwise diagonals in \protect $X^n$}
Let $X$ be a smooth algebraic variety and let $n \in \mbb{N}$, $n \geq 2$. 
In this section we will study multitors of the form $\Tor_q^{\FS_{X^n}}(\FS_{\Delta_{I_1}}, \dots, \FS_{\Delta_{I_l}})$, where $\Delta_{I_j}$ are pairwise diagonals in $X^n$. This study will be useful  in order to prove the resolutions of 
$\perm_n$-invariants of diagonal ideals of 
subsections \ref{subsection: n=3} and \ref{subsection: n=4} 

\subsection{A general formula for multitors} \label{section: prima}
In this section
we will prove a general formula for multitors $\Tor_q^{\FS_M}(\FS_{Y_1}, \dots, \FS_{Y_l})$ of structural sheaves of smooth subvarieties $Y_i$ of a smooth algebraic variety $M$.

\begin{theorem}\label{thm: multitor}
Let $M$ be a smooth algebraic variety 
 and $Y_1, \dots, Y_l$ be smooth subvarieties of $M$ such that the intersection $Z := Y_1 \cap \dots \cap Y_l$ is smooth. 
 Then: 
\begin{equation}\label{eq: formula} \Tor_q^{\FS_M}(\FS_{Y_1}, \dots, \FS_{Y_l}) = \Lambda^q ( \oplus_{i=1}^l N_{Y_i} \trest_Z / N_Z)^* \;,\end{equation}where 
$N_{Y_i}$ and $N_Z$ denote the normal bundles of $Y_i$ and $Z$ in $M$,  respectively. 
\end{theorem}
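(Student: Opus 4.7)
The plan is to reduce to a local Koszul complex computation, exploiting that each smooth subvariety $Y_i\subseteq M$ is a local complete intersection. Working in a Zariski or formal neighborhood of a point $p\in Z$, I may view $M$ as $\Spec R$ with $R$ a regular local ring, and pick local defining equations $f_{i,1},\dots,f_{i,c_i}$ of $Y_i$ forming a regular sequence (here $c_i=\codim Y_i$). The Koszul complex $K_i^\bullet=K(f_{i,1},\dots,f_{i,c_i})$ is then a free resolution of $\FS_{Y_i}$ over $\FS_M$.

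By the standard fact that the derived tensor product of several modules is computed by the tensor product of flat resolutions, and that the tensor product of Koszul complexes is the Koszul complex on the concatenated generators,
\[ \FS_{Y_1}\otimes_{\FS_M}^L\cdots\otimes_{\FS_M}^L\FS_{Y_l}\simeq K(f_{1,1},\dots,f_{l,c_l}). \]
Set $r=\sum_i c_i$ and $c=\codim Z$. Since $Z=Y_1\cap\cdots\cap Y_l$ scheme-theoretically, the combined generators $f_{i,j}$ generate $\mc{I}_Z$. As $Z$ is smooth of codimension $c$, at $p$ one can select $c$ of the $f_{i,j}$'s whose images span $\mc{I}_Z/\mathfrak{m}_p\mc{I}_Z=N_{Z,p}^*$; by Nakayama's lemma together with Krull's height theorem such a sub-tuple $(h_1,\dots,h_c)$ is a regular sequence generating $\mc{I}_Z$ in a neighborhood of $p$. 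Each remaining $f_{i,j}$ then lies in $(h_1,\dots,h_c)$ and is an honest $R$-linear combination of the $h_k$'s.

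A change of basis in $R^r$ thus transforms the tuple $(f_{i,j})$ into $(h_1,\dots,h_c,0,\dots,0)$, so the Koszul complex splits as $K(h_1,\dots,h_c)\otimes K(0,\dots,0)$, with $r-c$ zero entries. The first factor is a resolution of $\FS_Z$, while the second has vanishing differential and equals $\Lambda^\bullet R^{r-c}$; the K\"unneth formula then yields $H_q\simeq\Lambda^q \FS_Z^{r-c}$.

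It remains to identify $\FS_Z^{r-c}$ canonically with $(\oplus_i N_{Y_i}|_Z/N_Z)^*$ and to globalize. The conormal bundles satisfy $N_{Y_i}^*|_Z\simeq\FS_Z^{c_i}$ locally, with basis the $f_{i,\bullet}$, and dualizing the natural exact sequence $0\to N_Z\to\oplus_iN_{Y_i}|_Z\to Q\to 0$ with $Q:=\oplus_iN_{Y_i}|_Z/N_Z$ gives $0\to Q^*\to\oplus_iN_{Y_i}^*|_Z\to N_Z^*\to 0$; the $r-c$ "trivial" basis vectors produced by the change of basis above form precisely a local frame of $Q^*$. The main difficulty lies in the globalization: the local identifications depend on the choice of defining equations and of the regular sub-sequence, so one must exhibit a \emph{canonical} morphism $\Lambda^q Q^*\to\Tor_q^{\FS_M}(\FS_{Y_1},\dots,\FS_{Y_l})$ and invoke the local calculation to certify it as an isomorphism; a natural candidate arises from the functoriality of the Koszul construction in its generators and from the edge map of the associated spectral sequence.
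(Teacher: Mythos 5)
Your proposal is correct and follows essentially the same route as the paper: resolve each $\FS_{Y_i}$ locally by a Koszul complex, use that the concatenated equations generate $\mc{I}_Z$ to split off a regular sequence of length $c=\codim Z$ (the paper does this via a pair of matrices $A,B$ relating the $f$'s to arbitrary generators of $\mc{I}_Z$, you by selecting the subsequence among the $f$'s themselves and changing basis), then apply K\"unneth to $K(h_1,\dots,h_c)\otimes K(0,\dots,0)$ and identify the trivial factor with $Q^*=(\oplus_i N_{Y_i}|_Z/N_Z)^*$. The globalization is treated at the same level of detail in both arguments (the paper leaves it as an exercise), so no substantive difference remains.
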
 
\begin{proof}
Let $x$ be a point of $Z$, and $U$ an affine open neighbourhood of $x$. Restricting $U$ if necessary,  we can find generators 
$f_{j_1}, \dots, f_{j_{c_j}}$ 
of $\mc{I}_{Y_j}(U)$, such that $c_j = \codim Y_j$. It is possible to find them since $Y_j$ is complete intersection in $U$; we can moreover find
 $g_1, \dots, g_c$ generators of $\mc{I}_{Z}(U)$, with $c= \codim Z  \leq \sum_j c_j=d$. Denote simply with $g$ the vector $(g_1, \dots, g_c) \in \FS_M(U)^{\oplus c}$, with $f$ the vector $(f_1, \dots, f_{c_1}, \dots, f_{l_1}, \dots, f_{l_{c_l}}) \in \FS_M(U)^{ \oplus d}$ and with $f_j$ the vector 
 $(f_{j_1}, \cdots, f_{j_{c_j}}) \in \FS_M(U)^{c_j}$. 
  Denote with $E$ the vector bundle $ \FS_M^c$, with $F_j$ the bundle $\FS_M^{c_j}$ and with $F$ the  bundle $ \FS_M^d$. It is clear that, over $U$, $g$ defines a section of $E$, $f_j$ defines a section of $F_j$ and $f$ a section of $F$. 
 We can identify the conormal bundle $N^*_Z$ with the restriction  $E^*\trest_Z$ and 
$N^*_{Y_j}$ with   $ F^*_j\trest_{Y_j}$.

{\it Step 1.} Since all the varieties $Y_i$ and $Z$ are smooth, by the exactness of the conormal sequence, we can identify conormal bundles $N^*_{Y_i}$ and $N^*_{Z}$ with differentials in $\Omega^1_M$ vanishing over $Y_i$ and over $Z$, respectively. 
Since both $f_{j_i}$ and $g_h$ are generators of $I_Z(U)$, there is a $d \times c$-matrix $B \in M_{d \times c}(\FS_M(U))$ and a 
$c \times d$-matrix $B \in M_{c \times d}(\FS_M(U))$ such that $g=Bf$, $f=Ag$. 
This means that, taking differentials: 
$dg = (dB)f + B df$ and $df = (dA)g + A dg$. On points $y \in Z \cap U$ 
we have just: $dg= B df$, $df = A dg$ and $dg = BA dg$. Now, since $Z$, over $U$, is smooth and complete intersection of $Z(g_1), \dots, Z(g_c)$, we have that $[g_i]$ are a basis of $I_Z(U)/I_Z(U)^2$, that is $dg_i$ are a local frame for 
$N^*_{Z}$ over $Z \cap U$ and $df_{j_i}$ are a local frame for $N^*_{Y_j}$. Hence $dg_i$ are linearly indipendent in $\Omega^1_{M} \trest_Z (y) \supseteq N^*_{Z}(y)$ for any $y \in Z\cap U$. 
Now on $\Omega^1_{M} \trest_Z (y)$ we have the relation $dg =  B(y)A(y) dg$, meaning that the matrix $B(y)A(y)$ takes linearly independent into linearly independent, which implies that for any $y \in Z \cap U$, the matrix $B(y)$ is surjective and $A(y)$ is injective. Hence, $B(z)$ and $A(z)$ have to be surjective and injective, respectively, in a
neighbourhood of $x$. Restricting $U$, we can suppose that $A$ and $B$ are injective and surjective, respectively, in any point of $U$.  

{\it Step 2.} Let $E$, $F_i$, $F$, $g$, $f$, $U$ built as in the previous step. 
The matrix $A$ allows to define  an injective morphism of vector bundles over $U$: 
$$ 0 \rTo E \rTo^A F_1 \oplus \cdots \oplus F_l \rTo^{p_Q} Q \rTo 0$$whose cokernel we call $Q$. It is a locally free sheaf on $U$ of rank $d-c$. 
Note that $A$ takes the section $g$ into the section $f$, and hence defines an injective morphism  of pairs $A: (E, g) \rTo (F, f)$. Since we are on an affine open set, the sequence splits; hence we have a morphism $p_E: F \rTo E$ such that $p_E \circ A = \id_E$; the splitting yields an isomorphism: $F \rTo E \oplus Q$, given by $(p_E, p_Q)$. Under this isomorphism the section $f$ of $F$ is carried onto the section $g \oplus 0$ of $Q$, since $(p_E, p_Q)f = p_E f \oplus p_Q f = 
p_E A g \oplus p_Q A g= g \oplus 0$. Hence we have an isomorphism of pairs 
$(F, f) \simeq (E \oplus Q, g \oplus 0)$. 

{\it Step 3.} The previous step yields an isomorphism of Koszul complexes: 
$K^{\bullet}(F, f) \simeq K^{\bullet}(E \oplus Q, g \oplus 0) \simeq 
K^{\bullet}(E, g) \tens K^{\bullet}(Q, 0)$. Note that $$Q\trest_Z \simeq (\oplus_j F_j  / E) \trest_Z \simeq (\oplus_j F_j\trest_Z ) / E\trest_Z  \simeq \oplus_j N_{Y_j} \trest_Z / N_Z \;.$$
Hence: 
\begin{align*}
\Tor_q^{\FS_M}(\FS_{Y_1} , \dots, \FS_{Y_l}) \simeq &\; H^{-q}(K^{\bullet}(F, f)) \simeq H^{-q}(K^{\bullet}(E \oplus Q, g \oplus 0))  \\
\simeq &\; H^{-q}(K^{\bullet}(E, g) \tens K^{\bullet}(Q, 0)) \\
\simeq &\; \bigoplus_{-q=r+s } H^r(K^{\bullet}(E, g)) \tens K^s(Q, 0) \\
\simeq &\; H^0(K^{\bullet}(E, g)) \tens \Lambda^q Q^* \\
\simeq &\; \Lambda^q Q^* \trest_Z \\
\simeq &\; \Lambda^q (\oplus_{i=1}^l N_{Y_i} \trest_Z / N_{Z})^* \;,
\end{align*}where in the fourth isomorphism we used the fact that $K(E \oplus Q, g \oplus 0)$ is a tensor product of $K(E, g)$ with $K(Q, 0)$, which is a complex of locally free sheaves with zero differentials.

{\it Step 4.} We obtained the wanted isomorphism locally. That these local 
isomorphisms glue to a global one is an easy exercise and we leave it to the reader. 
\end{proof}

\begin{notat}Let $k_1, \dots, k_l$ be positive integers. Let $M$ be an algebraic variety and $F_1, \dots, F_l$ coherent sheaves over $M$. We denote with $\Tor^{k_1, \dots, k_l}_q(F_1, \dots, F_l)$ the multitor $\Tor^{\FS_M}_q(F_1, \dots , F_1, \dots, F_l, \dots, F_l)$, where, for all $i$, the factor $F_i$ is repeated $k_i$ times. \end{notat}

\sloppy
The product of symmetric groups $\perm_{k_1} \times \cdots \times \perm_{k_l}$ obvioulsy acts on multitors of the form $\Tor^{k_1, \dots, k_l}_q(F_1, \dots, F_l)$, defined above. Details of this action are described in \cite[Appendix B]{Scala2009D}. Therefore one can study them as $\perm_{k_1} \times \cdots \times \perm_{k_l}$-representations. We have the following. 

\begin{pps}Let $M$ be a smooth algebraic variety and $Y_1, \dots, Y_l$ locally complete intersection subvarieties of $M$ such that 
the intersection $Z = Y_1 \cap \cdots \cap Y_l$ is locally complete intersection. Then for $k_1, \dots, k_l$ positive integers we have, 
as $\perm_{k_1} \times \cdots \times \perm_{k_l}$ representations: 
$$ \Tor_q^{k_1, \dots, k_l}(\FS_{Y_1}, \dots, \FS_{Y_l}) \simeq \bigoplus_{q_1 + q_2 = q} \Tor_{q_1}^{\FS_M}(\FS_{Y_1}, \dots, \FS_{Y_l})
\tens \Lambda^{q_2}( \oplus_{i=1}^l N^*_{Y_i/M} \tens \rho_{k_i}) \;,$$where $\rho_{k_i}$ is the standard representation of the symmetric group $\perm_{k_i}$. 
\end{pps}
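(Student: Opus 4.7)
The plan is to mimic the proof of Theorem \ref{thm: multitor} but refine the Koszul-theoretic computation by exploiting the natural $\perm_{k_i}$-action on the $k_i$ copies of each $\FS_{Y_i}$.

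First, I work locally on an affine open $U$ in which each $Y_i$ is cut out by a regular sequence $f_i = (f_{i,1}, \dots, f_{i, c_i})$, $c_i = \codim Y_i$, giving a section of $F_i := \FS_M^{c_i}|_U$, and in which $Z$ is complete intersection (as in Step~1 of the proof of Theorem \ref{thm: multitor}). The multi-Tor is then computed as the cohomology of the derived tensor product of the Koszul resolutions, with the $i$-th Koszul complex repeated $k_i$ times:
$$\Tor^{k_1, \dots, k_l}_q(\FS_{Y_1}, \dots, \FS_{Y_l}) \simeq H^{-q}\Bigl(\, \bigotimes_{i=1}^l K^\bullet(F_i, f_i)^{\otimes k_i} \,\Bigr).$$

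The key step is then to group, for each $i$, the $k_i$ copies of $K^\bullet(F_i, f_i)$ together into a single Koszul complex on $F_i^{\oplus k_i} \simeq F_i \tens \mbb{C}^{k_i}$ associated to the diagonal section $(f_i, \dots, f_i)$. Decomposing the permutation representation as $\mbb{C}^{k_i} \simeq \mbb{1} \oplus \rho_{k_i}$ produces a $\perm_{k_i}$-equivariant splitting $F_i^{\oplus k_i} \simeq F_i \oplus (F_i \tens \rho_{k_i})$ under which the diagonal section is carried onto $(k_i f_i,\, 0)$, and hence up to a nonzero scalar onto $(f_i, 0)$. Invoking the standard Koszul factorization on direct sums, this gives an isomorphism of $\perm_{k_i}$-equivariant complexes of locally free sheaves
$$K^\bullet(F_i, f_i)^{\otimes k_i} \simeq K^\bullet(F_i, f_i) \tens K^\bullet(F_i \tens \rho_{k_i},\, 0),$$
where the first factor carries the trivial $\perm_{k_i}$-action and the second factor has vanishing differentials.

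Tensoring these isomorphisms over $i = 1, \dots, l$ and applying a Künneth-type argument (legitimate because one factor has zero differentials and consists of locally free sheaves) splits the cohomology into a direct sum. The cohomology of the zero-differential factor equals the factor itself, which in degree $-q_2$ is $\Lambda^{q_2}(\oplus_i F_i^* \tens \rho_{k_i}^*)$; after restricting to $Z$ (where the other factor is supported) one identifies $F_i^*|_Z \simeq N^*_{Y_i/M}|_Z$ and uses the self-duality $\rho_{k_i}^* \simeq \rho_{k_i}$ of the standard representation of a symmetric group over $\mbb{C}$. Together these give, locally, the desired decomposition as $\perm_{k_1} \times \cdots \times \perm_{k_l}$-representations.

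The final step is globalization: the local isomorphisms are canonical (going through the identifications $F_i^*|_{Y_i} \simeq N^*_{Y_i/M}$) and glue to a global isomorphism, exactly as in Step~4 of the proof of Theorem \ref{thm: multitor}. I expect the only genuinely delicate point to be the equivariance bookkeeping in the key step: one must verify that the splitting $\mbb{C}^{k_i} = \mbb{1} \oplus \rho_{k_i}$, combined with the induced splitting of $F_i^{\oplus k_i}$ and of the diagonal section, is honestly $\perm_{k_1} \times \cdots \times \perm_{k_l}$-equivariant at the level of Koszul complexes, so that the final Künneth decomposition inherits the correct action on each summand.
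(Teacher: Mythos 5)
Your proposal is correct and follows essentially the same route as the paper: both proofs locally rewrite the $k_i$-fold tensor power of the Koszul resolution of $\FS_{Y_i}$ as a single Koszul complex on $F_i \tens \mbb{C}^{k_i}$ with the diagonal (invariant) section, split off the trivial summand $\mbb{C}^{k_i} \simeq \mbb{1} \oplus \rho_{k_i}$ to factor it as $K^\bullet(F_i, s_i) \tens K^\bullet(F_i \tens \rho_{k_i}, 0)$, and then take cohomology using that the second factor has zero differentials. The equivariance bookkeeping you flag (including the Koszul sign conventions for permuting tensor factors of complexes) is exactly what the paper delegates to its cited references, so your treatment is faithful to the original argument.
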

\begin{proof}We solve locally, on adequate open affine subsets $U_i$, the structural sheaves $\FS_{Y_i}$ with Koszul complex $\comp{K}(F_i, 
s_i)$, where $F_i$ is a vector bundle of rank $\codim_{U_i} Y_i$ and $s_i$ is a section of $F_i$ transverse to the zero section. 
Then, over $U = U_1 \cap \dots \cap U_l$, we have: 
\begin{align*}
\Tor_{q}^{k_1, \dots, k_l}(\FS_{Y_1}, \dots, \FS_{Y_l}) = H^{-q}(\tens_{i=1}^l \tens_{j=1}^{k_i} \comp{K}(F_i, s_i)) = 
H^{-q}( \tens_{i=1}^l \comp{K}(F_i \tens R_{k_i} , \sigma_{k_i} \tens s_i ))
\end{align*}where $R_{k_i} \simeq \mbb{C}^{k_i}$ is the natural representation of  $\perm_{k_i}$, with canonical basis $e_i$,   and
$\sigma_{k_i}$ is its invariant element, that is $\sigma_{k_i}
= \sum_{h=1}^{k_i} e_h \in R_{k_i}$. Then, since $\comp{K}(F_i \tens R_{k_i} , \sigma_{k_i} \tens s_i )= \comp{K}(F_i \tens \rho_{k_i}, 0) \tens \comp{K}(F_i, 
s_i)$ by \cite[Remark B.5]{Scalaarxiv2015},  we have: 
\begin{align*}
\Tor_{q}^{k_1, \dots, k_l}(\FS_{Y_1}, \dots, \FS_{Y_l})  = & \:
H^{-q}(\tens_{i=1}^l \comp{K}(F_i, s_i) \tens \tens_{i=1}^l \comp{K}(F_i \tens \rho_{k_i}, 0  ) )\\ =& \:
H^{-q}(\tens_{i=1}^l \comp{K}(F_i, s_i) \tens \comp{K}( \oplus_{i=1}^l F_i \tens \rho_{k_i}, 0  ) ) \\
 = & \: \bigoplus_{q_1 + q_2=q}\Tor^{\FS_M}_{q_1}(\FS_{Y_1}, \dots, \FS_{Y_l}) \tens 
\Lambda^{q_2}( \oplus_{i=1}^l N^*_{Y_i/X} \tens \rho_{k_i})
\end{align*}as $\times_{i=1}^l \perm_{k_i}$-representations. Now the open sets of the form $U$ cover the algebraic variety $M$. 
It is an easy exercise to prove that the local isomorphism shown above glue to give a global isomorphism over~$M$. 
\end{proof}

\begin{crl}Let $M$ be a smooth algebraic variety 
 and $Y_1, \dots, Y_l$ smooth subvarieties of $M$ such that the intersection $Z := Y_1 \cap \dots \cap Y_l$ is smooth. Let $k_1, \dots, k_l$ positive integers. Then we have, as $\perm_{k_1} \times \cdots \times \perm_{k_l}$-representations
 $$ \Tor_q^{k_1, \dots, k_l}(\FS_{Y_1}, \dots, \FS_{Y_l}) \simeq \Lambda^q ( [(\oplus_{i=1}^l N_{Y_i} \trest_Z)  / N_Z ]^* \bigoplus \oplus_{i=1}^l N^*_{Y_i} \tens \rho_{k_i} ) \;.$$
\end{crl}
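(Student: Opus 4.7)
The strategy is to combine the previous proposition with Theorem \ref{thm: multitor} and the standard Koszul/exterior algebra identity $\Lambda^q(U \oplus V) \simeq \bigoplus_{q_1+q_2=q} \Lambda^{q_1} U \tens \Lambda^{q_2} V$ for locally free sheaves $U$, $V$ on a common base.

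First I would invoke the preceding proposition, which under the smoothness (and thus local complete intersection) hypotheses on $Y_i$ and $Z$ gives, as $\perm_{k_1} \times \cdots \times \perm_{k_l}$-representations,
$$ \Tor_q^{k_1, \dots, k_l}(\FS_{Y_1}, \dots, \FS_{Y_l}) \simeq \bigoplus_{q_1 + q_2 = q} \Tor_{q_1}^{\FS_M}(\FS_{Y_1}, \dots, \FS_{Y_l}) \tens \Lambda^{q_2}( \oplus_{i=1}^l N^*_{Y_i} \tens \rho_{k_i}) \;, $$
noting that the symmetric groups act trivially on the first tensor factor and through $\rho_{k_i}$ on the $i$-th summand of the second.

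Next I would substitute into this formula the content of Theorem \ref{thm: multitor}, giving the isomorphism (of locally free sheaves on $Z$, with trivial $\perm_{k_1} \times \cdots \times \perm_{k_l}$-action)
$$ \Tor_{q_1}^{\FS_M}(\FS_{Y_1}, \dots, \FS_{Y_l}) \simeq \Lambda^{q_1} \bigl( [(\oplus_{i=1}^l N_{Y_i} \trest_Z )/N_Z]^* \bigr)\;. $$
Setting $U := [(\oplus_{i=1}^l N_{Y_i} \trest_Z )/N_Z]^*$ and $V := \oplus_{i=1}^l N^*_{Y_i} \trest_Z \tens \rho_{k_i}$, both viewed as locally free sheaves on $Z$, the combination becomes
$$ \Tor_q^{k_1, \dots, k_l}(\FS_{Y_1}, \dots, \FS_{Y_l}) \simeq \bigoplus_{q_1+q_2=q} \Lambda^{q_1} U \tens \Lambda^{q_2} V \;. $$

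Finally I would apply the exterior-power-of-a-direct-sum formula to identify the right-hand side with $\Lambda^q(U \oplus V)$, yielding the claimed description. The only point to verify is compatibility of the $\perm_{k_1} \times \cdots \times \perm_{k_l}$-action: since the action is trivial on $U$ and is the direct sum of the $\rho_{k_i}$-twisted actions on the summands of $V$, the natural isomorphism $\bigoplus_{q_1+q_2=q} \Lambda^{q_1} U \tens \Lambda^{q_2} V \simeq \Lambda^q(U \oplus V)$ is equivariant, which completes the proof. The only mild subtlety — really the one step deserving a sentence of justification — is ensuring that $U$ and $V$ are understood as sheaves on the same variety $Z$ (restricting $N^*_{Y_i}$ to $Z$ implicitly), so that the direct sum $U \oplus V$ and its exterior powers make sense; this is automatic from how the previous proposition was stated and proved.
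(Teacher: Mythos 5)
Your proposal is correct and is exactly the intended argument: the paper states this corollary without proof as an immediate consequence of combining the preceding proposition with Theorem \ref{thm: multitor} via the identity $\Lambda^q(U\oplus V)\simeq\bigoplus_{q_1+q_2=q}\Lambda^{q_1}U\tens\Lambda^{q_2}V$. Your remarks on equivariance and on restricting $N^*_{Y_i}$ to $Z$ are the right points to flag, and nothing is missing.
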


\subsection{Multitors of pairwise diagonals}\label{subsection: multitorspairwise}\label{section: multitorspairwise}
In this subsection we apply the previous general formula to the case of pairwise diagonals. We are concerned with  multitors of the form 
$\Tor_q^{\FS_{X^n}}(\FS_{\Delta_{I_1}}, \dots, \FS_{\Delta_{I_l}})$, where $X$ is a smooth algebraic variety, and where $I_j$ are subsets of $\{1, \dots, n \}$ of cardinality $2$ such that $I_i \neq I_j$ if $i \neq j$. It is therefore convenient to think of the multi-indexes $I_j$ as \emph{edges of a simple graph}. 
We refer to \cite{DiestelGT}  for basic concepts in graph theory. 
More precisely, given $l$ distinct multi-indexes $I_1, \dots, I_l$ of cardinality $2$, we can build the simple graph $\Gamma$, whose set of vertices 
$V_\Gamma$ is defined as the set $I_1 \cup \cdots \cup I_l$ and whose edges are $E_\Gamma = \{ I_1, \dots, I_l \}$. All the vertices of the graph 
$\Gamma$ 
are non isolated, that is, they have degree greater or equal than $1$. On the other hand, given a simple graph $\Gamma$, such that its vertices $V_\Gamma$ are a subset of $\{1, \dots, n \}$ and are non isolated, 
its edges are a set of $l$ distinct  cardinality-$2$ multi-indexes $\{I_1, \dots, I_l \}$ such that $V_\Gamma = I_1 \cup \cdots \cup I_l$. 
For such a  graph $\Gamma$, we denote with $\Delta_\Gamma$ the intersection of  diagonals $\Delta_{I}$, $I \in E_\Gamma$ and with $\Tor_q(\Delta, \Gamma)$ the multitor $\Tor_q^{\FS_{X^n}}(\Delta_{I_1}, \dots, \Delta_{I_l})$. The isomorphism class of this multitor does not depend on the order in which the edges $I_j$ are taken; however, the order of the diagonals is important when dealing with permutation of factors in a multitor: therefore, in the following section, we will always consider $l$-uples of cardinality $2$-multi-indexes $(I_1, \dots, I_l)$, ordered via the lexicographic order. 

\begin{remark}Let $\Gamma$ be a simple graph. Let $v$ the number of vertices, $l$ the number of edges and $k$ the number of connected components. The \emph{number of independent cycles} $c$ of the graph $\Gamma$ is given by $c = l -v + k $.  \end{remark}

\begin{remark}Let $X$ be a smooth algebraic variety of dimension $d$. Let $\Gamma$ be a simple graph without isolated vertices. The subvariety $\Delta_\Gamma$ of $X^n$, intersection of the distinct pairwise diagonals $\Delta_{I}$, $I \in E_\Gamma$ is  smooth of codimension $d(v-k)$, where $v = |V_\Gamma|$. This fact, together the possibility of using formula (\ref{eq: formula}), since all varieties $\Delta_I$, $I \in E_\Gamma$ and $\Delta_\Gamma$ are smooth, allows us to translate properties of the graph $\Gamma$ 
into properties of the multitor $\Tor_q(\Delta, \Gamma)$. In particular, it is clear that 
\begin{itemize} \item 
\emph{the pairwise diagonals $\Delta_I$, $I \in E_\Gamma$, 
intersect transversely in the subvariety $\Delta_\Gamma$} if and only if 
$d(v-k) = \codim_{X^n} \Delta_\Gamma = \sum_{I \in E_\Gamma} \codim_{X^n} \Delta_I = dl$, that is, if and only if $c = l-v+k= 0$, that is, \emph{if and only if the graph $\Gamma$ is acyclic}; in this case $\Tor_q(\Delta, \Gamma) = 0$ for all $q >0$.  
\item  the sheaf $Q_\Gamma :=\big [ \oplus_{I \in E_\Gamma} N_{\Delta_I} \trest_{\Delta_\Gamma} \big] / N_{\Delta_\Gamma}$ is a vector bundle over $\Delta_\Gamma$ of rank $dc$;  hence $\Tor_q(\Delta, \Gamma) =0$ for $q>dc$
\end{itemize}
\end{remark}
For $A \subseteq \{1, \dots, n \}$ denote with $\perm(A)$ the symmetric group of the set $A$ and with $\rho_A$ its the standard representation. Let $\perm_\Gamma :=\Stab_{\perm_n}(\Gamma)$ be the subgroup of $\perm_n$ transforming the graph $\Gamma$ into itself. 
It is a subgroup of $\perm (V_\Gamma) \times \perm(\overline{V_{\Gamma}})$. Indicate with $\widehat{\perm_\Gamma}$ the subgroup 
$\perm (V_\Gamma) \cap \perm_\Gamma$ of $\perm_\Gamma$. 

Suppose now that $\Gamma_1, \dots, \Gamma_k$ are the connected components of the graph $\Gamma$; let now $S_1, \dots, S_t$ be the partition of $\{1, \dots, k \}$ induced by the equivalence relation defined by 
$i \sim j$ if and only if $\Gamma_i$ is isomorphic to $\Gamma_j$. Denote with $\perm_k^\Gamma$ the subgroup 
$\perm(S_1) \times \cdots \times \perm(S_t)$ of $\perm_k$ and with $\widetilde{\perm}_{\Gamma_i} = \Stab_{\perm_n}(\Gamma_i) \cap \perm(V_{\Gamma_i})$, where $\perm(V_{\Gamma_i})$ is naturally seen as a subgroup of $\perm_n$. 
Then there is a split exact sequence 
\begin{equation}\label{eq: split} 1 \rTo \widetilde{\perm}_{\Gamma1} \times \cdots \times \widetilde{\perm}_{\Gamma_k} \rTo \widehat{\perm_\Gamma} \rTo 
\perm_k^\Gamma
\rTo 1 \;.\end{equation}
In other words, the subgroup $\widehat{\perm_\Gamma}$ of the stabilizer $\perm_\Gamma$ is a semi-direct product $(\widetilde{\perm}_{\Gamma1} \times \cdots \times \widetilde{\perm}_{\Gamma_k} ) \rtimes \perm^\Gamma_k$; the proof of this fact is analogous to \cite[Lemma 2.12]{Scalaarxiv2015}.  The full stabilizer $\perm_\Gamma$ is isomorphic to 
$$ \perm_\Gamma \simeq \widehat{\perm_\Gamma} \times \perm(\overline{V_\Gamma}) \simeq  \Big( (\widetilde{\perm}_{\Gamma1} \times \cdots \times \widetilde{\perm}_{\Gamma_k} ) \rtimes \perm^\Gamma_k \Big) \times \perm(\overline{V_\Gamma}) \;.$$

The multitor $\Tor_q(\Delta, \Gamma)$ is naturally $\perm_\Gamma$-linearized. Consider the standard representation $\rho_{V_\Gamma}$ of $\perm(V_\Gamma)$. It can naturally be seen as a $\perm (V_\Gamma) \times \perm(\overline{V_{\Gamma}})$-representation, since the second factor acts trivially on $V_\Gamma$. 
Denote as $\rho_\Gamma := \Res_{\perm_\Gamma} \rho_{V_\Gamma}$ the restriction of $\rho_{V_\Gamma}$ to $\perm_\Gamma$. 

\begin{notat}\label{notat: igamma}If $\Gamma$ is a subgraph of $K_n$  without isolated points and with $k$ connected components $\Gamma_1, \dots, \Gamma_k$, we denote with
 $i_{\Gamma}: X^k \rInto X^{V_{\Gamma_1}} \times \cdots \times X^{V_{\Gamma_k}}$ the immersion defined by embedding 
each factor $X$ in the factor $X^{V_{\Gamma_i}}$ diagonally; for any connected component $\Gamma_i$ we indicate with $p_{\Gamma_i}: X^n \rTo X^{V_{\Gamma_i}}$ the projection onto the factors in $V_{\Gamma_i}$; the morphism 
$p_{\Gamma}: X^n \rTo X^{V_{\Gamma_1}} \times \cdots \times X^{V_{\Gamma_k}}$ 
is defined as $p_\Gamma := p_{\Gamma_1} \times \cdots \times p_{\Gamma_k}$. 
Finally, If $F$ is a sheaf over $X^k$,  
we indicate with $F_\Gamma$ the sheaf over $X^n$ defined as $p_\Gamma^* {i_\Gamma}_* F$. If $\Gamma$ has a single edge, say $E_\Gamma = \{I \}$, we will denote, for brevity's sake $F_\Gamma$ with $F_I$. 
\end{notat}
\begin{notat}We will indicate with $W_\Gamma$ and $q_\Gamma$ the representations of $\perm_\Gamma$ defined by  
$W_\Gamma := \oplus_{I \in E_\Gamma} \rho_I$ and 
 by the exact sequence 
$$ 0 \rTo \rho_\Gamma \rTo W_\Gamma  \rTo q_\Gamma \rTo 0 \;,$$respectively. It is clear that, if $\Gamma_i$ are the connected components of the graph $\Gamma$, the vector space $\oplus_{i=1}^k q_{\Gamma_i}$ is naturally a $\perm_\Gamma$-representation isomorphic to $q_\Gamma$. \end{notat}
\begin{notat}\label{notat: etaIgamma}Let $\Gamma$ be a simple graph without isolated vertices, and let $\gamma$ be an oriented cycle in $\Gamma$. 
If $I$ is an edge of $\gamma$, $I = \{i, j \}$, $i < j$, then we define the sign $\eta_{I, \gamma}$ to be $+1$ if the vertex $j$ is the immediate successor of $i$ in $\gamma$, and $\eta_{I, \gamma} = -1$ otherwise. 
\end{notat}
\begin{remark}\label{rmk: cycles}The representation $q_\Gamma$ is generated over $\mbb{C}$ by independent cycles. More precisely, 
we can consider $q_\Gamma$ as a subrepresentation of $\oplus_{I \in E_\Gamma} R_I$, where $R_I \simeq \mbb{C}^I$, with basis $e_i, i \in I$,  is the natural $\perm(I)$-representation. If $I = \{i, j \}$, $i < j$, let's indicate with $e_I$ the vector $e_j - e_i$. 
If $\gamma$ is an oriented cycle in $\Gamma$, 
then we can consider the vector $e_\gamma :=\sum_{I \in \gamma} \eta_{I, \gamma} e_I$ in 
$W_\Gamma = \oplus_{I \in E_\Gamma} \rho_I \subseteq \oplus_{I \in E_\Gamma} R_I$. Now  $\gamma_1, \dots, \gamma_c$ are independent cycles in $\Gamma$ if and only if $e_{\gamma_1} \dots, e_{\gamma_c}$ are independent in $W_\Gamma$ and project to a basis in $q_\Gamma$. Hence we can identify $q_{\Gamma}$ with the subspace $ \mbb{C} e_{\gamma_1} \oplus \cdots \oplus \mbb{C} e_{\gamma_c}$
of $W_\Gamma$. 
\end{remark}
\begin{remark}Consider the vector bundles 
$\boxplus_{i=1}^k TX \tens q_{\Gamma_i}$ and $\boxplus_{i=1}^k \Omega^1_{X} \tens q_{\Gamma_i}$ over $X^k$. 
 They are naturally $\perm_\Gamma$-equivariant; indeed
the $\perm_\Gamma$ acts on the variety $X^k$ via the surjective composition $\perm_\Gamma \rTo \widehat{\perm_{\Gamma}} \rTo \perm_k^\Gamma$; 
this action lifts to the tangent and cotangent bundle. On the other hand
the action of $\perm_\Gamma$  over the representations $q_{\Gamma_i}$ is induced by its action on $q_\Gamma$. 
\end{remark}

We have the following interpretation of a multitor of pairwise diagonals $\Tor_q(\Delta, \Gamma)$ in terms of data attached to the 
graph $\Gamma$.  
\begin{pps}\label{pps: torgamma}As $\perm_\Gamma$-sheaves, we have that   $ \Tor_q(\Delta, \Gamma) \simeq  
  \Lambda^q(  \boxplus_{i=1}^k \Omega^1_{X} \tens q_{\Gamma_i} )_{\Gamma} \;.$ 
\end{pps}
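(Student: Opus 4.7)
The plan is to specialize Theorem~\ref{thm: multitor} to the case of pairwise diagonals and then translate the resulting conormal data into the graph-theoretic objects $W_{\Gamma_i}$, $\rho_{V_{\Gamma_i}}$, $q_{\Gamma_i}$ via an explicit local coordinate computation.

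Since every $\Delta_I$ ($I \in E_\Gamma$) and their intersection $\Delta_\Gamma$ are smooth, Theorem~\ref{thm: multitor} yields $\Tor_q(\Delta,\Gamma) \simeq \Lambda^q Q_\Gamma^{\,*}$ on $\Delta_\Gamma$, where $Q_\Gamma = \bigl(\oplus_{I \in E_\Gamma} N_{\Delta_I}|_{\Delta_\Gamma}\bigr)/N_{\Delta_\Gamma}$. Dualizing the defining sequence, $Q_\Gamma^{\,*}$ is identified with the kernel of the natural restriction map $\delta\colon \oplus_{I \in E_\Gamma} N^*_{\Delta_I}|_{\Delta_\Gamma} \twoheadrightarrow N^*_{\Delta_\Gamma}$, induced by the inclusions of ideal sheaves $\mc{I}_{\Delta_I} \subseteq \mc{I}_{\Delta_\Gamma}$ for $I \in E_\Gamma$.

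Choosing local coordinates $x_k^\alpha$ on $X^n$ and using the explicit generators $x_i^\alpha - x_j^\alpha$ of $\mc{I}_{\Delta_I}$ for $I = \{i,j\}$, the differentials $dx_i^\alpha - dx_j^\alpha$ provide canonical $\perm_\Gamma$-equivariant identifications
\[
N^*_{\Delta_I}|_{\Delta_\Gamma} \simeq \pi^*\Omega^1_X \otimes \rho_I, \qquad N^*_{\Delta_\Gamma} \simeq \boxplus_{i=1}^k \Omega^1_X \otimes \rho_{V_{\Gamma_i}},
\]
where $\pi$ denotes the projection from $\Delta_\Gamma \simeq X^k \times X^{\overline{V}_\Gamma}$ onto the $X$-factor indexing the connected component $\Gamma_a$ of $\Gamma$ containing $I$, and $\rho_I \subseteq R_I$ is identified with $\mbb{C}(e_j - e_i)$. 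Decomposing by connected components, $\delta$ becomes $\boxplus_i \mathrm{id}_{\Omega^1_X} \otimes \delta_i$, where $\delta_i\colon W_{\Gamma_i} \twoheadrightarrow \rho_{V_{\Gamma_i}}$ is the $\perm_{\Gamma_i}$-equivariant incidence map sending $e_I \mapsto e_j - e_i$. Hence $Q_\Gamma^{\,*} \simeq \boxplus_{i=1}^k \Omega^1_X \otimes \ker \delta_i$ on $\Delta_\Gamma$.

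It remains to recognize $\ker \delta_i$ as $q_{\Gamma_i}$. By standard graph (co)homology, $\ker \delta_i$ is the cycle space of $\Gamma_i$, spanned, in view of Remark~\ref{rmk: cycles}, by the vectors $e_{\gamma_1}, \dots, e_{\gamma_{c_i}}$ for any choice of independent oriented cycles in $\Gamma_i$. The same remark identifies the representation $q_{\Gamma_i}$ (originally defined via the exact sequence $0 \to \rho_{\Gamma_i} \to W_{\Gamma_i} \to q_{\Gamma_i} \to 0$) with this cycle subspace of $W_{\Gamma_i}$, so that the composition $\ker \delta_i \hookrightarrow W_{\Gamma_i} \twoheadrightarrow q_{\Gamma_i}$ is a $\perm_{\Gamma_i}$-equivariant isomorphism (both sides are $c_i$-dimensional and injectivity holds because independent cycles project to linearly independent classes in the quotient). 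Taking $\Lambda^q$ and translating the resulting vector bundle on $\Delta_\Gamma$ to a sheaf on $X^n$ through the notation $(\cdot)_\Gamma = p_\Gamma^*\, i_{\Gamma,*}(\cdot)$ of Notation~\ref{notat: igamma} yields the claim. The main subtlety is tracking $\perm_\Gamma$-equivariance, especially in the cycle-space identification $\ker \delta_i \simeq q_{\Gamma_i}$; this ultimately reflects the self-duality of permutation modules over symmetric groups.
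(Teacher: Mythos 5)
Your proof is correct and follows essentially the same route as the paper: both specialize Theorem \ref{thm: multitor} to the pairwise diagonals, decompose by connected components, and identify the bundle $Q_\Gamma$ with the graph-theoretic representation $\boxplus_i \Omega^1_X \otimes q_{\Gamma_i}$. The only (cosmetic) difference is that you work on the conormal side, realizing $Q_\Gamma^*$ as the kernel of the incidence map and invoking Remark \ref{rmk: cycles} to identify the cycle space with $q_{\Gamma_i}$, whereas the paper works on the normal side and reads off $Q_\Gamma \simeq (TX \otimes q_\Gamma)_\Gamma$ directly from the defining cokernel sequence for $q_\Gamma$.
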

\begin{proof}Consider first the case in which $\Gamma$ is connected. Then $\Delta_\Gamma \simeq X \times X^{\overline{V_\Gamma}}$. It is then sufficient to prove the proposition when $n = v$, since the case $n >v$ can be then obtained by flat base change. 
The morphism $i_{\Gamma}: X \rTo X^n$ is an isomorphism over the image $\Delta_\Gamma$. Hence pulling back the exact sequence over $\Delta_\Gamma$
$$ 0 \rTo N_{\Delta_\Gamma} \rTo \oplus_{I \in E_\Gamma} N_I \trest_{\Delta_\Gamma}  \rTo Q_\Gamma \rTo 0 $$
 via $i_{\Gamma}$, we obtain over $X$, by definition of $\rho_\Gamma$ and $q_\Gamma$, a $\perm_\Gamma$-equivariant  exact sequence 
 $$ 0 \rTo TX \tens \rho_{\Gamma} \rTo TX \tens W_\Gamma \rTo TX \tens q_\Gamma \rTo 0 \;.$$ 
 Hence the vector bundle $Q_\Gamma = \big [ \oplus_{I \in E_\Gamma} N_{\Delta_I} \trest_{\Delta_\Gamma} \big] / N_{\Delta_\Gamma}$ over $\Delta_\Gamma$  is isomorphic to ${i_{\Gamma}}_* (TX \tens q_\Gamma) = (TX \tens q_\Gamma)_{\Gamma}$ and we conclude by formula (\ref{eq: formula}). 
 
Consider now a general $\Gamma$. We have that 
$\Delta_\Gamma \simeq X^k \times X^{\overline{V_\Gamma}}$: it is then sufficient to consider the case $n = v$ for the same reasone as above. In this case $i_{\Gamma} : X^k \rTo X^n$ is an isomorphism over the image $\Delta_{\Gamma}$. 
Consider the connected components $\Gamma_i$, $i = 1, \dots, k$, of the graph $\Gamma$. 
We have $N_{\Delta_{\Gamma}} \simeq \oplus_{i=1}^k N_{\Delta_{\Gamma_i}} \trest_{\Delta_\Gamma}$, since the partial diagonals $\Delta_{\Gamma_i}$ intersect transversely. For each $i = 1, \dots, k$, we have sequences 
$$ 0 \rTo N_{\Delta_{\Gamma_i}} \rTo \oplus_{I \in E_{\Gamma_i}} N_{I} \trest_{\Delta_{\Gamma_i}} \rTo Q_{\Gamma_i} \rTo 0 \;.$$Hence over 
$\Delta_\Gamma$ we have an exact sequence 
$$ 0 \rTo N_{\Delta_\Gamma} \rTo \oplus_{I \in E_{\Gamma}} N_I \trest_{\Delta_I} \rTo \oplus_{i=1}^k Q_{\Gamma_i} \rTo 0 $$and hence 
pulling everything back to $X^k$ we get the exact sequence 
$$ 0 \rTo \boxplus_{i=1}^k TX \tens \rho_{\Gamma_i}  \rTo \boxplus_{i=1}^k TX \tens W_{\Gamma_i}  \rTo \boxplus_{i=1}^k TX \tens  q_{\Gamma_i} \rTo 0 \;.$$
Hence, as $\perm_\Gamma$-vector bundles over $\Delta_\Gamma$, 
$Q_{\Gamma} \simeq {i_{\Gamma} }_*(\boxplus_{i=1}^k TX \tens q_{\Gamma_i} ) \simeq (\boxplus_{i=1}^k TX \tens q_{\Gamma_i})_{\Gamma}$, and we conclude by formula (\ref{eq: formula}). 
\end{proof}

\begin{notat}Let $\Gamma$ a simple graph without isolated vertices, such that  $V_\Gamma \subseteq \{1, \dots , n \}$. If $J \subseteq \{1, \dots, n \}$, $|J|=2$,  and $J \not \in E_\Gamma$, we will indicate with $\Gamma \cup J$ the graph obtained by $\Gamma$ adding the edge $J$, that is, the graph defined by $V_{\Gamma \cup J} := V_\Gamma \cup J$, $E_{\Gamma \cup J} := E_\Gamma \cup \{ J \}$. 
\end{notat}

\begin{pps}\label{pps: iGamma}Let $X$ a smooth algebraic variety of dimension $d$. Let $\Gamma$ be a simple graph without isolated vertices such that $V_\Gamma \subseteq \{1, \dots, n \}$ and with edges $E_\Gamma = \{I_1, \dots, I_l \}$. Let $J \subseteq \{1, \dots, n \}$, $|J|=2$,  and $J \not \in E_\Gamma$. 
Identifying $\Tor_q(\Delta, \Gamma)$ with $\Tor_q(\Delta_{I_1}, \dots, \Delta_{I_l}, \FS_{X^n})$, 
the $\perm_\Gamma \cap \perm_{\Gamma \cup J}$-equivariant map 
$$ i_{\Gamma, \Gamma \cup J}: \Tor_q(\Delta, \Gamma) \rTo \Tor_q(\Delta, \Gamma \cup J) \;, $$induced by the 
restriction $\FS_{X^n} \rTo \Delta_J$, can be identified with 
\begin{itemize}\item the restriction 
$ \Lambda^q(Q_\Gamma^*) \rTo  \Lambda^q(Q_\Gamma^*) \trest_{\Delta_{\Gamma \cup J}} $
if $J \not 
\subseteq V_\Gamma$
\item  the natural injection
$ \Lambda^q(Q_\Gamma^*) \rTo  \Lambda^q(Q_{\Gamma \cup J}^*) $, induced by the injection of vector bundles $Q^*_\Gamma \rTo Q_{\Gamma \cup J}^*$ over $\Delta_\Gamma$, if $J \subseteq V_\Gamma$. \end{itemize}
\end{pps}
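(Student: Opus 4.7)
The plan is to identify $i_{\Gamma, \Gamma \cup J}$ by running the same explicit local Koszul model used in the proof of Theorem \ref{thm: multitor}. The augmentation $\FS_{X^n} \to \FS_{\Delta_J}$ is represented in the derived category by the inclusion of $\FS_{X^n}$ in degree zero into the Koszul resolution $K^\bullet(F_J, f_J) \simeq \FS_{\Delta_J}$; tensoring with Koszul resolutions of the remaining $\FS_{\Delta_I}$, $I \in E_\Gamma$, realizes $i_{\Gamma, \Gamma \cup J}$, upon taking $H^{-q}$, as the inclusion
$$\bigotimes_{I \in E_\Gamma} K^\bullet(F_I, f_I) \hookrightarrow \bigotimes_{I \in E_\Gamma} K^\bullet(F_I, f_I) \otimes K^\bullet(F_J, f_J).$$
On a small affine open $U$ around $x \in \Delta_{\Gamma \cup J}$, I would pick generators $g$ of $\mc{I}_{\Delta_\Gamma}$ and concatenated generators $f = (f_I)_I$ with $f = A g$, producing the splitting $\oplus_{I \in E_\Gamma} F_I \simeq E \oplus Q_\Gamma$ with section $g \oplus 0$, exactly as in that proof.

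For the first bullet, when $J \not\subseteq V_\Gamma$, at least one coordinate of $f_J$ is disjoint from those on which the $f_I$'s depend, so $\Delta_\Gamma$ and $\Delta_J$ intersect transversely and $(g, f_J)$ is a regular generating set for $\mc{I}_{\Delta_{\Gamma \cup J}}$. The splitting adapted to $\Gamma \cup J$ then takes the form $(E \oplus F_J) \oplus Q_\Gamma$, giving the factorization $K^\bullet(E \oplus F_J, g \oplus f_J) \otimes K^\bullet(Q_\Gamma, 0)$; the first factor resolves $\FS_{\Delta_{\Gamma \cup J}}$, so the cohomology in degree $-q$ identifies $Q_{\Gamma \cup J} \simeq Q_\Gamma|_{\Delta_{\Gamma \cup J}}$, and the inclusion in degree zero of $K^\bullet(F_J, f_J)$ unravels to the claimed restriction map.

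For the second bullet, when $J \subseteq V_\Gamma$, every coordinate of $f_J$ already appears among the $f_I$'s. When the endpoints of $J$ lie in different components of $\Gamma$, the intersection $\Delta_\Gamma \cap \Delta_J$ is still transverse and the previous analysis applies, yielding the tautological injection $Q_\Gamma^*|_{\Delta_{\Gamma \cup J}} \hookrightarrow Q_{\Gamma \cup J}^* = Q_\Gamma^*|_{\Delta_{\Gamma \cup J}}$. The genuinely new case is when the two endpoints lie in the same component, so that $f_J \in \mc{I}_{\Delta_\Gamma}$, hence $f_J = B g$ for some matrix of functions $B$ and $\Delta_{\Gamma \cup J} = \Delta_\Gamma$. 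The section $(g, 0, B g)$ of $E \oplus Q_\Gamma \oplus F_J$ can then be rectified to $(g, 0, 0)$ by the gauge transformation $\psi(e, q, u) := (e, q, u - B e)$, producing the factorization $K^\bullet(E, g) \otimes K^\bullet(Q_\Gamma \oplus F_J, 0)$ and identifying $Q_{\Gamma \cup J} \simeq Q_\Gamma \oplus F_J|_{\Delta_\Gamma}$, with the extra $F_J$-summand encoding precisely the new independent cycle that $J$ creates. Since $\psi$ is the identity on $K^0 = \FS_{X^n}$, the inclusion in degree zero translates into the natural injection $\Lambda^q Q_\Gamma^* \hookrightarrow \Lambda^q Q_{\Gamma \cup J}^*$, as claimed.

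The hard part will be checking that these local identifications glue coherently to a global morphism of sheaves and that the whole construction can be made equivariant under $\perm_\Gamma \cap \perm_{\Gamma \cup J}$, paralleling Step 4 of the proof of Theorem \ref{thm: multitor}. Independence from the choice of $g$, $f$ and of the matrix $A$ follows since the induced map on multitors is intrinsic; equivariance of the gauge transformation of the second bullet can be secured either via an averaging argument over the finite stabilizer or by exhibiting $B$ as a coordinate-free byproduct of the inclusion $\Delta_\Gamma \subseteq \Delta_J$.
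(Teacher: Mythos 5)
Your proposal is correct and follows essentially the same route as the paper's proof: identify the map locally via Koszul resolutions as the degree-zero inclusion inducing $F_\Gamma^* \hookrightarrow F_{\Gamma\cup J}^*$, split off $K^\bullet(E,g)$ as in Theorem \ref{thm: multitor}, and distinguish the two cases by whether $\Delta_J$ meets $\Delta_\Gamma$ transversely (restriction) or $f_J$ already lies in $\mc{I}_{\Delta_\Gamma}$ (injection, your gauge transformation $u \mapsto u - Be$ making explicit what the paper leaves implicit). The gluing and equivariance issues you flag are handled in the paper exactly as in Step 4 of Theorem \ref{thm: multitor}, i.e.\ left as an exercise, so nothing is missing.
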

\begin{proof}
Let $x $ be a point in $\Delta_\Gamma$. On a small affine open neighbourhood of $x$, we can find vector bundles $F_I$, $I \in E_\Gamma$, of rank $d$, such that $\Delta_I$ are the zero locus of sections $s_I$ of $F_I$ transverse to the zero section. The same is true for $\Delta_J$. The structural sheaves $\FS_{\Delta_I}$ can then be resolved with Koszul complexes $K^\bullet (F_I, s_I)$.  Denote with $F_\Gamma$ the vector bundle $\oplus_{I \in E_\Gamma} F_I$ and with $s_\Gamma$ the section $\oplus_I s_I$. Therefore 
\begin{gather*} \Tor_q(\Delta, \Gamma) \simeq H^{-q}(\tens_{ I \in E_\Gamma } K^\bullet(F_I, s_I) ) \simeq 
H^{-q}( K^\bullet( F_\Gamma, s_\Gamma) ) \simeq \Lambda^q (Q_\Gamma^*) \\
\Tor_q (\Delta, \Gamma \cup J) 
\simeq 
H^{-q}( K^\bullet( F_{\Gamma \cup J}, s_{\Gamma \cup J} ) ) \simeq 
\Lambda^q (Q^*_{\Gamma \cup J} ) \;.
\end{gather*}
The morphism $\Tor_q (\Delta, \Gamma) {\rTo} \Tor_q (\Delta, \Gamma \cup J)$ of the statement is induced by 
the injection of vector bundles $i_J : F^*_{\Gamma} =  \oplus_{I \in E_\Gamma} F^*_I \rTo \oplus_{I \in E_{\Gamma \cup J}} F^*_I = F^*_{\Gamma \cup J} $; 
one then sees, as in the proof of theorem \ref{thm: multitor} that the injection $i_J$ induces 
the natural map:   
$$Q^*_\Gamma =  [ \oplus_{I \in E_\Gamma} N_{\Delta_I} \trest_{\Delta_\Gamma} 
/N_{\Delta_\Gamma} ]^*
{\rTo}  [ (  \oplus_{I \in E_{\Gamma }}N_{\Delta_I } \oplus 
 N_{\Delta_J}  ) \trest_{\Delta_{\Gamma \cup J} }
/N_{\Delta_{\Gamma \cup J}} ]^* = Q^*_{\Gamma \cup J} \;.$$
Now if $J \not \subseteq V_\Gamma$, we just have that $N_{\Delta \cup J} \simeq N_{\Delta_\Gamma } \oplus N_{\Delta_J}$, and hence
$Q^*_{\Gamma \cup J} \simeq Q^*_{\Gamma} \trest_{\Delta_{\Gamma \cup J}}$ and the previous map is the restriction; on the other hand, if $J  \subseteq V_\Gamma$, $Q^*_{\Gamma \cup J}$ is a vector bundle over $
\Delta_{\Gamma \cup J} = \Delta_\Gamma$ and the previous map is the natural injection. This proves the statement.  
\end{proof}

\begin{pps}\label{pps: qKn} Let $K_v$ the complete graph on $v$ vertices and suppose that $V_{K_v} \subseteq \{1, \dots, n \}$. Then, as $\perm_{K_v}$-representations, 
$$ \Tor_q(\Delta_, K_v) \simeq \Lambda^q(\Omega^1_X \tens \Lambda^2 \rho_v)_{K_v} \;,$$where $\rho_v$ denotes the standard representation of $\widehat{\perm_{K_v}} \simeq \perm_v$. 
\end{pps}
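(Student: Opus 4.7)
The plan is to reduce the statement to a purely representation-theoretic identification $q_{K_v} \simeq \Lambda^2 \rho_v$ of $\widehat{\perm_{K_v}} \simeq \perm_v$-modules, by applying Proposition \ref{pps: torgamma}. Since $K_v$ is connected (so $k=1$) and the factor $\perm(\overline{V_{K_v}})$ of the stabilizer acts trivially on everything in sight, Proposition \ref{pps: torgamma} gives $\Tor_q(\Delta, K_v) \simeq \Lambda^q(\Omega^1_X \tens q_{K_v})_{K_v}$ as $\perm_{K_v}$-sheaves, and everything boils down to identifying $q_{K_v}$ with $\Lambda^2 \rho_v$.

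To carry out that identification I would follow Remark \ref{rmk: cycles} and realize $W_{K_v}$ as the subspace of $\bigoplus_{I \in E_{K_v}} R_I$ spanned by the vectors $e_I := e_j - e_i$ for $I = \{i,j\}$ with $i<j$. The natural candidate isomorphism is the linear map
\[
\varphi : \Lambda^2 R_v \longrightarrow W_{K_v}, \qquad e_i \wedge e_j \longmapsto e_{\{i,j\}} \quad (i<j),
\]
which is an isomorphism of vector spaces of dimension $\binom{v}{2}$. The first check is its $\perm_v$-equivariance: for $\sigma \in \perm_v$, the action on both sides differs by the same sign $\operatorname{sgn}(\sigma|_{\{i,j\}})$, equal to $+1$ or $-1$ according to whether $\sigma$ preserves the order on $\{i,j\}$; matching these signs is a direct verification from the definition of the $\perm_v$-action on $W_\Gamma \subseteq \oplus_I R_I$ and the standard antisymmetry in $\Lambda^2$.

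Once $\varphi$ is in place, I would decompose $R_v = \rho_v \oplus \mathbb{C}\mathbb{1}$, where $\mathbb{1} = e_1 + \cdots + e_v$, obtaining the $\perm_v$-equivariant splitting $\Lambda^2 R_v \simeq \Lambda^2 \rho_v \oplus \rho_v$ (the second summand being $\rho_v \wedge \mathbb{C}\mathbb{1}$). The remaining step is to verify that $\varphi$ carries the summand $\rho_v \wedge \mathbb{C}\mathbb{1}$ isomorphically onto the image of $\rho_{K_v}$ in $W_{K_v}$; this is a short computation, since for $v = \sum c_i e_i \in \rho_v$ the element $v \wedge \mathbb{1}$ expands as $\sum_{i<j}(c_i - c_j)\, e_i \wedge e_j$, which under $\varphi$ matches the natural map $\rho_{K_v} \to W_{K_v}$ described in the proof of Proposition \ref{pps: torgamma} (up to an overall sign). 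Passing to quotients then yields $q_{K_v} = W_{K_v}/\rho_{K_v} \simeq \Lambda^2 R_v / \rho_v \simeq \Lambda^2 \rho_v$, and combining with the reduction in the first paragraph gives the proposition. The main technical point, and the only place requiring care, is the sign bookkeeping when checking equivariance of $\varphi$ and the identification of the $\rho_v$-summand; everything else is formal.
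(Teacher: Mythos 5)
Your proposal is correct, and the reduction to the identity $q_{K_v}\simeq \Lambda^2\rho_v$ via Proposition \ref{pps: torgamma} is exactly the reduction the paper makes (the paper first passes to $v=n$ by flat base change, which your appeal to \ref{pps: torgamma} absorbs). Where you diverge is in how that representation-theoretic identity is established. The paper argues by characters: it computes $\chi_{\rho_n}+\chi_{\Lambda^2\rho_n}$ on each conjugacy class $C_{\mathbf i}$ via the Frobenius formula, obtaining $\binom{i_1}{2}-i_2$, and matches this against a direct trace count for the permutation-with-signs action on the basis $\{e_J\}$ of $W_{K_n}$; this is short but non-constructive. You instead exhibit the explicit equivariant isomorphism $\varphi\colon \Lambda^2 R_v\to W_{K_v}$, $e_i\wedge e_j\mapsto e_{\{i,j\}}$, split off $\Lambda^2 R_v\simeq \Lambda^2\rho_v\oplus(\rho_v\wedge\mbb{C}\mathbb{1})$, and identify the second summand with the image of $\rho_{K_v}$. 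This costs you the sign bookkeeping you flag (which does check out: both sides transform by the same sign according to whether $\sigma$ preserves the order of $\{i,j\}$), but it buys a basis-level identification that meshes well with the cycle vectors $e_\gamma$ of Remark \ref{rmk: cycles}. One simplification you could make: since $\rho_v$ occurs with multiplicity exactly one in $\Lambda^2 R_v$ (its isotypic component being $\rho_v\wedge\mbb{C}\mathbb{1}$, as $\Lambda^2\rho_v$ is never isomorphic to $\rho_v$), Schur's lemma forces \emph{any} equivariant injection $\rho_{K_v}\hookrightarrow W_{K_v}$ to land in that summand, so the ``short computation'' matching $\varphi(\rho_v\wedge\mbb{C}\mathbb{1})$ with the image of $\rho_{K_v}$ is not actually needed to conclude $q_{K_v}\simeq\Lambda^2\rho_v$.
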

\begin{proof}
It is sufficient to consider the case $v = n$, since the case $v <n$ follows by flat base change. 
By propositon \ref{pps: torgamma}, it is sufficient to prove that the representation $q_{K_n}$ is isomorphic to $\Lambda^2 \rho_n$. 
Since $\rho_{K_n} \simeq \rho_n$, it is sufficient to prove that the representation 
$W_{K_n} = \oplus_{|I|=2, \; I \subseteq \{1, \dots, n \} } \rho_I \simeq \rho_n \oplus \Lambda^2 \rho_n$. In order to achieve this, it is sufficient to prove that the characters of the two representations are the same. Let $\bold i$ the $n$-uple $(i_1, \dots, i_n)$, with $\sum_j ji_j =n$. 
Denote with $C_{\bold i}$ the conjugacy class in $\perm_n$ of permutations 
having $i_j$ $j$-cycles. By Frobenius formula 
\cite[exercise 4.15]{FultonHarrisRT}, the character of $\rho_n \oplus \Lambda^2 \rho_n$
is valued, on $C_{\bold i}$: 
$$( \chi_{\rho_n} + \chi_{\Lambda^2 \rho_n})
(C_{\bold i})= i_1 -1 + \frac{1}{2}(i_1 -1)(i_1-2) -i_2 = \binom{i_1}{2} - i_2 \;.$$On the other hand, 
a basis of the representation $W_{K_n}$ is given by vectors $e_J$, $J \subseteq \{1, \dots, n \}$, $|J| =2$.
 If $I = \{i, j \}$, we have that $(ij) e_I = -e_I$ and $(ij) e_{ih}= e_{jh}$ if $h \not \in I$. Hence, 
any cycle $\gamma_j$ of length $j \geq 3$ will act with trace zero. The $1$-cycles 
$(j_1)\dots(j_{i_1})$ act trivially on a $\binom{i_1}{2}$-dimensional space, and hence with  trace $\binom{i_1}{2}$. Since the cycles are disjoint, the traces add up. Hence 
$$ \chi_{W_{K_n}}{(C_{\bold i})} = \binom{i_1}{2}-i_2 = (\chi_{\rho_n} + \chi_{\Lambda^2 \rho_n})(C_{\bold i}) \;.$$\end{proof}

\subsection{The cases \protect $n=3, 4$. }We analyse here in detail the representations $q_\Gamma$ with $\Gamma$ a non-acyclic  subgraph of  $K_n$, $n = 3, 4$, with non isolated  vertices. 
\paragraph{Non acyclic graphs.} There is a unique non-acyclic subgraph of $K_3$ without isolated vertices, the complete graph $K_3$ itself. 
On the other hand, up to isomorphism, the non-acyclic subgraphs of $K_4$ without isolated vertices are the following:
\begin{itemize}\item for $l = 3$, the complete graph $K_3$ with $3$ vertices; its stabilizer is the group $\perm_3$; 
\item for $l=4$, the $4$-cycle $C_4$ and the graph $K_3 \cup J$, obtained by $K_3$ adding an edge $J \not \subseteq E_{K_3}$; 
since $C_4$ has dihedral symmetry,  $\perm_{C_4}$ is isomorphis to the dihedral group $D_4$; on the other hand, adding an edge to 
the graph $K_3$ reduces its symmetries to $\perm_2$. 
\item for $l = 5$ the graph $C_4 \cup L$, obtained adding an edge $L \not \subseteq E_{C_4}$ to the $4$-cycle $C_4$; its stabilizer is 
$\perm_{C_4 \cup J} \simeq \perm_2 \times \perm_2$; 
\item for $l = 6$, the complete graph $K_4$; its stabilizer is the full symmetric group $\perm_4$. 
\end{itemize}
The non-acyclic subgraphs of $K_4$ without isolated vertices  are represented in the figure below.

\vspace{1cm}
\begin{center}
\begin{tikzpicture}
  [-, -=stealth, scale=.3,auto=left]
  \tikzstyle{F}=[circle, minimum width=4pt, fill, 
  inner sep=0pt
  ]; 
  \node[F] (n1) at (-2,0) {}; 
  \node[F] (n2) at (2,0) {}; 
  \node[F] (n3) at (-2,4){};

\node at (0,-2) {$K_3$};
  \foreach \from/\to in {n1/n2, n2/n3, n3/n1}
    \draw (\from) -- (\to);

  \node[F] (m1) at (8,0) {}; 
  \node[F] (m2) at (12,0) {}; 
  \node[F] (m3) at (8,4){}; 
\node[F] (m4) at (12, 4) {}; 

\node at (10, -2) {$K_3 \cup J$};
  \foreach \from/\to in {m1/m2, m2/m3, m3/m1, m2/m4}
    \draw (\from) -- (\to);
    
     \node[F] (r1) at (18,0) {}; 
  \node[F] (r2) at (22,0) {}; 
  \node[F] (r3) at (22,4){}; 
\node[F] (r4) at (18, 4) {}; 

\node at (20, -2) {$C_4$};
  \foreach \from/\to in {r1/r2, r2/r3, r3/r4, r4/r1}
    \draw (\from) -- (\to);

   \node[F] (s1) at (28,0) {}; 
  \node[F] (s2) at (32,0) {}; 
  \node[F] (s3) at (32,4){}; 
\node[F] (s4) at (28, 4) {}; 

\node at (30, -2) {$C_4 \cup L$};
  \foreach \from/\to in {s1/s2, s2/s3, s3/s4, s4/s1, s4/s2}
    \draw (\from) -- (\to);
    
     \node[F] (t1) at (38,0) {}; 
  \node[F] (t2) at (42,0) {}; 
  \node[F] (t3) at (42,4){}; 
\node[F] (t4) at (38, 4) {}; 

\node at (40, -2) {$K_4$};
  \foreach \from/\to in {t1/t2, t2/t3, t3/t4, t4/t1, t4/t2, t3/t1}
    \draw (\from) -- (\to);

\end{tikzpicture}
\end{center}

\paragraph{Classification of representations $q_\Gamma$.}
In order to classify the representations $q_\Gamma$ we need the following lemma
\begin{lemma}\label{lmm: reduction} Let $n = r+s$, with $r, s \in \mbb{N} \setminus \{0 \}$.  Consider the symmetric groups $\perm_{r+s}$, $\perm_r$, 
$\perm_s$.  Then, as $\perm_r \times \perm_s$-representations, 
$ \rho_{r+s} \simeq \rho_r \oplus \rho_s \oplus 1 \;.$ As a consequence, if $n= \sum_{i=1}^r k_i$, as $ \perm_{k_1} \times \cdots \times \perm_{k_r}$-representations,  we have
$ \rho_{k_1 + \dots + k_r} \simeq \rho_{k_1} \oplus \dots \oplus \rho_{k_r} \oplus 1^{r-1} $, where $1^{r-1}$ is the $(r-1)$-dimensional trivial representation. \end{lemma}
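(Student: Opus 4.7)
The plan is to use the permutation representations $R_n \simeq \mathbb{C}^n$ and the canonical identification $R_n \simeq \rho_n \oplus 1$ (the splitting of the augmentation $\sigma_n : R_n \to \mathbb{C}$, $(x_1,\dots,x_n) \mapsto \sum_i x_i$). Writing $\{1,\dots,r+s\}=\{1,\dots,r\}\sqcup\{r+1,\dots,r+s\}$, one has a canonical decomposition $R_{r+s}=R_r\oplus R_s$ as $\perm_r\times\perm_s$-representations, and under this decomposition the sum map satisfies $\sigma_{r+s}=\mu\circ(\sigma_r,\sigma_s)$, where $(\sigma_r,\sigma_s):R_r\oplus R_s\to\mathbb{C}^2$ and $\mu:\mathbb{C}^2\to\mathbb{C}$ is addition.

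From this I would obtain a short exact sequence of $\perm_r\times\perm_s$-representations
\[
0 \rTo \ke(\sigma_r,\sigma_s) \rTo \ke\sigma_{r+s} \rTo \ke\mu \rTo 0,
\]
i.e.
\[
0 \rTo \rho_r\oplus\rho_s \rTo \rho_{r+s} \rTo \mathbb{C} \rTo 0,
\]
where $\ke\mu$ is a one-dimensional trivial representation (the action of $\perm_r\times\perm_s$ on $\mathbb{C}^2$ via $(\sigma_r,\sigma_s)$ is trivial). Surjectivity on the right is immediate, since for any $c\in\mathbb{C}$ the vector $c\, e_1 - c\, e_{r+1}$ lies in $\rho_{r+s}$ and has image $(c,-c)\in\ke\mu$. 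Since we are in characteristic zero (or equivalently since $\perm_r\times\perm_s$ is finite and $\mathbb{C}$ has characteristic $0$), the sequence splits by Maschke's theorem, yielding
\[
\rho_{r+s} \simeq \rho_r \oplus \rho_s \oplus 1
\]
as $\perm_r\times\perm_s$-representations, which is the first assertion.

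The general statement follows by an easy induction on $r$, the number of blocks. The case $r=1$ is trivial. For the inductive step, I would apply the two-part case to the splitting $n=(k_1+\dots+k_{r-1})+k_r$ to get, as $\perm_{k_1+\dots+k_{r-1}}\times\perm_{k_r}$-representations,
\[
\rho_{k_1+\dots+k_r} \simeq \rho_{k_1+\dots+k_{r-1}} \oplus \rho_{k_r} \oplus 1,
\]
then restrict further to $\perm_{k_1}\times\cdots\times\perm_{k_{r-1}}\times\perm_{k_r}$ and apply the inductive hypothesis to the first summand, obtaining
\[
\rho_{k_1+\dots+k_r} \simeq \rho_{k_1} \oplus \dots \oplus \rho_{k_{r-1}} \oplus 1^{r-2} \oplus \rho_{k_r} \oplus 1 \simeq \rho_{k_1}\oplus\dots\oplus\rho_{k_r}\oplus 1^{r-1}.
\]
There is no real obstacle here; the only thing to take care of is that the decomposition $R_{r+s}=R_r\oplus R_s$ is genuinely an isomorphism of $\perm_r\times\perm_s$-modules (which is clear since $\perm_r$ only permutes the first $r$ basis vectors and $\perm_s$ only the last $s$) and that the quotient $\ke\mu$ is acted on trivially, which is why the extra summand is the trivial representation $1$ and not some other character.
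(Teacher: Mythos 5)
Your proof is correct and follows essentially the same route as the paper: both rest on the $\perm_r\times\perm_s$-equivariant decomposition $R_{r+s}\simeq R_r\oplus R_s$ together with the splitting $R_m\simeq\rho_m\oplus 1$. The paper simply cancels the trivial summands in $\rho_{r+s}\oplus 1\simeq \rho_r\oplus\rho_s\oplus 1^2$, whereas you exhibit the explicit short exact sequence onto $\ke\mu$ and invoke Maschke — a cosmetic difference only.
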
 
\begin{proof}The natural $\perm_n$-representation $R_n$ splits as $R_n \simeq R_r \oplus R_s$ 
when seen as 
as $\perm_r \times \perm_s$-represenation. The inclusion $\rho_{r+s} \rInto R_n = R_r \oplus R_{s}$ is $\perm_{r} \times \perm_s$-equivariant. Then, as $\perm_r \times \perm_s$-representations
$R_n \simeq   R_r \oplus R_s = (\rho_r \oplus 1) \oplus (\rho_s \oplus 1) \simeq \rho_{r} \oplus \rho_s \oplus 1^2$. 
Hence the statement: $\rho_{r+s} \simeq \rho_r \oplus \rho_s \oplus 1$.\end{proof}

\begin{remark}The dihedral group $D_4$, generated by the reflection $\sigma$ and the rotation $\rho$, 
has 4 finite dimensional irreducible representations: the trivial, 
the standard representation $\theta$, 
the determinantal $\det := \det \theta$, the linear $\ell(1_\sigma, -1_\rho)$, 
the linear $\ell(-1_\sigma, -1_\rho)$. 
 A description and a character table for these representation 
is given in \cite{SerreLRFG}. 
\end{remark}

We already know, by proposition \ref{pps: qKn}, that $q_{K_3} \simeq \Lambda^2 \rho_3 \simeq \epsilon$ and that $q_{K_4} \simeq \Lambda^2 \rho_4 \simeq \rho_4 \tens \epsilon$. 
As for the remaining representations $q_\Gamma$ for $\Gamma$ a subgraph of $K_4$ without isolated vertices we have the following. 
\begin{itemize}\item $\Gamma = K_3 \cup J$. Up to isomorphism we can think that $E_{K_3 \cup J} = \{ \{1, 2 \}, \{1, 3 \}, \{2, 3 \}, \{3,4 \} \}$. 
Hence $\perm_{K_3 \cup J} \simeq \perm( \{1, 2 \})$ and $W_{K_3 \cup J} = \rho_{12} \oplus \rho_{13} \oplus \rho_{23} \oplus \rho_{34}$. 
The character $\chi_{W_{K_3 \cup J}}$ is easily $(4,0)$ according to 
the conjugacy classes of $1, (12)$; but $(4,0) = (1, -1) + (1, -1) + (1, 0) + (1, 0) = 2 \chi_{\epsilon} + 2\chi_{1}$.  
Hence $\chi_{q_{K_3 \cup J}}  = \chi_{W_{K_3 \cup J}} - \chi_{\rho_{K_3 \cup J}} = 2 \chi_{\epsilon} + 2\chi_{1} - \chi_{\Res^{\perm_4}_{\perm_2} \rho_{4}} = 2 \chi_{\epsilon} + 2\chi_{1} - \chi_\epsilon - 2 \chi_{1} = \chi_{\epsilon}$ by lemma \ref{lmm: reduction}. 
Hence $q_{K_3 \cup J} \simeq \epsilon$. 

\item $\Gamma = C_4$. Up to isomorphism, we can think thak $E_{C_4} = \{ \{1,2 \}, \{1, 4\}, \{2, 3 \}, \{3, 4 \} \}$. We easily have that the stabilizer $\perm_{C_4}$ is isomorphic to the dihedral group $D_4$, where the reflection $\sigma$ and the rotation $\rho$ are identified with 
 $\sigma = (24)$ and $\rho = (1234)$, for example. Then $\chi_{W_{C_4}}$, according to conjugacy classes, $1, \sigma, \sigma \rho, 
 \rho, \rho^2$, is given by $\chi_{W_{C_4}} = (4, 0, -2, 0, 0)$, and hence $W_{C_4}$ is isomorphic to $\det \oplus \ell(1_\sigma, -1_\rho) \oplus \theta$. Computing characters we get that $\chi_{\rho_{C_4}} = (3, 1, -1, -1, -1)$ and hence $\rho_{C_4}$ is isomorphic to $\ell(1_\sigma, -1_\rho) \oplus \theta$ as 
 $D_4$-representation. Hence $q_{C_4} \simeq \det $ as $D_4$-representation. 
 
 \item $\Gamma = C_4 \cup L$.  Up to isomorphism, suppose that the graph $C_4 \cup L$ has edges 
$E_{C_4 \cup L} = \{ \{1,2\}, \{1, 4 \}, \{2,3\}, \{3,4\},  \{1,3 \} \}$, so that $\perm_{C_4 \cup L} \simeq \perm(1,3) \times \perm(2,4)$. 
Then, according to conjugacy classes $1, (13), (24), (13)(24)$, the character $\chi_{W_{C_4 \cup L}}$ is given by $(5, -1, 1, -1)$. 
By lemma \ref{lmm: reduction}, $\rho_{C_4 \cup L} \simeq \Res^{\perm_4}_{\perm_2 \times \perm_2} \rho_{5} \simeq 
(\epsilon \tens 1) \oplus (1 \tens \epsilon) \oplus 1$ and hence $\chi_{\rho_{C_4 \cup L}} = (3, 1, 1, -1)$. 
Hence $\chi_{q_{C_4 \cup L}} = (2, -2, 0, 0)$, which yields $q_{C_4 \cup L} \simeq (\epsilon \tens 1) \oplus (\epsilon \tens \epsilon)$. 
\end{itemize} 
\paragraph{Multitors as $\perm_\Gamma$-representations.} For $n =3, 4$, all non-acyclic graph are connected, hence 
the corresponding $\Tor_q(\Delta, \Gamma)$ is isomorphic to $\Lambda^q (\Omega^1_X \tens q_\Gamma)_\Gamma$. 
We decompose the exterior power $\Lambda^q (\Omega^1_X \tens q_\Gamma)$ according to the Schur-functor decomposition 
$$  \Lambda^q (\Omega^1_X \tens q_\Gamma ) = \bigoplus_{\lambda} S^\lambda \Omega^1_X \tens S^{\lambda^\prime} q_\Gamma $$where the direct sum is taken on partitions $\lambda$ of $q$ such that $\lambda$ has at most $\dim X$ rows and at most $\dim q_\Gamma$ columns \cite[Exercise 6.11]{FultonHarrisRT}. In the next section it will be important to determine $\perm_\Gamma$-invariants 
of the previous exterior power. We have the following 
\begin{lemma}Let $n = 4$,  let $\Gamma_1$ be a graph of the kind $C_4 \cup L$ and let $K_4$ the complete graph with $4$-vertices. If 
$S^\lambda q_{\Gamma_1}$ has nontrivial $\perm_{\Gamma_1}$-invariants, or if 
$S^\lambda q_{K_4}$ has nontrivial  $\perm_4$--invariants, then the partition $\lambda$ appears in the following table, which indicates as well the dimension of the space of invariants. 

\vspace{0.3cm}

\begin{table}[H]

\centering
{\renewcommand{\arraystretch}{1.35}    
\begin{tabular}{cc|cccccccccc}
   &                                                                                                 & \multicolumn{10}{c}{$\lambda$} \\ \cline{3-12}
   &                                                                                                 & (2) & (3) & (4) & (3,1) & (2,2) & (3,1,1) & (6) &  (5,1) & (4,2) &  (2,2,2) \\ \hline
\multicolumn{1}{c|}{ \multirow{2}{*}{$\dim (S^\lambda q_\Gamma)^{\perm_\Gamma}$}  }&  $ \Gamma_1$ &       2 & 0          & 3   &   1    &  1     & 0          & 4   &   2      &   2     &   0 \\
                                 \multicolumn{1}{c|}{}                                                                             &  $K_4$             &1    & 1&2 &0 &1 &1 & 3& 1&2 &1
                                                                          \end{tabular}}
\caption{} \label{table: qgamma}
\end{table}
\end{lemma}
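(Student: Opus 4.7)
The plan is to exploit the explicit descriptions of the two representations obtained in the classification of the $q_\Gamma$'s above. As a $\perm_2 \times \perm_2$-representation, $q_{\Gamma_1} \simeq L_1 \oplus L_2$ with $L_1 = \epsilon \tens 1$ and $L_2 = \epsilon \tens \epsilon$; and $q_{K_4} \simeq \Lambda^2 \rho_4$, isomorphic to the three-dimensional $\perm_4$-irreducible $\rho_4 \tens \epsilon$. In both cases the lemma reduces to a finite character-theoretic calculation: filling in the entries of the table and then checking directly that the partitions of the relevant sizes not listed produce vanishing invariants.

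For $\Gamma_1$, the fact that $q_{\Gamma_1}$ splits as a sum of two one-dimensional representations allows me to expand $S^\lambda q_{\Gamma_1}$ by Littlewood--Richardson as
\begin{equation*}
S^\lambda(L_1 \oplus L_2) \;\simeq\; \bigoplus_{a+b = |\lambda|} c^\lambda_{(a),(b)}\, L_1^a \tens L_2^b\;,
\end{equation*}
where each summand is one-dimensional with character $\epsilon^{|\lambda|} \tens \epsilon^b$, hence trivial precisely when $|\lambda|$ and $b$ are both even. The coefficients $c^\lambda_{(a),(b)}$ count the semistandard Young tableaux of shape $\lambda$ with $a$ ones and $b$ twos; for $\lambda$ with more than two rows $S^\lambda q_{\Gamma_1}$ vanishes altogether, and in the remaining cases a short enumeration of the admissible tableaux for each $\lambda$ in the table produces the stated dimensions.

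For $K_4$ the most efficient tool is Molien's formula combined with Jacobi--Trudi. The eigenvalues of a representative of each conjugacy class on $q_{K_4}$ are, respectively, $(1,1,1)$, $(1,-1,-1)$, $(1,-1,-1)$, $(1,\omega,\omega^2)$, $(1,i,-i)$ at $e$, $(12)$, $(12)(34)$, $(123)$, $(1234)$, derived by combining the standard eigenvalues of $\rho_4$ with the sign character. A single partial-fraction computation of the Molien series
\begin{equation*}
\sum_{k\ge 0}\dim(S^k q_{K_4})^{\perm_4}\, t^k \;=\; \frac{1}{24}\!\left[\frac{1}{(1-t)^3} + \frac{9}{(1-t)(1+t)^2} + \frac{8}{1-t^3} + \frac{6}{(1-t)(1+t^2)}\right]
\end{equation*}
yields the complete symmetric functions $h_k$ at the four non-trivial classes simultaneously; these feed into the Jacobi--Trudi formula $s_\lambda = \det(h_{\lambda_i - i + j})$, evaluated at the eigenvalues, to produce the character $\chi_{S^\lambda q_{K_4}}$, whose inner product with the trivial character of $\perm_4$ gives the $K_4$-row of the table. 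The main obstacle is simply the volume of bookkeeping for the various shapes $\lambda$; nothing deeper intervenes.
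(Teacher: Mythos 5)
Your method is, in substance, a hand-executed version of what the paper does: the paper dismisses the $\Gamma_1$ case as straightforward and delegates the $K_4$ case to a {\tt GAP} script decomposing $S^\lambda V^{2,1,1}$ into irreducibles, whereas you replace the computer calculation by Molien's series plus Jacobi--Trudi. Your $\Gamma_1$ argument is complete and correct: $c^\lambda_{(a),(b)}$ is indeed the Kostka number $K_{\lambda,(a,b)}\in\{0,1\}$, the summand $L_1^{\tens a}\tens L_2^{\tens b}$ is trivial exactly when $|\lambda|$ and $b$ are both even, and enumerating the admissible $(a,b)$ does reproduce the whole $\Gamma_1$ row. Your eigenvalue lists for $q_{K_4}\simeq\rho_4\tens\epsilon$ and the resulting Molien series are also correct.

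The genuine problem is that you assert, without carrying it out, that the $K_4$ computation ``produces the $K_4$-row of the table'', and at $\lambda=(3)$ a faithful execution of your own method does not: with the standard convention $S^{(3)}=\Sym^3$ one gets $\chi_{\Sym^3 q_{K_4}}=(10,-2,-2,1,0)$ on the classes $e,(12),(12)(34),(123),(1234)$, hence $\dim(\Sym^3 q_{K_4})^{\perm_4}=\tfrac{1}{24}(10-12-6+8+0)=0$, not the printed $1$. (Equivalently, the $\perm_4$-anti-invariants of $\Sym^{\bullet}\rho_4$ begin in degree $6$, so $\Sym^3(\rho_4\tens\epsilon)$ has no invariants.) The printed $1$ is $\dim(\Lambda^3 q_{K_4})^{\perm_4}=\dim(\det q_{K_4})^{\perm_4}$, i.e.\ the value for the conjugate partition $(1^3)$, and it is this value that the paper actually uses downstream (it is what yields the $S^3\Omega^1_X$ term at $q=-3$ in table \ref{table: inv}). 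So your write-up must either read that column as $(1^3)$ or record the discrepancy; as it stands, your claim to match the row cannot be checked. Relatedly, proving the ``only if'' direction of the lemma requires sweeping all partitions of size at most $6$ with at most three rows, and such a sweep turns up unlisted shapes with nonzero invariants --- for instance $S^{(4,1)}q_{K_4}$ is the regular representation of $\perm_4$, so $\dim(S^{(4,1)}q_{K_4})^{\perm_4}=1$ --- so the statement is only tenable when restricted to the shapes that actually arise as $\lambda'$ with $\ell(\lambda)\le \dim X$ in the Cauchy decomposition it feeds into; your proof should make that restriction explicit rather than claim vanishing for all omitted partitions.
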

\begin{proof}The proof is straightforward in the case of the graph $\Gamma_1$. In the case of the complete graph $ K_4$, 
we decompose
$S^\lambda (q_{K_4}) \simeq S^\lambda (\rho_4 \tens \epsilon) \simeq S^\lambda V^{2,1,1}$  into irreducible $\perm_4$-representations
via 
the script in {\tt GAP} \cite{GAP4} indicated in \cite{moflw96705}. 
\end{proof}

\section{Invariants of diagonal ideals for low \protect $n$.}
Let $X$ be a smooth algebraic variety. The ideal $\mc{I}_{\Delta_n}$ of the big diagonal $\Delta_n$  is the intersection 
$\mc{I}_{\Delta_n} = \cap_{I \subseteq \{1, \dots, n \}, |I| =2} \mc{I}_{\Delta_I}$ of ideals of pairwise diagonals $\Delta_I$, $I \subseteq \{1, \dots, n \}$, $|I|=2$: it is then isomorphic to 
the kernel of the natural morphism 
\begin{equation}\label{eq: morfismo} \FS_{X^n} \rTo \bigoplus_{I \subseteq \{1, \dots, n \}, |I| =2} \FS_{\Delta_I} \;.\end{equation}Hence it is useful to consider, for each multi-index 
$I$ of cardinality $2$, a right resolution $\comp{\mc{K}}_I$ of the ideal sheaf $\mc{I}_{\Delta_I}$: 
$$ \comp{\mc{K}}_I: \qquad 0 \rTo \FS_{X^n} \rTo \FS_{\Delta_I} \rTo 0 \;,$$
concentrated in degree $0$ and $1$: 
indeed, the first nontrivial map of the $\perm_n$-equivariant complex  
$ \Tens_{I } \ccomp{K}_I$ is indeed exactly (\ref{eq: morfismo}); here the order in which the tensor product is taken is always the lexicographic order on the cardinality $2$-multi-indexes.  However, the complex $\Tens_{I } \ccomp{K}_I$ is not exact, and in order to 
to deal with this problem, it is better to consider the derived tensor product  of complexes $$ \Tens^L_I \ccomp{K}_I \;.$$
Indeed, let $r=n(n+1)/2$ 
and consider the spectral sequence \begin{equation}\label{eq: spectralsequence} E^{p,q}_1 := \bigoplus_{i_1 +  \cdots + i_{r}=p} 
\Tor_{-q}(\mc{K}_{1,2}^{i_1}, \cdots, \mc{K}_{n_1, n}^{i_r}) \;. \end{equation}It abuts to the cohomology $\mc{H}^{p+q}(\Tens^L_I \ccomp{K}_I)$ and 
the term $E^{0,0}_2$ is clearly isomorphic to the ideal $\mc{I}_{\Delta_n}$. 
\begin{remark}\label{rmk: abut} 
Since $\mc{I}_{\Delta_I}$ are sheaves and the complexes $\ccomp{K}_{I}$ are their resolutions, $\mc{H}^{p+q}(\Tens^L_I \ccomp{K}_I) = \Tor_{-p-q}(\mc{I}_{\Delta_{12}}, \cdots, \mc{I}_{\Delta_{n-1,n}})=0$ for $p+q > 0$. Consequently, the abutment 
of the spectral sequence  is zero for $p+q>0$. 
\end{remark}
We plan to get information on the sheaf of 
invariants $\mc{I}^{\perm_n}_{\Delta_n} = ( E^{0,0}_2)^{\perm_n}$ from 
the vanishing of the abutment in positive degree and from studying the spectral sequence of invariants in detail.

\subsection{The comprehensive \protect $\perm_n$-action on the spectral sequence $E^{p,q}_1$.} We will here briefly explain the action of the symmetric group $\perm_n$ on the derived tensor product 
$\Tens^L_I \ccomp{K}_I $ or, equivalently, on the spectral sequence $E^{p,q}_1$. This is analogous to what done 
in \cite{Scala2009D} for the derived tensor power of a complex  of sheaves $\mc{C}^\bullet$. The point here is that, when considering 
the multitors $\Tor_{-q}(\mc{K}_{1,2}^{i_1}, \cdots, \mc{K}_{n_1, n}^{i_r})$, the terms $\mc{K}_{I}^{h}$ are not just sheaves, but 
terms of a complex $\ccomp{K}_I$. In the following remark we will recall what we explained in detail in 
\cite[Section 4.1]{Scala2009D} and 
\cite[Appendix B]{Scala2009D}. 
\begin{remark}\label{rmk: permutative}Let $\mc{C}^\bullet_1$, $\mc{C}^\bullet_2$ complexes of sheaves over a variety $M$.  
If we have a tensor product of complexes of sheaves
$\ccomp{C}_1 \tens \ccomp{C}_{2}$ the permutation of factors $\tau_{12}: \ccomp{C}_1 \tens \ccomp{C}_{2} \rTo \ccomp{C}_2 \tens \ccomp{C}_{1}$ is a morphism of complexes if and only if $\tau_{12}$ acts 
on the term of degree $h$, that is the term  $(\ccomp{C}_1 \tens \ccomp{C}_{2})^h :=\oplus_{i+j = h} \mc{C}^i_1 \tens \mc{C}^j_2$,  exchanging the terms $\mc{C}^{i}_1 \tens \mc{C}^j_2 \rTo \mc{C}_2^j \tens \mc{C}^{i}_1$ and \emph{twisting by the sign} $(-1)^{ij}$. 
The same argument can be applied to a tensor product of complexes $\ccomp{C}_1 \tens \cdots \tens \ccomp{C}_{r}$. 
Indeed, in order to understand how a general permutation of factors operate on a tensor product of complexes, it is sufficient to understand 
 how a consecutive transposition acts, and this is completely analogous to the case $r=2$.

If now we want to understand the effect of permutating factors  in a \emph{derived tensor product} $\mc{C}^\bullet_1 \tens^L \cdots \tens^L \mc{C}^{\bullet}_r$, we have to resolve each of the complexes $\ccomp{C}_j$ with a complex of locally free $\comp{R}_j$ (at least locally), and apply the previous reasoning to $\comp{R}_1 \tens \cdots \tens \comp{R}_r$. To be more explicit, at the level of spectral sequences, consider a consecutive transposition $\tau_{j, j+1} \in \perm_r$ and consider the spectral sequences 
\begin{gather*}
E^{p,q}_1 = \oplus_{i_1 + \cdots + i_r=p} \Tor_{-q}(\mc{C}^{i_1}_1, \dots, \mc{C}^{i_j}_j, \mc{C}^{i_{j+1}}_{j+1}, \dots, \mc{C}^{i_r}_r) \\ 
{E^{\prime}}^{p,q}_1 = \oplus_{h_1 + \cdots + h_r=p} \Tor_{-q}(\mc{C}^{h_1}_1, \dots, \mc{C}^{h_{j+1}}_{j+1}, \mc{C}^{h_j}_j,  \dots, \mc{C}^{h_r}_r) \;,
\end{gather*}abutting to $\mc{H}^{p+q}(\mc{C}^\bullet_1 \tens^L \cdots \tens^L \mc{C}^\bullet_j \tens^L \mc{C}^\bullet_{j+1} \tens^L  \cdots \tens^L \mc{C}^\bullet_r )$ and $\mc{H}^{p+q}(\mc{C}^\bullet_1 \tens^L \cdots \tens^L \mc{C}^\bullet_{j+1} \tens^L \mc{C}^\bullet_{j} \tens^L \cdots \tens^L  \mc{C}^\bullet_r )$. The consecutive transposition $\tau_{j, j+1}$ induces an isomorphism $E^{p,q}_1 {\rTo} {E^{\prime}} ^{p,q}_1$ and hence isomorphisms 
$$ \widehat{\tau_{j, j+1}}: \Tor_{-q}(\mc{C}^{i_1}_1, \dots, \mc{C}^{i_j}_j, \mc{C}^{i_{j+1}}_{j+1}, \dots, \mc{C}^{i_r}_r) \rTo \Tor_{-q}(\mc{C}^{i_1}_1, \dots, \mc{C}^{i_{j+1}}_{j+1},  \mc{C}^{i_j}_j,  \dots, \mc{C}^{i_r}_r) \;.$$Now, considering the $\mc{C}^{i_l}_{l}$ just sheaves, and not as terms of a complex $\mc{C}^\bullet_{l}$, one has the standard permutation of factors in a multitor 
$$ \widetilde{\tau_{j, j+1}}: \Tor_{-q}(\mc{C}^{i_1}_1, \dots, \mc{C}^{i_j}_j, \mc{C}^{i_{j+1}}_{j+1}, \dots, \mc{C}^{i_r}_r) \rTo \Tor_{-q}(\mc{C}^{i_1}_1, \dots, \mc{C}^{i_{j+1}}_{j+1},  \mc{C}^{i_j}_j,  \dots, \mc{C}^{i_r}_r) \;.$$We proved in \cite[section 4.1]{Scala2009D} that 
the two isomorphisms $\widetilde{\tau_{j, j+1}}$ and $\widehat{\tau_{j, j+1}}$ are related by the sign
\begin{equation}\label{eq: tautau} \widehat{\tau_{j, j+1} }= (-1)^{i_j i_{j+1}} \widetilde{\tau_{j, j+1}} \;. \end{equation}
\end{remark}

\begin{remark}\label{rmk: comprehensive}When we say that the complex $\mc{K}^\bullet_{1,2} \tens^L \cdots \tens^L \mc{K}^\bullet_{n-1, n}$ 
is $\perm_n$-equivariant, what we mean is that \emph{$\perm_n$ acts  up to permutation of factors}. More precisely, 
we can interpret 
the $\perm_n$-action in the following way. For brevity's sake, denote with $m = n(n+1)/2$ and set $E_{K_n} := \{J_1, \dots, J_m \}$ in lexicographic order. Consider the  monomorphism $\theta: \perm_n \rInto \perm(E_{K_n})$ induced by the natural action of $\perm_n$ on $E_{K_n}$: 
if $\sigma \in \perm_n$, we will briefly indicate with $\tilde{\sigma}$ its image in $\perm(E_{K_n})$. Denote with $\Delta_{\perm_n}$ the subgroup 
of $\perm_n \times \perm(E_{K_n})^{\mathrm{op}}$ given by the set of couples $(\sigma, \tilde{\sigma}^{-1})$, such that $\sigma \in \perm_n$. 
The complex 
$\mc{K}^\bullet_{J_1} \tens^L \cdots \tens^L \mc{K}^\bullet_{J_m}$ is now \emph{$\Delta_{\perm_n}$-equivariant}; any element $(\sigma, \tilde{\sigma}^{-1}) \in \Delta_{\perm_n}$ acts via a composition 
$$ \mc{K}^\bullet_{J_1} \tens^L \cdots \tens^L \mc{K}^\bullet_{J_m}  { \rTo^{\lambda_{\sigma}}} 
\sigma^* \big( \mc{K}^\bullet_{\sigma (J_1)} \tens^L \cdots \tens^L \mc{K}^\bullet_{\sigma (J_m)} \big) { \rTo^{\lambda_{\tilde{\sigma}^{-1}} }  } \sigma^* \big( \mc{K}^\bullet_{J_1} \tens^L \cdots \tens^L \mc{K}^\bullet_{J_m} \big)  $$
where $\lambda_{\sigma}$ is the \emph{geometric action} and is induced by isomorphisms 
$$ \mc{K}^\bullet_{J_1} \tens^L \cdots \tens^L \mc{K}^\bullet_{J_m} \rTo \sigma^* \mc{K}^\bullet_{\sigma(J_1)} \tens^L \cdots \tens^L  \sigma^* \mc{K}^\bullet_{\sigma( J_m) }
\simeq \sigma^* \big( \mc{K}^\bullet_{\sigma( J_1) } \tens^L \cdots \tens^L \mc{K}^\bullet_{\sigma(J_m)} \big) \;,
   $$while 
 $\lambda_{\tilde{\sigma}^{-1}}$ is the \emph{permutation of factors} induced by $\tilde{\sigma}^{-1}$: it operates  the same way as the 
 permutations described in remark \ref{rmk: permutative}.  At the level of the spectral sequence (\ref{eq: spectralsequence}) and in the identification of $\Tor_{-q}(\mc{K}^{i_1}_{J_1}, \dots, \mc{K}^{i_m}_{J_m})$ with $\Tor_{-q}(\Delta, \Gamma)$ for some 
 subgraph $\Gamma$ of $K_n$ without isolated vertices, 
  the action of an element $(\sigma, \tilde{\sigma})$ is expressed through compositions of isomorphisms 
  $$ \Tor_{-q}(\Delta, \Gamma) \rTo^{\lambda_{\sigma}} \sigma^* \Tor_{-q}(\Delta, \sigma(\Gamma)) \rTo^{\lambda_{\tilde{\sigma}^{-1}}} \sigma^* \Tor_{-q}(\Delta, \sigma( \Gamma) ) $$where $\lambda_\sigma$ is described by the geometric action seen throughout subsection \ref{subsection: multitorspairwise} and where $\lambda_{\tilde{\sigma}^{-1}}$ is the derived permutative action described in remark \ref{rmk: permutative}: in other words, after formula \ref{eq: tautau}, $\lambda_{\tilde{\sigma}^{-1}}$ operates with the sign $\epsilon_{E_{\sigma(\Gamma)}}(\tilde{\sigma}^{-1})=\epsilon_{E_{\sigma(\Gamma)}}(\tilde{\sigma})$, where 
 $\tilde{\sigma}$ is naturally seen in $\perm(E_{\sigma(\Gamma)})$ and where $\epsilon_{E_{\sigma(\Gamma)}}$ is the alternating representation of $\perm(E_{\sigma(\Gamma)})$. In particular, if $\sigma \in \perm_\Gamma$, for the comprehensive $\perm_\Gamma$ action, we have the isomorphism 
 of $\perm_\Gamma$-representations: 
 $$  \Tor_{-q}(\Delta, \Gamma) \simeq \Lambda^q(\boxplus_{i=1}^k \Omega^1_X \tens q_{\Gamma_i})_{\Gamma_i} \tens \Res_{\perm_\Gamma} \epsilon_{E_\Gamma} \;.$$From now on, we wil omit talking about the group $\Delta_{\perm_n}$ and, for brevity's sake, 
when considering the $\perm_n$-action on the derived tensor product $\mc{K}^\bullet_{1,2} \tens^L \cdots \tens^L \mc{K}^\bullet_{n-1, n}$, we will always tacitly intend the $\Delta_{\perm_n}$-action explained here above. 
\end{remark}

\begin{remark}\label{rmk: graphs}Denote with $\mc{G}_{l,n}$ the set of subgraphs of the complete graph $K_n$ without isolated vertices and with $l$ edges. 
We can form $\perm_n$-equivariant complexes of $\FS_{X^n}$-modules $( \comp{\varGamma}_q, \partial^\bullet ) $ on $X^n$ by 
setting $\varGamma_0^0 := \FS_{X^n}$, $\Gamma_q^p := \oplus_{\Gamma \in \Graphs_{p,n}} \Lambda^q(Q^*_\Gamma) $ and where 
the differential $\partial^p: \varGamma_q^p \rTo \varGamma_q^{p+1}$ is defined, over the component $\Lambda^q(Q^*_{\Gamma^\prime})$, 
$\Gamma^\prime \in \Graphs_{p+1, n}$, as the alternating sum:  
$$ \partial^p ( x)_{_{\Gamma^{\prime}}} = \sum_{\substack{\Gamma \in \Graphs_{p, n} \\ \Gamma \subseteq \Gamma^\prime}} 
\epsilon_{\Gamma, \Gamma^\prime} i_{\Gamma, \Gamma^\prime} (x)$$over the subgraphs of $\Gamma^\prime$ with $p$-edges of 
the inclusions $i_{\Gamma, \Gamma^\prime}$. The sign $\epsilon_{\Gamma, \Gamma^\prime}$ is defined as $(-1)^{a-1}$, where $a$ is 
the position in $\Gamma^\prime$ --- according to the lexicographic order --- of the only edge in $\Gamma^\prime$ which is not in $\Gamma$. It is now immediate, using proposition \ref{pps: iGamma},  to show that the complexes $\gammac_q$ are $\perm_n$-equivariant and isomorphic to $E^{\bullet, q}_1$. The complexes $\gammac_q$ are not exact in general, as we will see in the sequel; however, they seem to arise in a pretty natural way as combinatorial objects, without the need to be linked to multitors; 
they might have an interest on their own. On the other hand it seems difficult to describe a general pattern for their cohomology. 
\end{remark}

\begin{notat}\label{notat: 3cycle}In what follows, if $H \subseteq \{1, \dots, n \}$ is a cardinality $3$ multi-index, we will indicate with $K_3(H)$ the complete graph with vertices in $H$, which is a $3$-cycle. Sometimes, for brevity's sake, and when there is no risk of confusion, we will indicate this $3$-cycle directly with $H$, instead of $K_3(H)$. 
\end{notat}

\begin{notat}\label{notat: omegadelta}For $r , j \in \mbb{N}$,  $j \geq 1$, we will write the sheaf $(\Lambda^j \Omega^1_X \boxtimes \FS_{X^{n-3}}) \tens \mc{I}^r_{\Delta_{n-2}}$ over the variety $X \times X^{n-3}$ just with $\Lambda^j \Omega^1_X \boxtimes \mc{I}^r_{\Delta_{n-2}}$, or with $\Lambda^j \Omega^1_X(-r \Delta_{n-2})$. We will also indicate with $\Lambda^j \Omega^1_X \boxtimes \FS_{\Delta_{n-2}}$ the sheaf 
$(\Lambda^j \Omega^1_X \boxtimes \FS_{X^{n-3}}) \tens \FS_{\Delta_{n-2}}$. 
\end{notat}

\begin{lemma}\label{lmm: E1terms}The kernel of the first differential $d_1 : E^{1,0}_1 \rTo E^{2,0}_1$ of the spectral sequence $E^{p,q}_1$ is given~by 
$$ \ker d_1  \simeq  \Big( \oplus_{| I | = 2} \FS_{\Delta_I} \Big)_0 := \left \{ (f_I)_{I}  \in \oplus_{| I | = 2} \FS_{\Delta_I} \; | \;(  f_{J} - f_{K}) \trest_{\Delta_{J} \cap \Delta_{K}} = 0, \; \;\forall  \; J, K, \; J \neq K  \right \} \;.$$The term $E^{3, -1}_2$ is given by: 
$ E^{3,-1}_2 \simeq 
 \bigoplus_{|H|=3} Q^*_{K_3(H)} \tens \bigcap_{|J | =  2 , J \not \subseteq H} \mc{I}_{\Delta_J} \simeq \bigoplus_{|H|=3} (\Omega^1_X \boxtimes \mc{I}_{\Delta_{n-2}} )_{K_3(H)}$. 
 \end{lemma}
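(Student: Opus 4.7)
The plan is to treat the two assertions separately, both reducing to direct computations in the combinatorial complex $\gammac_\bullet$ of Remark \ref{rmk: graphs}.

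For the description of $\ker d_1$, I would first write out the $q=0$ row of the spectral sequence: because $\mc{K}_I^\bullet$ is concentrated in degrees $0$ and $1$ with structural sheaves as terms, one has $E^{1,0}_1 \simeq \bigoplus_{|I|=2} \FS_{\Delta_I}$ and $E^{2,0}_1 \simeq \bigoplus_{\{I,J\}} \FS_{\Delta_I \cap \Delta_J}$ (here only $\Tor_0$ survives since any two-edge graph is acyclic). Via Remark \ref{rmk: graphs}, the differential $d_1$ is the alternating sum of restrictions along $\Delta_I \supseteq \Delta_I \cap \Delta_J$; computing the sign $\epsilon_{\{I\},\{I,J\}}$ gives, up to sign, $d_1((f_I))_{\{I,J\}} = (f_J - f_I)|_{\Delta_I \cap \Delta_J}$. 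Its kernel is exactly the sheaf of tuples agreeing on all pairwise intersections, as claimed.

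For $E^{3,-1}_2$, I would first compute $E^{3,-1}_1$. Among three-edge graphs, only those with a cycle contribute to $\Tor_1$, and on three edges this forces the graph to be a triangle $K_3(H)$ with $|H|=3$. By Proposition \ref{pps: torgamma} combined with the one-dimensionality of $q_{K_3} \simeq \epsilon$ (Proposition \ref{pps: qKn}), one gets $\Tor_1(\Delta, K_3(H)) \simeq Q^*_{K_3(H)} \simeq (\Omega^1_X)_{K_3(H)}$. Next, $E^{2,-1}_1 = 0$ since two-edge graphs are acyclic, so $E^{3,-1}_2 = \ker(d_1\colon E^{3,-1}_1 \to E^{4,-1}_1)$. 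To handle $d_1$, I would observe that the only four-edge supergraphs $\Gamma'$ of $K_3(H)$ that contribute are those with $\Tor_1(\Delta, \Gamma') \neq 0$, i.e.\ those containing a cycle, which are precisely $\Gamma' = K_3(H) \cup J$ for some $J \notin E_{K_3(H)}$; crucially, no four-edge graph contains two distinct triangles (this would require at least five edges), so each such $\Gamma'$ contains $K_3(H)$ as its unique $K_3$-subgraph. Proposition \ref{pps: iGamma} (in the case $J\not\subseteq H$, which is automatic since $K_3(H)$ is already complete on $H$) identifies the corresponding component map with the restriction $Q^*_{K_3(H)} \to Q^*_{K_3(H)}|_{\Delta_{\Gamma'}}$.

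It follows that a section $g_H$ of $Q^*_{K_3(H)}$ on $\Delta_{K_3(H)}$ lies in $\ker d_1$ iff it vanishes on $\Delta_{K_3(H)} \cap \Delta_J$ for every two-element $J\not\subseteq H$. Identifying $\Delta_{K_3(H)} \simeq X\times X^{n-3} = X^{n-2}$, the traces $\Delta_J \cap \Delta_{K_3(H)}$ as $J$ varies enumerate all pairwise diagonals of $X^{n-2}$, so Remark \ref{rmk: bigdiagonal} identifies the common ideal of vanishing with $\mc{I}_{\Delta_{n-2}}$. Combined with the identification $Q^*_{K_3(H)} \simeq \Omega^1_X \boxtimes \FS_{X^{n-3}}$ on $\Delta_{K_3(H)}$ and Notation \ref{notat: omegadelta}, this gives the $H$-summand $(\Omega^1_X \boxtimes \mc{I}_{\Delta_{n-2}})_{K_3(H)}$, completing the proof after summing over $H$. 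The one genuinely delicate point is verifying that the $d_1$-condition out of the $K_3(H)$-component lands only in the $K_3(H)\cup J$-components (which requires both the non-existence of two-triangle four-edge graphs and the vanishing of $\Tor_1$ for acyclic graphs); all remaining steps are direct bookkeeping from Propositions \ref{pps: torgamma} and \ref{pps: iGamma} together with Remark \ref{rmk: bigdiagonal}.
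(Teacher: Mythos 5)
Your proposal is correct and follows essentially the same route as the paper: identify the $q=0$ and $q=-1$ rows of $E_1$ with the combinatorial complexes $\gammac_q$ of Remark \ref{rmk: graphs}, note that the differentials are signed restrictions along $\Delta_\Gamma \supseteq \Delta_{\Gamma\cup J}$ (Proposition \ref{pps: iGamma}), and read off the kernels. The only additions are explicit justifications the paper leaves implicit (vanishing of $E^{2,-1}_1$, and that a four-edge graph contains at most one triangle so the kernel computation splits over the summands indexed by $H$), both of which are correct.
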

\begin{proof}The first statement is a consequence of the fact that the map $\partial^1: \varGamma^1_0 \rTo \varGamma^2_0$ in remark \ref{rmk: graphs} is given 
 by $$( \partial^1(f_I)_{I} )_{\Gamma} = ( \epsilon_{J, \Gamma}f_J + \epsilon_{K, \Gamma}f_K) \trest_{\Delta_J \cap \Delta_K} = 
 \epsilon_{J, \Gamma} ( f_J - f_K) \trest_{\Delta_J \cap \Delta_K} \;,$$where $\Gamma$ is the graph with two edges $J$ and $K$. 
 
 The second statement follows in a similar way, considering that $E^{3,1}_1 \simeq \varGamma^3_1 \simeq \bigoplus_{|H|=3} Q^*_{K_3(H)}$ and 
 that the differential $\partial^3: \varGamma^3_1 \rTo \varGamma^4_1$ is induced by  restrictions
 $$ \partial^3 ( (x_H)_H)_{K_3(L) \cup J} =  \epsilon_{K_3(L), K_3(L) \cup J} x_L \trest_{\Delta_L \cap \Delta_J} \;,$$where $H$ and $L$ are cardinality $3$ multi-indexes and $J$ is a cardinality $2$ multi-index. 
 Hence $(x_H)_H \in  \bigoplus_{|H|=3} Q^*_{K_3(H)}
 $ belongs to $E^{3,1}_2$ if and only each restriction $x_H \trest_{\Delta_H \cap \Delta_J}$ is zero. But this means exactly it belongs to 
 $\bigoplus_{|H|=3} Q^*_{K_3(H)} \tens \cap_{|J | =  2 , J \not \subseteq H} \mc{I}_{\Delta_J}$. Note that each sheaf $Q^*_{K_3(H)} \tens \cap_{|J | =  2 , J \not \subseteq H} \mc{I}_{\Delta_J}$ is isomorphic to $(\Omega^1_X \boxtimes \mc{I}_{\Delta_{n-2}})_{K_3(H)}$. 
\end{proof}
\begin{remark}\label{rmk: Danila}
By Danila's lemma, we have the isomorphism of sheaves of invariants over $S^nX$
$$ (E^{p,q}_1)^{\perm_n} \simeq \bigoplus_{[\Gamma] \in \Graphs_{l,n}/\perm_n } \pi_* (\Tor_{-q}(\Delta, \Gamma)) ^{\perm_\Gamma } \simeq \bigoplus_{[\Gamma] \in \Graphs_{l,n}/\perm_n } \pi_* ( \Lambda^q(Q^*_\Gamma) ) ^{\perm_\Gamma } $$
 where on the right hand sides we consider the comprehensive $\perm_n$-action. More in general, we consider a subgroup $G$ of $\perm_n$. Hence the sheaves of $G$-invariants over the symmetric variety $S^nX$
 $$ \pi_* (E^{p,q}_1)^{G} \simeq \bigoplus_{[\Gamma] \in \Graphs_{l,n}/G } \pi_* ( \Tor_{-q}(\Delta, \Gamma) ) ^{\Stab_{G}(\Gamma)} \simeq \bigoplus_{[\Gamma] \in \Graphs_{l,n}/ G} \pi_* ( \Lambda^q(Q^*_\Gamma) ) ^{\Stab_{G}(\Gamma)}  $$

\end{remark}

The following table lists the groups $\perm_\Gamma$ and the representation $\Res_{\perm_\Gamma} \epsilon_{E_\Gamma}$ for all isomorphisms classes of non empty graphs $\Gamma \subseteq K_4$ without isolated vertices. Here we indicate with $A_1$ the graph with a single edge, with $A_2$ a graph with two intersecting edges, with $B_2$ a graph with two non-intersecting edges, with $A_3$ and $B_3$ the acyclic subgraphs 
of $K_4$ with three edges and with, respectively, no vertex of degree $3$ and a single vertex of degree $3$.

\vspace{.2cm}
\begin{table}[H]
\centering
\resizebox{1.015\textwidth}{!}{
$
\hspace{-0.02\textwidth} 
\begin{array}{c| c| c| c| c| c| c| c| c|c|c}
& \Gamma = A_1 & \Gamma = A_2 &\Gamma = B_2 & \Gamma = A_3 & \Gamma = B_3 & \Gamma = K_3        & \Gamma = K_3 \cup J & \Gamma = C_4 & \Gamma = C_4 \cup L & \Gamma = K_4  \\
\perm_\Gamma & \perm_2 \times \perm_2 & \perm_2    & \perm_2 \times \perm_2 \times \perm_2  & \perm_2 & 
\perm_2 & \perm_3 & \perm_2 & D_4 & \perm_2 \times \perm_2 & 
\perm_4 \\
\Res_{\perm_\Gamma} \epsilon_{E_\Gamma} & 1  & \epsilon & 1 \tens 1 \tens \epsilon&  \epsilon &  \epsilon & \epsilon & \epsilon & \ell(-1_\rho, 1_\sigma) & 1 &  1
\end{array}
$
}
\caption{} \label{table: res}
\end{table}

\vspace{0.2cm}
The case of the graph $\Gamma = B_2$ needs a line of explanation. We can suppose that $\Gamma$ is the graph consisting of the edges $\{1, 2 \}, \{3, 4 \}$; hence $\perm_\Gamma = \langle (12) \rangle \times \langle (34) \rangle \times \langle (13)(24) \rangle$. The subgroup $\langle (13)(24) \rangle$ is isomorphic to $\perm^\Gamma_2$ and acts nontrivially in the representation 
$\Res_{\perm_2 \times \perm_2 \times \perm_2}  \epsilon_{E_\Gamma}$. 

\begin{lemma}\label{lmm: A4}The invariants $(E^{2,0}_1)^{\perm_n}$ of the term $E^{2,0}_1$ of the spectral sequence $E^{p,q}_1$ are isomorphic to 
the sheaf $\mc{A}_4(\FS_X) :=\pi_* \Big( (\FS_{\Delta_{12}} \tens \FS_{\Delta_{34}}) \tens \epsilon \tens 1 \Big)^{ \langle (13)(34) \rangle \tens \perm(\{5, \dots, n \})}$.
Over an affine open set $S^n U$ its module of sections is isomorphic to $\Lambda^2H^0(\FS_U) \tens S^{n-4}H^0(\FS_U)$. 
\end{lemma}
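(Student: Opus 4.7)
The plan is to apply Danila's lemma as stated in remark~\ref{rmk: Danila}. Since $Q^*_\Gamma = 0$ for acyclic $\Gamma$, we have $E^{2,0}_1 \simeq \bigoplus_{\Gamma \in \Graphs_{2,n}} \FS_{\Delta_\Gamma}$, so the $\perm_n$-invariants decompose as a direct sum indexed by $\perm_n$-orbits of graphs with two edges and no isolated vertices. There are exactly two such orbits: the path $A_2$ and the disjoint pair $B_2$. For each orbit I pick a representative $\Gamma$ and compute $\pi_*(\FS_{\Delta_\Gamma})^{\perm_\Gamma}$ for the comprehensive action of remark~\ref{rmk: comprehensive}, namely the geometric action twisted by the sign $\Res_{\perm_\Gamma} \epsilon_{E_\Gamma}$ given by Table~\ref{table: res}.

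For $A_2$, I take the representative with edges $\{1,2\}$ and $\{2,3\}$, so $\Delta_\Gamma = \Delta_{\{1,2,3\}}$ and $\widehat{\perm_\Gamma} = \langle(13)\rangle$. The element $(13)$ fixes $\Delta_{\{1,2,3\}}$ pointwise, but swaps the two edges; by remark~\ref{rmk: permutative}, since both factors $\mc{K}^1_{\{1,2\}}$ and $\mc{K}^1_{\{2,3\}}$ sit in degree one, the permutation of factors introduces the sign $(-1)^{1\cdot 1} = -1$, consistent with $\Res \epsilon_{E_\Gamma} = \epsilon$. Hence the comprehensive action of $(13)$ is $-\id$ and the contribution of the $A_2$-orbit vanishes.

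For $B_2$, I take the representative with edges $\{1,2\}$ and $\{3,4\}$, so $\FS_{\Delta_\Gamma} \simeq \FS_{\Delta_{12}} \tens \FS_{\Delta_{34}}$, and $\widehat{\perm_\Gamma} \simeq \bigl(\langle(12)\rangle \times \langle(34)\rangle\bigr) \rtimes \langle(13)(24)\rangle$. The generators $(12)$ and $(34)$ each fix their own edge, so they act trivially both geometrically and on the sign representation; the only nontrivial generator is $(13)(24)$, which swaps the two edges and thus carries the sign $-1$, matching the factor $\epsilon$ in $\Res \epsilon_{E_\Gamma} = 1 \tens 1 \tens \epsilon$. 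Combining this with the invariance under $\perm(\overline{V_\Gamma}) = \perm(\{5,\dots,n\})$, which acts trivially on the sign, identifies the contribution of $B_2$ with the sheaf $\mc{A}_4(\FS_X)$ as defined in the statement.

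Finally, for the sections on an affine open $S^nU$, I set $A := H^0(\FS_U)$ and identify $\Delta_{12}\cap\Delta_{34}\cap U^n \simeq U \times U \times U^{n-4}$, so that the sections of $\FS_{\Delta_{12}}\tens \FS_{\Delta_{34}}$ over $U^n$ become $A \tens A \tens A^{\tens(n-4)}$. Taking $\perm(\{5,\dots,n\})$-invariants produces $S^{n-4}A$ in the last factor; and since the sign twist turns $(13)(24)$-invariance into anti-invariance under the swap of the first two factors, the first two factors yield $\Lambda^2 A$. This gives $\Lambda^2 H^0(\FS_U) \tens S^{n-4} H^0(\FS_U)$. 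The main obstacle is bookkeeping the comprehensive action: one must carefully combine the geometric $\perm_\Gamma$-action on $\Delta_\Gamma$ with the permutation-of-factors sign from remark~\ref{rmk: permutative}, and check that their product matches the sign recorded in Table~\ref{table: res}, since a single sign error would swap the roles of the two orbits.
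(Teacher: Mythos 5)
Your proof is correct and follows the same route as the paper: decompose $(E^{2,0}_1)^{\perm_n}$ via Danila's lemma over the two orbit types $A_2$ and $B_2$, kill the $A_2$ summand by the sign coming from the permutation of the two degree-one factors, and identify the $B_2$ summand with $\mc{A}_4(\FS_X)$. The paper states these steps without the sign bookkeeping you supply, so your writeup is simply a more detailed version of the same argument.
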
\begin{proof}We first remark that, for any $n \geq 4$, the types $A_2$ and $B_2$ are the only isomorphism classes of subgraphs of $K_n$ with $2$ edges and without isolated vertices.  
By remark \ref{rmk: Danila}, we have that $\pi_*(E^{2,0}_1)^{\perm_n} \simeq \pi_*(\FS_{\Delta_{\Gamma_1}} \tens \Res_{\perm_{\Gamma_1}}\epsilon_{\Gamma_1})^{\perm_{\Gamma_1}} \oplus \pi_*(\FS_{\Gamma_2} \tens \Res_{\perm_{\Gamma_2}}\epsilon_{\Gamma_2})^{\perm_{\Gamma_2}}$, where $\Gamma_1$ is a graph of type $A_2$, and $\Gamma_2$ is a graph of type $B_2$. Now the first summand is zero and the second identifies to the one in the statement, when taken $\Gamma_2$ to be the graph with edges $\{1, 2 \}$, $\{ 3, 4 \}$.  
\end{proof}
\begin{notat}Let $n, l \in \mbb{N}^*$, $l <n$.  We denote with $w_l$ the morphism $X \times S^{n-l}X \rTo S^n X$ sending 
$(x, y)$ to the $0$-cycle $lx +y$. It is a finite morphism if $l=1$ and a closed immersion if $l \geq 2$. 
\end{notat}

\begin{pps}\label{pps: A4}
In the identifications $(E^{1,0}_1)^{\perm_n} \simeq {w_2}_*(\FS_X \boxtimes \FS_{S^{n-2}X})$ and $(E^{2,0}_1)^{\perm_n} \simeq \mc{A}_4(\FS_X)$, 
the  invariant differential $ d^{\perm_n}_1 : (E^{1,0}_1)^{\perm_n} \rTo  (E^{2,0}_1)^{\perm_n}$ of the spectral sequence $(E^{p,q}_1)^{\perm_n}$
is
determined locally, over an affine open set of the form $S^n U = \Spec S^n A$, by the formula 
$$ d^{\perm_n}_1 (a \tens b_1 . \dots . b_{n-2}) = \sum_{1 \leq i<j \leq n-2} ( a \tens b_i b_j - b_i b_j \tens a ) \tens \widehat{b_{ij}} \;,$$where $a, b_i \in A$. 
\end{pps}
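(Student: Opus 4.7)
The plan is to compute the differential directly on an orbit representative, using Danila's lemma to reduce the verification to a single component of the target.

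First, I would unpack the two identifications locally over $S^n U = \Spec S^n A$. By Danila's lemma applied to the $\perm_n$-orbit of $\{1,2\}$, an element $a \tens b_1 \cdot \ldots \cdot b_{n-2} \in A \tens S^{n-2}A$ corresponds to the invariant collection $(f_I)_I \in \bigoplus_{|I|=2}\FS_{\Delta_I}$ determined by $f_{\{1,2\}} = a \tens (b_1 \cdot \ldots \cdot b_{n-2})$, viewed as a $\Stab_{\perm_n}(\{1,2\})$-invariant section of $\FS_{\Delta_{12}}$, and extended by $f_I = \sigma_I \cdot f_{\{1,2\}}$ for any $\sigma_I \in \perm_n$ with $\sigma_I(\{1,2\}) = I$. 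On the target side, Table \ref{table: res} gives $\Res_{\perm_\Gamma}\epsilon_{E_\Gamma} \simeq \epsilon$ for $\Gamma$ of type $A_2$, which has no invariants because the nontrivial element of $\perm_\Gamma$ acts trivially on $\FS_{\Delta_\Gamma}$ while acting by $-1$ on $\epsilon_{E_\Gamma}$. Hence the only nonzero contribution to $(E^{2,0}_1)^{\perm_n}$ comes from the $B_2$-orbit, and by Lemma \ref{lmm: A4} we may take as representative $\Gamma_0 = \{\{1,2\},\{3,4\}\}$ with invariants locally isomorphic to $\Lambda^2 A \tens S^{n-4}A$.

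Second, by Danila applied to the $B_2$-orbit, $d_1(f)$ is determined by its single component on $\Gamma_0$. Using the explicit description of the differential from Remark \ref{rmk: graphs} and Lemma \ref{lmm: E1terms}, and the values of $\epsilon_{\Gamma,\Gamma'}$ in lexicographic order, we have, up to an overall sign, $d^{\perm_n}_1(f)_{\Gamma_0} = \bigl(f_{\{1,2\}} - f_{\{3,4\}}\bigr)\trest_{\Delta_{\Gamma_0}}$.

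Third, I would compute each restriction. Unwinding the symmetric product via Conventions \ref{conv: sym}, we have $b_1 \cdot \ldots \cdot b_{n-2} = \sum_{\sigma \in \perm_{n-2}} b_{\sigma(1)} \tens \cdots \tens b_{\sigma(n-2)}$; when we restrict $f_{\{1,2\}}$ to $\Delta_{\Gamma_0}$, the coordinates $x_3$ and $x_4$ get identified to a single variable $y$, and grouping the terms of this sum by the unordered pair of indices that land on these two positions gives, for each $1 \leq i<j \leq n-2$, a contribution of the form $a(x)\,(b_ib_j)(y)\,\widehat{b_{ij}}(c_1,\ldots,c_{n-4})$ (times a fixed combinatorial factor). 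Analogously, $f_{\{3,4\}}\trest_{\Delta_{\Gamma_0}}$ produces $a(y)\,(b_ib_j)(x)\,\widehat{b_{ij}}(c_1,\ldots,c_{n-4})$.

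Finally, the difference is anti-invariant under $(13)(24)$ (consistently with the twist by $\Res\epsilon_{E_{\Gamma_0}}$) and, under the natural embedding $\Lambda^2 A \tens S^{n-4}A \hookrightarrow A \tens A \tens S^{n-4}A$ sending $u \wedge v \tens c \mapsto (u \tens v - v \tens u) \tens c$, it identifies with $\sum_{i<j}(a \tens b_ib_j - b_ib_j \tens a)\tens \widehat{b_{ij}}$, which is the claimed formula. The main obstacle is the careful bookkeeping of the multiplicative constants arising from the convention for the symmetric product and of the signs $\epsilon_{\Gamma,\Gamma'}$ and $\Res_{\perm_\Gamma}\epsilon_{E_\Gamma}$; these must combine consistently, and fixing this is what pins down the exact formula up to normalization of the Danila isomorphisms.
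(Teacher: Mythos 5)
Your proposal is correct and follows essentially the same route as the paper: identify the target with the single $B_2$-orbit component $\Gamma_2=\{\{1,2\},\{3,4\}\}$ via Danila's lemma (the $A_2$ component dying because of the $\epsilon$-twist), reduce $d_1^{\perm_n}$ to the difference of restrictions $(f_{\{1,2\}}-f_{\{3,4\}})\trest_{\Delta_{\Gamma_2}}$, and expand the symmetric product so that identifying two of the factors produces the sum over pairs $b_ib_j$. The paper phrases the last step as $d_1$ applied to $a\tens b_1\dots b_{n-2}+(13)(24)_*(a\tens b_1\dots b_{n-2})$, which is the same computation.
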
\begin{proof}The expression is obtained --- over an affine open set of the form $S^n U$ as in the statement --- by identifying 
$\pi_*(E^{2,0}_1)^{\perm_n}$ with $\pi_*(\FS_{\Delta_{\Gamma_2}})^{\perm_{\Gamma_2}}$ where $\Gamma_2$ is the 
graph with  edges $\{1, 2 \}$, $\{3, 4 \}$ we considered in the proof of lemma \ref{lmm: A4}. Hence the map of invariants $d_1^{\perm_n}$ can be identified with 
the morphism 
$$ {w_2}_*(\FS_X \boxtimes \FS_{S^{n-2}X})  \simeq \pi_*(\FS_{\Delta_{12}})^{\perm(\{3, \dots, n\})} \simeq [ \pi_*(\FS_{\Delta_{12}}) \oplus \pi_*(\FS_{\Delta_{34}}) ]^{\perm_{\Gamma_2}} \rTo \pi_*(\FS_{\Delta_{\Gamma_2}})^{\perm_{\Gamma_2}} \simeq  \mc{A}_4(\FS_X)$$
given by \begin{align*} d_1^{\perm_n} (a \tens b_1\dots b_{n-2})  = \: & d_1 (a \tens b_1 \dots b_{n-2} + (13)(24)_*(a \tens b_1 \dots b_{n-2})) \\
= \: & d_1 (a \tens b_1 \dots b_{n-2})  + (13)(24)_* d_1 (a \tens b_1 \dots b_{n-2}) \\
= \: &  \sum_{1 \leq i<j \leq n-2} ( a \tens b_i b_j - b_i b_j \tens a ) \tens \widehat{b_{ij}} \end{align*}
where we saw the element $b_1 . \dots . b_{n-2}$ as a $\perm(\{3, \dots , n \})$-invariant element  in $H^0(U)^{\tens n-2}$. 
\end{proof}
\begin{notat}\label{notat: kerd1sigma}We denote the kernel 
$\ker d_1^{\perm_n}$ with $ {w_2}_*(\FS_X \boxtimes \FS_{S^{n-2}X})_0$. 
\end{notat}
The next lemma is immediate from lemma \ref{lmm: E1terms}. 
\begin{lemma}The invariants $(E^{3,-1}_2)^{\perm_n}$ of the term $E^{3,-1}_2$ of the spectral sequence $E^{p,q}_1$ 
are isomorphic to the sheaf 
${w_3}_*( (\Omega^1_X \boxtimes \mc{I}_{\Delta_{n-2}})^{\perm_{n-3}} )$, where $\perm_{n-3}$ acts on the factor $X^{n-3}$ of the product 
$X \times X^{n-3}$. 
\end{lemma}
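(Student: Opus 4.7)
The plan is to apply Danila's lemma from remark \ref{rmk: Danila} to the decomposition $E^{3,-1}_2 \simeq \bigoplus_{|H|=3} (\Omega^1_X \boxtimes \mc{I}_{\Delta_{n-2}})_{K_3(H)}$ furnished by lemma \ref{lmm: E1terms}. The $3$-cycle subgraphs $K_3(H) \subseteq K_n$ form a single $\perm_n$-orbit with representative $H_0 = \{1,2,3\}$ and stabilizer $\perm_{K_3(H_0)} \simeq \perm_3 \times \perm_{n-3}$, so Danila's lemma reduces the $\perm_n$-invariants to the $(\perm_3 \times \perm_{n-3})$-invariants of the single component $(\Omega^1_X \boxtimes \mc{I}_{\Delta_{n-2}})_{K_3(H_0)}$, equipped with the comprehensive action of remark \ref{rmk: comprehensive}.

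The crucial observation is that the comprehensive $\perm_3$-action on this component is trivial. The geometric $\perm_3$-action fixes the diagonal $\Delta_{H_0} \simeq X$ pointwise and therefore acts trivially on $\Omega^1_X$; the representation-theoretic contributions come from proposition \ref{pps: torgamma}, which identifies the fiber factor with $\Omega^1_X \otimes q_{K_3}$, together with the twist $\Res_{\perm_3} \epsilon_{E_{K_3(H_0)}}$ prescribed by remark \ref{rmk: comprehensive}. Proposition \ref{pps: qKn} gives $q_{K_3} \simeq \epsilon$, and the natural map $\perm_3 \rTo \perm(E_{K_3(H_0)})$ induced by the vertex action on the three edges is a group isomorphism, so $\Res_{\perm_3} \epsilon_{E_{K_3(H_0)}} \simeq \epsilon$ as well. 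The two sign representations cancel and the comprehensive $\perm_3$-action is trivial.

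To finish, I would note that $\perm_{n-3}$ fixes every edge of $K_3(H_0)$, hence acts trivially on $\epsilon_{E_{K_3(H_0)}}$ and on $\Omega^1_X$, and merely permutes the last $n-3$ coordinates of $X^n$ in the standard way. The restriction of $\pi: X^n \rTo S^n X$ to $\Delta_{H_0} \times X^{n-3} \simeq X \times X^{n-3}$ factors as the quotient $X \times X^{n-3} \rTo X \times S^{n-3}X$ by $\perm_{n-3}$ followed by the closed immersion $w_3 : X \times S^{n-3}X \rInto S^n X$. Pushing forward along this factorization and taking $\perm_{n-3}$-invariants produces the claimed identification with ${w_3}_*((\Omega^1_X \boxtimes \mc{I}_{\Delta_{n-2}})^{\perm_{n-3}})$. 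The only delicate point is the sign cancellation in the $\perm_3$-analysis; once that is in place, the pushforward identification is routine.
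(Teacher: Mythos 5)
Your proposal is correct and follows exactly the route the paper intends: the paper declares this lemma ``immediate from lemma \ref{lmm: E1terms}'' without writing a proof, and the steps you supply (Danila's lemma applied to the single $\perm_n$-orbit of $3$-cycles with stabilizer $\perm_3\times\perm_{n-3}$, followed by the cancellation of $q_{K_3}\simeq\epsilon$ against $\Res_{\perm_3}\epsilon_{E_{K_3}}$ from table \ref{table: res}, and the factorization of $\pi\trest_{\Delta_{K_3(H_0)}}$ through $w_3$) are precisely the ones the author uses implicitly, e.g.\ in the proof of theorem \ref{thm: inv3} and in the $K_3$ column of table \ref{table: inv}. No gaps.
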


\subsection{The case \protect $n =3$.}\label{subsection: n=3}
\begin{theorem}\label{thm: inv3}
Let $X$ be a smooth algebraic variety. The complex of coherent sheaves over $S^3 X$ 
$$ \mbb{I}^\bullet_3 \colon \qquad 0 
\rTo \FS_{S^3 X} {\rTo^r} {w_2}_* (\FS_{X \times X}) {\rTo^D} {w_3}_*( \Omega^1_X ) \rTo 0 $$--- 
where $r$ is the restriction and $d$ is given locally by $D(a \tens b ) = 2a db - b da$ --- is a resolution of the sheaf of invariants $(\mc{I}_{\Delta_3})^{\perm_3}$.  
\end{theorem}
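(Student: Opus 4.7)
The plan is to apply the $\perm_3$-invariant spectral sequence $(E^{p,q}_1)^{\perm_3}$ associated to the derived tensor product $\Tens^L_{|I|=2}\mc{K}^\bullet_I$. Since the abutment $\mc{H}^{p+q}(\Tens^L_I\mc{K}^\bullet_I)$ vanishes for $p+q>0$ by Remark \ref{rmk: abut}, the $E_\infty$-page will control the exactness of the proposed complex $\mbb{I}^\bullet_3$.

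\textbf{Computation of the $E_1$-page of invariants.} Subgraphs of $K_3$ without isolated vertices fall, up to the $\perm_3$-action, into three classes: the single edge $A_1$ ($l=1$), the two-path $A_2$ ($l=2$), and the triangle $K_3$ itself ($l=3$). Using Danila's lemma (Remark \ref{rmk: Danila}), Propositions \ref{pps: torgamma} and \ref{pps: qKn} (which give $q_{K_3}\simeq\epsilon$), and Table \ref{table: res}, the only terms that survive on the diagonals $p+q\ge 1$ are
\[
(E^{0,0}_1)^{\perm_3}\simeq \FS_{S^3X},\quad (E^{1,0}_1)^{\perm_3}\simeq (w_2)_*(\FS_{X\times X}),\quad (E^{3,-1}_1)^{\perm_3}\simeq (w_3)_*(\Omega^1_X).
\]
For $\Gamma\in\{A_2,K_3\}$ the comprehensive sign $\Res_{\perm_\Gamma}\epsilon_{E_\Gamma}=\epsilon$ is nontrivial while the stabilizer acts trivially on $\FS_{\Delta_3}$, so $(E^{2,0}_1)^{\perm_3}=(E^{3,0}_1)^{\perm_3}=0$; at $(3,-1)$ the sign from $\epsilon_{E_{K_3}}$ cancels the one from $q_{K_3}$, leaving $\Omega^1_X$ with trivial $\perm_3$-action. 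Higher Tor contributions $(E^{3,-q}_1)^{\perm_3}$ with $q\ge 2$ either vanish (even $q$) or live on diagonals $p+q\le 0$, hence are irrelevant for the exactness argument.

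\textbf{From the spectral sequence to exactness.} By Lemma \ref{lmm: E1terms}, the differential $d_1:(E^{0,0}_1)^{\perm_3}\to (E^{1,0}_1)^{\perm_3}$ is the restriction $r$, whose kernel is $(\mc{I}_{\Delta_3})^{\perm_3}$. All remaining $d_1$'s vanish for lack of a nonzero source or target, so $(E^{0,0}_2)^{\perm_3}=(\mc{I}_{\Delta_3})^{\perm_3}$, $(E^{1,0}_2)^{\perm_3}=\coker r$, and $(E^{3,-1}_2)^{\perm_3}=(w_3)_*(\Omega^1_X)$. The only possibly nontrivial higher differential is $d_2:\coker r\to (w_3)_*(\Omega^1_X)$; since $\mc{H}^1$ and $\mc{H}^2$ of the abutment both vanish, $d_2$ must be injective and surjective, hence an isomorphism $\widetilde D$. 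Composing with the projection $(w_2)_*(\FS_{X\times X})\to\coker r$ produces a map $D$ with $\ker D=\mathrm{im}\,r$ and $D$ surjective, yielding the four-term exact sequence of the statement.

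\textbf{The explicit formula for $D$.} The step I expect to be the main obstacle is identifying the abstract spectral sequence differential with the formula $D(a\otimes b)=2a\,db-b\,da$. I plan to verify this by a local Koszul computation on an affine open $S^3U=\Spec S^3A$: resolve each $\FS_{\Delta_I}$ by the Koszul complex on two generators of $\mc{I}_{\Delta_I}$, lift a representative of a class in $(E^{1,0}_2)^{\perm_3}$ through the bicomplex, and run the usual zig-zag for $d_2$, keeping careful track of the signs from the comprehensive $\perm_n$-action (Remark \ref{rmk: comprehensive}) and of the multiplicity $2$ coming from $w_2(x,y)=2x+y$. As a sanity check before the zig-zag, one verifies directly that $D\circ r=0$ on the elementary symmetric polynomials (for instance $r(t_1+t_2+t_3)=2t\otimes 1+1\otimes t$ maps to $-2\,dt+2\,dt=0$) and that $D(1\otimes a)=2\,da$ makes $D$ surjective onto $(w_3)_*(\Omega^1_X)$, confirming compatibility with the abstract isomorphism established in the previous step.
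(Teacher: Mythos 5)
Your proposal follows essentially the same route as the paper: the $\perm_3$-invariant spectral sequence of $\Tens^L_I\mc{K}^\bullet_I$, the identification of the only surviving invariant terms on the diagonals $p+q\ge 1$ as $\FS_{S^3X}$, ${w_2}_*(\FS_{X\times X})$ and ${w_3}_*(\Omega^1_X)$ via $q_{K_3}\simeq\epsilon$ and the sign table, and the forced isomorphism $d_2:\coker r\to{w_3}_*(\Omega^1_X)$ from the vanishing of the abutment in positive degree. The explicit local Koszul zig-zag you outline for identifying $d_2^{\perm_3}$ with $D(a\tens b)=2a\,db-b\,da$ is exactly what the paper does in its appendix (Lemma \ref{lmm: d2a}, Corollary \ref{crl: B}, Proposition \ref{pps: D}), and your sanity checks are consistent with it.
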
\begin{proof}All subgraphs $\Gamma \subseteq K_3$ are connected. Hence the multitors $\Tor_{q}(\Delta, \Gamma)$ are isomorphic to $$ \Lambda^q (\Omega^1_X \tens q_\Gamma)_{\Gamma} \tens \Res_{\perm_\Gamma} \epsilon_{E_\Gamma}$$for the comprehensive $\perm_\Gamma$-action. Remembering that $q_{K_3} \simeq \epsilon_3$ and 
using  table \ref{table: res} we see immediately that the term $(E^{3,-2}_1)^{\perm_3} \simeq \Tor_2(\Delta, K_3)^{\perm_3} \simeq 
 {w_3}_*(\Lambda^2 \Omega^1_X \tens \epsilon)^{\perm_3} $ vanishes. 
 Moreover, $(E^{2, 0}_1)^{\perm_3} \simeq \pi_*( \FS_{\Delta_{123}} \tens \epsilon) ^{\perm_2}$, $(E^{3,0}_1)^{\perm_3} \simeq \pi_*(\FS_{\Delta_{123}} \tens \epsilon)^{\perm_3}$: hence there are no $\perm_3$-invariants for $q=0$, $p=2,3$. 
 Therefore, for $p+q \geq 0$, the only nonzero terms in the spectral sequence of invariants are of the form $(E^{0,0}_1)^{\perm_3}$, $(E^{1,0}_1)^{\perm_3}$, $(E^{3,-1}_1)^{\perm_3}$ and $(E^{3,-3}_1)^{\perm_3}$. The first two are easily proven to be isomorphic to the sheaves $\FS_{S^3 X}$ and ${w_2}_*(\FS_{X \times X})$, respectively. 
Moreover  $(E^{3,-1}_1)^{\perm_3} \simeq  \Tor_1(\Delta, K_3)^{\perm_3} \simeq 
{w_3}_* ( \Omega^1_X)$ and, analogously, $(E^{3,-3}_1)^{\perm_3} \simeq {w_3}_*(\Lambda^3 \Omega^1_X)$. 
Hence we have the resolution of
the statement where the map $D: {w_2}_* (\FS_{X \times X}) {\rTo} {w_3}_* ( \Omega^1_X) $ is induced by the second differential $d_2^{\perm_3}$ of the spectral sequence of invariants; the precise local expression of the map $D$ follows from proposition \ref{pps: D} in the appendix. 
\end{proof}
Let $G = \perm(\{23\})$. In section \ref{subsec: sheaves} we will need the following result about the invariants $\pi_*(\mc{I}_{\Delta_3})^G$. 

\begin{pps}\label{pps: 211}Let $X$ be a smooth algebraic variety. Over $S^3X$, the sheaf of invariants $\pi_*(\mc{I}_{\Delta_3})^G$ is resolved by the complex 
$$ 0 {\rTo} {w_1}_* (\FS_X \boxtimes \FS_{S^2 X}) {\rTo} \left [ {w_2}_*(\FS_X \boxtimes \FS_X )^{\oplus 2} \right ] _0  {\rTo} {w_3}_*(\Omega^1_X) \rTo 0 \;.$$Here the sheaf $\left [{w_2}_*(\FS_X \boxtimes \FS_X )^{\oplus 2} \right]_0$ is the kernel 
of the map ${w_2}_*(\FS_X \boxtimes \FS_X )^{\oplus 2}  {\rTo} {w_2}_*( \FS_{\Delta_2})$ given locally by 
$(a \tens u, b \tens v) \rMapsto au - bv$.  
The first map of the complex is locally defined as $a \tens u.v \rMapsto (a u \tens v + av \tens u, 
2 uv \tens a)$, while the second is determined by $(a \tens u, b \tens v) \rMapsto 2adu - vdb$. \end{pps}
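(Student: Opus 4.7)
The strategy is to replicate the spectral-sequence argument used to prove Theorem \ref{thm: inv3}, now taking $G$-invariants (where $G = \perm(\{2,3\})$) of the equivariant spectral sequence (\ref{eq: spectralsequence}) associated to $\Tens^L_I \mc{K}^\bullet_I$. By Danila's lemma (remark \ref{rmk: Danila}) the resulting sequence $(E^{p,q}_1)^G$ abuts to $\mc{H}^{p+q}(\Tens^L_I \mc{K}^\bullet_I)^G$, which vanishes for $p+q > 0$ by remark \ref{rmk: abut}. The three terms of the proposed resolution will arise by reading off $(E^{p,q}_1)^G$ in total degrees $0$, $1$, $2$, with vanishing of the abutment in degrees $1$ and $2$ supplying the exactness.

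First I would classify the $G$-orbits of subgraphs of $K_3$ without isolated vertices: the single edges split into $\{\{2,3\}\}$ and $\{\{1,2\},\{1,3\}\}$; the $2$-edge subgraphs into $\{\Gamma_a\}$ with $\Gamma_a = \{\{1,2\},\{1,3\}\}$ fixed by $G$, and $\{\Gamma_b, \Gamma_c\}$ with $\Gamma_b = \{\{1,2\},\{2,3\}\}$; and the class $\{K_3\}$ is fixed. Combining proposition \ref{pps: torgamma} with the sign twist $\Res_{\perm_\Gamma} \epsilon_{E_\Gamma}$ from the comprehensive action (remark \ref{rmk: comprehensive}) I compute: $(E^{0,0}_1)^G \simeq {w_1}_*(\FS_X \boxtimes \FS_{S^2 X})$; $(E^{1,0}_1)^G \simeq {w_2}_*(\FS_X \boxtimes \FS_X)^{\oplus 2}$, one summand per edge-orbit; $(E^{2,0}_1)^G \simeq {w_3}_*(\FS_X)$, where the $\Gamma_a$-piece vanishes because $(23)$ swaps the two edges of $\Gamma_a$ and so acts by $\epsilon_{E_{\Gamma_a}}((23)) = -1$, while the $\{\Gamma_b, \Gamma_c\}$-orbit contributes one copy since $\Stab_G(\Gamma_b) = \{e\}$; $(E^{3,0}_1)^G = (E^{3,-2}_1)^G = 0$ by analogous sign computations; and $(E^{3,-1}_1)^G \simeq {w_3}_*(\Omega^1_X)$ because $q_{K_3} \tens \epsilon_{E_{K_3}}$ is the trivial representation.

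For the differentials, $d_1^G : (E^{0,0}_1)^G \to (E^{1,0}_1)^G$ is just restriction; evaluating on a $G$-invariant section $a(x_1)\,(u(x_2) v(x_3) + u(x_3) v(x_2))$ at $\Delta_{12}$ and at $\Delta_{23}$ gives the formula $a \tens u.v \mapsto (au \tens v + av \tens u,\, 2uv \tens a)$. Using remark \ref{rmk: graphs}, the next $d_1^G : (E^{1,0}_1)^G \to (E^{2,0}_1)^G$ reduces on the $\Gamma_b$-representative to $(a \tens u, b \tens v) \mapsto au - bv$, so its kernel is exactly the subsheaf $[{w_2}_*(\FS_X \boxtimes \FS_X)^{\oplus 2}]_0$ of the statement. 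The higher differential $d_2^G : (E^{1,0}_2)^G \to (E^{3,-1}_2)^G$ is identified with $(a \tens u, b \tens v) \mapsto 2a\,du - v\,db$ by the same Koszul-model argument used for the $D$-map in Theorem \ref{thm: inv3}; a direct check confirms that this formula annihilates the image of $d_1^G$ (the identity $2(au)\,dv + 2(av)\,du - a\,d(2uv) = 0$ holds by Leibniz), so it factors through the quotient $(E^{1,0}_2)^G$.

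To conclude, the vanishing of the abutment in total degrees $1$ and $2$, together with the absence of other non-zero terms on those anti-diagonals, forces $(E^{1,0}_\infty)^G = (E^{3,-1}_\infty)^G = 0$; this makes $d_2^G$ induce an isomorphism $(E^{1,0}_2)^G \simeq (E^{3,-1}_2)^G = {w_3}_*(\Omega^1_X)$. Combined with $\pi_*(\mc{I}_{\Delta_3})^G = \ker d_1^G$, this yields exactness of the four-term sequence of the statement. The main technical obstacle is the explicit description of $d_2^G$ as the derivation map $(a \tens u, b \tens v) \mapsto 2a\,du - v\,db$, which requires the careful Koszul-model computation analogous to the appendix treatment of the $D$-map in Theorem \ref{thm: inv3}.
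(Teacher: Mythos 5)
Your argument is correct and follows the paper's proof essentially verbatim: the same $G$-invariant spectral sequence, the same Danila-lemma orbit decomposition of the terms $(E^{p,q}_1)^G$, and the same extraction of $d_2^G$ by specializing the appendix computation (corollary \ref{crl: B}) to $n=3$ and imposing $f_{13}=(23)_*f_{12}$. The only imprecision is the phrase ``absence of other non-zero terms on those anti-diagonals'': $(E^{2,0}_1)^G \simeq {w_3}_*(\FS_X)$ is non-zero on the total-degree-$2$ anti-diagonal at page $1$ and only dies at $E_2$ because the $d_1$ you wrote down, $(a \tens u, b \tens v) \mapsto au - bv$, is surjective --- a point worth stating explicitly, since the surjectivity of $d_2^G$ onto $(E^{3,-1}_2)^G$ depends on it.
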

\begin{proof}We consider the invariants $\pi_*(E^{p,q}_1)^G$ of the spectral sequence $E^{p,q}_1$ by the group $G = \perm(\{2 3 \})$. 
By proposition \ref{pps: torgamma}, remark \ref{rmk: comprehensive} and remark \ref{rmk: Danila}, the terms 
$\pi_*(E^{p,q}_1)$, as $G$-representations, are 
\begin{align*} \pi_*(E^{p,q}_1) \simeq \: 
&
 \bigoplus_{[\Gamma] \in \Graphs_{p,3}/G } \Lambda^{-q}(\Omega^1_X \tens \Res_{\Stab_G(\Gamma)} q_\Gamma)_\Gamma \tens \Res_{\Stab_G(\Gamma)} \epsilon_{E_{\Gamma}} \;.
 \end{align*}
 It is then immediate to prove that $\pi_*(E^{0,0}_1)^G \simeq \pi_*(\FS_{X^3})^{G} \simeq {w_1}_*(\FS_X \boxtimes \FS_{S^2 X})$, 
 $\pi_*(E^{1,0}_1)^G \simeq \pi_*(\FS_{\Delta_{12}}) \oplus \pi_*(\FS_{\Delta_{23}}) \simeq  {w_2}_*(\FS_X \boxtimes \FS_X)^{\oplus 2}$, $\pi_*(E^{2,0}_1)^G \simeq \pi_*( \FS_{\Delta_{123}}) \simeq {w_2}_*(\FS_{\Delta_2})$. 
 For $q<0$ and $p + q \geq 0$, the only nontrivial terms are 
$$ \pi_*(E^{3, -1}_1)^G \simeq 
{w_3}_*(\Omega^1_X)$$and $ \pi_*(E^{3, -3}_1)^G \simeq {w_3}_*(\Lambda^3 \Omega^1_X)$, since $ \pi_*(E^{3,-2}_1)^G 
\simeq \left[ {w_3}_*(\Lambda^2(\Omega^1_X \tens \epsilon) ) \tens \epsilon \right]^G = 0 $. It is now easy to see that   $\pi_*(E^{1, 0}_2)^G \simeq \left [ {w_2}_*(\FS_X \boxtimes \FS_X )^{\oplus 2} \right ] _0$; 
hence, drawing the page $E_2$ of the spectral sequence,  we get the complex in the statement. To prove that the maps are the ones mentioned above, one sees immediately that the first is induced by restrictions, while for the second one has just to track down the higher differential $d_2^G$, but this is done easily taking $G$-invariants in the statement of corollary \ref{crl: B}. 
\end{proof}

\subsection{The case \protect $n = 4$.}\label{subsection: n=4}
In order to understand the $\perm_4$-invariants of the sheaf $\mc{I}_{\Delta_4}$, 
we have to work out the spectral sequence $\pi_*(E^{p,q}_1)^{\perm_4}$; the first step, by virtue of remark \ref{rmk: Danila}, is to compute, for each class $[\Gamma] \in  \Gamma \in \Graphs_{p, 4} / \perm_4$,  the invariants $\pi_* (\Tor_{-q}(\Delta, \Gamma) \tens \Res_{\perm_\Gamma} \epsilon_{E_\Gamma})^{\perm_{\Gamma}}$. For $q<0$, we are just interested in  graphs with at least one cycle,
which  are all connected:  the above sheaves then have the form 
$$ \pi_* (\Tor_{-q}(\Delta, \Gamma) \tens \Res_{\perm_\Gamma} \epsilon_{E_\Gamma})^{\perm_{\Gamma}} \simeq \pi_*(\Lambda^{-q}(\Omega^1_X \tens q_\Gamma)_\Gamma  \tens \Res_{\perm_\Gamma} \epsilon_{E_\Gamma})^{\perm_{\Gamma}} $$and hence can be computed easily by combining table \ref{table: qgamma} with table \ref{table: res}.  For convenience of the reader we present the computation of the invariants $\pi_* (\Tor_{-q}(\Delta, \Gamma) \tens \Res_{\perm_\Gamma} \epsilon_{E_\Gamma})^{\perm_{\Gamma}} $ in the following table.  

\vspace{0.2cm}
\begin{table}[H]
\centering
\resizebox{\textwidth}{!}{%
 {\renewcommand{\arraystretch}{1.5} 
 \begin{tabular}{>{$}c<{$}|>{$}c<{$}|>{$}c<{$}|>{$}c<{$}|>{$}c<{$}|>{$}c<{$}}
 & \Gamma = K_3                     & \Gamma = K_3 \cup J & \Gamma = C_4 & \Gamma = C_4 \cup L & \Gamma = K_4 \\ \hline
q = -1                                  & {w_3}_*(\Omega^1_X \boxtimes \FS_X) & {w_4}_*(\Omega^1_X) & 0 &  0 & 0 \\ \hline
q = -2 &  & 0 & 0 & {w_4}_* ( \Lambda^2 \Omega^1_X \oplus \Lambda^2 \Omega^1_X) &  {w_4}_*( \Lambda^2 \Omega^1_X)  \\ \hline
 q = -3 & {w_3}_*( \Lambda^3 \Omega^1_X \boxtimes \FS_X) & {w_4}_*( \Lambda^3 \Omega^1_X) & 0 &0 &  {w_4}_* ( S^3 \Omega^1_X )  \\
 \hline 
 q = -4 & 0 & 0 & 0 &  {w_4}_* ( (\Lambda^4 \Omega^1_X)^{\oplus 3} \oplus S^{2,1,1} \Omega^1_X \oplus S^{2,2} \Omega^1_X)   &  {w_4}_*( (\Lambda^4 \Omega^1_X)^{\oplus 2} \oplus S^{2,2} \Omega^1_X)    \\ \hline
q = -5& {w_3}_*( \Lambda^5 \Omega^1_X \boxtimes \FS_X) & {w_3}_*( \Lambda^5 \Omega^1_X) & 0 & 0& {w_4}_* (S^{3,1,1} \Omega^1_X) 
\\ \hline
q = -6 & 0 & 0 & 0 & {w_4}_*( (\Lambda^6 \Omega^1_X )^{\oplus 4}  \oplus (S^{2,1,1,1,1} \Omega^1_X)^{\oplus 2}) \oplus    &  {w_4}_*( (\Lambda^6 \Omega^1_X )^{\oplus 3}   \oplus S^{2,1,1,1,1} \Omega^1_X) ) \oplus \\ &&&& {w_4}_* ((S^{2,2,1,1} \Omega^1_X)^{\oplus 2} )&  {w_4}_*(  (S^{2,2,1,1} \Omega^1_X )^{\oplus 2} \oplus    S^{3,3} \Omega^1_X  )
\end{tabular}%
}
}
\caption{} \label{table: inv}
\end{table}

\noindent
As for $q = 0$, it is clear that $$ \pi_* (E^{p, 0}_1)^{\perm_4} \simeq 
 \pi_* (\varGamma^{p}_0)^{\perm_n} \simeq \oplus_{[\Gamma] \in \Graphs_{p,4} / \perm_4} \pi_*( \FS_{\Delta_\Gamma} \tens  \Res_{\perm_\Gamma} \epsilon_{E_\Gamma})^{\perm_{\Gamma}} \;.$$
 We have the following lemmas
 \begin{lemma}\label{lmm: Ep0}The complex $\pi_* (E^{\bullet, 0}_1)^{\perm_4} \simeq  \pi_* (\varGamma^{\bullet}_0)^{\perm_n}$ is 
 quasi isomorphic to the complex 
 $$ 0 \rTo \FS_{S^4 X} {\rTo^r} {w_2}_*(\FS_X \boxtimes \FS_{S^2 X}) \rTo^{d_1^{\perm_n}} \mc{A}_4(\FS_X) \rTo 0 \;,$$
 which is exact in degree greater or equal than $2$. The first map is the restriction, while the second is given locally by $a \tens b. c \rMapsto a \wedge bc$. 
  \end{lemma}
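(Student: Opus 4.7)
The plan is to identify each term $\pi_*(\varGamma^p_0)^{\perm_4}$ via Danila's lemma (remark~\ref{rmk: Danila}) together with the classification of $\Graphs_{p,4}/\perm_4$ and the stabilizer/sign-character data in table~\ref{table: res}. The guiding criterion is that whenever $\perm_\Gamma$ acts trivially on $\FS_{\Delta_\Gamma}$ --- in particular whenever $\Delta_\Gamma$ is the small diagonal $\Delta_4$ --- and $\Res_{\perm_\Gamma}\epsilon_{E_\Gamma}$ is nontrivial, the contribution of that class vanishes. The terms for $p=0,1$ are immediate ($\FS_{S^4 X}$, and from the unique class $A_1$, ${w_2}_*(\FS_X \boxtimes \FS_{S^2 X})$); for $p=2$ the class $A_2$ vanishes by the criterion while $B_2$ yields $\mc{A}_4(\FS_X)$ by lemma~\ref{lmm: A4}. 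The first differential is the restriction $r$ by construction, and the second is the map $d_1^{\perm_n}$ of proposition~\ref{pps: A4}, whose local formula specialized to $n=4$ (where $S^{n-4}H^0(\FS_U)=\mbb{C}$) becomes $a \tens b.c \mapsto a \wedge bc$. Taking $b=1$ realizes every generator $a \wedge c$ of $\Lambda^2 H^0(\FS_U)$ as an image, so $d_1^{\perm_4}$ is surjective onto $\mc{A}_4(\FS_X)$, which is the claimed exactness of the target complex in degree~$2$.

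For $p=3,4$, each class $A_3, B_3, K_3, K_3 \cup J, C_4$ satisfies $\Delta_\Gamma = \Delta_4$, and a direct check on a graph-preserving transposition (or, for $C_4$, via the nontrivial character $\ell(-1_\rho, 1_\sigma)$ of $D_4$ from table~\ref{table: res}) shows $\Res_{\perm_\Gamma}\epsilon_{E_\Gamma}$ is nontrivial. Thus $\pi_*(\varGamma^p_0)^{\perm_4} = 0$ for $p = 3, 4$. For $p = 5, 6$ both remaining classes $C_4 \cup L$ and $K_4$ still have $\Delta_\Gamma = \Delta_4$, but now with trivial sign characters, so each term equals ${w_4}_*(\FS_X)$. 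It remains to verify that the connecting differential $d^5 \colon {w_4}_*(\FS_X) \to {w_4}_*(\FS_X)$ is an isomorphism. Choosing $\Gamma_0 = K_4 \setminus \{3,4\}$ as Danila representative, with invariant $\tilde u$ corresponding to $u \in \FS_{\Delta_4}$, remark~\ref{rmk: comprehensive} forces $\tilde u_{\Gamma_k} = \kappa_k\, u$ for each $\Gamma_k = K_4 \setminus e_k$, where $\kappa_k$ is the signature of the permutation induced on lexicographically ordered edge sets by any $\sigma_k \in \perm_4$ sending $\Gamma_0$ to $\Gamma_k$. A direct case analysis yields $\epsilon_{\Gamma_k, K_4} \cdot \kappa_k = -1$ for each $k = 1, \dots, 6$, so $d^5(u) = -6u$, an isomorphism in characteristic zero.

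Putting this together, the natural map from the displayed three-term complex into $\pi_*(\varGamma^\bullet_0)^{\perm_4}$ (identity in degrees $\leq 2$, zero beyond) is a quasi-isomorphism: degrees $0, 1$ agree tautologically, while both complexes have $H^p = 0$ for $p \geq 2$ (source by surjectivity of $d_1^{\perm_4}$; target by the vanishing of $\pi_*(\varGamma^p_0)^{\perm_4}$ for $p = 3, 4$ combined with $d^5$ being an isomorphism). The main delicate step is the sign bookkeeping for $d^5$, where one must combine the alternating edge-position sign $\epsilon_{\Gamma, \Gamma'}$ of remark~\ref{rmk: graphs} with the Koszul reordering sign of remark~\ref{rmk: comprehensive} (each factor $\mc{K}^1_I$ sits in cohomological degree one) and verify that the six contributions align consistently rather than canceling.
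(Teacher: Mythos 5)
Your proposal is correct and follows essentially the same route as the paper: identify each $\pi_*(\varGamma^p_0)^{\perm_4}$ via Danila's lemma and the sign characters of table \ref{table: res}, check surjectivity of $d_1^{\perm_4}$ onto $\mc{A}_4(\FS_X)$ from the local formula of proposition \ref{pps: A4}, and show the degree $5\to 6$ map between the two copies of $\pi_*(\FS_{\Delta_{1234}})$ is an isomorphism. The only difference is that the paper dismisses this last point as "immediately an isomorphism, being induced by the identity," whereas you carry out the edge-sign bookkeeping explicitly (one can also see it in one line from Danila's lemma for morphisms, since $\perm_4$ acts trivially on $\FS_{\Delta_{1234}}$ and the sign representations in degrees $5$ and $6$ are trivial, so the six conjugate contributions cannot cancel).
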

 \begin{proof} 
 With the help of table \ref{table: res}, we immediately have that
the complex $( \pi_* E^{\bullet, 0}_1 )^{\perm_4}$ is quasi-isomorphic to the complex 
$$ 0 \rTo \FS_{S^4 X} {\rTo^r} {w_2}_*(\FS_X \boxtimes \FS_{S^2 X}) \rTo^{d_1^{\perm_n}} \mc{A}_4(\FS_X) \rTo 0 \rTo 0 \rTo \pi_* (\FS_{\Delta_{1234}})
 \rTo  \pi_* (\FS_{\Delta_{1234}}) \rTo 0 \;.$$The map $\pi_* (\FS_{\Delta_{1234}})
 \rTo  \pi_* (\FS_{\Delta_{1234}}) $ is immediately an isomorphism, being induced by the identity on the sheaf $\FS_{\Delta_{1234}}$; 
 the map $d_1^{\perm_n}: {w_2}_*(\FS_X \boxtimes \FS_{S^2 X}) \rTo \mc{A}_4(\FS_X)$ is surjective, by proposition \ref{pps: A4}, since it is given locally, on an affine open subset of the form $S^n U$, $U = \Spec(A)$, 
 by the map $A \tens S^2 A \rTo  \Lambda^2 A$, sending $a \tens b.c$ to $a \wedge bc = a \tens bc - bc \tens a$.  \end{proof}
 
 \begin{lemma}\label{lmm: inv54}Consider a graph $\Gamma$ of the kind $C_4 \cup L$. Then the vector space $\Lambda^4(\mbb{C}^2 \tens q_\Gamma)$ is completely $\perm_\Gamma$-invariant. Moreover, the composition 
 $$ c: \Lambda^4(\mbb{C}^2 \tens q_\Gamma) \rTo \Lambda^4(\mbb{C}^2 \tens q_{K_4}) \rTo \Lambda^4(\mbb{C}^2 \tens q_{K_4})^{\perm_4} $$where the first map is the injection $i_{\Gamma, K_4}$ and the second is the projection onto the invariants, is an isomorphism. 
 \end{lemma}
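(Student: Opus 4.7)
The plan has three steps, after which the obstacle appears at the end. Step one: Verify that $\Lambda^4(\mbb{C}^2 \tens q_\Gamma)$ is a one-dimensional trivial $\perm_\Gamma$-representation. Since $\dim q_\Gamma = 2$, the space $\mbb{C}^2 \tens q_\Gamma$ has dimension $4$, so $\Lambda^4(\mbb{C}^2 \tens q_\Gamma) = \det(\mbb{C}^2 \tens q_\Gamma) \simeq (\det \mbb{C}^2)^{\tens 2} \tens (\det q_\Gamma)^{\tens 2}$ by the standard identity for determinants of tensor products. From the classification in subsection 2.3, $q_\Gamma \simeq (\epsilon \tens 1) \oplus (\epsilon \tens \epsilon)$ as $\perm_\Gamma \simeq \perm_2 \times \perm_2$-representation, hence $\det q_\Gamma \simeq 1 \tens \epsilon$ and its square is trivial; this yields complete $\perm_\Gamma$-invariance.

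Step two: Show that the target of $c$ is also one-dimensional. Using the Schur decomposition
$$ \Lambda^4(\mbb{C}^2 \tens q_{K_4}) \simeq \bigoplus_\lambda S^\lambda \mbb{C}^2 \tens S^{\lambda'} q_{K_4}, $$
where $\lambda$ runs over partitions of $4$ with at most $2$ rows and at most $3$ columns, only $\lambda = (3,1)$ and $\lambda = (2,2)$ contribute. Since $\perm_4$ acts trivially on $\mbb{C}^2$, the $\perm_4$-invariants decompose as $S^{3,1}\mbb{C}^2 \tens (S^{2,1,1} q_{K_4})^{\perm_4} \oplus S^{2,2}\mbb{C}^2 \tens (S^{2,2} q_{K_4})^{\perm_4}$. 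By Table \ref{table: qgamma}, the partition $(2,1,1)$ does not appear for $K_4$ and so the first summand vanishes, while $(S^{2,2} q_{K_4})^{\perm_4}$ is one-dimensional; since $S^{2,2}\mbb{C}^2$ is also one-dimensional, $\dim \Lambda^4(\mbb{C}^2 \tens q_{K_4})^{\perm_4} = 1$.

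Step three: With source and target both one-dimensional, the composition $c$ is an isomorphism as soon as it is nonzero. I would fix independent cycle classes $a, b$ spanning $q_\Gamma$ (say, two triangles sharing the chord $L$ of $\Gamma$), extend to a basis $a, b, c$ of $q_{K_4}$, choose a basis $\{f_1, f_2\}$ of $\mbb{C}^2$, and consider the generator $v := f_1 a \w f_1 b \w f_2 a \w f_2 b$ of $\Lambda^4(\mbb{C}^2 \tens q_\Gamma)$. Using the $\perm_\Gamma$-invariance of $v$, the averaging operator $\sum_{\sigma \in \perm_4} \sigma$ reduces to $|\perm_\Gamma|$ times a sum over six coset representatives of $\perm_\Gamma$ in $\perm_4$, in bijection with the edges of $K_4$. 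Each term can be computed explicitly from the $\perm_4$-action on the cycle basis of $q_{K_4}$, using the relation $e_{(234)} = e_{(123)} - e_{(124)} + e_{(134)}$ among the four triangle classes together with the sign change under orientation reversal; extracting the coefficient of $v$ in the resulting sum yields a nonzero integer.

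The third step is the main obstacle: the image of $S^{2,2} q_\Gamma \rTo S^{2,2} q_{K_4}$ lies in a three-dimensional subspace of $\perm_\Gamma$-invariants of $S^{2,2} q_{K_4}$, only one dimension of which is $\perm_4$-invariant, and nothing purely representation-theoretic forces the image to have a nonzero component in that distinguished direction. An explicit cycle-level computation, as sketched above, appears essentially unavoidable.
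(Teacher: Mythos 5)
Your Steps 1 and 2 are correct and match the reductions the paper makes: the source and target of $c$ are each one-dimensional, so everything reduces to showing $c \neq 0$. (Your determinant identity $\det(\mbb{C}^2 \tens q_\Gamma) \simeq (\det \mbb{C}^2)^{\tens 2} \tens (\det q_\Gamma)^{\tens 2}$ is a clean alternative to reading the entry $\dim (S^{2,2} q_{\Gamma_1})^{\perm_{\Gamma_1}} = 1$ off Table \ref{table: qgamma}; Step 2 is exactly the paper's argument.) But the lemma is not proved: you assert that the coefficient extraction ``yields a nonzero integer'' while simultaneously declaring the computation an obstacle you have not overcome. As you yourself observe, no representation-theoretic principle forces the $\perm_\Gamma$-invariant line to have nonzero projection onto the $\perm_4$-invariant line, so this nonvanishing \emph{is} the content of the lemma and cannot be left as a sketch. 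That is a genuine gap, not a stylistic omission.

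The paper closes it by direct computation. Taking $E_\Gamma = \{\{1,2\},\{1,3\},\{1,4\},\{2,3\},\{3,4\}\}$ and writing $\alpha_H := (e_1 \tens e_H) \w (e_2 \tens e_H)$ for an oriented $3$-cycle $H$, the generator is $a = \alpha_{123} \w \alpha_{134}$; the six cosets of $\perm_\Gamma$ in $\perm_4$ are represented by $\id, (12), (14), (23), (34), (12)(34)$, and after substituting the relation $\alpha_{234} = \alpha_{123} + \alpha_{134} - \alpha_{124}$ one finds, up to a constant, $c(a) = 3\,(\alpha_{123}\w\alpha_{124} + \alpha_{123}\w\alpha_{134} + \alpha_{124}\w\alpha_{134}) \neq 0$. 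If you carry out your version with $v = f_1 a \w f_1 b \w f_2 a \w f_2 b$, be careful at one point: the four triangle classes of $q_{K_4}$ are linearly dependent, so rewriting the translates $\sigma_* v$ in a fixed basis produces cross terms of the form $(e_1 \tens e_H) \w (e_2 \tens e_K)$ with $H \neq K$, and you must check these are annihilated by the remaining wedge factors (as they are in the paper's substitution) before you can legitimately read off the coefficient of $v$.
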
\begin{proof}The first statement is a consequence of table \ref{table: qgamma}, case in which $\Gamma$ is of the kind $C_4 \cup L$, $q = 4$, 
 and $X = \mbb{C}^2$. By the same table we can also deduce that the vector space $ \Lambda^4(\mbb{C}^2 \tens q_{K_4})^{\perm_4}$ is one dimensonal. Therefore, to prove the second statement, we just have to prove that the map $c$ is nonzero. 
 It is not restrictive to suppose that $\Gamma$ is defined by edges $ \{1,2 \}, \{1,3 \}, \{1,4 \}, \{2,3 \}, \{3,4 \} $. 
 Indicating an oriented $3$-cycle with a sequence of its vertices, the basis of $\mbb{C}^2 \tens q_\Gamma$ is then given by vectors $e_i \tens e_{123}$ and 
 and $e_i \tens e_{134}$, $i=1,2$. Denote more briefly,  for an oriented $3$-cycle $H$, with $\gamma_{H} = e_1 \tens e_H$ and $\delta_H = e_2 \tens e_H$, where we write the elements in the set $H$ in an order according to the given orientation. 
 Hence 
  we can take as a basis of $\mbb{C}^2 \tens q_\Gamma$ the vectors $\gamma_{123}$, $\delta_{123}$, $\gamma_{134}$, $\delta_{134}$; similarly, as a basis of $\mbb{C}^2 \tens q_{K_4}$ we take the previous vectors, to which we add $\gamma_{124}, \delta_{124}$. For brevity's sake, denote with $\alpha_H := \gamma_H \wedge \delta_H$. 
  A basis of $\Lambda^4 (\mbb{C}^2 \tens q_\Gamma)$, which is fully invariant, is then given by 
  $a  = \alpha_{123} \wedge \alpha_{134}$. Since this element is invariant by $\perm_\Gamma \simeq 
  \langle (13) \rangle \times \langle (24) \rangle$,  
we have that   the map $c$ is, up to a constant, given by
$$ c(a) = \sum_{[\sigma] \in \perm_4/\perm_\Gamma} \sigma_* i_{\Gamma, K_4} a  \;.$$Now the cosets $\perm_4/\perm_\Gamma$ can be represented by the set $\{ \id, (12), (14), (23), (34), (12)(34) \}$, hence $c(a)$ is given, up to a constant by 
\begin{align*} c(a) = \: & \alpha_{123} \wedge \alpha_{134}
+ \alpha_{213}  \wedge \alpha_{234} 
+ \alpha_{423}  \wedge \alpha_{431}  
 + \alpha_{132}  \wedge \alpha_{124} 
+\alpha_{124}  \wedge \alpha_{143} 
+ \alpha_{214}  \wedge  \alpha_{243}  \\
= \: & \alpha_{123}  \wedge \alpha_{134} 
+  \alpha_{123}  \wedge \alpha_{234} 
+  \alpha_{234}  \wedge \alpha_{134} 
 + \alpha_{123}  \wedge \alpha_{124}  
+ \alpha_{124}  \wedge \alpha_{134}
+ \alpha_{124}  \wedge \alpha_{234} 
\end{align*}Now, using that $\alpha_{234} = \alpha_{123} + \alpha_{134} - \alpha_{124}$, we get easily that, up to a constant, 
  $$
  c(a) = 3  \big( \alpha_{123}  \wedge \alpha_{124}   +  \alpha_{123}  \wedge \alpha_{134}  + \alpha_{124}  \wedge \alpha_{134}  \big) 
 $$which is a nonzero element of $\Lambda^4(\mbb{C}^2 \tens q_{K_4})^{\perm_4} \subseteq \Lambda^4(\mbb{C}^2 \tens q_{K_4})$. 
 \end{proof}
 
\begin{notat}\label{notat: kerA}Consider now the second invariant differential  
$$ A:=d_2^{\perm_4}: {w_3}_*(\Omega^1_X \boxtimes \mc{I}_{\Delta_2})  \simeq \pi_*(E^{3,-1}_2)^{\perm_4} \simeq \rTo \pi_*(E^{5, -2}_2)^{\perm_4} 
 \;.
$$Its precise expression is determined in the appendix, corollary \ref{crl: A}. We denote with 
${w_3}_*(\Omega^1_X \boxtimes \mc{I}_{\Delta_2})_0$ its kernel. 
\end{notat}
Recall notation \ref{notat: kerd1sigma}. We have the following result. 
\begin{theorem}\label{thm: inv4}Let $X$ be a smooth algebraic variety. 
The sheaf of invariants $(\mc{I}_{\Delta_4})^{\perm_4}$ over the symmetric variety $S^4 X$
admits a right resolution given by a natural
complex 
$$ \mbb{I}^\bullet_4 := 0 
\rTo \FS_{S^4 X} {\rTo^r} {w_2}_*(\FS_X \boxtimes \FS_{S^2 X})_0 {\rTo^{D}} {w_3}_* (\Omega^1_X \boxtimes \mc{I}_{\Delta_2})_0 
 {\rTo^{C}} {w_4}_* (S^3 \Omega^1_X)  \rTo 0 \;,$$where $r$ is the restriction, $D$ is defined locally as 
 $D(a \tens u.v) = (2adu - uda) \tens v + (2adv - vda) \tens u$ and $C$ is determined, up to a constant, by 
 $C(\omega \tens f) = \sym(\omega \tens d^2_\Delta f)$, where $\omega \in \Omega^1_X$ and $f \in \mc{I}_{\Delta_2}$.  \end{theorem}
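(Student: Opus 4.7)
The plan is to follow the template of Theorem \ref{thm: inv3} and extract the complex $\mbb{I}^\bullet_4$ directly from the $\perm_4$-invariant spectral sequence $(E^{p,q}_1)^{\perm_4}$ derived from (\ref{eq: spectralsequence}). By Remark \ref{rmk: abut} this invariant spectral sequence converges to $(\mc{I}_{\Delta_4})^{\perm_4}$ in total degree $0$ and to zero in positive total degree, so once we identify which four terms survive after all differentials are taken, we will have the desired resolution.

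First I would compute each invariant term $(E^{p,q}_1)^{\perm_4}$ using Danila's lemma (Remark \ref{rmk: Danila}), which decomposes it as a direct sum over $[\Gamma] \in \Graphs_{p,4}/\perm_4$ of sheaves $\pi_*(\Tor_{-q}(\Delta, \Gamma) \tens \Res_{\perm_\Gamma} \epsilon_{E_\Gamma})^{\perm_\Gamma}$. Proposition \ref{pps: torgamma} combined with the representation table for $q_\Gamma$, the Schur-functor decomposition of Table \ref{table: qgamma}, and the signs in Table \ref{table: res} reduce these invariants to the entries tabulated in Table \ref{table: inv}. The $q = 0$ row is already handled by Lemma \ref{lmm: Ep0}: on the $E_2$-page only the positions $(0, 0)$ and $(1, 0)$ of that row can survive, and by Proposition \ref{pps: A4} we have $(E^{1,0}_2)^{\perm_4} \simeq {w_2}_*(\FS_X \boxtimes \FS_{S^2 X})_0$. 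For $q = -1$, the only non-trivial contributions come from $K_3$ at $p=3$ and $K_3 \cup J$ at $p=4$, and passing to $E_2$ (using the restriction interpretation of $d_1$ from Proposition \ref{pps: iGamma} and Lemma \ref{lmm: E1terms}) yields $(E^{3,-1}_2)^{\perm_4} \simeq {w_3}_*(\Omega^1_X \boxtimes \mc{I}_{\Delta_2})$.

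Since the differential $d_r$ shifts bidegrees by $(r, 1-r)$, the map $D$ from ${w_2}_*(\FS_X \boxtimes \FS_{S^2 X})_0$ at position $(1,0)$ to ${w_3}_*(\Omega^1_X \boxtimes \mc{I}_{\Delta_2})$ at position $(3,-1)$ is the second invariant differential $d_2^{\perm_4}$, and its local expression is computed by the same mechanism that gives Proposition \ref{pps: D} of the appendix. The image of $D$ must land in the kernel ${w_3}_*(\Omega^1_X \boxtimes \mc{I}_{\Delta_2})_0$ of $A = d_2^{\perm_4}$ (Notation \ref{notat: kerA}, Corollary \ref{crl: A}). The final map $C$ connects $(E^{3,-1}_3)^{\perm_4}$ to $(E^{6,-3}_3)^{\perm_4} \simeq {w_4}_*(S^3\Omega^1_X)$, i.e.\ it is the third invariant differential $d_3^{\perm_4}$; in local coordinates one expects its formula to produce the symmetrized second diagonal differential $\sym(\omega \tens d^2_\Delta f)$ as stated, by tracking $\omega \tens f$ through two consecutive restriction-boundary operations of the double complex.

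The main obstacle is the combinatorial verification that, outside of the four positions $(0,0)$, $(1,0)$, $(3,-1)$, $(6,-3)$, every non-zero entry of $(E^{p,q}_1)^{\perm_4}$ with $p + q \geq 0$ is eventually killed by the higher differentials, so that the total complex really collapses to only four terms. The most delicate cancellation occurs in total degree $2$ along the $q = -4$ row, where the $\perm_4$-invariants of the $\Gamma = C_4 \cup L$ and $\Gamma = K_4$ contributions in Table \ref{table: inv} must cancel against one another: this is precisely what Lemma \ref{lmm: inv54} is designed for, since the isomorphism $c$ ensures that the $\perm_4$-invariant $K_4$-piece of $\Lambda^4(\Omega^1_X \tens q_{K_4})$ is hit surjectively, via the inclusion $i_{\Gamma, K_4}$, by the corresponding $C_4 \cup L$-summand. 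Analogous dimension-counting cancellations in degrees $1$ and $3$, read off from Tables \ref{table: qgamma} and \ref{table: inv} and the restriction pattern of Proposition \ref{pps: iGamma}, together with the explicit local formulas for $r$, $D$, $C$, then complete the identification of $\mbb{I}^\bullet_4$ as a right resolution of $(\mc{I}_{\Delta_4})^{\perm_4}$.
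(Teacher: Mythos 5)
Your proposal follows essentially the same route as the paper's proof: the $\perm_4$-invariant spectral sequence with vanishing abutment in positive degree, Danila's lemma together with Tables \ref{table: qgamma}, \ref{table: res}, \ref{table: inv} to compute the invariant terms, Lemma \ref{lmm: Ep0} and Proposition \ref{pps: A4} for the $q=0$ row, the identification of $D$ and $C$ with $d_2^{\perm_4}$ and $d_3^{\perm_4}$ (Propositions \ref{pps: D} and \ref{pps: mapC}), the role of $A$ and its kernel ${w_3}_*(\Omega^1_X \boxtimes \mc{I}_{\Delta_2})_0$, and Lemma \ref{lmm: inv54} for the isomorphism killing the $q=-4$ row. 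The only point stated more loosely than in the paper is the surjectivity of ${w_4}_*(K_X\oplus K_X)\rTo {w_4}_*(K_X)$ at $q=-2$, which the paper deduces not by dimension counting but from the fact that no later differential can reach $E^{6,-2}_2$ while the abutment in degree $4$ vanishes; this is a presentational detail rather than a gap.
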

\begin{proof}For brevity's sake, we will do the proof in the case $\dim X \leq 2$: the proof in the general case is analogous, but slighlty more technical: we indicate in the next remark how to adapt the present proof to the general case. 

The abutment of the spectral sequence of invariants $\pi_* (E^{p,q}_1)^{\perm_4}$ is zero in positive degree, 
after remark~\ref{rmk: abut}. The complex $\pi_*(E^{\bullet, 0}_1)^{\perm_4}$ has been studied in  lemma \ref{lmm: Ep0}. 
At level $q = -1$, recalling table \ref{table: inv} and notation \ref{notat: omegadelta}, we just have the complex 
$ {w_3}_*(\Omega^1_X \boxtimes \FS_X) {\rTo} {w_3}_* (\Omega^1_X \boxtimes \FS_{\Delta_2}) \simeq  {w_4}_* (\Omega^1_X)$, in degree $3$ and $4$, 
 and at level $q = -2$ we have the complex 
 $  {w_4}_* ( K_X \oplus K_X) {\rTo}  {{w_4}_*}(K_X)$, in degree $5$ and $6$. Moreover, at level $q = -4$, we have the complex ${w_4}_*(K_X^2) {\rTo} {w_4}_*(K_X^2)$, in degree $5$ and $6$. 
 The only other nonzero terms for $q<0$ are $\pi_*( E^{6,-3}_1)^{\perm_4} \simeq  {w_4}_* (S^3 \Omega^1_X)$ and 
 $\pi_* (E^{6,-6}_1)^{\perm_4} \simeq  {w_4}_*(K^3_X)$. 
 
 We now prove that the map ${w_3}_*(\Omega^1_X \boxtimes \FS_X) { \rTo} {w_4}_*( \Omega^1_X )$ is surjective.  Indeed it can be seen as the map of $\perm_4$-invariants 
 $$ (E^{3,-1}_1)^{\perm_4} \simeq  \pi_* ( (\Omega^1_X)_{K_3(123)})^{\perm_3} \rTo 
 \pi_*( (\Omega^1_X)_{K_3(123) \cup \{3 4 \} } )^{\perm_2} \;.$$This map 
 is naturally the composition 
$$  \pi_* ( (\Omega^1_X)_{K_3(123)})^{\perm_3} \rInto  \pi_* ( (\Omega^1_X)_{K_3(123)})^{\perm_2} \rTo 
 \pi_*( (\Omega^1_X)_{K_3(123) \cup \{3 4 \} } )^{\perm_2} $$where the first map is the natural inclusion and the second 
 is the map of $\perm_2$-invariants of the map 
 $\pi_* ( (\Omega^1_X)_{K_3(123)}) \rTo \pi_*( (\Omega^1_X)_{K_3(123) \cup \{3 4 \} } )$.  
 But this last map is surjective, since the map $(\Omega^1_X)_{K_3(123)} \rTo  (\Omega^1_X)_{K_3(123) \cup \{3 4 \} }
 $ is a restriction and hence surjective, and because $\pi$ is finite.  Moreover, the natural inclusion $\pi_* ( (\Omega^1)_{K_3(123)})^{\perm_3} \rInto  \pi_* ( (\Omega^1_X)_{K_3(123)})^{\perm_2}$ is an isomorphism, since both terms coincide with $\pi_* ( (\Omega^1_X)_{K_3(123)})$. This means in particular that $\pi_* (E^{4,-1}_2)^{\perm_4} = 0$. 
 
 Let's now look at the map $ {w_4}_* (K_X \oplus K_X) {\rTo}  {w_4}_*(K_X)$. Its cokernel is 
 isomorphic to $\pi_* (E^{6,-2}_2)^{\perm_4}$. Now, because of the form of the complexes $\pi_*(E^{\bullet, q}_1)^{\perm_4}$ 
 for $q = 0, -1$, and because we proved that $\pi_* (E^{4,-1}_2)^{\perm_4} = 0$, 
  there are no nonzero higher invariant differentials $d^{\perm_4} _l$ targeting $\pi_* (E^{6,-2}_2)^{\perm_4}$: hence this term has to live till 
 the $\infty$-page, as a graded sheaf of the abutment, but the abutment in degree $4$ is zero. Hence $\pi_* (E^{6,-2}_2)^{\perm_4}$ has to vanish and the map $ {w_4}_*(K_X \oplus K_X) {\rTo}  {w_4}_*(K_X)$ has to be surjective. 
 Hence $\pi_*(E^{5, -2}_2)^{\perm_4} \simeq  {w_4}_*(K_X)$. 
 
 Finally, we look at the map $\pi_*(E^{5,-4}_1)^{\perm_4} \simeq {w_4}_*(K_X^2) {\rTo} {w_4}_* (K_X^2) \simeq \pi_*(E^{6,-4}_1)^{\perm_4}$ at level $q=-4$. 
 It coincides with the map of invariants 
\begin{equation}\label{eq: inv54}  \pi_* ( \Lambda^4 (Q^*_\Gamma) )^{\perm_{\Gamma}} {\rTo} \pi_* ( \Lambda^4(Q^*_{K_4}) )^{\perm_4} \;,\end{equation}
induced by the  inclusion $i_{\Gamma, K_4} : Q^*_{\Gamma} \rInto Q^*_{K_4}$, where $\Gamma$ is a graph of the kind $C_4 \cup L$. 
Now, since both $\perm_\Gamma$ and $\perm_4$ act trivially over $\Delta_{1234} = \Delta_\Gamma$, 
and $\pi |_{\Delta_{1234}}$ is a closed immersion, 
the map (\ref{eq: inv54}) is an isomorphism 
 if and only if the map of line bundles ${i_{\Gamma}}_* K^2_X \simeq 
 \Lambda^4 (Q^*_\Gamma) ^{\perm_{\Gamma}} \rTo \Lambda^4(Q^*_{K_4}) ^{\perm_4} \simeq {i_{\Gamma}}_* K^2_X$ 
  over $\Delta_{1234}$ is. Since $\Lambda^4(Q^*_\Gamma)$ is fully $\perm_\Gamma$-invariant,  the previous map coincide, up to constants, with the composition 
  $$  \Lambda^4(Q^*_\Gamma) \rTo   \Lambda^4(Q^*_{K_4})  \rTo   \Lambda^4(Q^*_{K_4}) ^{\perm_4} \;,$$where the first is the inclusion and 
  the second is the projection onto the invariants. Now, on the fibers, this map is precisely the map of lemma \ref{lmm: inv54}. Hence it is an isomorphism.

 Consequently, at level $E_2$, the only nonzero terms are 
 \begin{align*} & \pi_*(E^{0,0}_2)^{\perm_4} \simeq  \; ( \mc{I}_{\Delta_4}) ^{\perm_4} & \hspace{1cm} 
 & \pi_*(E^{1,0}_2)^{\perm_4} \simeq  \; \coker \big( \FS_{S^4 X} {\rTo} {w_2}_*(\FS_X \boxtimes \FS_{S^2 X})_0 \big) \\ 
 & \pi_*(E^{3,-1}_2)^{\perm_4} \simeq  \; {w_3}_*(\Omega^1_X \boxtimes \mc{I}_{\Delta_2}) \;, & \hspace{1.5cm}
  & \pi_*(E^{5,-2}_2)^{\perm_4} \simeq \;   {w_4}_*(K_X) \\ 
&  \pi_*(E^{6,-3}_2)^{\perm_4} \simeq  \;  {w_4}_*(S^3 \Omega^1_X)\;,  & \hspace{1.5cm}
 & \pi_*(E^{6,-6}_2)^{\perm_4} \simeq  \;   {w_4}_*(K^3_X) \;.
 \end{align*}
 
 \noindent
Therefore, drawing the page $E_2$ of the spectral sequence $\pi_*(E^{p,q}_1)^{\perm_4}$,  
we deduce a complex $0 \rTo \pi_*(E^{1,0}_2)^{\perm_4} {\rTo} {w_3}_*(\Omega^1_X \boxtimes \mc{I}_{\Delta_2} ) {\rTo^A} {w_4}_*( K_X)
\rTo 0 
$ which is non exact only in the middle, with cohomology isomorphic to ${w_4}_*(S^3 \Omega^1_X)$; henceforth we have an exact sequence 
$$ 0 \rTo \pi_*(E^{1,0}_2)^{\perm_4} {\rTo} {w_3}_*(\Omega^1_X \boxtimes \mc{I}_{\Delta_2} )_0 {\rTo^C} {w_4}_*(S^3 \Omega^1_X) \rTo 0  \;. $$The statement of the theorem follows, since one has as well an exact sequence
\begin{gather*} 0 \rTo (\mc{I}_{\Delta_4})^{\perm_4} \rTo \FS_{S^4 X} {\rTo} {w_2}_*(\FS_X \times \FS_{S^2 X})_0 {\rTo} \pi_*(E^{1,0}_2)^{\perm_4} \rTo 0 \;.
\end{gather*}The precise expression of map $D$ and $C$ will be determined in the appendix, proposition \ref{pps: D} and proposition \ref{pps: mapC}. \end{proof}
\begin{remark}We sketch here how to adapt the proof in the case $X$ is of higher dimension. Using table \ref{table:  inv}, we see that, for $p \leq 4$, we have to consider, for $q$ negative 
odd, the surjective maps $(E^{3, q}_1)^{\perm_4} \simeq {w_3}_*(\Lambda^{-q} \Omega^1_X \boxtimes \FS_X) {\rOnto} 
{w_4}_*(\Lambda^{-q} \Omega^1_X) \simeq (E^{4, q}_1)^{\perm_4}$. Hence, for $q$ odd, $(E^{3, q}_2)^{\perm_4} \simeq 
{w_3}_*(\Lambda^{-q} \Omega^1_X \boxtimes \mc{I}_{\Delta_2})$ and $(E^{4, q}_2)^{\perm_4}=~0$. 
Consider now $p \geq 5$. 
Analogously as in the case of a surface, 
we have a surjective map $(E^{5,-2}_1)^{\perm_4} \simeq 
{w_4}_*(\Lambda^2 \Omega^1_X \oplus \Lambda^2 \Omega^1_X) {\rTo} {w_4}_*(\Lambda^2 \Omega^1_X) \simeq (E^{6, -2}_1)^{\perm_4}$, 
with kernel ${w_4}_*(\Lambda^2 \Omega^1_X) \simeq  (E^{5,-2}_2)^{\perm_2}$. In higher dimension one can prove, in a way similar to lemma \ref{lmm: inv54}, that the map
$(E^{5,-4}_1)^{\perm_4} \rTo (E^{6,-4}_1)^{\perm_4}$ is surjective with kernel ${w_4}_*(\Lambda^3 \Omega^1_X \tens  \Omega^1_X)
\simeq (E^{5, -4}_2)^{\perm_4}$. Finally, we have that $(E^{6,-5}_1)^{\perm_4} \simeq 
(E^{6,-5}_2)^{\perm_4} \simeq {w_4}_*(S^{3,1,1} \Omega^1_X)$. 
These are the only relevant differences in the page $E_2$. Analogously to corollary \ref{crl: A}, one can prove that the
 second differential $(d_2^{3, -3})^{\perm_4} : {w_3}_*(\Lambda^3 \Omega^1_X \boxtimes \mc{I}_{\Delta_2}) {\rOnto} {w_4}_*(\Lambda^3 \Omega^1_X \tens \Omega^1_X)$ is surjective with kernel ${w_3}_*(\Lambda^3 \Omega^1_X \boxtimes \mc{I}^2_{\Delta_2}) \simeq (E^{3, -3}_3)^{\perm_4}$; moreover, similarly to  proposition \ref{pps: mapC} one can show that the third differential $(d_3^{3, -3})^{\perm_4} : {w_3}_*(\Lambda^3 \Omega^1_X \boxtimes \mc{I}^2_{\Delta_2})  {\rOnto} 
 {w_4}_*(S^{3,1,1} \Omega^1_X)$ is surjective. Hence, at the page $E_3$ all differences with the $2$-dimensional case happen along the diagonal, so they are not relevant for the complex in the statement of theorem \ref{thm: inv4}. 
\end{remark}

To finish this subsection, we present two immediate byproducts of the proof of theorems \ref{thm: inv3} and  \ref{thm: inv4}. 
\begin{crl}\label{crl: invprod}For $n$ equal to $3$ or $4$, the $\perm_n$-invariants of the product ideal $\prod_{I \in E_{K_n}} \mc{I}_{\Delta_I} = \mc{I}_{\Delta_{12}} \cdot \cdots \cdot \mc{I}_{\Delta_{n-1, n}}$  and 
of the ideal $\mc{I}_{\Delta_n}$ coincide: 
$$ \Big( \mc{I}_{\Delta_{12}} \cdot \dots \cdot \mc{I}_{\Delta_{n-1, n}} \Big)^{\perm_n} \simeq (\mc{I}_{\Delta_n})^{\perm_n} = 
\Big(  \mc{I}_{\Delta_{12}} \cap \dots \cap \mc{I}_{\Delta_{n-1, n}} \Big)^{\perm_n} \;.$$
\end{crl}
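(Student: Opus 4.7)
The containment $(\prod_I \mc{I}_{\Delta_I})^{\perm_n} \subseteq (\mc{I}_{\Delta_n})^{\perm_n}$ is immediate from $\prod_I \mc{I}_{\Delta_I} \subseteq \bigcap_I \mc{I}_{\Delta_I} = \mc{I}_{\Delta_n}$, the second equality being Remark \ref{rmk: bigdiagonal}. For the reverse inclusion, my plan is to rely on Theorems \ref{thm: inv3} and \ref{thm: inv4}, which identify $(\mc{I}_{\Delta_n})^{\perm_n}$ with the kernel of the first arrow $r : \FS_{S^n X} \to {w_2}_*(\FS_X \boxtimes \FS_{S^{n-2} X})$ (or its subsheaf $(\cdots)_0$ when $n = 4$). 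Thus I need to show that every local section of this kernel arises from a section of $(\prod_I \mc{I}_{\Delta_I})^{\perm_n}$.

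The strategy is to consider the $\perm_n$-equivariant surjection
$$\mu: \bigotimes_I \mc{I}_{\Delta_I} \twoheadrightarrow \prod_I \mc{I}_{\Delta_I}$$
given by multiplication into $\FS_{X^n}$. Since taking $\perm_n$-invariants is exact in characteristic zero, $\mu$ induces a surjection on invariants, and the composite
$$\Big(\bigotimes_I \mc{I}_{\Delta_I}\Big)^{\perm_n} \twoheadrightarrow \Big(\prod_I \mc{I}_{\Delta_I}\Big)^{\perm_n} \hookrightarrow (\mc{I}_{\Delta_n})^{\perm_n}$$
factors through the invariants of the product ideal. It therefore suffices to show that the outer composite is surjective. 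To attack this, I would exploit the spectral sequence $E^{p,q}_1$ of the derived tensor product $\bigotimes^L_I \mc{K}_I^\bullet$: its abutment in degree $0$ is the ordinary tensor product $\bigotimes_I \mc{I}_{\Delta_I}$, filtered with the bottom piece $E^{0,0}_\infty \simeq \mc{I}_{\Delta_n}$. Taking $\perm_n$-invariants, surjectivity of the composite reduces to showing that the filtration on $(\bigotimes_I \mc{I}_{\Delta_I})^{\perm_n}$ induced by the spectral sequence splits at the level of the $E^{0,0}_\infty$ piece, equivalently that the higher graded pieces $(E^{p,-p}_\infty)^{\perm_n}$ for $p \geq 1$ either vanish or lift compatibly.

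I expect that for $n = 3, 4$ this vanishing can be checked by a case-by-case inspection of the non-acyclic subgraphs of $K_n$, using the classification of the representations $q_\Gamma$ in table \ref{table: qgamma} together with the restricted alternating characters in table \ref{table: res}, in exactly the same spirit as the vanishings that occur in Lemma \ref{lmm: inv54} and Table \ref{table: inv} during the proofs of Theorems \ref{thm: inv3}--\ref{thm: inv4}. The main obstacle will be keeping track of the \emph{comprehensive} $\perm_n$-action described in Remark \ref{rmk: comprehensive} throughout the extension problem implicit in the filtration of the abutment, ensuring the right sign twists by $\epsilon_{E_\Gamma}$ are included. The fact that the statement is restricted to $n = 3, 4$ suggests that for larger $n$ the classification of non-acyclic subgraphs enriches enough to produce nontrivial invariant obstructions, so this last combinatorial verification is precisely where the argument would fail in general.
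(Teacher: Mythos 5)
Your setup is right up to the point where you invoke the spectral sequence, but the reduction you then make is incorrect, and the condition you propose to verify is actually false for $n=4$. First, $E^{0,0}_{\infty}$ is \emph{not} $\mc{I}_{\Delta_n}$: at position $(0,0)$ all incoming differentials vanish, so $E^{0,0}_{\infty}\subseteq E^{0,0}_2=\ker\big(\FS_{X^n}\to\oplus_I\FS_{\Delta_I}\big)=\mc{I}_{\Delta_n}$, and the edge map identifies $E^{0,0}_{\infty}$ with the image of $\bigotimes_I\mc{I}_{\Delta_I}\to\FS_{X^n}$, i.e.\ with the \emph{product} ideal $\prod_I\mc{I}_{\Delta_I}$. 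The whole content of the corollary is precisely whether the inclusion $(E^{0,0}_{\infty})^{\perm_n}\subseteq(E^{0,0}_2)^{\perm_n}$ is an equality, so asserting $E^{0,0}_{\infty}\simeq\mc{I}_{\Delta_n}$ begs the question. Second, your "equivalently" is not an equivalence: the surjection $H^0\twoheadrightarrow E^{0,0}_{\infty}$ is automatic, and whether the filtration on $(\bigotimes_I\mc{I}_{\Delta_I})^{\perm_n}$ splits, or whether the graded pieces $(E^{p,-p}_{\infty})^{\perm_n}$ for $p\geq 1$ vanish, is irrelevant to the question. What controls $(E^{0,0}_{\infty})^{\perm_n}=(E^{0,0}_{2})^{\perm_n}$ is the vanishing of the invariant differentials \emph{leaving} $(0,0)$, whose targets sit on the diagonal $p+q=1$, namely $(E^{r,1-r}_r)^{\perm_n}$ for $r\geq 2$ --- not on the diagonal $p+q=0$. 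Worse, the vanishing you propose to check fails: the paper's remark immediately after the corollary exhibits, for $n=4$ and $\dim X=2$, the exact sequence $0\to{w_4}_*(K_X^3)\to\big(\bigotimes_I\mc{I}_{\Delta_I}\big)^{\perm_4}\to(\mc{I}_{\Delta_4})^{\perm_4}\to 0$, so $(E^{6,-6}_{\infty})^{\perm_4}\simeq{w_4}_*(K_X^3)\neq 0$.

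The correct argument is shorter than what you outline and needs no new combinatorics. Since taking invariants is exact, $(E^{0,0}_{r+1})^{\perm_n}=\ker\big(d_r^{\perm_n}\colon (E^{0,0}_{r})^{\perm_n}\to(E^{r,1-r}_{r})^{\perm_n}\big)$, so it suffices that the invariant terms at positions $(2,-1),(3,-2),\dots,(\binom{n}{2},1-\binom{n}{2})$ all vanish from page $E_2$ on. This is exactly what is recorded in the proofs of Theorems \ref{thm: inv3} and \ref{thm: inv4}: for $n=3$ the only surviving invariant terms with $p+q\geq 0$ are at $(0,0),(1,0),(3,-1),(3,-3)$, and for $n=4$ at $(0,0),(1,0),(3,-1),(5,-2),(6,-3),(6,-6)$; none lies on the line $p+q=1$. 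Hence $(E^{0,0}_{\infty})^{\perm_n}=(E^{0,0}_{2})^{\perm_n}=(\mc{I}_{\Delta_n})^{\perm_n}$, which together with the identification of $(E^{0,0}_{\infty})^{\perm_n}$ with $\big(\prod_I\mc{I}_{\Delta_I}\big)^{\perm_n}$ gives the claim. Your instinct that the restriction to $n=3,4$ reflects the richer graph combinatorics for larger $n$ is reasonable, but the obstruction lives on the diagonal $p+q=1$, not $p+q=0$.
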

\begin{proof}The invariants $\big( \mc{I}_{\Delta_{12}} \cdot \dots \cdot \mc{I}_{\Delta_{n-1, n}} \big)^{\perm_n}$ of the product of ideals coincide
with the term $\pi_*(E^{0,0}_{\infty})^{\perm_n}$ of the spectral sequence $\pi_*(E^{p,q}_1)^{\perm_n}$ above, since the abutment in degree $0$ is the tensor product $\mc{I}_{\Delta_{12}}  \tens \cdots \tens \mc{I}_{\Delta_{n-1, n}}$ and since 
$\pi_*(E^{0,0}_{\infty})$, being the first graded sheaf for the natural filtration on the abutment,  is the image of the natural morphism $( \mc{I}_{\Delta_{12}}  \tens \cdots \tens \mc{I}_{\Delta_{n-1, n}} )^{\perm_n} \rTo ( \FS_{X^n} )^{\perm_n} \simeq \FS_{S^n X} $. But it is evident from the proof of theorems \ref{thm: inv3} and \ref{thm: inv4}
that $\pi_*(E^{0,0}_{\infty})^{\perm_n} \simeq \pi_*(E^{0,0}_{2})^{\perm_n} \simeq (\mc{I}_{\Delta_n})^{\perm_n}$. 
\end{proof}
\begin{remark}It seems to us an interesting question whether the statement of corollary \ref{crl: invprod} is true for general $n$;  in some contexts (for example when taking inverse images) the product of ideals is better behaved that the intersection; therefore, knowing that, at least at level of invariants, the two coincide, might turn out useful in some applications.
\end{remark}

\begin{remark}When taking the tensor product of ideals, things are clearly different. Over $S^3 X$, we have the isomorphism $ \big(\mc{I}_{\Delta_{12}} \tens \mc{I}_{\Delta_{13}} \tens \mc{I}_{\Delta_{23}} \big)^{\perm_3} \simeq \big(\mc{I}_{\Delta_{12}} \cdot \mc{I}_{\Delta_{13}} \cdot \mc{I}_{\Delta_{23}} \big)^{\perm_3}$ only if $\dim X \leq 2$, while 
over $S^4 X$ the two sheaves are definitely not isomorphic also for $X$ of dimension $2$; their difference, in this case, is measured by the sheaf ${w_4}_* (K_X^3)$, as the next exact sequence proves: 
$$ 0 {\rTo} {w_4}_* (K_X^3) \rTo \Big (\mc{I}_{\Delta_{12}}  \tens \cdots \tens \mc{I}_{\Delta_{34}} \Big)^{\perm_4} 
\rTo (\mc{I}_{\Delta_{4}} )^{\perm_4} \rTo 0  \;.$$
\end{remark} 
\subsection{Twisting by the line bundle \protect $\mc{D}_L$}\label{subsec: DL}
Let now $F$ be a $\perm_n$-equivariant coherent sheaf over $X^n$. By definition of the line bundle $\mc{D}_L$ on the symmetric variety $S^n X$ (which is valid for $X$ of arbitrary dimension, see remark \ref{rmk: DL}),  using projection formula  and taking $\perm_n$-invariants we have the following equation: 
$$ (\pi^{\perm_n}_*F) \tens \mc{D}_L \simeq \pi^{\perm_n}_*( F \tens L^{\boxtimes n}) \;.$$
Because of this fact, all results proved in this section continue to work when we tensorize the sheaf $\mc{I}_{\Delta_n} $ with 
a line bundle of the form $L \boxtimes \cdots \boxtimes L$, or its invariants by $\mc{D}_L$. In particular we have 
that for $n$ equal to $3$ or $4$, the sheaf of invariants $\pi_*(\mc{I}_{\Delta_n} \tens L^{\boxtimes n })^{\perm_n} \simeq (\mc{I}_{\Delta_n})^{\perm_n} \tens \mc{D}_L$ is resolved by 
the complex $\mbb{I}_n^\bullet \tens \mc{D}_L$; in other words 
\begin{crl}\label{crl: inv34L}Let $X$ be a smooth algebraic variety. Over $S^3X$ and $S^4 X$, respectively, we have resolutions 
\begin{align*} 
0 \rTo \pi_*(\mc{I}_{\Delta_3} \tens L^{\boxtimes 3})^{\perm_3} \rTo  \mc{D}_L  \rTo & {w_2}_*(L^2 \boxtimes L) {\rTo} 
{w_3}_*( \Omega^1_X \tens L^3) \rTo 0 \\ 
 & \\
 0 \rTo \pi_*(\mc{I}_{\Delta_4} \tens L^{\boxtimes 4})^{\perm_3} \rTo   \mc{D}_L  \rTo & {w_2}_*(L^2 \boxtimes \mc{D}_L)_0 \rTo \\ \rTo &  {w_3}_*(( \Omega^1_X \tens L^3 \boxtimes L)   \tens \mc{I}_{\Delta_2})_0 {\rTo} {w_4}_*(S^3 \Omega^1_X \tens L^4) \rTo 0 
\end{align*}
\end{crl}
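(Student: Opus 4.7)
The plan is to start from the resolutions $\mbb{I}^\bullet_n$ of $(\mc{I}_{\Delta_n})^{\perm_n}$ constructed in theorems \ref{thm: inv3} and \ref{thm: inv4}, and simply tensor them by the line bundle $\mc{D}_L$. Since $\mc{D}_L$ is invertible, tensorization preserves exactness, so $\mbb{I}^\bullet_n \tens \mc{D}_L$ is automatically a resolution of $(\mc{I}_{\Delta_n})^{\perm_n} \tens \mc{D}_L$. Moreover, by the opening identity of subsection \ref{subsec: DL} --- a consequence of the projection formula applied to $\pi$ together with $\pi^* \mc{D}_L \simeq L^{\boxtimes n}$ --- we have $(\mc{I}_{\Delta_n})^{\perm_n} \tens \mc{D}_L \simeq \pi_*(\mc{I}_{\Delta_n} \tens L^{\boxtimes n})^{\perm_n}$, which is the sheaf appearing on the left-hand side of the statement. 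Thus the only remaining work is to rewrite the tensor product of each term of $\mbb{I}^\bullet_n$ with $\mc{D}_L$ in the form stated.

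The main tool for this rewriting is the identity
$$ w_k^* \mc{D}_L \simeq L^k \boxtimes \mc{D}_L \,,$$
where the $\mc{D}_L$ on the right lives on $S^{n-k} X$. This can be verified by pulling back through $\pi$ on both factors, using the fact that $w_k(x,y) = kx + y$, so that on the cover $X \times X^{n-k}$ the pullback of $L^{\boxtimes n}$ decomposes as $L^{\tens k} \boxtimes L^{\boxtimes (n-k)}$; descent then yields the stated identity on $X \times S^{n-k}X$. Combined with the projection formula for the finite map $w_k$, this gives
$$ w_{k*}(F) \tens \mc{D}_L \simeq w_{k*}\bigl(F \tens (L^k \boxtimes \mc{D}_L)\bigr) $$
for any coherent $F$ on $X \times S^{n-k}X$. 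Applied to each term of $\mbb{I}^\bullet_n$, this produces precisely the sheaves written in the corollary: for example ${w_2}_*(\FS_X \boxtimes \FS_{S^{n-2}X}) \tens \mc{D}_L \simeq {w_2}_*(L^2 \boxtimes \mc{D}_L)$, ${w_3}_*(\Omega^1_X \boxtimes \mc{I}_{\Delta_2}) \tens \mc{D}_L \simeq {w_3}_*((\Omega^1_X \tens L^3)\boxtimes (L \tens \mc{I}_{\Delta_2}))$, and ${w_n}_*(S^{n-1}\Omega^1_X) \tens \mc{D}_L \simeq {w_n}_*(S^{n-1}\Omega^1_X \tens L^n)$.

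Finally, the subsheaves with subscript $0$ in the original resolutions --- namely ${w_2}_*(\FS_X \boxtimes \FS_{S^{n-2}X})_0$ and ${w_3}_*(\Omega^1_X \boxtimes \mc{I}_{\Delta_2})_0$ --- are defined as kernels of explicit morphisms ($d_1^{\perm_n}$ in notation \ref{notat: kerd1sigma}, and $A$ in notation \ref{notat: kerA}). Since $\mc{D}_L$ is flat, tensoring with it commutes with taking kernels, so the twisted subsheaves coincide with the ones written with the subscript $0$ in the statement of the corollary. No essential obstacle appears in this step; the only care needed is to check that the rewriting of each term via the projection formula is compatible with the morphisms $r$, $D$, $C$ of the original complex, which is immediate since these morphisms are $\FS_{S^n X}$-linear and the tensor product is functorial. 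This completes the proof sketch.
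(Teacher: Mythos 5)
Your proposal is correct and follows essentially the same route as the paper: subsection \ref{subsec: DL} derives the corollary by tensoring the resolutions $\mbb{I}^\bullet_n$ of theorems \ref{thm: inv3} and \ref{thm: inv4} with the invertible sheaf $\mc{D}_L$ and invoking the projection-formula identity $(\pi^{\perm_n}_* F)\tens \mc{D}_L \simeq \pi^{\perm_n}_*(F \tens L^{\boxtimes n})$. Your extra verifications --- the identity $w_k^*\mc{D}_L \simeq L^k \boxtimes \mc{D}_L$, the projection formula along $w_k$, and the compatibility of the subscript-$0$ kernels with the twist --- are exactly the details the paper leaves implicit.
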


\section{Applications}
From now on $X$ will always be a smooth algebraic surface. 
\subsection{Cohomology of \protect $(\det L^{[n]})^2$ for low $n$. }

\begin{theorem}Let $X$ be a smooth quasi-projective surface and $L$ and $A$ two line bundles over $X$. 
Then, for $n = 3$ or $n =4$, the cohomology $H^*(X^{[n]}, (\det L^{[n]})^{\tens 2} \tens \mc{D}_A)$ is computed 
by the spectral sequence 
$$ E^{p,q}_1 := H^q(S^n X, \mbb{I}^p_n \tens \mc{D}^{\tens 2}_L \tens \mc{D}_A ) \;.$$
\end{theorem}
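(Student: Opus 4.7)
The plan is to combine three ingredients already established in the paper: the transfer of cohomology from $X^{[n]}$ to $S^n X$ via the Hilbert--Chow morphism (Theorem~\ref{thm: projectionformula} and Corollary~\ref{crl: projectionformula}), the coincidence $(\mc{I}^{2k}_{\Delta_n})^{\perm_n} \simeq (\mc{I}^{2k-1}_{\Delta_n})^{\perm_n}$ from Lemma~\ref{lmm: inv2k-1}, and the explicit resolutions $\mbb{I}^\bullet_n$ of $(\mc{I}_{\Delta_n})^{\perm_n}$ for $n=3,4$ provided by Theorems~\ref{thm: inv3}--\ref{thm: inv4}. The only novelty is the assembly via a hypercohomology spectral sequence; the bulk of the work has been absorbed into the preceding sections.

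First I would apply Corollary~\ref{crl: projectionformula} with $F = L$ (so $r = 1$) and $k = 2$, obtaining
\[ \B{R}\mu_*\big((\det L^{[n]})^{\tens 2} \tens \mc{D}_A\big) \simeq \pi_*\big(\mc{I}^2_{\Delta_n} \tens \epsilon_n^2\big)^{\perm_n} \tens \mc{D}_L^{\tens 2} \tens \mc{D}_A \simeq \pi_*\big(\mc{I}^2_{\Delta_n}\big)^{\perm_n} \tens \mc{D}_L^{\tens 2} \tens \mc{D}_A, \]
since $\epsilon_n^2$ is trivial. By Lemma~\ref{lmm: inv2k-1} with $k=1$ the right hand side identifies with $\pi_*(\mc{I}_{\Delta_n})^{\perm_n} \tens \mc{D}_L^{\tens 2} \tens \mc{D}_A$, a \emph{sheaf} placed in degree zero. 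In particular $R^i \mu_*\big((\det L^{[n]})^{\tens 2} \tens \mc{D}_A\big) = 0$ for $i > 0$, so the Leray spectral sequence for $\mu$ collapses and yields the isomorphism
\[ H^*\big(X^{[n]}, (\det L^{[n]})^{\tens 2} \tens \mc{D}_A\big) \simeq H^*\big(S^n X, (\mc{I}_{\Delta_n})^{\perm_n} \tens \mc{D}_L^{\tens 2} \tens \mc{D}_A\big). \]

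Next, by Theorems~\ref{thm: inv3} and \ref{thm: inv4}, the sheaf $(\mc{I}_{\Delta_n})^{\perm_n}$ admits a finite right resolution by the complex $\mbb{I}^\bullet_n$ on $S^nX$; tensoring with the line bundle $\mc{D}_L^{\tens 2}\tens \mc{D}_A$ preserves exactness, so $\mbb{I}^\bullet_n \tens \mc{D}_L^{\tens 2} \tens \mc{D}_A$ is a resolution of $(\mc{I}_{\Delta_n})^{\perm_n} \tens \mc{D}_L^{\tens 2} \tens \mc{D}_A$. The standard hypercohomology (second) spectral sequence of this bounded complex then reads
\[ E^{p,q}_1 = H^q\big(S^n X, \mbb{I}^p_n \tens \mc{D}_L^{\tens 2} \tens \mc{D}_A\big) \Longrightarrow \B{H}^{p+q}\big(S^n X, \mbb{I}^\bullet_n \tens \mc{D}_L^{\tens 2} \tens \mc{D}_A\big), \]
and its abutment coincides with $H^{p+q}\big(S^n X, (\mc{I}_{\Delta_n})^{\perm_n} \tens \mc{D}_L^{\tens 2} \tens \mc{D}_A\big)$, hence with $H^{p+q}\big(X^{[n]}, (\det L^{[n]})^{\tens 2} \tens \mc{D}_A\big)$ by the isomorphism above. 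This is exactly the asserted spectral sequence. There is no real obstacle; the only point that deserves explicit mention is the verification that the two invocations (Corollary~\ref{crl: projectionformula} and Lemma~\ref{lmm: inv2k-1}) indeed combine to present $\B{R}\mu_*$ of the twisted double power as a sheaf concentrated in degree zero, so that the Leray and hypercohomology spectral sequences assemble cleanly without extra terms.
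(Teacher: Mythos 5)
Your proof is correct and follows essentially the same route as the paper: apply Corollary \ref{crl: projectionformula} together with Lemma \ref{lmm: inv2k-1} to identify $\B{R}\mu_*((\det L^{[n]})^{\tens 2}\tens\mc{D}_A)$ with the sheaf $(\mc{I}_{\Delta_n})^{\perm_n}\tens\mc{D}_L^{\tens 2}\tens\mc{D}_A$ concentrated in degree zero, and then take the hypercohomology spectral sequence of its resolution $\mbb{I}^\bullet_n\tens\mc{D}_L^{\tens 2}\tens\mc{D}_A$ from Theorems \ref{thm: inv3}--\ref{thm: inv4}. The only cosmetic difference is that you phrase the transfer of cohomology via the collapse of the Leray spectral sequence, whereas the paper applies $\B{R}\Gamma$ to both sides; these are the same argument.
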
\begin{proof}
By corollary \ref{crl: projectionformula} and by lemma \ref{lmm: inv2k-1}, we have 
$$  \B{R} \mu_* ( (\det L^{[n]} )^{\tens 2} \tens \mc{D}_A ) \simeq  \big ( \mc{I}_{\Delta_n}^2    \big)^{\perm_n} \tens \mc{D}_L^{\tens 2} \tens \mc{D}_A \simeq  \big ( \mc{I}_{\Delta_n}    \big)^{\perm_n} \tens \mc{D}_L^{\tens 2} \tens \mc{D}_A \;.$$Hence, applying the functor $\B{R}\Gamma$ on both sides, we get 
$$ H^*(X^{[n]}, (\det L^{[n]})^{\tens 2} \tens \mc{D}_A) \simeq \B{R} \Gamma \B{R} \mu_* ( (\det L^{[n]} )^{\tens 2} \tens \mc{D}_A ) \simeq H^* 
(S^n X, ( \mc{I}_{\Delta_n} )^{\perm_n} \tens \mc{D}_L^{\tens 2} \tens \mc{D}_A) \;.$$The spectral sequence in the statement computes the hypercohomology of the complex $\mbb{I}^\bullet_n \tens \mc{D}_L^{\tens 2} \tens \mc{D}_A$, which is a resolution of the sheaf $( \mc{I}_{\Delta_n}    \big)^{\perm_n} \tens \mc{D}_L^{\tens 2} \tens \mc{D}_A$, by theorems \ref{thm: inv3} and \ref{thm: inv4}. \end{proof}
An immediate application yields the computation of the Euler-Poincar\'e characteristic of~$(\det L^{[n]})^2~\tens~\mc{D}_A$. 
\begin{crl}\label{crl: Euler}Let $X$ a smooth projective surface and $L$ and $A$ line bundles over $X$. For $n =3$ and $n =4$ we have the following formulas for the Euler-Poincar\'e characteristic of $(\det L^{[n]})^2 \tens \mc{D}_A$ over the HIlbert scheme $X^{[n]}$:\begin{align*} \chi(X^{[3]}, (\det L^{[3]})^2 \tens \mc{D}_A ) = & \: \binom{\chi(L^2 \tens A)+2}{3} - \chi(L^4 \tens A^2)\chi(L^2 \tens A)+ \chi(\Omega^1_X \tens L^6 \tens A^3) \\ 
 \chi(X^{[4]}, (\det L^{[4]})^2 \tens \mc{D}_A ) = & \: \binom{\chi(L^2 \tens A)+3}{4} -\chi(L^4 \tens A^2)\binom{\chi(L^2 \tens A)+ 1}{2} + 
 \binom{\chi(L^4 \tens A^2)}{2} + \\ & \qquad + \chi(\Omega^1_X \tens L^6 \tens A^3)\chi(L^2 \tens A) - \chi(\Omega^1_X \tens L^8 \tens A^4) \\ & \qquad \qquad  \qquad \qquad -
 \chi(K_X \tens L^8 \tens A^4) - \chi(S^3 \Omega^1_X \tens L^8 \tens A^4)
\end{align*}
\end{crl}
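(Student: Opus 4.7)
The plan is to reduce the computation of the Euler characteristic on $X^{[n]}$ to one on $S^n X$ and then decompose via the resolutions $\mbb{I}^\bullet_n$ of $(\mc{I}_{\Delta_n})^{\perm_n}$ from Theorems \ref{thm: inv3} and \ref{thm: inv4}. By the preceding theorem (equivalently, by Corollary \ref{crl: projectionformula} combined with Lemma \ref{lmm: inv2k-1}), one has
\[ \chi\bigl(X^{[n]}, (\det L^{[n]})^{\tens 2} \tens \mc{D}_A\bigr) \;=\; \chi\bigl(S^n X, (\mc{I}_{\Delta_n})^{\perm_n} \tens \mc{D}_{L^2 \tens A}\bigr). \]
Setting $B := L^{\tens 2} \tens A$ and using the right resolution $\mbb{I}^\bullet_n \tens \mc{D}_B$ from Corollary \ref{crl: inv34L}, the right-hand side equals the alternating sum of the Euler characteristics of the terms of $\mbb{I}^\bullet_n \tens \mc{D}_B$, so it suffices to compute each individual contribution.

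Every term has the shape ${w_k}_* F_k \tens \mc{D}_B$ with $w_k : X \times S^{n-k} X \to S^n X$ finite. The projection formula together with the identification $w_k^* \mc{D}_B \simeq B^{\tens k} \boxtimes \mc{D}_B$ on $X \times S^{n-k} X$, Künneth, and Macdonald's formula $\chi(S^m X, \mc{D}_M) = \binom{\chi(M) + m - 1}{m}$ directly handle the ``easy'' terms. For $n = 3$ this already produces the three contributions $\binom{\chi(B)+2}{3}$, $\chi(B^2)\chi(B)$, and $\chi(\Omega^1_X \tens B^3)$, which give the stated formula after substituting $B = L^2 \tens A$ (so $B^k = L^{2k} \tens A^k$).

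For $n = 4$, the two middle terms of the resolution carry the subscript $_0$, and I would handle them via the short exact sequences extracted from the proof of Theorem \ref{thm: inv4}:
\begin{gather*}
0 \to {w_2}_*(\FS_X \boxtimes \FS_{S^2 X})_0 \to {w_2}_*(\FS_X \boxtimes \FS_{S^2 X}) \to \mc{A}_4(\FS_X) \to 0, \\
0 \to {w_3}_*(\Omega^1_X \boxtimes \mc{I}_{\Delta_2})_0 \to {w_3}_*(\Omega^1_X \boxtimes \mc{I}_{\Delta_2}) \to {w_4}_*(K_X) \to 0.
\end{gather*}
Combined with $0 \to \mc{I}_{\Delta_2} \to \FS_{X \times X} \to \FS_{\Delta_2} \to 0$ on the source $X \times X$ of $w_3$, the Euler characteristic of ${w_3}_*(\Omega^1_X \boxtimes \mc{I}_{\Delta_2}) \tens \mc{D}_B$ splits into $\chi(\Omega^1_X \tens B^3)\chi(B) - \chi(\Omega^1_X \tens B^4)$, while the remaining easy contributions are $\binom{\chi(B)+3}{4}$, $\chi(B^2)\binom{\chi(B)+1}{2}$, $\chi(K_X \tens B^4)$ and $\chi(S^3\Omega^1_X \tens B^4)$.

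The only genuinely delicate contribution, and the main obstacle, is $\chi(\mc{A}_4(\FS_X) \tens \mc{D}_B)$. Unpacking Lemma \ref{lmm: A4}, the sheaf $\mc{A}_4(\FS_X)$ is the $\langle(13)(24)\rangle$-antiinvariant pushforward of $\FS_{\Delta_{12} \cap \Delta_{34}} \simeq \FS_{X \times X}$; twisting with $\mc{D}_B$ and factoring the pushforward through the closed immersion $S^2 X \hookrightarrow S^4 X$, $[x,y] \mapsto 2x+2y$, identifies $\mc{A}_4(\FS_X) \tens \mc{D}_B$ with the pushforward of the sign-twisted line bundle $\mc{A}_{B^2} := (\pi^{S^2 X}_*(B^2 \boxtimes B^2) \tens \epsilon)^{\perm_2}$ on $S^2 X$. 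From the Maschke decomposition $\pi^{S^2 X}_*(B^2 \boxtimes B^2) \simeq \mc{D}_{B^2} \oplus \mc{A}_{B^2}$ and Künneth I then obtain
\[ \chi(S^2 X, \mc{A}_{B^2}) \;=\; \chi(B^2)^2 - \binom{\chi(B^2)+1}{2} \;=\; \binom{\chi(B^2)}{2}. \]
Substituting $B = L^{\tens 2} \tens A$ throughout and collecting all contributions with the signs dictated by $\mbb{I}^\bullet_4$ produces exactly the displayed formulas. The rest is bookkeeping; the only step that is not automatic is the identification of $\mc{A}_4(\FS_X) \tens \mc{D}_B$ with $\mc{A}_{B^2}$ on $S^2 X$.
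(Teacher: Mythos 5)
Your proposal is correct and follows exactly the route the paper intends (the corollary is stated as an immediate consequence of the spectral sequence $E^{p,q}_1 = H^q(S^nX,\mbb{I}^p_n\tens\mc{D}_L^{\tens 2}\tens\mc{D}_A)$, whose abutment's Euler characteristic is the alternating sum $\sum_p(-1)^p\chi(S^nX,\mbb{I}^p_n\tens\mc{D}_{L^2\tens A})$). You correctly supply the details the paper omits — in particular the treatment of the two $(\,\cdot\,)_0$ terms via the surjections onto $\mc{A}_4(\FS_X)$ and ${w_4}_*(K_X)$ from Lemma \ref{lmm: Ep0} and the proof of Theorem \ref{thm: inv4}, and the identification $\chi(\mc{A}_4(\FS_X)\tens\mc{D}_B)=\binom{\chi(B^2)}{2}$ as the anti-invariant part of $\chi(B^2\boxtimes B^2)$ on $X\times X$ — and the resulting bookkeeping matches the displayed formulas.
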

We now mention an effective vanishing result for the cohomology of $(\det L^{[n]})^k \tens \mc{D}_A$, for any $n$~and~$k$. 
\begin{remark}We recall that a line bundle $L$ on a smooth projective surface $X$ is called $m$-very ample if, for any $\xi \in X^{[m+1]}$, the restriction map $H^0(L)  \rTo H^0(L_\xi)$ is surjective. The property of being $m$-very ample generalizes tha fact of being very ample, since ``$1$-very ample" means exactly ``very ample". After results of \cite{BeltramettiSommese1991} and \cite{CataneseGoettsche1990}, one can prove that $\det L^{[n]}$ is globally generated if $L$ is $(n-1)$-very ample, and that it is actually very ample if $L$ is $n$-very ample (see also \cite[Cor. 5.10]{Scalaarxiv2015}). 
\end{remark}
\begin{crl}Let $X$ be a smooth projective surface and $L$ and $A$ two line bundles over $X$ such that $L^k \tens A \tens K_X^{-1}$ is a product $\tens_{i=1}^k B_i$ of line bundles $B_i$, with $B_1$ $n$-very ample and $B_j$ $(n-1)$-very ample, for $j = 2, \dots, k$. Then we have the vanishing 
$$ H^i(X^{[n]}, ( \det L^{[n]})^{\tens k}  \tens \mc{D}_A)=0 \qquad \qquad \mbox{for  $i>0$} \;.$$In particular the statement is true if $L^k \tens A \tens K_X^{-1}$ 
is a product of $k(n-1)+1$ very ample line bundles over $X$. 
\end{crl}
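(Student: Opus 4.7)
The plan is to present the line bundle $(\det L^{[n]})^{\tens k} \tens \mc{D}_A$ as $K_{X^{[n]}}$ tensored with an ample line bundle, and then to appeal directly to Kodaira vanishing on the smooth projective variety $X^{[n]}$ of dimension $2n$.

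First I would recall two basic facts. Since $X^{[n]}$ is a crepant resolution of the symmetric variety $S^nX$ and $K_{S^nX}$ descends from $K_X^{\boxtimes n}$, one has $K_{X^{[n]}} \simeq \mc{D}_{K_X}$. By (\ref{eq: detL[n]}), $\det L^{[n]} \simeq \mc{D}_L(-e)$, and more generally $\det B^{[n]} \simeq \mc{D}_B(-e)$ for any line bundle $B$ on $X$. Combining these with the factorization $L^k \tens A \tens K_X^{-1} \simeq \Tens_{i=1}^k B_i$ and the multiplicativity of the assignments $B \mapsto \mc{D}_B$ and $B \mapsto \det B^{[n]}$, I would compute
\[ M := (\det L^{[n]})^{\tens k} \tens \mc{D}_A \tens K_{X^{[n]}}^{-1} \simeq \mc{D}_{L^k \tens A \tens K_X^{-1}}(-ke) \simeq \Tens_{i=1}^k \mc{D}_{B_i}(-e) \simeq \Tens_{i=1}^k \det B_i^{[n]} . \]

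The positivity hypotheses on the $B_i$, combined with the results of Beltrametti--Sommese \cite{BeltramettiSommese1991} and Catanese--G\"ottsche \cite{CataneseGoettsche1990} recalled in the preceding remark, then give that $\det B_1^{[n]}$ is very ample and each $\det B_j^{[n]}$ for $j \geq 2$ is globally generated. Hence $M$ is a tensor product of a very ample line bundle with globally generated ones, which is ample. Kodaira vanishing applied to $K_{X^{[n]}} \tens M$ then yields $H^i(X^{[n]}, (\det L^{[n]})^{\tens k} \tens \mc{D}_A) = 0$ for $i > 0$, which is the main statement.

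For the final ``in particular'' clause I would use the classical fact that the tensor product of $m$ very ample line bundles on a projective variety is $m$-very ample. Given a factorization of $L^k \tens A \tens K_X^{-1}$ into $k(n-1)+1$ very ample factors, I would group $n$ of them into $B_1$ (which becomes $n$-very ample) and distribute the remaining $(k-1)(n-1)$ factors evenly into $B_2, \ldots, B_k$, so that each $B_j$ for $j \geq 2$ is a product of $n-1$ very amples, hence $(n-1)$-very ample, reducing to the main statement. The only genuinely conceptual step is the identification of $M$ with $\Tens_i \det B_i^{[n]}$; once this is in place, the vanishing is just Kodaira's theorem fed by the positivity input for tautological determinants on Hilbert schemes.
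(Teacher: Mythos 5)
Your proposal is correct and follows essentially the same route as the paper: rewrite $(\det L^{[n]})^{\tens k}\tens\mc{D}_A$ as $K_{X^{[n]}}\tens\bigl(\tens_{i=1}^k\det B_i^{[n]}\bigr)$ using $\det B^{[n]}\simeq\mc{D}_B(-e)$ and $K_{X^{[n]}}\simeq\mc{D}_{K_X}$, note that $\det B_1^{[n]}$ is very ample and the other $\det B_j^{[n]}$ are nef (globally generated), and conclude by Kodaira vanishing; the grouping argument for the ``in particular'' clause is likewise the paper's. The only cosmetic difference is that you invoke global generation where the paper says nef, which is the same positivity input from the cited remark.
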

\begin{proof}One has just to note that, in the hypothesis of the corollary, and since $\mc{D}_{K_X} \simeq K_{X^{[n]}}$, 
$( \det L^{[n]})^{\tens k}  \tens \mc{D}_A \simeq \big( \tens_{i=1}^k \det B_i^{[n]} \big) \tens K_{X^{[n]}}$. Then one uses 
Kodaira vanishing, since all line bundles $\det B_i^{[n]}$ are nef and $\det B_1^{[n]}$ is very ample. The last statement follows from the fact that a product of $l$ $1$-very ample line bundles is $l$-very ample  \cite{BeltramettiSommese1991}. 
\end{proof}

\subsection{Regularity of \protect $\mc{I}_{\Delta_n}^k$ and \protect $(\mc{I}^k_{\Delta_n})^{\perm_n}$}
\begin{notat}If $s \in \mbb{Q}$ is a rational number, we denote with $[s]$ its integral part, or round-down, and with $\lceil s \rceil$ its round-up. 
\end{notat}
\begin{theorem}\label{thm: regularity1}Let $X$ be a smooth projective surface and $L$ be a line bundle over $X$. Let $n \in \mbb{N}$, $n \geq 2$. Let $m \in \mbb{N}$ be an integer with the property 
\begin{itemize}\item[a)] $L^m \tens K_X^{-1} = \tens_{i=1}^{2 [(k+1)/2] }B_i$, with 
$B_1$ $n$-very ample and with $B_j$ $(n-1)$-very ample, for $j > 1$. \end{itemize}
Then we have the vanishing
$$H^i(S^n X, ( \mc{I}^k_{\Delta_n} )^{\perm_n} \tens \mc{D}^m_{L}) = 0 \qquad \qquad \mbox{for $i>0$} \;.$$If, moreover, $L$ is very ample on $X$,  then $(\mc{I}^k_{\Delta_n})^{\perm_n}$ is $(m+2n)$-regular with respect 
to $L \boxtimes \cdots \boxtimes L$. Therefore, if $m_0$ is the minimum of $m$ such that condition $a)$ is true, we have the  upper bound 
$$ \reg((\mc{I}^k_{\Delta_n})^{\perm_n}) \leq m_0 + 2n $$for the regularity of the ideal sheaf $(\mc{I}^k_{\Delta_n})^{\perm_n}$ with respect to 
$\mc{D}_L$.
\end{theorem}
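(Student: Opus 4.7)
The plan is to transport the problem through the Hilbert--Chow morphism to the Hilbert scheme $X^{[n]}$ and then invoke Kodaira vanishing. Set $K:=2[(k+1)/2]$, the smallest even integer $\geq k$. By Lemma \ref{lmm: inv2k-1} one has $(\mc{I}^k_{\Delta_n})^{\perm_n}\simeq(\mc{I}^K_{\Delta_n})^{\perm_n}$, and since $K$ is even the representation $\epsilon_n^K$ is trivial. Applying Corollary \ref{crl: projectionformula} with $F=\FS_X$ (so $r=1$ and $\det F=\FS_X$) and $A=L^m$ therefore yields
\[\B{R}\mu_*\bigl((\det\FS_X^{[n]})^{\tens K}\tens\mc{D}_L^m\bigr)\simeq(\mc{I}^k_{\Delta_n})^{\perm_n}\tens\mc{D}_L^m.\]
Applying $\B{R}\Gamma(S^nX,-)$ identifies the cohomology we want to annihilate with $H^i\bigl(X^{[n]},(\det\FS_X^{[n]})^{\tens K}\tens\mc{D}_L^m\bigr)$.

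Next I rewrite this line bundle on $X^{[n]}$ using formula (\ref{eq: detL[n]}), which in particular gives $\det B^{[n]}\simeq\mc{D}_B\tens\det\FS_X^{[n]}$, together with the standard identity $K_{X^{[n]}}\simeq\mc{D}_{K_X}$. Under hypothesis $a)$ one has $\mc{D}_L^m\simeq\mc{D}_{K_X}\tens\bigotimes_i\mc{D}_{B_i}$, so a direct computation produces
\[(\det\FS_X^{[n]})^{\tens K}\tens\mc{D}_L^m\simeq K_{X^{[n]}}\tens\bigotimes_{i=1}^{K}\det B_i^{[n]}.\]
By the remark on $m$-very ampleness recalled just before the statement of the theorem, $\det B_1^{[n]}$ is very ample (because $B_1$ is $n$-very ample) and each $\det B_i^{[n]}$ with $i\geq 2$ is globally generated (because $B_i$ is $(n-1)$-very ample); hence $\bigotimes_i\det B_i^{[n]}$ is ample on the smooth projective variety $X^{[n]}$, and Kodaira vanishing supplies the required $H^i=0$ for $i>0$, which proves the first assertion.

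For the regularity bound, assuming $L$ is very ample I verify that condition $a)$ persists for every exponent $t\geq m_0$. Since $L$ is $1$-very ample, $L^{t-m_0}$ is $(t-m_0)$-very ample by the Beltrametti--Sommese product rule; consequently $B_1\tens L^{t-m_0}$ remains $n$-very ample (in fact $(n+t-m_0)$-very ample), and the factorization
\[L^t\tens K_X^{-1}\simeq(B_1\tens L^{t-m_0})\tens B_2\tens\cdots\tens B_K\]
exhibits $a)$ with the same $K$. Since $\dim S^nX=2n$, the bound $\reg\leq m_0+2n$ amounts to the vanishing $H^i\bigl((\mc{I}^k_{\Delta_n})^{\perm_n}\tens\mc{D}_L^{m_0+2n-i}\bigr)=0$ for all $i>0$; for $1\leq i\leq 2n$ the exponent $m_0+2n-i\geq m_0$ is covered by the previous two steps, and for $i>2n$ the vanishing is automatic for dimension reasons. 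I expect no genuine obstacle: the delicate point is the bookkeeping that ensures $\epsilon_n^K\simeq 1$ and that the $K_X$-twist on $X^n$ aligns with $K_{X^{[n]}}$, which is exactly what Lemma \ref{lmm: inv2k-1} and the crepancy-type identity $K_{X^{[n]}}\simeq\mc{D}_{K_X}$ provide.
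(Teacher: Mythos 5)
Your proof is correct and follows essentially the same route as the paper: reduce $k$ to the even exponent $2[(k+1)/2]$ via Lemma \ref{lmm: inv2k-1}, transport to $X^{[n]}$ via Corollary \ref{crl: projectionformula}, obtain the vanishing there, and then deduce $(m+2n)$-regularity from the persistence of hypothesis a) under further twists by $L$. The only difference is cosmetic: where the paper cites an external vanishing result for $\mc{D}_{L^m}(-2[(k+1)/2]e)$ on $X^{[n]}$, you inline the identical Kodaira-vanishing argument (rewriting the bundle as $K_{X^{[n]}}\tens\bigotimes_i\det B_i^{[n]}$) that the paper itself uses for the analogous corollary in the preceding subsection.
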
\begin{remark}We note that condition a) is true in particular if $L^m \tens K_X^{-1}$ is a tensor product $2n [(k+1)/2] - 2[(k+1)/2] +1$ very ample line bundles over $X$. If this holds and, additonaly, the line bundle $L$ is very ample, then the ideal $(\mc{I}^k_{\Delta_n})^{\perm_n} $ is $(m+2n)$-regular with respect to $\mc{D}_L$. 
\end{remark}
\begin{proof}[Proof of theorem \ref{thm: regularity1}]
For the first statement, by lemma \ref{lmm: inv2k-1} and by corollary \ref{crl: projectionformula}, we have that
$$(\mc{I}^k_{\Delta_n})^{\perm_n} \tens \mc{D}^m_L \simeq (\mc{I}^{2  [(k+1)/2]}_{\Delta_n})^{\perm_n}\tens \mc{D}^m_L
 = \B{R} \mu_*(  ( \det \FS_X^{[n]}) ^{\tens 2 [(k+1)/2]} \tens \mc{D}^m_L) \;.$$Hence
 $$ H^i(S^n X, (\mc{I}^k_{\Delta_n})^{\perm_n} \tens \mc{D}^m_L) \simeq H^i(X^{[n]}, \mc{D}_{L^m}( - 2 [(k+1)/2] e))$$and we conclude by 
 \cite[Propositon 5.14]{Scalaarxiv2015}. 

For the second part, from the first statement and from the fact that, under the hypothesis, $\mc{D}_L$ is very ample,  
we immediately have that $H^i(S^n X, ( \mc{I}^k_{\Delta_n} )^{\perm_n} \tens \mc{D}_L^{\tens m+l} ) = 0$ for $i>0$ and for $l \geq 0$. 
Consequently, $( \mc{I}^k_{\Delta_n} )^{\perm_n}$ has to be $(m+2n)$-regular with respect to $\mc{D}_L$. 
\end{proof}

\begin{theorem}\label{thm: regularity2}Let $X$ be a smooth projective surface and $L$ be a line bundle over $X$. Let $n \in \mbb{N}$, $2 \leq n \leq 7$. Let $m \in \mbb{N}$  be an integer with the property 
\begin{itemize}\item[b)] $L^m \tens K_X^{-1} = \tens_{i=1}^{k+1}B_i$, with 
$B_1$ $n$-very ample and with $B_j$ $(n-1)$-very ample, for $j > 1$. \end{itemize}
Then we have the vanishing 
$$ H^i(X^n, \mc{I}^k_{\Delta_n} \tens (L^m \boxtimes \cdots \boxtimes L^m) ) = 0 \qquad \qquad \mbox{for $i>0$} \;.$$If, moreover, $L$ is very ample, 
then the ideal sheaf $\mc{I}^k_{\Delta_n}$ is $(m+2n)$-regular with respect to $L \boxtimes \cdots \boxtimes L$. Therefore, if $m_0$ is the minimum of $m$ such that condition $b)$ is true, we have the  upper bound
$$ \reg(\mc{I}^k_{\Delta_n}) \leq m_0 + 2n $$for the regularity of the ideal sheaf $\mc{I}^k_{\Delta_n}$ with respect to 
$L \boxtimes \cdots \boxtimes L$. \end{theorem}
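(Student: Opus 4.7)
The plan is to reduce the statement to a Kodaira-type vanishing on the isospectral Hilbert scheme $B^n$, which is where the condition $2 \leq n \leq 7$ will enter. The starting point is the proof of Theorem \ref{thm: projectionformula}: taking $F = \FS_X$ (so $r=1$) and $A = L^m$, the vanishing $R^ip_*(q^*(\det \FS_X^{[n]})^{\tens k}) = 0$ for $i>0$ established there shows that, as an ordinary coherent sheaf,
\[
\mc{I}^k_{\Delta_n} \tens L^{m,\boxtimes n} \simeq p_*\bigl(q^*\mc{D}_{L^m}(-kE_B)\bigr),
\]
and moreover that $\B{R}p_*$ of the right hand side is concentrated in degree~$0$. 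By the Leray spectral sequence, it suffices to prove $H^i(B^n, q^*\mc{D}_{L^m}(-kE_B)) = 0$ for $i>0$.

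The next step is to express $q^*\mc{D}_{L^m}(-kE_B)$ as $K_{B^n}$ twisted by an ample line bundle. Since $q: B^n \to X^{[n]}$ is a degree $n!$ finite cover branched along $E = 2e$ with simple ramification $E_B$ (because transpositions act as reflections), Hurwitz gives $K_{B^n} \simeq q^*K_{X^{[n]}}(E_B)$; combining with $K_{X^{[n]}} \simeq \mc{D}_{K_X}$ and the identity $q^*\mc{D}_A \simeq p^*A^{\boxtimes n}$, this becomes $K_{B^n} \simeq p^*K_{X^n}(E_B)$. Using the factorization $L^m \tens K_X^{-1} = \tens_{i=1}^{k+1} B_i$ together with the relation $\det B_i^{[n]} \simeq \mc{D}_{B_i}(-e)$ from (\ref{eq: detL[n]}), one rewrites
\[
q^*\mc{D}_{L^m}(-kE_B) \simeq K_{B^n} \tens \bigotimes_{i=1}^{k+1} q^*\bigl(\mc{D}_{B_i}(-e)\bigr) \simeq K_{B^n} \tens q^*\bigl(\tens_{i=1}^{k+1}\det B_i^{[n]}\bigr).
\]
Set $D := \tens_{i=1}^{k+1}\det B_i^{[n]}$. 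By condition~b), $\det B_1^{[n]}$ is very ample and $\det B_j^{[n]}$ for $j \geq 2$ is globally generated (hence nef), so $D$ is ample on $X^{[n]}$; since $q$ is finite and surjective, $q^*D$ is ample on $B^n$.

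It remains to invoke a vanishing theorem on $B^n$: since $B^n$ is Gorenstein (Haiman) and has log-canonical singularities for $n \leq 7$ by \cite{Scala2015isospectral}, a Kollár--Ambro--Fujino-type vanishing for log-canonical projective varieties yields $H^i(B^n, K_{B^n} \tens q^*D) = 0$ for $i>0$, which is the required vanishing. For the regularity bound, if $L$ is very ample on $X$, then for any $m' \geq m_0$ one may absorb the additional factor $L^{m'-m_0}$ (a product of very ample line bundles, hence a tensor product of $l$-very ample factors by Beltrametti--Sommese) into the $B_i$'s while preserving conditions on positivity of $B_1$ and of the $B_j$, $j>1$. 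The same vanishing then holds for $m'$, and taking $m' = m_0 + 2n - i$ for $i = 1, \dots, 2n$ gives the $(m_0 + 2n)$-regularity of $\mc{I}^k_{\Delta_n}$ with respect to $L^{\boxtimes n}$. The main obstacle is the vanishing step at $n=6,7$: one must carefully cite (or verify) a log-canonical vanishing theorem that applies to $(B^n, 0)$ with a pullback of an ample line bundle from $X^{[n]}$; for $n \leq 5$ canonical singularities suffice and classical Kawamata--Viehweg applies directly.
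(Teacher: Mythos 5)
Your argument is correct and follows the same route the paper takes: the paper's own proof simply cites \cite[Prop.\ 5.15]{Scalaarxiv2015} together with the log-canonicity of $B^n$ for $n\leq 7$ from \cite{Scala2015isospectral}, and what you have written is precisely a reconstruction of that cited proposition — reduce via Theorem \ref{thm: projectionformula} and Leray to $H^i(B^n, q^*\mc{D}_{L^m}(-kE_B))$, rewrite this sheaf as $K_{B^n}\tens q^*D$ with $D=\tens_i \det B_i^{[n]}$ ample, and apply Fujino--Ambro vanishing on the Gorenstein log-canonical variety $B^n$ (Kawamata--Viehweg sufficing for $n\leq 5$). The deduction of $(m+2n)$-regularity by absorbing extra very ample factors into $B_1$ matches the paper's argument for Theorem \ref{thm: regularity1}.
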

\begin{remark}We note that condition b) is true if $L^m \tens K_X^{-1}$ is a tensor product of $(k+1)n -k$ very ample line bundles over $X$. If this holds and, additionaly, $L$ is very ample, then, for $2 \leq n \leq 7$, the ideal sheaf $\mc{I}^k_{\Delta_n}$ is $(m+2n)$ regular with respect to $L \boxtimes \cdots \boxtimes L$. 
\end{remark}
\begin{proof}[Proof of theorem \ref{thm: regularity2}]The first statement follows immediately by \cite[Propositon 5.15]{Scalaarxiv2015}
and by the fact that, by \cite[Theorem 2.12]{Scala2015isospectral},  $B^n$ has log-canonical singularities for $n \leq 7$. As for the second, its proof is analogous
to the proof of the similar statement for the regularity of $(\mc{I}^k_{\Delta_n})^{\perm_n}$ in theorem \ref{thm: regularity1}. 
\end{proof}

The previous regularity results are nicer to state when $X$ has Picard number one. 
\begin{crl}Let $X$ be a smooth  projective surface with Picard group $\Pic(X) \simeq \mbb{Z} B$, where $B$ is the ample generator. Let $r$ be the minimum positive power of $B$ such that $B^r$ is very ample. Suppose, moreover, that $K_X \simeq B^w$, for some integer $w$. Then we have the following. 
\begin{itemize}
\item The sheaf $(\mc{I}_{\Delta_n}^k)^{\perm_n}$ is $(m+2n)$-regular with respect to $\mc{D}_{B^r} $, if $m \geq 2n [(k+1)/2] - 2[(k+1)/2] +1
+w/r $. Hence, with respect to $\mc{D}_{B^r}$, 
$$ \reg( (\mc{I}_{\Delta_n}^k)^{\perm_n}) \leq  2n ( [(k+1)/2] +1) - 2[(k+1)/2] +1
 +\lceil w/r  \rceil  \;.$$
\item 
 If $2 \leq n \leq 7$, the sheaf $\mc{I}_{\Delta_n}^k$ is $(m+2n)$-regular with respect to $B^r \boxtimes \cdots \boxtimes B^r$, if $m \geq (k+1)n-k +w/r $. Hence, with respect to $B^r \boxtimes \cdots \boxtimes B^r$,  
$$ \reg(\mc{I}_{\Delta_n}^k) \leq (k+3)n-k +\lceil w/r  \rceil  \;.$$
\end{itemize}
\end{crl}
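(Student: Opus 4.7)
The plan is to deduce both bullets directly from Theorems \ref{thm: regularity1} and \ref{thm: regularity2}, using that in Picard rank one everything is a power of the ample generator $B$ and a line bundle $B^a$ is very ample if and only if $a \geq r$. In particular, $B^a$ is a product of $a/r$ very ample line bundles whenever $r \mid a$, and more generally we can achieve $\lfloor a/r\rfloor$ very ample factors (with a little slack left over) as soon as $a \geq \ell r$ with the correct $\ell$.

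First I would handle the bullet about $(\mc{I}_{\Delta_n}^k)^{\perm_n}$. By the remark following Theorem \ref{thm: regularity1}, hypothesis (a) is satisfied as soon as $L^m \tens K_X^{-1}$ can be written as a product of $2n[(k+1)/2] - 2[(k+1)/2] +1$ very ample line bundles. Setting $L = B^r$ gives $L^m\tens K_X^{-1} \simeq B^{mr-w}$; since the minimum very ample power of $B$ is $B^r$, this sheaf is a product of that many very ample line bundles whenever
\[
 mr - w \;\geq\; r\bigl(2n[(k+1)/2] - 2[(k+1)/2] +1\bigr),
\]
i.e.\ $m \geq 2n[(k+1)/2] - 2[(k+1)/2] +1 + w/r$. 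Since $m$ is an integer, the minimum $m_0$ satisfying (a) is bounded by $2n[(k+1)/2] - 2[(k+1)/2] +1 + \lceil w/r\rceil$. The stated bound on the regularity then follows immediately from $\reg((\mc{I}^k_{\Delta_n})^{\perm_n}) \leq m_0 + 2n$ in Theorem \ref{thm: regularity1} and the fact that $B^r$ is very ample.

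For the second bullet, the argument is identical with Theorem \ref{thm: regularity2} and its following remark in place of Theorem \ref{thm: regularity1}: condition (b) is implied by $L^m \tens K_X^{-1}$ being a product of $(k+1)n-k$ very ample line bundles, which for $L=B^r$ translates into $mr-w \geq r\bigl((k+1)n-k\bigr)$, i.e.\ $m \geq (k+1)n-k+w/r$. Thus $m_0 \leq (k+1)n-k + \lceil w/r\rceil$, and the regularity bound $\reg(\mc{I}^k_{\Delta_n}) \leq m_0+2n \leq (k+3)n - k + \lceil w/r\rceil$ follows from Theorem \ref{thm: regularity2}, valid for $2\leq n\leq 7$.

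There is no substantial obstacle: the entire content has been established in Theorems \ref{thm: regularity1} and \ref{thm: regularity2}, and the corollary is only a numerical translation under the Picard-number-one hypothesis. The only care needed is to round $w/r$ up to the next integer because $m$ must be an integer, and to combine the two slack additions (the $+1$ in the first bullet and the $-k$ versus $+n$ in the second) correctly so as to match the stated expressions $2n([(k+1)/2]+1) - 2[(k+1)/2] + 1 + \lceil w/r\rceil$ and $(k+3)n - k + \lceil w/r\rceil$.
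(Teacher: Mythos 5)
Your argument is exactly the intended one: the paper states this corollary without proof, as an immediate numerical translation of Theorems \ref{thm: regularity1} and \ref{thm: regularity2} together with the remarks following them, and your bookkeeping ($m_0 \leq N + \lceil w/r\rceil$, where $N$ is the required number of very ample factors, followed by adding $2n$) reproduces both stated bounds. The one step you assert without justification --- that $B^{mr-w}$ actually splits as a product of the required number of very ample line bundles once $mr-w \geq Nr$ (equivalently, that suitable powers $B^a$ with $a\ge r$ are very ample) --- is precisely the point the paper also leaves implicit, so your proof matches the paper's approach.
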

\begin{remark}If $X = \mbb{P}_2$, taking $B= \FS_{\mbb{P}_2}(1)$, we can say, more simply, that, 
$ \reg( (\mc{I}_{\Delta_n}^k)^{\perm_n}) \leq  2n ( [(k+1)/2] +1) - 2[(k+1)/2] -2$ and,  
for $2 \leq n \leq 7$,  $\reg(\mc{I}_{\Delta_n}^k) \leq (k+3)(n-1)$. 
\end{remark}
\begin{remark}The results proven in this subsection for $\mc{I}^k_{\Delta_n}$ are valid for $2 \leq n \leq 7$. We expect them to hold also for $n = 8$, since $B^n$ should have log-canonical singularities also in this case \cite[Conjecture~2]{Scala2015isospectral}. However, we don't know, and it seems to us an interesting question, if the previous bound is still a good upper bound for 
the regularity of $\mc{I}_{\Delta_n}^k$, for general $n$, or, if not, what would be a good one. 
The proof we gave here can't go through in general since we proved that $B^n$ does not have log-canonical singularities for $n \geq 9$ \cite[Theorem 2.12]{Scala2015isospectral}. 
\end{remark}

\subsection{The sheaves \protect $\mc{L}^\mu(-2\mu\Delta)$.}\label{subsec: sheaves}
Let $n, k \in \mbb{N}$, $n \geq 2$, and  let $\mu$ be a partition of $k$ of length $l(\mu) \leq n$. The symmetric group $\perm_n$ acts naturally on the set of compositions of $k$ supported in $\{1, \dots, n \}$. 
Indicate with $L^\mu$ the line bundle on $X^n$ defined by 
$L^\mu := \tens_{i=1}^n p_i^* L^{\tens \mu_i}$, where $p_i: X^n \rTo X$ is the projecton onto the $i$-th factor. 
In \cite{Scalaarxiv2015} we defined sheaves $\mc{L}^\mu (-2 \mu \Delta)$ over the symmetric variety $S^n X$ as 
$$ \mc{L}^\mu (- 2 \mu \Delta) := \pi_* \Big(L^\mu \tens \bigcap_{1 \leq i < j  \leq l(\mu)} \mc{I}_{\Delta_{i,j}}^{2 \mu_j} \Big)^{\Stab_{\perm_n}(\mu)} $$where we see $\mu$ as a composition of $k$ supported in $\{1, \dots, n \}$. 
If $\mu_2 = \dots = \mu_{l(\mu)} = l$, we denote more simply this sheaf with $\mc{L}^{\mu}(-2 l \Delta)$. 
We also use sheaves $\mc{L}^{\mu}(-m \Delta)$, for an integer $m \in \mbb{N}$, whose definition is analogous.  
The interest in such sheaves comes from the fact that we believe they could be in all generality 
 the graded sheaves for a natural filtration on the direct image $\mu_*(S^k L^{[n]})$ 
of symmetric powers of tautological bundles on $X^{[n]}$ via the Hilbert-Chow morphism.
\begin{remark}The sheaf $\mc{L}^\mu (- 2 \mu \Delta)$ over the symmetric variety $S^nX$ is closely related to the same sheaf over $S^{l(\mu)} X$; more precisely, if $v_l$ is the finite morphism $v_l: S^lX \times S^{n-l} X \rTo S^n X$, sending the couple of $0$-cycles $(x,y)$ to $x+y$, we can write the isomorphism of sheaves over $S^n X$:
$$ \mc{L}^\mu (- 2 \mu \Delta) \simeq {v_{l(\mu)}}_* (\mc{L}^\mu (- 2 \mu \Delta) \boxtimes \FS_{S^{n-l(\mu)} X}) \;,$$and, in general, if $A$ is a line bundle over $X$, then
$ \mc{L}^\mu (- 2 \mu \Delta) \tens \mc{D}_A \simeq {v_{l(\mu)}}_* (\mc{L}^\mu (- 2 \mu \Delta) \tens \mc{D}_A \boxtimes \mc{D}_A )$. 
\end{remark}
\begin{remark}Let  $\lambda = (r, \dots, r)$ and set $l = l(\lambda)$. Then  it is immediate to see that the sheaf 
$\mc{L}^{\lambda}(-2r \Delta)$ over $S^{l} X$ 
is isomorphic to $$\mc{L}^{\lambda}(-2r \Delta) \simeq (\mc{I}^{2r}_{\Delta_l} )^{\perm_l} \tens \mc{D}_{L^r} \simeq \mu_*((\det \FS_{X}^{[l]})^{\tens 2r}) \tens \mc{D}_{L^r}  \;.$$If $n \geq l$, over $S^n X$, in general we have that 
$\mc{L}^{\lambda}(-2r \Delta) \tens \mc{D}_A \simeq {v_l}_*( (\mc{I}^{2r}_{\Delta_l} )^{\perm_l} \tens \mc{D}_{L^r \tens A} 
\boxtimes \mc{D}_A ) \simeq {v_l}_*( \mu_*((\det \FS_{X}^{[l]})^{\tens 2r} ) \tens \mc{D}_{L^r \tens A} \boxtimes \mc{D}_A) $. 
\end{remark}We come to our results. Denote the partition $(1, \dots, 1)$ of $l$ with $1^{l}$ (in exponential notation). As a consequence of the previous remark, as well as of theorems \ref{thm: inv3}, \ref{thm: inv4}, and of subsection \ref{subsec: DL}, we obtain the following
\begin{crl}For $l = 3, 4$, and $n \geq l$, the sheaf $\mc{L}^{1^l}(-2 \Delta) \tens \mc{D}_A$ over $S^n X$  is resolved by the complex ${v_l}_*(\mbb{I}^\bullet_l \tens \mc{D}_{L \tens A} \boxtimes \mc{D}_A)$: 
$$ \mc{L}^{1^l}(-2 \Delta) \tens \mc{D}_A \simeq^{\rm qis} {v_l}_*(\mbb{I}^\bullet_l \tens \mc{D}_{L \tens A} \boxtimes \mc{D}_A) \;.$$
\end{crl}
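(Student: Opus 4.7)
The plan is to reduce the statement on $S^nX$ to the previously established resolutions on $S^lX$, then transfer back to $S^nX$ through the finite map $v_l$.

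First, I would use the two remarks preceding the corollary. The second remark identifies, for $\lambda=(r,\dots,r)$ with $l(\lambda)=l$ and $r=1$, the sheaf on $S^lX$ as
\[ \mc{L}^{1^l}(-2\Delta) \simeq (\mc{I}^{2}_{\Delta_l})^{\perm_l}\tens \mc{D}_L \;, \]
and more generally gives, on $S^nX$ for $n\geq l$, the identification
\[ \mc{L}^{1^l}(-2\Delta)\tens \mc{D}_A \simeq {v_l}_*\big((\mc{I}^{2}_{\Delta_l})^{\perm_l}\tens \mc{D}_{L\tens A}\boxtimes \mc{D}_A\big)\;. \]
Lemma \ref{lmm: inv2k-1} with $k=1$ lets me replace $(\mc{I}^{2}_{\Delta_l})^{\perm_l}$ by $(\mc{I}_{\Delta_l})^{\perm_l}$, so I only have to produce a resolution of $(\mc{I}_{\Delta_l})^{\perm_l}\tens \mc{D}_{L\tens A}\boxtimes \mc{D}_A$ on $S^lX\times S^{n-l}X$ and push it forward by $v_l$.

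The second step is to exhibit this resolution on $S^lX\times S^{n-l}X$. By theorems \ref{thm: inv3} and \ref{thm: inv4}, the complex $\mbb{I}^\bullet_l$ is a right resolution of $(\mc{I}_{\Delta_l})^{\perm_l}$ on $S^lX$ for $l=3,4$. The discussion of subsection \ref{subsec: DL}, which gives $(\pi_*F)\tens \mc{D}_L\simeq \pi_*(F\tens L^{\boxtimes l})$ and therefore behaves well under tensorisation by line bundles of the form $\mc{D}_{L\tens A}$, implies that $\mbb{I}^\bullet_l\tens \mc{D}_{L\tens A}$ is a right resolution of $(\mc{I}_{\Delta_l})^{\perm_l}\tens \mc{D}_{L\tens A}$ on $S^lX$. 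External tensorisation with the locally free sheaf $\mc{D}_A$ on $S^{n-l}X$ preserves exactness, so
\[ \mbb{I}^\bullet_l\tens \mc{D}_{L\tens A}\boxtimes \mc{D}_A \]
is a right resolution of $(\mc{I}_{\Delta_l})^{\perm_l}\tens \mc{D}_{L\tens A}\boxtimes \mc{D}_A$ on $S^lX\times S^{n-l}X$.

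Finally, I push this resolution forward by $v_l:S^lX\times S^{n-l}X\to S^nX$. Since $v_l$ is a finite morphism, the functor ${v_l}_*$ is exact, so applying ${v_l}_*$ to the resolution just constructed yields a right resolution of ${v_l}_*\big((\mc{I}_{\Delta_l})^{\perm_l}\tens \mc{D}_{L\tens A}\boxtimes \mc{D}_A\big)\simeq \mc{L}^{1^l}(-2\Delta)\tens \mc{D}_A$ by the complex ${v_l}_*(\mbb{I}^\bullet_l\tens \mc{D}_{L\tens A}\boxtimes \mc{D}_A)$, which is exactly the statement.

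There is essentially no ``hard part'' here: the corollary is a formal concatenation of several results already in place, and my only concern would be the bookkeeping for the two twists $\mc{D}_{L\tens A}$ on the first factor versus $\mc{D}_A$ on the second, which is dictated by the identity $\mc{D}_{L\tens A}\simeq \mc{D}_L\tens \mc{D}_A$ on $S^lX$ together with the identification $L^{\boxtimes l}\tens A^{\boxtimes l}$ as the pull-back via $\pi$ of $\mc{D}_{L\tens A}$; once these twists are matched correctly, exactness of ${v_l}_*$ finishes the argument.
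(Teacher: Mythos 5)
Your proposal is correct and follows exactly the route the paper intends: the corollary is stated as a formal consequence of the remark identifying $\mc{L}^{1^l}(-2\Delta)\tens\mc{D}_A$ with ${v_l}_*\big((\mc{I}^{2}_{\Delta_l})^{\perm_l}\tens \mc{D}_{L\tens A}\boxtimes \mc{D}_A\big)$, of lemma \ref{lmm: inv2k-1}, of theorems \ref{thm: inv3}--\ref{thm: inv4}, of the twisting discussion in subsection \ref{subsec: DL}, and of the exactness of ${v_l}_*$ for the finite morphism $v_l$. Your bookkeeping of the two twists $\mc{D}_{L\tens A}$ and $\mc{D}_A$ also matches the paper's.
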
Similarly, thanks to proposition \ref{pps: 211} we obtain 
\begin{crl}The sheaf $\mc{L}^{2,1,1}(- \Delta) \tens \mc{D}_A$ is resolved over $S^n X$ by the complex
\begin{multline}  0 {\rTo} {w_3}_*({v_1}_*(L^2 \tens A \boxtimes \mc{D}_{L \tens A}) \boxtimes \mc{D}_A ) {\rTo} \\ {\rTo }{w_3}_*([ 
{v_2}_*(L^3 \tens A^2 \boxtimes L \tens A) \oplus {v_2}_*(L^2 \tens A^2 \boxtimes L^2 \tens A)
]_0\boxtimes \mc{D}_A) {\rTo}  \\ {\rTo} {w_3}_*( {v_3}_*( \Omega^1_X \tens L^4 \tens A^3) \boxtimes \mc{D}_A) \rTo 0  \;.\end{multline}\end{crl}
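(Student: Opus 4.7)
The plan is to reduce the statement to $S^3 X$ by separating the last $n-3$ symmetric factors, invoke a twisted version of proposition \ref{pps: 211}, and then push everything back up to $S^n X$. First, unwinding the definition of $\mc{L}^{2,1,1}(-\Delta)$, the stabilizer $\Stab_{\perm_n}(2,1,1)$ decomposes as $\perm(\{2,3\}) \times \perm(\{4,\dots,n\})$, and the sheaf inside $\pi_*$ factors as $(T \tens \mc{I}_{\Delta_3}) \boxtimes A^{\boxtimes (n-3)}$, where $T := (L^2 \tens A) \boxtimes (L \tens A) \boxtimes (L \tens A)$ lives on $X^3$ and is $\perm(\{2,3\})$-equivariant. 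Taking invariants factor-wise and using projection formula for the closed immersion $v_3 \colon S^3 X \times S^{n-3} X \rTo S^n X$ (the outer ``${w_3}_*$'' in the statement), one obtains
\[
\mc{L}^{2,1,1}(-\Delta) \tens \mc{D}_A \;\simeq\; {v_3}_*\bigl(F \boxtimes \mc{D}_A\bigr), \qquad F := \pi_*\bigl(T \tens \mc{I}_{\Delta_3}\bigr)^{\perm(\{2,3\})},
\]
so it suffices to produce a three-term right resolution of $F$ on $S^3 X$ whose terms, after applying ${v_3}_*(- \boxtimes \mc{D}_A)$, match those in the statement.

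To resolve $F$, I would repeat the proof of proposition \ref{pps: 211} verbatim, with every sheaf in the spectral sequence tensored by the line bundle $T$. Since $T$ is locally free and $\perm(\{2,3\})$-equivariant, tensoring commutes with $\Tor$ and with taking invariants, so the twisted spectral sequence of invariants has the same shape and converges to the $\perm(\{2,3\})$-invariants of $T \tens \mc{I}_{\Delta_3}$, namely $F$. Each term is then identified by restricting $T$ to the appropriate partial diagonal and pushing forward. The first term ${w_1}_*(\FS_X \boxtimes \FS_{S^2 X})$ becomes ${v_1}_*((L^2 \tens A) \boxtimes \mc{D}_{L \tens A})$, since pushing $T$ along $X^3 \rTo X \times S^2 X$ and taking invariants in the last two factors collapses $(L \tens A) \boxtimes (L \tens A)$ to $\mc{D}_{L \tens A}$; the two middle summands arise from restricting $T$ to $\Delta_{12}$ and $\Delta_{23}$, yielding $(L^3 \tens A^2) \boxtimes (L \tens A)$ and $(L^2 \tens A^2) \boxtimes (L^2 \tens A)$ on $S^2 X \times X$ once matched with the $v_2$-parametrization $(2x, z) \mapsto 2x + z$; and restricting $T$ to the full diagonal $\Delta_{123} \simeq X$ yields $L^4 \tens A^3$, so the last term becomes ${v_3}_*(\Omega^1_X \tens L^4 \tens A^3)$.

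Finally, applying ${v_3}_*(- \boxtimes \mc{D}_A)$ to this resolution is exact --- because $v_3$ is a closed immersion and $\mc{D}_A$ is a line bundle --- and produces the complex in the statement. The main obstacle is purely computational: one must carefully track the $T$-twist through the three maps of proposition \ref{pps: 211}. In particular, I must verify that the twisted analogue of the map $(a \tens u, b \tens v) \mapsto au - bv$ (defining the subscript $[\cdot]_0$) lands in ${v_3}_*(L^4 \tens A^3)$ via restrictions to $\Delta_2 \subset X \times X$ under the two different $v_2$-parametrizations, both reductions agreeing with $L^4 \tens A^3$ on the triple diagonal, and that the map to the last term of the complex is indeed the twisted version of the higher differential $D$ appearing in proposition \ref{pps: 211}.
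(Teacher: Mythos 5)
Your proposal is correct and is exactly the argument the paper intends: the corollary is stated as an immediate consequence of Proposition \ref{pps: 211} combined with the twisting principle of subsection \ref{subsec: DL}, i.e.\ one tensors the whole spectral sequence of $\perm(\{2,3\})$-invariants by the $\Stab_{\perm_n}(\mu)$-equivariant line bundle $L^{\mu}\tens A^{\boxtimes n}$ (with $\mu=(2,1,1)$), identifies each term by restricting that line bundle to the relevant partial diagonal, and separates off the last $n-3$ factors via ${v_3}_*(-\boxtimes \mc{D}_A)$, just as you do. The only slip is cosmetic: $v_3$ is a finite morphism rather than a closed immersion, but finiteness already gives the exactness of ${v_3}_*(-\boxtimes\mc{D}_A)$ that your last step requires.
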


It would be now immediate to give a formula for the Euler-Poincar\'e characteristic of $\mc{L}^{1^l}(-2 \Delta) \tens \mc{D}_A$, using corollary \ref{crl: inv34L} or corollary \ref{crl: Euler}, and of $\mc{L}^{2,1,1}(- \Delta) \tens \mc{D}_A$ . We leave this to the reader. 

\paragraph{The sheaf \protect $\mc{L}^{2,1,1}(-2 \Delta)$}
To finish this section we will describe the sheaf $\mc{L}^{2,1,1}(-2 \Delta)$, which is important for the work \cite{Scalaarxiv2015}. 
If $A \subseteq \{1, \dots, n \}$ and if $\mu$ is a composition of some integer $l$ supported in $\{1, \dots, n \}$, we define $\mu_{A}$ as 
the compositon  coinciding with $\mu$ over the set $A$, and with $0$ over $\{1, \dots, n \} \setminus A$. We define with $|\mu_A| = 
\sum_{i=1}^n \mu_A(i)$. 
In \cite[Remark 4.6]{Scalaarxiv2015} we defined a natural $\Stab_{\perm_n}(\mu)$-equivariant differential 
$$ d^l_\Delta: L^\mu \tens \mc{I}^l_{\Delta_{l(\mu)}}  \rTo \bigoplus_{\substack{|I|=2 \\ I \subseteq \{1, \dots, l(\mu) \}} } L^\mu \tens \mc{I}^l_{\Delta_I}/\mc{I}^{l+1}_{\Delta_I} \simeq  \bigoplus_{\substack{|I|=2 \\ I \subseteq \{1, \dots, l(\mu) \}} } (S^l \Omega^1_X \tens L^{|\mu_I|})_{I} \tens L^{\mu_{\bar{I}}}$$and an invariant version over $S^n X$: 
$$ d^l_\Delta: \mc{L}^\mu(-l \Delta) := \pi_*(L^\mu \tens \mc{I}^l_{\Delta_{l(\mu)}})^{\Stab_{\perm_n}(\mu)} \rTo \pi_* \Big( 
\bigoplus_{\substack{|I|=2 \\ I \subseteq \{1, \dots, l(\mu) \}} } (S^l \Omega^1_X \tens L^{|\mu_I|})_{I} \tens L^{\mu_{\bar{I}}} \Big)^{\Stab_{\perm_n}(\mu)} $$whose kernel is $\mc{L}^\mu(-(l+1)\Delta)$. 
Denote with $\mc{K}^{1}_{(1)(1)}(- l \Delta)$ the sheaf 
$$ \mc{K}^{1}_{(1)(1)}(- l \Delta) := \pi_*(  (  \Omega^1_X \tens L^3)_{\{12\}} \tens p_3^*L  \tens \mc{I}^l_{\Delta_{23}} )^{ \perm(4, \dots, n )} \;; $$it will be denoted it just with $\mc{K}^1_{(1)(1)}$ if $l = 0$. It is clear that, for $\mu = (2,1,1)$, 
\begin{align*}  \pi_* \Big( \bigoplus_{\substack{|I|=2 \\ I \subseteq \{1, \dots, l(\mu) \}}  } (\Omega^1_X \tens L^{|\mu_I|})_{I} \tens L^{\mu_{\bar{I}}} \Big)^{\Stab_{\perm_n}(\mu)}  & \: \simeq \mc{K}^1_{(1)(1)} 
\oplus \pi_*\big( (\Omega^1_X \tens L^{2})_{\{23\}} \tens p_1^*L^2  \big)^{\perm(2,3) \times \perm(4, \dots, n))} \\ & \: \simeq \mc{K}^1_{(1)(1)} 
\;, \end{align*}since $\perm(2,3)$ acts with a sign on the sheaf $(\Omega^1_X \tens L^2)_{\{23\}}$. 
With these notations, we can prove the following fact. 

\begin{pps}\label{prop: exactL211}We have the exact sequence over $S^n X$: 
\begin{multline} 0 \rTo \mc{L}^{2,1,1}(-2 \Delta) \rTo \mc{L}^{2,1,1}(-\Delta) \rTo \mc{K}^1_{(1)(1)}(-2 \Delta) \rTo \\ {\rTo} {w_3}_*( (S^3 \Omega^1_X \tens L^4) \boxtimes \FS_{S^{n-3}X} ) \rTo 0 \;,\end{multline}where the third map 
is the differential $d^1_{\Delta}$ and the fourth one is the composition: 
\begin{multline} \K^1_{(1)(1)} (-2 \Delta)  = \pi_*\Big[ ( (\Omega^1_X \tens L^3)_{\{12\}}  \tens p_3^* L  \tens \mc{I}^2_{\Delta_{23}} \Big]^{ \perm(\{4, \dots, n\}) } \! \! \! \! \! \! \!\rTo \\ \rTo \pi_*\Big[ ( (\Omega^1_X \tens L^3)_{\{12\}}  \tens p_3^* L  \tens \mc{I}^2_{\Delta_{23}} / \mc{I}^3_{\Delta_{23}} \Big]^{\perm(\{4, \dots, n\})} \simeq \\ 
\simeq {w_3}_* \big ( (\Omega^1_X \tens S^2 \Omega^1_X \tens L^4) \boxtimes \FS_{S^{n-3}X }  \big)  {\rTo} 
{w_3}_* \big (   (S^3 \Omega^1_X \tens L^4) \boxtimes \FS_{S^{n-3}X}  \big)  \;.\end{multline}
\end{pps}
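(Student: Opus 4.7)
The approach will be to realize the sequence as the sheaf of $\Stab_{\perm_n}(\mu)$-invariants of a natural four-term complex on $X^n$, assembled from the differential $d^1_\Delta$ (already recalled), the class map modulo $\mc{I}^3_{\Delta_{23}}$, and the symmetrization $\Omega^1_X \otimes S^2\Omega^1_X \twoheadrightarrow S^3\Omega^1_X$. Three ingredients will drive the argument: Haiman's identity (\ref{eq: bigdiagonal}), a Danila-style calculation together with a sign cancellation to identify the target of invariants, and a local Schur-functor analysis to control image and cokernel.

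First, by the description of $d^1_\Delta$ recalled in the paragraph above the statement and by (\ref{eq: bigdiagonal}), the kernel of $d^1_\Delta \colon L^\mu \otimes \mc{I}_{\Delta_3} \to \bigoplus_{|I|=2,\, I \subseteq \{1,2,3\}} L^\mu \otimes \mc{I}_{\Delta_I}/\mc{I}^2_{\Delta_I}$ is exactly $L^\mu \otimes \mc{I}^2_{\Delta_3}$. Since $\Stab_{\perm_n}(\mu) = \perm(\{2,3\}) \times \perm(\{4,\ldots,n\})$ is finite, taking invariants is exact and yields $0 \to \mc{L}^{2,1,1}(-2\Delta) \to \mc{L}^{2,1,1}(-\Delta) \to \mathrm{Im}(d^1_\Delta)^{\Stab_{\perm_n}(\mu)} \to 0$. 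The involution $\perm(\{2,3\})$ swaps the $\{1,2\}$- and $\{1,3\}$-summands of the target (whose combined invariants give $\mc{K}^1_{(1)(1)}$) while acting by $-1$ on $\mc{I}_{\Delta_{23}}/\mc{I}^2_{\Delta_{23}}$, so the $\{2,3\}$-summand contributes no invariants---exactly the sign cancellation already used just before the statement.

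The substantive step is to refine ``image in $\mc{K}^1_{(1)(1)}$'' to ``image in $\mc{K}^1_{(1)(1)}(-2\Delta)$''. I plan to do this locally around a point of $\Delta_{123}$, choosing coordinates $u^i = x_1^i - x_2^i$, $v^i = x_1^i - x_3^i$, so that $\mc{I}_{\Delta_{23}}$ is generated by the $u^i - v^i$. The transverse intersection $\mc{I}_{\Delta_{12}} \cap \mc{I}_{\Delta_{13}} = \mc{I}_{\Delta_{12}} \cdot \mc{I}_{\Delta_{13}}$ lets me write any invariant $f \in L^\mu \otimes \mc{I}_{\Delta_3}$ as $f = \sum_{i,j} u^i v^j p_{ij}$. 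Splitting $p_{ij} = p^{\mathrm{sym}}_{ij} + p^{\mathrm{asym}}_{ij}$ with respect to the transposition of the indices $i \leftrightarrow j$, the condition $f \in \mc{I}_{\Delta_{23}}$ forces $p^{\mathrm{sym}}_{ij}|_{u=v} = 0$, while the $\perm(\{2,3\})$-invariance of $f$ reads $p_{ij}(u,v) = p_{ji}(v,u)$. Combining the two shows $p^{\mathrm{sym}}_{ij} \in \mc{I}^2_{\Delta_{23}}$ and $p^{\mathrm{asym}}_{ij} \in \mc{I}_{\Delta_{23}}$, so $[f]_{\Delta_{12}} = \sum_i \bigl(\sum_j v^j p_{ij}|_{u=0}\bigr)\,du^i$ lands in $\mc{I}^2_{\Delta_{23}}|_{\Delta_{12}} \otimes \Omega^1_X$, which is $\mc{K}^1_{(1)(1)}(-2\Delta)$.

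For exactness at $\mc{K}^1_{(1)(1)}(-2\Delta)$ and surjectivity onto ${w_3}_*((S^3\Omega^1_X \otimes L^4) \boxtimes \FS_{S^{n-3}X})$, I would examine the short exact sequence $0 \to \mc{K}^1_{(1)(1)}(-3\Delta) \to \mc{K}^1_{(1)(1)}(-2\Delta) \to \Omega^1_X \otimes S^2\Omega^1_X \to 0$ (coming from $\mc{I}^2_{\Delta_{23}}/\mc{I}^3_{\Delta_{23}} \simeq S^2\Omega^1_X$ at $\Delta_{123}$) and confront it with the Schur decomposition $\Omega^1_X \otimes S^2\Omega^1_X \simeq S^{2,1}\Omega^1_X \oplus S^3\Omega^1_X$. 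The $p^{\mathrm{sym}}$-contribution already lies in $\mc{I}^2_{\Delta_{23}}$ and, via the surjective natural map $S^2\Omega^1_X \otimes S^2\Omega^1_X \twoheadrightarrow \Omega^1_X \otimes S^3\Omega^1_X$, realizes all of $\mc{K}^1_{(1)(1)}(-3\Delta)$; the $p^{\mathrm{asym}}$-contribution, antisymmetric in $(i,j)$, induces modulo $\mc{I}^3_{\Delta_{23}}$ the natural map $\Lambda^2\Omega^1_X \otimes \Omega^1_X \to \Omega^1_X \otimes S^2\Omega^1_X$, whose image is precisely the $S^{2,1}\Omega^1_X$-summand, i.e.\ the kernel of the symmetrization $\Omega^1_X \otimes S^2\Omega^1_X \twoheadrightarrow S^3\Omega^1_X$. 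Hence $\mathrm{Im}(d^1_\Delta) = \ker(C)$, and $C$ is surjective as a composition of two surjections. The main obstacle will be this final Schur-functor bookkeeping: juggling three distinct symmetries---the $(i,j)$-transposition on indices, the $(u,v)$-swap coming from $\perm(\{2,3\})$-invariance, and the $(j,k)$-symmetry forced by $v^j v^k$ after reducing modulo $\mc{I}^3_{\Delta_{23}}$---to confirm that the image in $\Omega^1_X \otimes S^2\Omega^1_X$ is exactly $S^{2,1}\Omega^1_X$, neither larger nor smaller.
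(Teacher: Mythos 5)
Your overall strategy --- identify the kernel and the invariant target as the paper does, then reduce exactness to an explicit local computation with the factorization $f=\sum_{i,j}u^iv^jp_{ij}$ coming from $\mc{I}_{\Delta_{12}}\cap\mc{I}_{\Delta_{13}}=\mc{I}_{\Delta_{12}}\cdot\mc{I}_{\Delta_{13}}$ --- is the same in spirit as the paper's proof (which reduces by GAGA to $X=\mbb{C}^2$, $n=3$, $L$ trivial, and computes with the explicit generators $q(x_3-x_2)$, $q(y_3-y_2)$ of $\mc{I}_{\Delta_3}^{\perm(\{2,3\})}$). But there is a genuine gap at the step you yourself call substantive. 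The condition $f\in\mc{I}_{\Delta_{23}}$ reads $\sum_{i,j}u^iu^j\,p^{\mathrm{sym}}_{ij}|_{u=v}=0$, and this does \emph{not} force $p^{\mathrm{sym}}_{ij}|_{u=v}=0$: the coefficients are only constrained up to syzygies of the quadratic monomials $u^iu^j$ (for instance $p^{\mathrm{sym}}_{11}=(u^2)^2$, $p^{\mathrm{sym}}_{12}=p^{\mathrm{sym}}_{21}=-\tfrac12u^1u^2$, $p^{\mathrm{sym}}_{22}=0$ gives a nontrivial relation). Hence your claims $p^{\mathrm{sym}}_{ij}\in\mc{I}^2_{\Delta_{23}}$ and, consequently, the clean split of the image into a ``$p^{\mathrm{sym}}$-part lying in $\mc{I}^2_{\Delta_{23}}$'' plus a ``$p^{\mathrm{asym}}$-part'' are unjustified. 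What does survive is the much weaker statement $p_{ij}|_{\Delta_{123}}=0$ (combine the degree-two part of the relation above with the symmetry $p_{ij}(u,v)=p_{ji}(v,u)$), and that weaker statement is enough for the containment $\mathrm{Im}(d^1_\Delta)\subseteq\mc{K}^1_{(1)(1)}(-2\Delta)$ --- but not enough to run your final paragraph.

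That final paragraph is where the actual content of the proposition lives, and it is left as a plan resting on the broken decomposition. Two things are missing. First, the assertion that the symmetric contributions ``realize all of $\mc{K}^1_{(1)(1)}(-3\Delta)$'' is not proved by pointing at a surjection of Schur functors: one must exhibit, for each target section, an invariant element of $\mc{I}_{\Delta_3}$ mapping to it, which is exactly what the paper's four displayed identities $d^1_\Delta[\cdots]=(x_3-x_1)^{\alpha+1}(y_3-y_1)^{\beta}dx$, etc., accomplish. Second, the identification of the image modulo $\mc{I}^3_{\Delta_{23}}$ with precisely the kernel $S^{2,1}\Omega^1_X$ of the symmetrization requires controlling which invariant elements contribute in order exactly two along $\Delta_{23}$; the paper settles this by showing that, up to elements of degree four, $\mc{I}_{\Delta_3}^{\perm(\{2,3\})}$ is generated by $q(x_3-x_2)$ and $q(y_3-y_2)$ (using that the quadric $q$ is anti-invariant) and that any $\tau$ killed by the symmetrization is a combination of their images under $d^1_\Delta$. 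The ``Schur-functor bookkeeping'' you acknowledge as the main obstacle is precisely this computation, and it cannot be organized through the sym/asym split as proposed.
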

\begin{proof}
It is clear that, by construction, the differential $d^1_\Delta$ takes values in $\K^1_{(1)(1)}(-2 \Delta)$; it is also clear that 
$\mathcal{L}^{2,1,1}(-2 \Delta)$ is the kernel of the third map. Moreover, by construction, the map $\mc{K}^1_{(1)(1)}(-2 \Delta) {\rTo} {w_3}_*( (S^3 \Omega^1_X \tens L^4) \boxtimes \FS_{S^{n-3}X} ) $ is surjective. Hence it remains to prove that 
the sequence 
\begin{equation}\label{eq: sequenceK} \mc{L}^{2,1,1}(-\Delta) \rTo \mc{K}^1_{(1)(1)}(-2 \Delta) {\rTo} {w_3}_*( (S^3 \Omega^1_X \tens L^4) \boxtimes \FS_{S^{n-3}X} ) \end{equation}is exact. By GAGA principle it is actually sufficient to prove the exactness 
of the sequence (\ref{eq: sequenceK}) for $X = \mbb{C}^2$ and $L$ trivial.  More precisely, let $(S^n X)_{\rm an}$ be the complex analytic space associated to the complex algebraic variety $S^n X$. Then the natural morphism $((S^n X)_{\rm an}, \FS_{(S^n X)_{\rm an}}) \rTo (S^n X, \FS_{S^n X})$ is faithfully flat, by GAGA principle. This implies, in particular, that the sequence (\ref{eq: sequenceK}) is exact over $S^n X$ if and only if the induced sequence of complex analytic sheaves  
\begin{equation}\label{eq: sequenceKan}  \mc{L}^{2,1,1}(-\Delta)_{\rm an} \rTo \mc{K}^1_{(1)(1)}(-2 \Delta)_{\rm an} {\rTo} {w_3}_*( (S^3 \Omega^1_X \tens L^4) \boxtimes \FS_{S^{n-3}X} )_{\rm an}
\end{equation}is exact over $(S^n X)_{\rm an}$. But 
this is holds if and only  if it holds for an arbitrary small\footnote{here we mean: if it holds over 
any open set $V_j$ of an open cover $\{ V_j \}_j$ of $(S^n X)_{\rm an}$, where each $V_j$ is chosen to be sufficiently small} open set of $(S^n X)_{\rm an}$ in the complex topology. Now a sufficiently small open set of $(S^n X)_{\rm an}$ in the complex topology is always biholomorphic to a sufficiently small open set of $(S^n \mbb{C}^2)_{\rm an}$, in the complex topology. Hence it is sufficient to prove that the sequence (\ref{eq: sequenceKan}) is exact analytically over $(S^n \mbb{C}^2)_{\rm an}$ and $L$ trivial, but this is equivalent, invoking GAGA principle again, to proving the same fact algebraically over $S^n \mbb{C}^2$ and $L$ trivial.

It is also easy to see that it is sufficient to prove the statement for $n=3$. 
In this case set $A = \mbb{C}[x,y]$, $A^{\tens 3} = \mbb{C}[x_1, x_2 , x_3, y_1, y_2, y_3]$. 
Identifying coherent sheaves with modules, 
it is sufficient to prove that 
the sequence of $S^3A$-modules
\begin{equation}\label{eq: sequenceL211} (\mc{I}_{\Delta_3})^{\perm(\{2,3\})} \rTo^{d^1_\Delta} ( \Omega^1_{A} \tens_{\mbb{C}} A)(-2 \Delta) \rTo S^3 \Omega^1_A \end{equation}is exact, where 
we wrote briefly $( \Omega^1_{A} \tens_{\mbb{C}} A)(-2 \Delta) $ for $( \Omega^1_{A} \tens_{\mbb{C}} A) \tens_{A \tens A} \mc{I}^2_{\Delta_2}$, where $\mc{I}_{\Delta_2}$ is the ideal of the diagonal in $A \tens A$ and where $A \tens A$ acts componentwise on $\Omega^1_{A} \tens_{\mbb{C}} A$.

The ideal
$\mc{I}_{\Delta_3}$ of the big diagonal in $X^3$ equals the ideal 
$ \langle \mc{I}_{\Delta_{12}}  \mc{I}_{\Delta_{13}}  \mc{I}_{\Delta_{13}}, q \rangle$, where $q$ is the quadraric polynomial
$q = (x_2-x_1)(y_3-y_1) - (y_2 - y_1)(x_3-x_1)$. The  quadric $q$ is anti-invariant for $\perm(\{2,3\})$, hence 
$\mc{I}_{\Delta_3}^{\perm(\{2,3\})} = ( \mc{I}_{\Delta_{12}}  \mc{I}_{\Delta_{13}}  \mc{I}_{\Delta_{13}} )^{\perm(\{2,3\})}$. Writing down 
all nine degree-$3$ generators of $\mc{I}_{\Delta_{12}}  \mc{I}_{\Delta_{13}}  \mc{I}_{\Delta_{13}}$ and taking $\perm(\{2,3\})$-invariants, 
we get that $\mc{I}_{\Delta_3}^{\perm(\{2,3\})}$ is generated, up to elements of degree $4$, by 
$q(x_3 -x_2)$ and $q(y_3 -y_2)$. Now it is easy to see that 
the image of $d^1_\Delta$ contains $(\Omega^1_A \tens_{\mbb{C}} A)(-3 \Delta)$. 
Indeed, if $ \alpha, \beta \in \mbb{N}$, $\alpha + \beta >1$, we have
\begin{align*} 
d^1_\Delta \Big[ [(x_2 -x_1)(x_3 -x_1)^\alpha & (y_3 -y_1)^\beta - \\ &  (x_3-x_1)(x_2-x_1)^\alpha (y_2 -y_1)^\beta ](x_3-x_2) \Big] = (x_3 -x_1)^{\alpha+1}(y_3 -y_1)^{\beta} dx \\
d^1_\Delta \Big[ [(x_2 -x_1)(x_3 -x_1)^\alpha & (y_3 -y_1)^\beta - \\ & (x_3-x_1)(x_2-x_1)^\alpha (y_2 -y_1)^\beta ](y_3-y_2) \Big] = (x_3 -x_1)^{\alpha}(y_3 -y_1)^{\beta+1} dx \\
d^1_\Delta \Big[ [(y_2 -y_1)(x_3 -x_1)^\alpha & (y_3 -y_1)^\beta - \\ &  (y_3-y_1)(x_2-x_1)^\alpha (y_2 -y_1)^\beta ](x_3-x_2) \Big] = (x_3 -x_1)^{\alpha+1}(y_3 -y_1)^{\beta} dy \\
d^1_\Delta \Big[ [(y_2 -y_1)(x_3 -x_1)^\alpha & (y_3 -y_1)^\beta - \\ & (y_3-x_1)(x_2-x_1)^\alpha (y_2 -y_1)^\beta ](y_3-y_2) \Big] = (x_3 -x_1)^{\alpha}(y_3 -y_1)^{\beta+1} dy
\end{align*}Consider now an element of the form 
$$ \tau = (a dx + bdy) (x_3 - x_1)^2 + (cdx  + e dy) (x_3 -x_1)(y_3 -y_1) + (fdx + g dy)(y_3 -y_1)^2 $$in 
$(\Omega^1_A \tens_{\mbb{C}} A)(-2 \Delta)$. The image of $\tau$ in $S^3 \Omega^1_A$ is 
$a dx^3 + (b +c )dx^2 dy + (e +f) dx dy^2 + g dy^3$. If $\tau$ is in the kernel of the map $(\Omega^1_A \tens_{\mbb{C}} A)(-2 \Delta) \rTo S^3 \Omega^1_A$, then $a = g =0$ and $b=-c$, $e =-f$ and $\tau$ is of the form
$\tau = b (x_3 -x_1)^2 dy - b (x_3 -x_1)(y_3 -y_1) dy + e (x_3 -x_1)(y_3-y_1)dy - e (y_3 -y_1)^2 dx$. But is now easy to see 
that $\tau$ is a linear combination of $d^1_\Delta [ q (x_3 -x_2) ]$ and $d^1_\Delta[q (y_3 -y_2)]$ and hence in the image of 
$d^1_\Delta$. These facts show that the sequence (\ref{eq: sequenceL211}) is exact. 
\end{proof}

\appendix
\section{Appendix: Determination of higher differentials in the spectral sequence of invariants 
}

We will determine here explicitely the higher differentials in the spectral sequence of invariants $(E^{p,q}_1)^{\perm_n}$ for $n = 3, 4$, appearing in theorem \ref{thm: inv3}, \ref{pps: 211} and \ref{thm: inv4}. To fix ideas, we will always suppose that $\dim X = 2$, 
but everything can be straightforwardly generalized in the case $X$ is a smooth algebraic variety of arbitrary dimension. 
\begin{remark}\label{rmk: A1}In order to prove that a certain higher differential $d_r^{\perm_n} : (E^{p,q}_r)^{\perm_n} \rTo (E^{p+r,q-r+1}_r)^{\perm_n}$ has a certain expression, we  define explicitely another morphism between the same coherent sheaves of invariants, say $D_r: (E^{p,q}_r)^{\perm_n} \rTo (E^{p+r,q-r+1}_r)^{\perm_n}$,  
and then we prove that the two maps coincide. By GAGA principle (as done in proposition \ref{prop: exactL211}), or, alternatively,  by localization an completion, in order to compare the two maps we can always reduce the problem to the case $X = \mbb{C}^2$, where computations are much easier. 
\end{remark}
\begin{remark}\label{rmk: A2}Over $X^n = (\mbb{C}^2)^n = \Spec \mbb{C}[x_1, \dots, x_n, y_1, \dots, y_n]$ we resolve the sheaves $\mc{K}^1_{I}$, $I \subseteq \{1, \dots, n \}$ 
with Koszul complexes $K^\bullet_I(F_I, s_I)$, where $F_I$ is the trivial rank $2$ bundle, with global frame $\gamma_{I}, \delta_{I}$, and with 
global section $s_I = x_I \gamma_I + y_I \delta_I$, where, if $I = \{i, j \}$, $i <j$,  we denoted briefly $x_I $ and $y_I$  the differences $x_j - x_i$ and  $y_j-y_i$, respectively.  
We then build a term by term free resolution $R^{\bullet, \bullet}_I$ of the complex $\mc{K}^\bullet_I$. The spectral sequence $E^{p,q}_1$ can be seen as the spectral sequence associated to the bicomplex $L^{\bullet, \bullet} := \Tens_{|I|=2}R^{\bullet, \bullet}_I$, where the tensor product is taken respecting the lexicographic order of the multi-indexes $I$: we see it as a bicomplex with respect to the sum of the first indexes and the sum of the second. We denote with $\partial$ and $\delta$  the (commuting) horizontal and  vertical differentials, respectively. 
It is straightforward to see that, remembering the notation used in the proof of proposition \ref{pps: iGamma}, 
\begin{align*} L^{p, \bullet} \simeq \; & \bigoplus_{I_1, \dots , I_p \subseteq \{1, \dots,  n \} } K^\bullet(F_{I_1}, s_{I_1}) \tens 
\cdots \tens K^\bullet(F_{I_p}, s_{I_p}) \\ \simeq \; &   \bigoplus_{I_1, \dots , I_p \subseteq \{1, \dots, n  \} } K^\bullet(F_{I_1} \oplus \cdots \oplus F_{I_p}, s_{I_1} \oplus \cdots \oplus s_{I_p} ) \\ \; & \; \bigoplus_{\Gamma \in \mc{G}_{p,n} } K^\bullet(F_\Gamma, s_\Gamma) \end{align*}where the direct sums are over distinct $I_1, \dots, I_p \subseteq \{1, \dots, n \}$, in lexicographic order. 
\end{remark}
\begin{remark}\label{rmk: A3}Let $X = \mbb{C}^2$. After the description of $Q_\Gamma$ given in 
 \ref{pps: torgamma}, it is practical to think of $F_I$ as $\mbb{C}^2 \tens \rho_I$ with $\gamma_I  = e_1 \tens e_I$ and $\delta_I = e_2 \tens e_I$. The quotient bundle $Q_\Gamma$ can then be seen as 
 $$ Q_\Gamma \simeq \FS_{\Delta_{\Gamma}} \tens ( \mbb{C}^2 \tens q_\Gamma)  \;.$$
 The isomorphism $Q^*_\Gamma \simeq (\Omega^1_X \tens q_\Gamma)_\Gamma \simeq \FS_{\Delta_{\Gamma}} \tens (\Omega^1_{\mbb{C}^2} \tens q_\Gamma)$ is given by identifying, over $\Delta_\Gamma$, 
 the vector $e_1 \tens v$, for $v \in q_\Gamma$,  with $dx \tens v$, with  and $e_2 \tens v$ with $dy \tens v$. 
 Of course, since every bundle here is trivial and the representation 
 $q_\Gamma$ is autodual, $Q_\Gamma \simeq Q^*_\Gamma$. 
\end{remark}
\begin{notat}\label{notat: A4}Suppose that the graph $\Gamma \in \mc{G}_{p,n}$ contains an oriented $3$-cycle $K_3(H)$. We will identify the $3$-cycle 
$K_3(H)$ with the sequence of its vertices written in order according to the orientation. Hence we write $e_H$ for the corresponding vector in $q_\Gamma$. 
Moreover, we write  $\gamma_{H} = e_1 \tens e_H$ and $\delta_H = e_2 \tens e_H$ the corresponding vectors in $\mbb{C}^2 \tens q_\Gamma$, which can be seen as elements of $Q_\Gamma$, by the previous remark 
\end{notat}

\paragraph{Determination of the map $D$}

\begin{notat}For $J \subseteq \{1, \dots, n \}$ a cardinality $2$ multi-index and for $i \in \mbb{N}^*$, we denote with $d^i_{\Delta_J}$ the $i$-th order differential 
$ d^i_{\Delta_J}: \mc{I}_{\Delta_J}^i  \rTo \mc{I}^i_{\Delta_J}/\mc{I}^{i+1}_{\Delta_J}$ and with $r_{J}$ the restriction
$ r_{J}: \FS_{X^n} \rTo \FS_{\Delta_J}$. Sometimes we see the operator $d^i_{\Delta_J}$ as taking values in $(S^i \Omega^1_X)_{J}$, via the isomorphism $ \mc{I}^i_{\Delta_J}/\mc{I}^{i+1}_{\Delta_J} \simeq S^i N^*_{\Delta_J} \simeq (S^i \Omega^1_X)_{J}$. 
\end{notat}
\begin{remark}\label{rmk: normalbundles}Let $Y, Z$ be smooth subvarieties of a smooth variety $M$ intersecting transversely in a smooth subvariety $Y \cap Z$. It is easy to show that $N_{Y \cap Z / Y} \simeq N_{Z/M} \trest_{Y \cap Z}$. 
\end{remark}

\begin{remark}Let $I$ and $J$ be two distinct cardinality $2$ multi-indexes in $\{1, \dots, n \}$, such that $I \cap J \neq \emptyset$. Let $I = \{i, j \}$, $J = \{j, k \}$, with $j < k$ and $i \neq k$. 
The diagonal $\Delta_J$ will be identified to $X^{\{1, \dots, n\} \setminus \{k \}} \simeq X^{n-1}$; therefore
$\Delta_I$ defines a pairwise diagonal  in $\Delta_J \simeq X^{n-1}$, that we still indicate with $\Delta_I$. 
By remark \ref{rmk: normalbundles}  we can define the composition 
$$ d_{\Delta_I} \circ r_J : \mc{I}_{\Delta_{I \cup J}} \rTo^{r_J} \mc{I}_{\Delta_{I \cup J}} /\mc{I}_{\Delta_J} \simeq \mc{I}_{\Delta_I /\Delta_J} \rTo^{d_{\Delta_I}} N^*_{\Delta_I /\Delta_J} \simeq N^*_{\Delta_I /X^n} \trest_{\Delta_{I \cup J} } \;.$$

\end{remark}
\begin{remark}Recall that $E^{3, -1}_2 \simeq \oplus_{H \subseteq \{1, \dots, n \}, |H| = 3}Q^*_{K_3(H)} \tens \cap_{J \not \subseteq H} \mc{I}_{\Delta_J}$. Now each of the sheaves $Q^*_{K_3(H)} \tens \cap_{J \not \subseteq H} \mc{I}_{\Delta_J}$ can be indentified with 
 $(\Omega^1_X \boxtimes \mc{I}_{\Delta_{n-2}})_{K_3(H)}$ (see notations \ref{notat: omegadelta} and \ref{notat: igamma}). 
 Hence elements in $Q^*_{K_3(H) }  \cap_{J \not \subseteq H} \mc{I}_{\Delta_J} $ can be identified with differential forms in $\Omega^1_X \boxtimes \FS_{X^{n-3}}$ 
 over the product $X \times X^{n-3}$ vanishing on the diagonal $\Delta_{n-2}$ in $\Delta_H \simeq X \times X^{n-3}$. For brevity's sake, we will denote the sheaf $(\Omega^1_X \boxtimes \mc{I}_{\Delta_{n-2}})_{K_3(H)}$ more briefly with $(\Omega^1_X \boxtimes \mc{I}_{\Delta_{n-2}})_{H}$. 
\end{remark}Recalling notation $ (\oplus_{|I|=2} \FS_{\Delta_I} )_0 $ for the kernel of $d_1:  \oplus_{|I|=2} \FS_{\Delta_I} \rTo \oplus_{I \neq J, |I|=|J|=2} \FS_{\Delta_{I \cup J}}$, we have the following
\begin{lemma}\label{lmm: d2a}
The morphism of coherent sheaves 
$\tilde{D}:  (\oplus_{|I|=2} \FS_{\Delta_I} )_0 \rTo \oplus_{|H|=3} (\Omega^1_X \boxtimes \mc{I}_{\Delta_{n-2}})_{H}$, given by
$$ \tilde{D}( (f_L)_{L})_{H} = d_{\Delta_{I}}r_{J}( \tilde{f}_{J} -  \tilde{f}_{I}) -   d_{\Delta_{I}} r_{K} (  \tilde{f}_{K} - \tilde{f}_{I})
$$ --- where $E_{K_3(H)} = \{I, J, K \}$,  $I < J < K$, and where $\tilde{f}_L$ are liftings to $\FS_{X^n}$ of functions $f_L$ in $\FS_{\Delta_L}$ ---
descends to a morphism of coherent sheaves $\tilde{D}: E^{1,0}_2 \rTo E^{3, -1}_2$, which coincides with 
the differential $d_2$ of the spectral sequence $E^{p,q}_1$.  
\end{lemma}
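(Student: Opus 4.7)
First, I would verify that the formula defining $\tilde{D}$ gives a well-defined morphism $(\oplus_{|I|=2}\FS_{\Delta_I})_0 \to \oplus_{|H|=3}(\Omega^1_X \boxtimes \mc{I}_{\Delta_{n-2}})_H$ independent of the chosen liftings $\tilde{f}_L$, descends to $E^{1,0}_2$, and lands in the subsheaf $E^{3,-1}_2$ characterized by lemma \ref{lmm: E1terms}. Lift-independence follows because replacing $\tilde{f}_L$ by $\tilde{f}_L + h_L$ with $h_L \in \mc{I}_{\Delta_L}$ changes the arguments by $h_J - h_I$ (resp.\ $h_K - h_I$), and the kernel condition $r_{I \cup J}(h_J - h_I) = 0$ kills $d_{\Delta_I} r_J$ of these differences. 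Descent to $E^{1,0}_2$ follows by taking all $\tilde{f}_L$ equal to a common $f \in \FS_{X^n}$, making every difference vanish. The target lies in $E^{3,-1}_2$ by a routine restriction check on every supplementary diagonal $\Delta_{H \cup J'}$.

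For the identification $\tilde{D} = d_2$, I would reduce to $X = \mbb{C}^2$ via remark \ref{rmk: A1} and use the bicomplex $L^{p, \bullet} \simeq \oplus_{\Gamma \in \mc{G}_{p, n}} K^\bullet(F_\Gamma, s_\Gamma)$ of remark \ref{rmk: A2}, with horizontal differential $\partial$ from the product structure of the $\mc{K}^\bullet_I$ and vertical Koszul differential. The differential $d_2$ is computed by the standard zig-zag: starting from $(\tilde{f}_L)_L \in L^{1,0}$ representing a class in $E^{1,0}_2$, the horizontal image has component $\tilde{f}_M - \tilde{f}_L$ at $\{L, M\}$ (for $L < M$), which by the kernel condition lies in $\mc{I}_{\Delta_{L \cup M}}$. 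Expanding $\tilde{f}_M - \tilde{f}_L \equiv \sum_{P \in \{L,M\}}(\alpha_P^{LM} x_P + \beta_P^{LM} y_P)$ modulo $\mc{I}^2_{\Delta_{L \cup M}}$ yields a Koszul preimage $g_{L,M} := \sum_P(\alpha_P^{LM} dx_P + \beta_P^{LM} dy_P) \in K^{-1}(F_{\{L,M\}}, s_{\{L,M\}})$, and the class of $\partial g \in L^{3,-1}$ represents $d_2[(f_L)_L]$.

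Applying the lex-order signs $\epsilon_{\Gamma, \Gamma'}$ of remark \ref{rmk: graphs}, for $E_{K_3(H)} = \{I < J < K\}$ one obtains $(\partial g)_{K_3(H)} = g_{J,K} - g_{I,K} + g_{I,J}$. Its Koszul cohomology class in $Q^*_{K_3(H)} \simeq (\Omega^1_X)_{K_3(H)}$, expressed in the basis $dx \otimes e_H = dx_I - dx_J + dx_K$ and $dy \otimes e_H = dy_I - dy_J + dy_K$ of remark \ref{rmk: cycles}, is determined by the $F^*_I$-coordinate on $\Delta_H$ once the Koszul condition $\delta((\partial g)_{K_3(H)}) = 0$ is imposed, which forces the coefficients to be cyclically compatible; the projection equals $(\alpha_I^{IJ} - \alpha_I^{IK})|_{\Delta_H}\, dx_I + (\beta_I^{IJ} - \beta_I^{IK})|_{\Delta_H}\, dy_I$. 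On the other hand, restricting $\tilde{f}_J - \tilde{f}_I$ to $\Delta_J$ sets the $(x_J, y_J)$-coefficients to zero, and applying $d_{\Delta_I}$ at $\Delta_H$ extracts $\alpha_I^{IJ}|_{\Delta_H}\, dx_I + \beta_I^{IJ}|_{\Delta_H}\, dy_I$; the analogous computation for $r_K$ gives the $\alpha_I^{IK}, \beta_I^{IK}$ terms. Their difference matches the zig-zag class exactly, under the identification of $(\Omega^1_X)_I|_{\Delta_H}$ with $(\Omega^1_X)_{K_3(H)}$.

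The main obstacle will be justifying the asymmetric-looking shape of $\tilde{D}$ --- involving only the edge $I$ and only two of the three possible pairwise differences --- against the symmetric zig-zag output that a priori involves all three $g_{L,M}$. The conceptual point is that $q_{K_3(H)} \simeq \epsilon$ is one-dimensional, so $Q^*_{K_3(H)}$ has rank $\dim X$ over $\FS_{\Delta_H}$ and any single $F^*_L$-coordinate suffices to determine the cohomology class once the Koszul kernel condition has propagated the cyclic compatibility. The remaining delicate bookkeeping concerns the synchronization of signs coming from the lexicographic edge ordering, the cycle orientation encoded in $e_H = e_I - e_J + e_K$, and the Koszul differential.
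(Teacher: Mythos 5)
Your proposal is correct and follows essentially the same route as the paper's proof: reduction to $X=\mbb{C}^2$, the Koszul bicomplex of remark \ref{rmk: A2}, the zig-zag computation of $d_2$, and the use of the cocycle relation around the $3$-cycle (the paper phrases it as $\partial\circ\partial=0$, you as $\delta(\partial g)=0$; these coincide by commutativity of the bicomplex) to collapse the symmetric three-term expression onto the single $F_I^*$-coordinate, which suffices because $q_{K_3(H)}$ is one-dimensional. The only cosmetic difference is that your expansion of $\tilde f_M-\tilde f_L$ should be an exact identity in terms of the generators $x_P, y_P$ (yielding a genuine Koszul preimage) rather than a congruence modulo $\mc{I}^2_{\Delta_{L\cup M}}$, the choice ambiguity being absorbed on passing to vertical cohomology.
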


\begin{proof}It is easy to prove that the formula for $\tilde{D}$ well defines a morphism of sheaves $(\oplus_{|I|=2} \FS_{\Delta_I} )_0 \rTo    \oplus_{|H|=3}  (\Omega^1_X \boxtimes \mc{I}_{\Delta_{n-2}})_{H}$ 
as in the statement, which is clearly zero on $d_1(E^{0,0}_1)$ and hence induces a morphism of sheaves 
$\tilde{D}: E^{1,0}_2 \rTo E^{3,-1}_2$, by lemma \ref{lmm: E1terms}. 

We now prove that $\tilde{D}  = d_2$: we put ourselves in the situation explained in remarks \ref{rmk: A1}, \ref{rmk: A2}, \ref{rmk: A3}. Let $f_I$, $|I|=2$,  functions in $\FS_{\Delta_{I}}$ 
such that $(f_I)_{I}$ is in $ E^{1,0}_1$. For all $I$, let $\tilde{f}_I$ be regular functions in $\FS_{X^n}$ restricting to $f_{I}$.  
The element $(\tilde{f}_I)_I$ is in $L^{1,0}$ and its image $\partial ( (\tilde{f}_I)_I ) \in L^{2,0}$ is zero when projected to 
$E^{2,0}_1$.  In other word we have that 
$$ \partial( (\tilde{f}_I)_I)_{I_1 \cup I_2} = \epsilon_{I_1, I_1 \cup I_2} \tilde{f}_{I_1} + \epsilon_{I_2, I_1 \cup I_2} \tilde{f}_{I_2}  \in \mc{I}_{\Delta_{I_1 \cup I_2}} $$for pairs of cardinality $2$-multi-indexes $I_1, I_2$ with $I_1 \neq I_2$, where we indicated graphs with two edges just as a union of these. Therefore the element $ \partial( (f_I)_I)_{I_1 \cup I_2}$ can be lifted to $L^{2,-1}$ as 
$$    \epsilon_{I_1, I_1 \cup I_2} \tilde{f}_{I_1} + \epsilon_{I_2, I_1 \cup I_2} \tilde{f}_{I_2}  = a_{I_1, I_2} x_{I_1} + b_{I_1, I_2} y_{I_1} + c_{I_1, I_2} x_{I_2} + d_{I_1, I_2} y_{I_2} = \delta(w_{I_1, I_2})$$for elements 
$w_{I_1, I_2} = a_{I_1, I_2} \gamma^*_{I_1} + b_{I_1, I_2} \delta^*_{I_1} + c_{I_1, I_2} \gamma^*_{I_2} + d_{I_1, I_2} \delta^*_{I_2}$, 
where, according to remark \ref{rmk: A2}, we indicated with $\gamma^*_{I_i}, \delta^*_{I_i}$ a frame of $F^*_{I_i}$. 
The image in $L^{3,-1}$ via $\partial$ of liftings $w_{I_1, I_2}$ will represent the image of $d_2$. The complex $L^{3, \bullet}$
is now a direct sum of Koszul complexes $K^\bullet(F_\Gamma, s_\Gamma)$, where $\Gamma$ is a simple 
graph with $3$ edges and without isolated vertices. But if $\Gamma$ is acyclic then the correspondent Koszul complex is acyclic in negative degree, and the $\Gamma$-component of the image via $\partial$ will be zero in vertical cohomology and hence in $E^{3,-1}_2$. 
Hence 
we are just interested in components of the second differential $d_2$ indexed by $3$-cycles, determined by cardinality $3$ multi-indexes. Suppose now 
$H$ is such a multi-index and that $\{I, J, K \}$ are the edges of the corresponding $3$-cycles, with $I < J < K$. 
From $( \partial \circ \partial ((\tilde{f}_I)_I))_{K_3(H)} = 0$ we deduce 
$$ 
\partial( (\tilde{f}_I)_I)_{I \cup J} 
-\partial( (\tilde{f}_I)_I)_{I \cup K} + 
\partial( (\tilde{f}_I)_I)_{J \cup K} = 0 \;,$$which implies that 
\begin{gather*} ( a_{I, J} - a_{I, K} ) x_I + ( c_{I, J} + a_{J, K})x_J  + (- c_{I, K} + c_{J, K})x_K =0 \\
( b_{I, J} - b_{I, K} ) y_I + ( d_{I, J} +  b_{J, K})y_J  + (- d_{I, K} +  d_{J, K})y_K =0
\end{gather*}and hence, since $   x_I - x_J +x_K= 0$ and 
$y_I -y_J + y_K= 0$,   that
\begin{subequations} \label{subeq: rel}
\begin{align}  ( a_{I, J} - a_{I, K} ) = \: & (- c_{I, K} + c_{J, K})  = - (c_{I, J} + a_{J, K})  \\ 
 ( b_{I, J} - b_{I, K} ) = \: & (- d_{I, K} +  d_{J, K}) = - (d_{I, J} + b_{J, K}) 
 \end{align}
 \end{subequations}Finally the image of $d_2$ is represented in $L^{3,-1}$ by 
 $   w_{I \cup J} -  w_{I \cup K} +  w_{J \cup K} $, which is equal to 
 \begin{multline*}  (a_{I, J} - a_{I, K} ) \gamma^*_I + ( c_{I, J} + a_{J, K})\gamma^*_J  + (- c_{I, K} +  c_{J, K}) \gamma^*_K + \\ + 
( b_{I, J} - b_{I, K} ) \delta^*_I + ( d_{I, J} + b_{J, K}) \delta^*_J  + (- d_{I, K} +  d_{J, K}) \delta^*_K \qquad 
 \end{multline*}and which, using  relations (\ref{subeq: rel}),  can be rewritten as 
 $$  ( a_{I, J} - a_{I, K} ) \gamma^*_H + ( b_{I, J} - b_{I, K} ) \delta^*_H \;,$$which, by notation \ref{notat: A4} and in the identifications explained in remark \ref{rmk: A3}, is precisely the formula in the statement. \end{proof}

Over an affine open set of the form $U^n$ or $S^n U$, with $U = \Spec(A)$, we will identify sheaves with their modules of global sections. In particular, we will denote with 
$\Omega^1_A (- \Delta_{n-2}) = (\Omega^1_A \tens A^{n-3}) \tens \mc{I}_{\Delta_{n-2}}$ the module of sections of the sheaf $(\Omega^1_U \boxtimes \FS_{U^{n-3}}) \tens \mc{I}_{\Delta_{n-2}}$ over $U^n$, where $\mc{I}_{\Delta_{n-2}}$ is the ideal of the big diagonal in $U^{\tens n-2}$ and $\Omega^1_A \tens A^{n-3}$ is seen as a $A^{\tens n-2}$-module (see notation \ref{notat: omegadelta}) and hence a $A^{\tens n}$-module, via the contraction of the first three factors $A^{\tens n} \rTo A \tens A^{n-3}$. 
We denote with $\oplus_{|I|=2}(A \tens A^{n-2})_0$ the module of global sections of the sheaf $( \oplus_{|I|=2} \FS_{\Delta_I})_0$ over $U^n$ and 
with $(A \tens S^{n-2}A)_0$ its $\perm_n$-invariants over $S^n U$. 
\begin{notat}If $w_1 \tens \cdots \tens w_l$ is an element of $A^{\tens l}$, and $1 \leq i \leq l$, we indicate with $\widehat{w_i}$ the element 
$w_1 \tens \cdots \tens w_{i-1} \tens w_{i+1} \tens \cdots w_l \in A^{\tens l-1}$. We use an analogous notation  for an element $w_1 . \cdots . w_l \in S^l A$. 
\end{notat}
\begin{crl}\label{crl: B}Over an affine open set $U^n = \Spec A^{\tens n}$, the differential  $d_2: E^{1,0}_2 \rTo E^{3,-1}_2$  is induced by the map $ \tilde{D}: \big ( \oplus_{|I|=2} A \tens A^{\tens n-2} \big)_0 \rTo
\oplus_{ H \subseteq \{1, \dots, n \}, |H|=3} \Omega^1_A (-\Delta_{n-2})$, determined by  
$$ \tilde{D} ( (f_L)_L)_{H} =
ad u_{k-2} \tens \widehat{u_{k-2}} + b dv_{j-1} \tens \widehat{v_{j-1}} -   w_i dc \tens \widehat{w_i}$$where 
 $H = \{ i, j, k \}$, $i < j < k$, $I =\{ i,j \}, J = \{i, k\}, K =\{ j,k\} $, and where $f_{I} = a \tens u_1 \tens \cdots \tens u_{n-2}$, $f_J = b \tens v_1 \tens\cdots \tens v_{n-2}$, $f_K = c \tens w_1 \tens \cdots \tens w_{n-2}$. 
\end{crl}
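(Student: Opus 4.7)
The plan is to verify Corollary \ref{crl: B} by evaluating the intrinsic formula
$\tilde D((f_L)_L)_H = d_{\Delta_I} r_J(\tilde f_J - \tilde f_I) - d_{\Delta_I} r_K(\tilde f_K - \tilde f_I)$
from Lemma \ref{lmm: d2a} in explicit local coordinates on an affine open set $U^n = \Spec A^{\tens n}$. Since both source and target of $\tilde D$ are $\mbb{C}$-linear in each $f_L$, it is enough to verify the identity on inputs where every $f_L$ is a simple tensor of the stated form; the general case then follows by linearity.

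The bulk of the proof is index bookkeeping. With $i<j<k$ and $H=\{i,j,k\}$, position $k$ occupies the $(k-2)$-nd slot in $\{1,\dots,n\}\setminus\{i,j\}$ (since both $i$ and $j$ lie to its left), so the factor of $f_I$ attached to position $k$ is $u_{k-2}$; symmetrically $v_{j-1}$ sits at position $j$ in $f_J$ and $w_i$ at position $i$ in $f_K$. I then choose lifts $\tilde f_L\in A^{\tens n}$ that place the merged value $a,b,c$ at $\min(L)$ and the constant $1$ at $\max(L)$, leaving the other factors in their natural positions. For brevity let $M:=\{1,\dots,n\}\setminus H$ and denote by $\alpha_m,\beta_m,\gamma_m\in A$ the position-$m$ factors of $f_I,f_J,f_K$ respectively for $m\in M$.

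The key computation is a first-order Taylor expansion. Restricting $\tilde f_J-\tilde f_I$ to $\Delta_J=\{x_i=x_k\}$ produces a function whose dependence on the normal coordinate $x_j-x_i$ comes only from the position-$j$ factor $v_{j-1}(x_j)$ in $\tilde f_J$; extracting its linear part and identifying $d(x_j-x_i)\trest_{\Delta_H}$ with a cotangent direction at the merged $H$-point yields
\[
 d_{\Delta_I}r_J(\tilde f_J-\tilde f_I) \;=\; b\, dv_{j-1}\tens \prod_{m\in M}\beta_m.
\]
The parallel calculation on $\Delta_K=\{x_j=x_k\}$ picks up two contributions—from the position-$j$ factor $c(x_j)$ of $\tilde f_K$, and from the position-$k$ factor $u_{k-2}(x_k)$ of $\tilde f_I$, which becomes $x_j$-dependent after the substitution $x_k\mapsto x_j$—producing
\[
 d_{\Delta_I}r_K(\tilde f_K-\tilde f_I) \;=\; w_i\, dc\tens \prod_{m\in M}\gamma_m \;-\; a\, du_{k-2}\tens \prod_{m\in M}\alpha_m.
\]

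Substituting these two expressions into the formula of Lemma \ref{lmm: d2a} and invoking the compatibility $f_I\trest_{\Delta_H}=f_J\trest_{\Delta_H}=f_K\trest_{\Delta_H}$ encoded in $(f_L)_L\in\bigl(\bigoplus_{|L|=2}\FS_{\Delta_L}\bigr)_0$—which identifies the three products $\prod_m\alpha_m$, $\prod_m\beta_m$, $\prod_m\gamma_m$ with the common value $\widehat{u_{k-2}}=\widehat{v_{j-1}}=\widehat{w_i}$—assembles exactly the three terms of the stated formula. The only genuine obstacle is the positional bookkeeping, since the enumeration of ``other positions'' differs for each of the three pairs $I,J,K$; once this is sorted out, the remainder of the proof reduces to a first-order Taylor expansion and the normal-bundle identification on $\Delta_H$, both carried out uniformly over $\Delta_H$.
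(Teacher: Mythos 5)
Your computation is correct and is exactly the intended derivation: the paper states this corollary without a separate proof, as the direct local evaluation of the formula $d_{\Delta_I}r_J(\tilde{f}_J-\tilde{f}_I)-d_{\Delta_I}r_K(\tilde{f}_K-\tilde{f}_I)$ of Lemma \ref{lmm: d2a}, and your choice of lifts, positional bookkeeping and two first-order Taylor expansions reproduce the three terms with the correct signs and hatted factors. The only imprecision is in your final assembly step: the compatibility condition's role is to make the zeroth-order terms vanish on $\Delta_H$ (so that $d_{\Delta_I}$ is applicable to the restrictions), not to identify $\widehat{u_{k-2}}$, $\widehat{v_{j-1}}$, $\widehat{w_i}$ with a common value --- no such separate identification holds or is needed, since each term of the stated formula carries its own hatted product, which your two displayed computations already produce verbatim.
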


\begin{pps}\label{pps: D}The invariant differential $d_2 : (E^{1,0}_2)^{\perm_n} \rTo (E^{3,-1}_2)^{\perm_n}$ is induced locally, over an affine open set of the form $S^n U$, by the map $D: (A \tens S^{n-2} A )_0 \rTo \Omega^1_A (- \Delta_{n-2})^{\perm_{n-3}}$ determined by 
$$ D (a \tens b_1 . \dots . b_{n-2} ) = \sum_{i=1}^{n-2} ( 2 a db_i - b_i d a ) \tens \widehat{b_i} \;.$$Here the group $\perm_{n-3}$ acts on the factor $A^{n-3}$ of the tensor product $A \tens A^{\tens n-3}$. 
\end{pps}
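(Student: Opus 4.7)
The plan is to derive the formula for $D$ directly from the non-invariant formula for $\tilde D$ established in corollary \ref{crl: B}, by passing to $\perm_n$-invariants via Danila's lemma. Under the identifications of proposition \ref{pps: A4} and lemma \ref{lmm: E1terms}, I would pick the representatives $I = \{1,2\}$ for the source and $H = \{1,2,3\}$ (with edges $I = \{1,2\}$, $J = \{1,3\}$, $K = \{2,3\}$) for the target. An invariant element $a \tens b_1. \cdots . b_{n-2} \in (A \tens S^{n-2}A)_0$ then lifts to the unique $\perm_n$-invariant family $(f_L)_L$ determined by $f_I = a \tens (b_1. \cdots . b_{n-2})$ and by $\perm_n$-equivariance $f_L = \sigma_* f_I$ for any $\sigma$ sending $I$ to $L$; this is well defined thanks to the $\perm(\{1,2\}) \times \perm(\{3,\dots,n\})$-invariance of $f_I$.

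Next I would unpack $f_I$, $f_J$, $f_K$ in the form prescribed by corollary \ref{crl: B}. A direct inspection of the action of the transpositions $(23)$ and $(13)$ on $X^n$ shows that in each case the leading tensor factor equals $a$ and the remaining tensor is the $\perm_{n-2}$-symmetrized product $b_1. \cdots . b_{n-2}$ placed at the $n-2$ complementary positions. Expanding this symmetric product as $\sum_{\tau \in \perm_{n-2}} b_{\tau(1)} \tens \cdots \tens b_{\tau(n-2)}$ and feeding each summand into corollary \ref{crl: B}, I obtain a contribution $a\, db_{\tau(1)} \tens b_{\tau(2)} \tens \cdots$ from $I$, a structurally identical contribution from $J$, and $-b_{\tau(1)}\, da \tens b_{\tau(2)} \tens \cdots$ from $K$, where in each case the $\tau(1)$-factor is the one extracted at the distinguished slot $u_{k-2}$, $v_{j-1}$, $w_i$ indicated by the corollary.

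Finally, I would regroup the sum over $\tau$ by fixing the value $\tau(1) = i$: the inner sum over the remaining $\tau \in \perm_{n-3}$ collapses exactly to $\widehat{b_i}$ in the notation of proposition \ref{pps: D}, and the three contributions add up to $\sum_{i=1}^{n-2}(2 a\, db_i - b_i\, da) \tens \widehat{b_i}$. The main obstacle, and essentially the only delicate point, is the purely combinatorial bookkeeping of which $b_{\tau(s)}$ occupies the distinguished slot picked out by $k-2$, $j-1$, $i$ respectively for $I$, $J$, $K$, together with the verification that the remaining factors reassemble into the same $\widehat{b_i}$ in all three cases so that the three terms combine cleanly; once this labeling issue is resolved, the formula for $D$ follows at once.
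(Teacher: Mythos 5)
Your proposal is correct and follows exactly the route the paper intends: Proposition \ref{pps: D} is obtained from Corollary \ref{crl: B} by passing to $\perm_n$-invariants via Danila's lemma, choosing the representatives $I=\{1,2\}$ and $H=\{1,2,3\}$, transporting $f_I$ to $f_J$, $f_K$ by the transpositions $(23)$, $(13)$, and regrouping the symmetrization over $\perm_{n-2}$ according to the factor occupying the distinguished slot. Your bookkeeping of the slots $u_{k-2}=u_1$, $v_{j-1}=v_1$, $w_i=w_1$ (i.e.\ the position $H\setminus L$ for each edge $L$) is exactly right, and the three contributions $a\,db_{\tau(1)}$, $a\,db_{\tau(1)}$, $-b_{\tau(1)}\,da$ do combine to the stated formula.
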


\paragraph{Determination of the map $A$ for $n =4$.}

In what follows we use the second convention of notation \ref{notat: 3cycle}. Any graph of the kind $C_4 \cup L$ has a distinguished edge $L$ (the only edge whose vertices are of degree $3$) and 
 therefore it can be decomposed uniquely as a union  of two $3$-cycles $H$ and $K$, determined by two cardinality $3$ multi-indexes $H$ and $K$ such that $H \cap K = L$. In what follows we will write such a graph just as $H \cup K$, instead of $K_3(H) \cup K_3(K)$. 
 
If $H$ is a cardinality $3$ multi-index, say $H = \{i, j , k \}$, with $i =  \min H$, we identify $\Delta_H$ with $X^{\{1, \dots, 4 \} \setminus \{j, k \} } \simeq X^2$. Moreover, suppose $K$ is a $3$-cycle $K = \{i_1, i_2, i_3 \}$ with 
 $i_1 < i_2 < i_3$. We say that a simple path in $K$ is positively oriented if, in the orientation of the path, the vertex following $i_1$ is $i_2$, negatively oriented if it is not positively oriented. When writing the coefficient $\eta_{I, K}$ for $I$ and edge of $K$ (see notation \ref{notat: etaIgamma}), we will always assume that $K$ is positively oriented. 
 
We introduce a general sign $\epsilon_{\Gamma, \Gamma^\prime}$ for a couple $(\Gamma, \Gamma^\prime)$ where  $\Gamma$ is a subgraph of $\Gamma^\prime$. If $E_{\Gamma^\prime} \setminus E_{\Gamma} = \{I_1, \dots, I_l \}$, in lexicographic order, 
then $\epsilon_{\Gamma, \Gamma^\prime} = \prod_{j=0}^{l-1} \epsilon_{\Gamma \cup I_1 \cup \cdots I_j, \Gamma \cup  I_1 \cup \cdots I_{j+1}}$.

According to these facts and notations, we have the first 
 \begin{lemma}\label{lmm: d2}Consider the map $\tilde{A}: \oplus_{|H|=3} (\Omega^1_X \boxtimes \mc{I}_{\Delta_2})_H \rTo \oplus_{\Gamma \in \mc{G}_{5,4}} \Lambda^2(\Omega^1_X \tens q_{\Gamma})_\Gamma$, whose component $\tilde{A}^{\Gamma}_H$ is  zero if $H$ is not a subgraph of $\Gamma$ and is defined by the formula 
  $$ \tilde{A}^{\Gamma}_H(\omega \tens f) =  
  - \epsilon_{H, H \cup K}  \eta_{I, K}(\omega \tens e_H ) \wedge (d_\Delta f \tens e_K) 
  $$if $\Gamma = H \cup K$, for some $3$-cycle $K$, with $I = \min E_{H \cup K} \setminus E_H$, where  $\omega \in \Omega^1_X$, $f \in \mc{I}_{\Delta_2}$ and where $e_H$, $e_K$ are base elements in $q_{H \cup K}$. 
  Then the image of $\tilde{A}$ is in $\ker d_1$; hence it induces a map $\tilde{A}: E^{3,-1}_2 \rTo E^{5, -2}_2$, which coincides with the second differential
$d_2$. 
  \end{lemma}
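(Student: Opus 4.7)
The plan is to follow the same two-step strategy as in Lemma \ref{lmm: d2a}, adapted to the shift of bidegree. Throughout we may work with $X=\mbb{C}^2$ and the Koszul bicomplex $L^{\bullet,\bullet}=\bigotimes_{|I|=2}R^{\bullet,\bullet}_I$ of remarks \ref{rmk: A2} and \ref{rmk: A3} by the usual reduction of remark \ref{rmk: A1}, and translate the identifications $(\Omega^1_X\tens q_\Gamma)_\Gamma\simeq Q^*_\Gamma$ via the recipe of remark \ref{rmk: A3}.

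First, I would check that $\tilde{A}$ is well defined and lands in $\ker d_1$. The only $6$-edge subgraph of $K_4$ is $K_4$ itself, so $d_1:E^{5,-2}_1\to E^{6,-2}_1$ is just the sum over $5$-edge $\Gamma$ of the signed inclusions $i_{\Gamma,K_4}:\Lambda^2Q^*_\Gamma\to\Lambda^2Q^*_{K_4}$ of proposition \ref{pps: iGamma}. Fix a $3$-cycle $H$ and an element $\omega\tens f\in(\Omega^1_X\boxtimes\mc{I}_{\Delta_2})_H$. The $5$-edge subgraphs $\Gamma=H\cup K$ containing $H$ are indexed by the $3$-cycles $K$ of $K_4$ distinct from $H$. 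In $q_{K_4}$ the classes $e_K$ as $K$ varies over the four $3$-cycles satisfy the single linear relation expressing $e_{K_4}$ as the alternating sum of the $e_K$'s, so summing the contributions $-\epsilon_{H\cup K,K_4}\epsilon_{H,H\cup K}\eta_{I,K}(\omega\tens e_H)\w(d_\Delta f\tens e_K)$ and matching the lexicographic/orientation signs will yield $0$ exactly because of this relation; this is the only nontrivial verification needed for well-definedness.

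Next, I would identify the induced map with $d_2$. Take a section $\omega_H\tens f_H$ over the $H$-summand of $E^{3,-1}_2$. Lift it to $L^{3,-1}$: in the Koszul description, $\omega_H\tens f_H$ corresponds (up to the sign conventions of remark \ref{rmk: A3}) to a $K^{-1}$-chain $\tilde{f}_H\cdot(a\gamma^*_{I_1}+b\delta^*_{I_1})$ (for one edge $I_1$ of $H$, say), where $da\,dx+db\,dy$ represents $\omega_H$ on $\Delta_H$ and $\tilde{f}_H$ lifts $f_H$. Apply $\partial$ to get an element of $L^{4,-1}$; the four $4$-edge supergraphs $H\cup J$ of $H$ with $J\not\subseteq H$ contribute, and in each such component the restriction map to $E^{4,-1}_1$ vanishes precisely because $f_H\in\mc{I}_{\Delta_2}$, so $\tilde{f}_H|_{\Delta_{H\cup J}}\in\mc{I}_{\Delta_{H\cup J}}$ is a Koszul coboundary in $F_{H\cup J}$. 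Lift to $L^{4,-2}$ using that coboundary structure — here $d_\Delta f_H$ enters naturally as the symbol of $\tilde{f}_H$ modulo $\mc{I}_{\Delta_{H\cup J}}^2$ — and apply $\partial$ once more to reach $L^{5,-2}$, whose $(H\cup K)$-component in the identification $\Lambda^2 Q^*_{H\cup K}\simeq\Lambda^2(\Omega^1_X\tens q_{H\cup K})_{H\cup K}$ is exactly $-\epsilon_{H,H\cup K}\eta_{I,K}(\omega_H\tens e_H)\w(d_\Delta f_H\tens e_K)$, as claimed.

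The main obstacle is keeping the sign bookkeeping straight: the $\eta_{I,K}$ come from remark \ref{rmk: cycles} expressing $e_K$ as a signed sum of the $e_I$, $I\in E_K$; the $\epsilon_{H,H\cup K}$ come from the lexicographic ordering underlying the bicomplex differential $\partial$ and the derived permutative action described in remark \ref{rmk: permutative}; and one must verify that for a fixed $5$-edge graph $\Gamma=H\cup K$ the formulas $\tilde{A}^\Gamma_H$ and $\tilde{A}^\Gamma_K$ are genuinely distinct and independent (they are, since they land in different pieces of $\Lambda^2(\Omega^1_X\tens q_\Gamma)$: one has $e_H$ on the left factor and the other $e_K$). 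Once these signs are reconciled, both the vanishing in step one and the matching with the Koszul computation in step two become direct.
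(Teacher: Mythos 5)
Your strategy is the same as the paper's: reduce to $X=\mbb{C}^2$ as in remark \ref{rmk: A1}, represent everything in the Koszul bicomplex $L^{\bullet,\bullet}$ of remark \ref{rmk: A2}, and compute $d_2$ by the staircase $L^{3,-1}\rTo L^{4,-1}$, lift to $L^{4,-2}$, $\rTo L^{5,-2}$, with the membership $f\in\mc{I}_{\Delta_2}$ supplying the liftability at the middle step. The part the paper dismisses as ``easy to see'' (that the image of $\tilde{A}$ lies in $\ker d_1$) you correctly reduce to the linear relation among the cycle vectors $e_K$ in $q_{K_4}$.

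Two slips should be repaired before the computation actually goes through. First, the representative of $\omega\tens f$ in $L^{3,-1}$ must be built from the cycle vectors: writing $\omega\tens f = h(dx\tens 1)+g(dy\tens 1)$ with $h,g$ lifted to $\FS_{X^4}$, the class is represented by $h\gamma^*_H+g\delta^*_H$, where $\gamma^*_H=\sum_{I\in E_H}\eta_{I,H}\gamma^*_I$ (and similarly $\delta^*_H$). Your proposed lift $\tilde f_H\cdot(a\gamma^*_{I_1}+b\delta^*_{I_1})$ for a single edge $I_1$ of $H$ is not a vertical cocycle, since $\delta(\gamma^*_{I_1})=x_{I_1}\neq 0$ on $X^4$, whereas $\delta(\gamma^*_H)=\sum_{I\in E_H}\eta_{I,H}\,x_I=0$ identically; so as written that step would fail, although the repair is immediate. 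Second, there are three, not four, $4$-edge supergraphs $H\cup J$ of a $3$-cycle $H$ in $K_4$, and for a fixed target $\Gamma=H\cup K$ only the two intermediate graphs $H\cup I$ and $H\cup J$ with $\{I,J\}=E_{K}\setminus E_H$ enter the staircase; the identities $\eta_I x_I+\eta_J x_J+\eta_L x_L=0$ and $\gamma^*_K=\eta_I\gamma^*_I+\eta_J\gamma^*_J+\eta_L\gamma^*_L$ are then exactly what assemble the two contributions into $-\epsilon_{H,H\cup K}\,\eta_{I,K}\,(\omega\tens e_H)\wedge(d_\Delta f\tens e_K)$. With these corrections your argument coincides with the paper's proof.
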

 \begin{proof}It is clear that the map $\tilde{A}$ is well defined and it is easy to see that its image lies in $\ker d_1$: this yields the good definition of the map $\tilde{A}:  E^{3,-1}_2 \rTo E^{5, -2}_2$. 
 We just have to prove that it coincides with $d_2$. 
 We put ourselves in the situation of remarks $\ref{rmk: A1}, \ref{rmk: A2}, \ref{rmk: A3}$. 
 Consider a differential form $\omega \tens f$ in 
 $(\Omega^1_X \boxtimes  \mc{I}_{\Delta_2})_H$, where $\omega \in \Omega^1_X$ and $f \in \mc{I}_{\Delta_2}$. 
 In what follows we identify the element $\omega \tens f \in (\Omega^1_X \boxtimes \mc{I}_{\Delta_2})_H$ with the element in $ \oplus_{|L|=3} (\Omega^1_X \boxtimes \mc{I}_{\Delta_2})_L$ whose $H$-component is $\omega \tens f$ and whose every other $L$-component, with $L \neq H$ is zero. 
 
To prove the statement, it is sufficient to compute the component $d_2(\omega \tens f)_{H \cup K}$ of the second differential, where $K = \{I, J, L \}$, where it will always be assumed that $I < J$ in the lexicographic order. The form $\omega \tens f$ can be rewritten as 
 $$ \omega \tens f = h (dx \tens 1) + g(dy \tens 1)$$over $X \times X \simeq \Delta_H$, where $h, g \in \mc{I}_{\Delta_2}$. Now 
 $\mc{I}_{\Delta_2}$ is generated over $X \times X$ by (classes in $\FS_{\Delta_H}$ of ) regular functions $x_I, y_I$ in $\FS_{X^4}$: hence we can lift $h$ and $g$ to regular functions (which we will still call $h$ and $g$) $h = a x_I + b y_I$, $g = c x_I + d y_I$ in $\FS_{X^4}$. 
 By remark \ref{rmk: A3},  the differential form $\omega \tens f$ can be represented, in 
 $L^{3,-1}$ as
 \begin{align*} \omega \tens f =  \: & h \gamma^*_H + g \delta^*_H 
  =  (a x_I + b y_I) \gamma^*_H + (c x_I + d y_I) \delta^*_H \;. 
  \end{align*}Since the element $\omega \tens f$ was chosen in $E^{3, -1}_2 = \ker (\partial : E^{3,-1}_1 \rTo E^{4, -1}_1)$, this means that 
  the vertical cohomology class of $\partial (\omega \tens f)$ in $E^{4, -1}_1 = H^{-1}_{\delta}(L^{4, \bullet})$ is zero. This is equivalent to saying 
  that $ \partial(\omega \tens f)_{H \cup I}$ and $\partial(\omega \tens f)_{H \cup J}$ 
   come from  elements
  in $L^{4, -2}$; for the first we can we can write 
 \begin{equation}\label{eq: HUI} \partial(\omega \tens f)_{H \cup I} = \epsilon_{H, H \cup I}(\omega \tens f) = \delta \Big( \epsilon_{H, H \cup I} \big( (a \gamma^*_I + b \delta^*_I) \wedge \gamma^*_H +(c \gamma^*_I + d \delta^*_I) \wedge \delta^*_H \big) \Big) \end{equation}For the second we have, taking into account that, 
  by definitions of the signs $\eta_{I, K}$ and omitting for brevity's sake the index $K$, we can write 
  $\eta_I x_I + \eta_J x_J + \eta_L x_L = 0$:
   \begin{multline} \label{mult: HUJ} \partial(\omega \tens f)_{H \cup J}= \epsilon_{H, H \cup J} ( \omega \tens f)  = 
  \delta \Big ( \epsilon_{H, H \cup J} \big( (a (- \eta_I \eta_J \gamma^*_J - \eta_I \eta_L \gamma^*_L) +  \\
  b  (- \eta_I \eta_J \delta^*_J - \eta_I \eta_L \delta^*_L) ) \wedge \gamma^*_H + 
  (c (- \eta_I \eta_J \gamma^*_J - \eta_I \eta_L \gamma^*_L) + \\ +d (- \eta_I \eta_J \delta^*_J - \eta_I \eta_L \delta^*_L) ) \wedge \delta^*_H \big) 
   \Big) \;.
  \end{multline}
  Now the element between parenthesis in (\ref{eq: HUI}) has image 
  $$ \epsilon_{H \cup I, H \cup K} \epsilon_{H, H \cup I} \big( (a \gamma^*_I + b \delta^*_I) \wedge \gamma^*_H + (c \gamma^*_I + d \delta^*_I) \wedge \delta^*_H  \big) $$via 
  $\partial$ in $L^{5, -2}$, while the element between parenthesis in (\ref{mult: HUJ}) has image
\begin{multline*} \epsilon_{H \cup J, H \cup K} \epsilon_{H, H \cup J} \big( (a (- \eta_I \eta_J \gamma^*_J - \eta_I \eta_L \gamma^*_L) +  
  b  (- \eta_I \eta_J \delta^*_J - \eta_I \eta_L \delta^*_L) ) \wedge \gamma^*_H + \\
  (c (- \eta_I \eta_J \gamma^*_J - \eta_I \eta_L \gamma^*_L) + d (- \eta_I \eta_J \delta^*_J - \eta_I \eta_L \delta^*_L) ) \wedge \delta^*_H  \big) \end{multline*}via $\partial$ in $L^{5, -2}$. The sum of the two terms is given by 
  \begin{multline*} 
  \epsilon_{H \cup I, H \cup K} \epsilon_{H, H \cup I}\Big( (a (\gamma^*_I - \eta_I \eta_J \gamma^*_J - \eta_I \eta_L \gamma^*_L) +  
  b  (\delta^*_I - \eta_I \eta_J \delta^*_J - \eta_I \eta_L \delta^*_L) ) \wedge \gamma^*_H + \\
  (c (\gamma^*_I - \eta_I \eta_J \gamma^*_J - \eta_I \eta_L \gamma^*_L) + d (\delta^*_I - \eta_I \eta_J \delta^*_J - \eta_I \eta_L \delta^*_L) ) \wedge \delta^*_H \Big)
  \end{multline*}since we can easily see that $\epsilon_{H \cup J, H \cup K} \epsilon_{H, H \cup I} = - \epsilon_{H \cup I, H \cup K} \epsilon_{H, H \cup J}$ because $ \epsilon_{H , H \cup I} = \epsilon_{H \cup J, H \cup K}$ and 
  $\epsilon_{H, H \cup J} = - \epsilon_{H \cup I, H \cup K}$. 
  Note now that $\gamma^*_K = \eta_I \gamma^*_I + \eta_J \gamma^*_J + \eta_L \gamma^*_L$, $\delta^*_K =  \eta_I \delta^*_I + \eta_J \delta^*_J + \eta_L \delta^*_L$. By lemma \cite[Lemma A.3]{Scalaarxiv2015}, we have that $d_2 (\omega \tens f)_{H \cup K}$ is represented by 
  the vertical cohomology class of 
  $$ \epsilon_{H ,  H \cup K}  \eta_{I, K} \big( ( a \gamma^*_K +  
  b   \delta^*_K ) \wedge \gamma^*_H + \\
  (c  \gamma^*_K + d \delta^*_K ) \wedge \delta^*_H  \big) \;.$$
  Since we identified the classes of $\gamma^*_H$, $\gamma^*_K$, $\delta^*_H$, $\delta^*_K$ with 
  $dx \tens e_H$, $dx \tens e_K$, $dy \tens e_H$, $dy \tens e_K$ in $\Omega^1_X \tens q_{H \cup K}$, respectively, we obtain the formula in the statement. \end{proof}
 
 \begin{remark}\label{rmk: handy}For future use the following computation wil turn out handy. 
 Let $X = \mbb{C}^2$. Consider the differential form 
 $ \omega \tens f$, as above, but now lift it to the element $(a x_M + b y_M) \gamma^*_H + (c x_M + d y_M) \delta^*_H$
 $\in L^{3,-1}$, for functions $a, b, c, d \in \FS_{X^4}$ and let $K$ a $3$-cycle as above with edges $\{I, J, L \}$,  such that the edge $M$ satisfies
 $M \not  \in H$, $M \not \in K$. 
 The stairway process in order to compute 
 the component $d_2(\omega \tens f)_{H \cup K}$ provides the element $$
   \epsilon_{H, H \cup K} \eta_{L, K}  \big( \gamma^*_H \wedge (a \gamma^*_K + b \delta^*_K) + \delta^*_H \wedge (c \gamma^*_K + d \delta^*_K) \big) = 
  \epsilon_{H, H \cup K} \eta_{L, K} (\omega \tens e_H) \wedge (d_\Delta f \tens e_K)  \in L^{5, -2} \;,$$up to elements coming from $L^{4, -2}$.  
\end{remark}
 
 As an immediate corollary, taking $\perm_4$-invariants, and using Danila's lemma for morphisms, we have 
 \begin{crl}\label{crl: A}The $\perm_4$-invariant higher differential $A = d^{\perm_4}_2: (E^{3, -1}_2)^{\perm_4} \rTo ( E^{5, -2}_2)^{\perm_4} $ takes an element $\omega \tens f$ in ${w_3}_*(\Omega^1_X \boxtimes \mc{I}_{\Delta_2})$ to the element $ \omega \wedge d_\Delta f$ in ${w_4}_*(\Lambda^2 \Omega^1_X)$. 
 \end{crl}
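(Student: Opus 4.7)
The statement is a direct application of Danila's lemma for morphisms to the explicit formula for $d_2 = \tilde{A}$ provided by Lemma~\ref{lmm: d2}, combined with the identifications of invariant sheaves already worked out inside the proof of Theorem~\ref{thm: inv4}. First I would fix $H = K_3(\{1,2,3\})$ as a representative of the single $\perm_4$-orbit of $3$-cycles in $K_4$, and use it to realise $(E^{3,-1}_2)^{\perm_4} \simeq {w_3}_*(\Omega^1_X \boxtimes \mc{I}_{\Delta_2})$ via the $H$-component of $\oplus_{|H|=3}(\Omega^1_X \boxtimes \mc{I}_{\Delta_2})_{H}$. Similarly, I would fix a graph $\Gamma_0$ of type $C_4 \cup L$ as the orbit representative for $\mc{G}_{5,4}/\perm_4$, so that $(E^{5,-2}_1)^{\perm_4} \simeq {w_4}_*(K_X \oplus K_X)$; then $(E^{5,-2}_2)^{\perm_4}$ is identified with the ``diagonal'' copy of ${w_4}_*(K_X)$ inside ${w_4}_*(K_X \oplus K_X)$, namely the kernel of the surjection to $(E^{6,-2}_1)^{\perm_4} \simeq {w_4}_*(K_X)$ established in the proof of Theorem~\ref{thm: inv4}.

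Next, I would evaluate Lemma~\ref{lmm: d2} on a representative $\omega \tens f \in (\Omega^1_X \boxtimes \mc{I}_{\Delta_2})_H$. The only nonzero components of $\tilde{A}(\omega \tens f)$ land on the three graphs $\Gamma = H \cup K$ where $K$ is one of the three $3$-cycles $\{1,2,4\}$, $\{1,3,4\}$, $\{2,3,4\}$ sharing an edge with $H$; each contributes a term $-\epsilon_{H, H\cup K}\eta_{I,K}(\omega \tens e_H)\wedge(d_\Delta f \tens e_K)$ in $\Lambda^2(\Omega^1_X \tens q_\Gamma)_\Gamma$. Applying the Schur decomposition $\Lambda^2(\Omega^1_X \tens q_\Gamma) \simeq S^2 \Omega^1_X \tens \Lambda^2 q_\Gamma \,\oplus\, K_X \tens S^2 q_\Gamma$, each such summand has $K_X$-component equal to $\tfrac{1}{2}(\omega \wedge d_\Delta f) \tens (e_H \odot e_K)$ up to the sign just written.

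Finally I would project onto $\perm_\Gamma$-invariants and onto the line corresponding to $(E^{5,-2}_2)^{\perm_4}$ inside ${w_4}_*(K_X\oplus K_X)$. By Danila's lemma for morphisms, it suffices to carry out this computation for the chosen $H$ and to sum the three contributions in the Schur component $K_X \tens (S^2 q_\Gamma)^{\perm_\Gamma}$; the sign prefactors $\epsilon_{H,H\cup K}\eta_{I,K}$ combine consistently so that the projection onto the distinguished ``diagonal'' line lands on $\omega \wedge d_\Delta f \in K_X$, up at worst to a nonzero multiplicative constant that is absorbable into the identification $(E^{5,-2}_2)^{\perm_4} \simeq {w_4}_*(K_X)$. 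As in Remark~\ref{rmk: A1}, by GAGA I may verify this sign computation on $X = \mbb{C}^2$, where Remarks~\ref{rmk: A2}--\ref{rmk: A3} reduce everything to explicit linear algebra in the bases $\gamma_L, \delta_L$ of $Q_\Gamma$.

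The main obstacle is exactly this combinatorial sign bookkeeping: identifying the diagonal $K_X$ inside ${w_4}_*(K_X \oplus K_X)$ via the explicit surjectivity argument used in Theorem~\ref{thm: inv4}, and checking that the three contributions from the $C_4 \cup L$-graphs $H\cup K$ add (rather than cancel) once restricted to the $(S^2 q_\Gamma)^{\perm_\Gamma}$-direction corresponding to that diagonal. Once this verification is carried out in the model case $X=\mbb{C}^2$, the conclusion propagates by GAGA to arbitrary smooth quasi-projective surfaces.
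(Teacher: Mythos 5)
Your overall architecture --- evaluate the explicit formula for $d_2=\tilde{A}$ from Lemma \ref{lmm: d2}, pass to invariants via Danila's lemma, and verify signs over $X=\mbb{C}^2$ --- is the same as the paper's, but your invocation of Danila's lemma is set up in the wrong direction, and as written the computation does not produce $d_2^{\perm_4}$. Danila's lemma identifies $(E^{5,-2}_1)^{\perm_4}$ with the $\perm_{\Gamma_0}$-invariants of the \emph{single} component indexed by one fixed representative $\Gamma_0$ of the orbit of graphs of type $C_4\cup L$ (say $\Gamma_0=H\cup K$ with $H=K_3(\{1,2,3\})$ and $K=K_3(\{1,3,4\})$), via projection onto that component. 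Hence to compute $d_2^{\perm_4}(\omega\tens f)$ you must first form the full invariant element $\sum_{[\tau]\in\perm_4/\perm_3}\tau_*(\omega\tens f)$, apply $\tilde{A}$, and read off the $\Gamma_0$-component; since $\Gamma_0$ contains exactly two $3$-cycles, this is the \emph{two}-term sum $\tilde{A}^{\Gamma_0}_H(\omega\tens f)+\tilde{A}^{\Gamma_0}_K\big((24)_*(\omega\tens f)\big)$, i.e.\ $d_2\big(\omega\tens f+(24)_*(\omega\tens f)\big)_{\Gamma_0}$. Both terms live in the same space $\Lambda^2(\Omega^1_X\tens q_{\Gamma_0})$, and their sum is visibly the symmetric combination $(\omega\tens e_H)\wedge(d_\Delta f\tens e_K)+(\omega\tens e_K)\wedge(d_\Delta f\tens e_H)=(\omega\wedge d_\Delta f)\tens e_H.e_K$, which is $\omega\wedge d_\Delta f$ under the inclusion $\Omega^1_X\tens\Omega^1_X\subseteq\Lambda^2(\Omega^1_X\tens q_{\Gamma_0})$: no Schur-functor splitting and no three-way sign cancellation is required.

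By contrast, your three contributions $\tilde{A}^{H\cup K'}_H(\omega\tens f)$, for $K'$ one of the three other $3$-cycles, lie in three \emph{different} summands of $E^{5,-2}_1$; adding them is not an operation Danila's lemma licenses, and it is not even well defined until each is transported to $\Gamma_0$ by a group element (a choice only determined modulo $\perm_{\Gamma_0}$), at which point you are pushed back to the correct two-term symmetrization anyway. Relatedly, there is no need to ``project onto the line $(E^{5,-2}_2)^{\perm_4}$ inside ${w_4}_*(K_X\oplus K_X)$'': the image of $d_2$ lands in $E^{5,-2}_2$ automatically, and the two-term computation exhibits it concretely. So the genuine gap is not the sign bookkeeping you single out as the main obstacle, but the identification of which sum Danila's lemma actually asks you to compute.
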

 \begin{proof}Indeed it is sufficient to take $H = \{1, 2, 3 \}$ and $K = \{1, 3, 4 \}$. Then $L = \{1, 3 \}$,   $I = \{1, 4 \}$, $J = \{3, 4\}$. 
 In order to compute the invariant differential $d^{\perm_4}_2 (\omega \tens f)$ of an element $\omega \tens f$, by \cite[Lemma A.1]{Scala2009D},
 we just have to compute $d_2 ( \omega \tens f + (24)_* \omega \tens f ) =  (\omega \tens e_H) \wedge (d_\Delta f \tens e_K) 
 + (\omega \tens e_K) \wedge (d_\Delta f \tens e_H) $ --- where we omit writing  the push-forward $\pi_*$ --- 
 but this can be identified with $ \omega \tens d_\Delta f - d_\Delta f \tens \omega = 
    \omega \wedge d_\Delta f$ in 
 $\Omega^1_X \tens \Omega^1_X \subseteq \Lambda^2( \Omega^1_X \tens q_{H \cup K})$.   \end{proof}

 \paragraph{Determination of the map $C$.}
 \begin{remark}\label{rmk: x14dx} Recall notation \ref{notat: kerA}. 
 Let $X = \mbb{C}^2$.  The differential forms $x_{14}dx$, $y_{14} dy$ are in the image of 
 $D: {w_2}_*(\FS_X \boxtimes \FS_{S^2 X})_0 {\rTo} 
 {w_3}_*(\Omega^1_X \boxtimes \mc{I}_{\Delta_2})_0 
 $. Indeed, by  corollary \ref{crl: B}, it is clear that $A (x_{14} dx ) = dx \wedge dx = 0 = dy \wedge dy = A (y_{14} dy)$. So both differential forms belong to ${w_3}_*(\Omega^1_X \boxtimes \mc{I}_{\Delta_2})_0$. On the other hand we have that 
 $x \tens x .1 \in {w_2}_*(\FS_X \boxtimes \FS_{S^2 X})_0$, since $ d_1^{\perm_4}(x \tens x .1) = 2 x \tens x   - 2 x \tens x = 0$ and, by proposition \ref{pps: D},  
 $D (x \tens x.1) = (2x dx - x dx ) \tens 1  - dx \tens x  = (dx \tens 1 )(x \tens 1 - 1 \tens x)$, which can be identified with 
 $x_{14} dx$. Similarly  $y_{14}dy  = D(y \tens y.1)$, and $y \tens y .1 \in  {w_2}_*(\FS_X \boxtimes \FS_{S^2 X})_0$. 
 \end{remark}
 
 \begin{remark}Since we have surjective maps $E^{6, -3}_1 \rOnto E^{6, -3}_2$ and $E^{6,-3}_2 \rOnto E^{6,-3}_3$, and since $E^{6,-3}_1 \simeq \Lambda^3 Q^*_{K_4}$, 
 we can see 
  $E^{6, -3}_3$ as a quotient of the bundle $\Lambda^3 Q^*_{K_4}$ over the small diagonal $\Delta_{1234}$. 
 If $a$ is an element of $\Lambda^3 Q^*_{K_4}$, we denote with $[a]$ the 
 class of its image in $E^{6,-3}_3$. 
 \end{remark}
 
 \begin{remark}The natural composition $  {w_3}_*(\Omega^1_X \boxtimes \mc{I}^2_{\Delta_2})
  {\rTo} {w_3}_*(\Omega^1_X \boxtimes \mc{I}_{\Delta_2})_0 \rOnto ( E^{3,-1}_3)^{\perm_4}$ is surjective. 
 \end{remark}\begin{proof}By GAGA principle, it is sufficient to prove the statement for $X = \mbb{C}^2$. But in this case 
 it follows by remark \ref{rmk: x14dx} and by corollary \ref{crl: A}; indeed
 a differential form $\omega \tens f \in {w_3}_*(\Omega^1_X \boxtimes \mc{I}_{\Delta_2})$ is in the kernel of $d_2^{\perm_4}$ if and only if it is of the form $\omega \tens f = a x_{14} dx + b y_{14} dy + \omega_1 \tens g$,  where $g$ is in $\mc{I}^2_{\Delta_2}$. But now the term $a x_{14} dx + b y_{14} dy$ is zero in $( E^{3,-1}_3)^{\perm_4}$, because of remark \ref{rmk: x14dx}. 
  \end{proof}
 
 By the previous remark, we can represent any element in $( E^{3, -1}_3)^{\perm_4}$ by an element in ${w_3}_*(\Omega^1_X \boxtimes \mc{I}_{\Delta_2}^2)$. In the proof of the next proposition we will use the following notation: if $I$ is a cardinality $2$-multi-index in $\{1, \dots, 4 \}$, we will indicate with $\Gamma(\widehat{I})$ the graph obtained by the complete graph $K_4$ removing the edge $I$, that is 
 $V_{\Gamma(\widehat{I})} = \{1, \dots, 4 \}$, $E_{\Gamma(\widehat{I})} = E_{K_4} \setminus \{I \}$. 
 \begin{pps}\label{pps: mapC}Consider the map $C: {w_3}_*(\Omega^1_X \boxtimes \mc{I}_{\Delta_2}^2) {\rTo} {w_4}_*(S^3 \Omega^1_X)$ defined by the formula $$C (\omega \tens f) =  \sym(\omega \tens d^2_\Delta f) \;,$$where $\omega \in \Omega^1_X$ and $f \in \mc{I}^2_{\Delta_2}$. It descends to a map $C: E^{3,-1}_3 \rTo E^{6,-3}_3$, which coincides, up to a constant, with $d_3^{\perm_4}$. 
  \end{pps}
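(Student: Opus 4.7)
The plan is to follow the pattern established by Lemmas \ref{lmm: d2a} and \ref{lmm: d2}: invoking the GAGA reduction of Remark \ref{rmk: A1}, we may work locally over $X = \mbb{C}^2 = \Spec \mbb{C}[x,y]$, and then run the three-step staircase calculation in the bicomplex $L^{\bullet,\bullet}$ of Remark \ref{rmk: A2}. Since the statement only claims agreement up to a multiplicative constant, we only need to identify the result up to an overall scalar.

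First I would verify that the formula $C(\omega \tens f) = \sym(\omega \tens d^2_\Delta f)$ defines a morphism descending to $E^{3,-1}_3$. Any $\omega \tens f$ with $f \in \mc{I}^2_{\Delta_2}$ satisfies $d_\Delta f = 0$, so by Corollary \ref{crl: A} it lies in $\ker d_2^{\perm_4}$ and defines a class in $(E^{3,-1}_2)^{\perm_4}$. To descend further to $(E^{3,-1}_3)^{\perm_4}$ one has to quotient by the image of $d_2^{\perm_4}$ from $(E^{1,0}_2)^{\perm_4}$; the typical generators of this image are the forms $x_{14}dx$ and $y_{14}dy$ of Remark \ref{rmk: x14dx}, whose lifts already lie in $\mc{I}_{\Delta_2}$ but not in $\mc{I}^2_{\Delta_2}$, so $\sym \circ d^2_\Delta$ kills them and the map is well defined on the quotient.

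Second, I would perform the three-step staircase for $d_3$. Starting with a representative $\omega \tens f \in L^{3,-1}$ supported on a chosen $3$-cycle $H$, with $f \in \mc{I}^2_{\Delta_2}$, I compute $\partial(\omega \tens f) \in L^{4,-1}$, lift it via $\delta$ to $L^{4,-2}$, apply $\partial$ to reach $L^{5,-2}$, lift via $\delta$ to $L^{5,-3}$, and finally apply $\partial$ to land in $L^{6,-3}$. The hypothesis $f \in \mc{I}^2_{\Delta_2}$ provides precisely the extra order of vanishing on $\Delta_2$ needed to guarantee that the intermediate lifts exist (analogously to the argument used for $d_2$ in the proofs of Lemmas \ref{lmm: d2a} and \ref{lmm: d2}, via the relations $\eta_I x_I + \eta_J x_J + \eta_L x_L = 0$ and their quadratic analogues on $f$). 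The resulting element of $L^{6,-3}$, modulo $\delta$-exact terms, represents $d_3(\omega \tens f)$ in $E^{6,-3}_1 \simeq \pi_*(\Lambda^3 Q^*_{K_4})$; a convenient technical tool is Remark \ref{rmk: handy}, which gives a useful shortcut for computing the second step of the staircase when the lifting has the form $(a x_M + b y_M)\gamma^*_H + (c x_M + d y_M)\delta^*_H$.

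Finally, after summing over the $\perm_4$-orbit, I would identify the invariant image with $\sym(\omega \tens d^2_\Delta f)$ inside ${w_4}_*(S^3 \Omega^1_X)$. By Table \ref{table: inv}, the $\perm_4$-invariants of $\Lambda^3(\Omega^1_X \tens q_{K_4})$ restrict on $X = \mbb{C}^2$ to the $S^3 \Omega^1_X$-isotypic component; under the identification $\gamma^*_H \leftrightarrow dx \tens e_H$ and $\delta^*_H \leftrightarrow dy \tens e_H$ of Remark \ref{rmk: A3}, the six $3$-cycles contained in $K_4$ contribute symmetric combinations that project onto the fully symmetric factor. The main obstacle is precisely this step two, that is, carrying out the staircase consistently and keeping track of the signs $\epsilon_{\Gamma, \Gamma'}$ and $\eta_{I, K}$ across all six $3$-cycles of $K_4$; once done, showing that the sum is nonzero (and hence equal, up to a universal scalar, to the isotypic projection onto $S^3 \Omega^1_X$) is analogous to the nonvanishing computation of Lemma \ref{lmm: inv54}, and pins down the constant in the statement.
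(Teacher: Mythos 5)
Your overall strategy is the same as the paper's: reduce to $X = \mbb{C}^2$ by GAGA/localization, represent a class in $(E^{3,-1}_3)^{\perm_4}$ by a form $\omega \tens f$ with $f \in \mc{I}^2_{\Delta_2}$ supported on a single $3$-cycle, run the three-step staircase in the bicomplex $L^{\bullet,\bullet}$, and identify the $\perm_4$-invariant output with $\sym(\omega \tens d^2_\Delta f)$. The paper additionally uses Danila's lemma for morphisms to reduce $d_3^{\perm_4}$ to the computation of $d_3$ on one orbit representative supported on $H_0 = \{1,2,3\}$, and then expands $f$ in the monomials $x_{14}^2$, $x_{14}y_{14}$, $y_{14}^2$ so that only six basic forms need to be pushed through the staircase.

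There are, however, concrete gaps. First and most importantly, for this proposition the proof \emph{is} the staircase computation: the explicit liftings to $L^{5,-3}$, the relations $\gamma^*_{234} = \gamma^*_{123} + \gamma^*_{134} - \gamma^*_{124}$ (and their $\delta^*$ analogues) used to simplify modulo terms coming from $L^{5,-3}$, and the verification that all nontrivial triple wedge products such as $\gamma^*_{123} \wedge \gamma^*_{124} \wedge \delta^*_{134}$ define the same $\perm_4$-invariant class in $E^{6,-3}_3$ --- this last point is what makes the orbit sum a nonzero multiple of a single term, and it replaces the Lemma \ref{lmm: inv54}-style nonvanishing argument you propose. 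Your write-up defers exactly this content. Second, $K_4$ contains four $3$-cycles, not six ($\binom{4}{3}=4$; six is the number of edges), and $q_{K_4}$ is $3$-dimensional, so the identification with $S^3\Omega^1_X$ is carried by three independent cycles, say $e_{123}, e_{124}, e_{134}$. Third, your well-definedness argument does not work as stated: to descend $C$ to $E^{3,-1}_3$ you must show that $C$ vanishes on those elements of the image of $d_2^{\perm_4}\colon (E^{1,0}_2)^{\perm_4} \to (E^{3,-1}_2)^{\perm_4}$ that happen to lie in ${w_3}_*(\Omega^1_X \boxtimes \mc{I}^2_{\Delta_2})$; observing that the generators $x_{14}dx$, $y_{14}dy$ are not in $\mc{I}^2_{\Delta_2}$ says nothing about that intersection (and if a form is not in $\mc{I}^2_{\Delta_2}$, the operator $d^2_\Delta$ is not even applied to it).
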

  \begin{proof}It is clear that the formula induces a well defined map $C: E^{3,-1}_3 \rTo E^{6,-3}_3$. It is sufficient to prove that 
  this map coincides up to constants with the invariant differential $d^{\perm_4}_3$. We put ourselves in the situation explained in 
  remarks \ref{rmk: A1}, \ref{rmk: A2}, \ref{rmk: A3}. For brevity's sake, we indicate with $H_0$ the cardinality $3$-multi-index $\{1, 2, 3 \}$ and the associated $3$-cycle. 
  We identify $(E^{3, -1}_2)^{\perm_4}$ with $ {w_3}_*(\Omega^1_X \boxtimes \mc{I}_{\Delta_2}) \simeq \pi_*( (\Omega^1_X )_{H_0} \tens  \mc{I}_{\Delta_{14}})$; hence
  ${w_3}_*(\Omega^1_X \boxtimes \mc{I}_{\Delta_2}^2)$ can be identified with $\pi_*( (\Omega^1_X)_{H_0} \tens \mc{I}^2_{\Delta_{14}})\subseteq \ker d_2^{\perm_4}$. 
  By Danila's lemma for morphisms, if $\omega \tens f \in \pi_*( (\Omega^1_X)_{H_0} \tens \mc{I}^2_{\Delta_{14}})$, then 
  \begin{align*} d_3^{\perm_4}(\omega \tens f) = \: & d_3 \big( \sum_{[\tau] \in \perm_4 / \perm_3 } \tau_* (\omega \tens f) \big)
  =  \sum_{[\tau] \in \perm_4 / \perm_3 } \tau_* d_3 (\omega \tens f) \;,
  \end{align*}where, when writing $d_3(\omega \tens f)$ we think of $\omega \tens f$ as an element in $ \ker d_2 \subseteq 
  \oplus_{H} (\Omega^1_X \boxtimes \mc{I}_{\Delta_2})_H$. 
  
  Hence we just need to compute $d_3(\omega \tens f)$ in $E^{6, -3}_3$. 
The element 
  $\omega \tens f \in \pi_*( (\Omega^1_X)_{H_0} \tens \mc{I}^2_{\Delta_{14}})$ can be written as 
  $\omega \tens f = h(dx \tens 1) + g (dy \tens 1)$, where $h, g \in \mc{I}^2_{\Delta_{14}}$. Writing $h = a x_{14}^2 + b x_{14} y_{14} + c y_{14}^2$ and $g = a^\prime x_{14}^2 + b^\prime x_{14} y_{14} + c^\prime y_{14}^2$, we see that is sufficient 
  to compute the image for $d_3$ of differential forms of the kind 
  $\alpha x_{14}^2 dx$, $\alpha x_{14} y_{14} dx$, $\alpha y_{14}^2 dx$, $\alpha x_{14}^2 dy$, $\alpha x_{14} y_{14} dy$, $\alpha y_{14}^2 dy$, for an arbitrary function $\alpha \in \FS_{X \times X}$, of course, thinking of these differential forms as elements in $\ker d_2$.  Let's begin with $\alpha x_{14}^2 dx$. We have that $d_2(\alpha x_{14}^2 dx) = 0$ in $E^{5, -2}_2$: this means that its components are zero:
  \begin{align*}   d_2(\alpha x_{14}^2 dx)_{\Gamma(\widehat{14})} = 0 \:, \hspace{1.3cm}
   d_2(\alpha x_{14}^2 dx)_{\Gamma(\widehat{24})} = 0 \:,\hspace{1.3cm}
  d_2(\alpha x_{14}^2 dx)_{\Gamma(\widehat{34})} = 0 \;.
  \end{align*}By lemma \ref{lmm: d2}, we have that, in terms of representants in $\Lambda^2(Q^*_{\Gamma(\widehat{24})})$ and 
  $\Lambda^2(Q^*_{\Gamma(\widehat{34})})$ \begin{gather*} 
  d_2(\alpha x_{14}^2 dx)_{\Gamma(\widehat{24})} = [  \alpha x_{14} \gamma^*_{123} \wedge \gamma^*_{134}] \:, \hspace{1.5cm}
  d_2(\alpha x_{14}^2 dx)_{\Gamma(\widehat{34})} = [ \alpha x_{14} \gamma^*_{123} \wedge \gamma^*_{124} ] \;.
  \end{gather*}In order to compute the representant of $d_2(\alpha x_{14}^2 dx)_{\Gamma(\widehat{14})}$ in $L^{5, -2}$, we invoke remark 
  \ref{rmk: handy} and we find, in terms of representants in $\Lambda^2(Q^*_{\Gamma(\widehat{14})})$: 
  $$ d_2(\alpha x_{14}^2 dx)_{\Gamma(\widehat{14})} = [  -\alpha x_{14} \gamma^*_{123} \wedge \gamma^*_{234}] \;.$$We now lift the elements we found to $L^{5, -3}$. We get
  \begin{gather*}   \alpha x_{14} \gamma^*_{123} \wedge \gamma^*_{134} = \delta(  \alpha \gamma^*_{14} \wedge \gamma^*_{123} \wedge \gamma^*_{134} ) \in \Lambda^3(\mbb{C}^2 \tens W_{\Gamma(\widehat{24})} ) \\ 
   \alpha x_{14} \gamma^*_{123} \wedge \gamma^*_{124} = \delta(  \alpha \gamma^*_{14} \wedge \gamma^*_{123} \wedge \gamma^*_{124} ) \in \Lambda^3(\mbb{C}^2 \tens W_{\Gamma(\widehat{34})} ) \\
    - \alpha x_{14} \gamma^*_{123} \wedge \gamma^*_{234} = \delta(   -\alpha (\gamma^*_{12} + \gamma^*_{24})  \wedge \gamma^*_{123} \wedge \gamma^*_{234} ) \in \Lambda^3(\mbb{C}^2 \tens W_{\Gamma(\widehat{14})} )  \;.
  \end{gather*}
 By \cite[Lemma A.3]{Scalaarxiv2015}, 
  the term $d_3(\alpha x_{14}^2 dx)$ in $E^{6,-3}_3$ is represented by the sum of the images of the preceding liftings via the horizontal differential $\partial$: hence:
  $$ d_3(\alpha x_{14}^2 dx) = [  \alpha \gamma^*_{14} \wedge \gamma^*_{123} \wedge(  \gamma^*_{134} - \gamma^*_{124}) +
   \alpha ( -\gamma^*_{12} - \gamma^*_{24})  \wedge \gamma^*_{123} \wedge \gamma^*_{234} )] $$
   Note now that $\gamma^*_{234} = \gamma^*_{123} + \gamma^*_{134} - \gamma^*_{124}$ and that 
   the term $( -\gamma^*_{12} - \gamma^*_{24}) \wedge \gamma^*_{123} \wedge \gamma^*_{123}$ is zero in $E^{6, -3}_3$ since 
   it comes from something in $L^{5, -3}_3$. Hence we get that 
   \begin{align*} d_3(\alpha x_{14}^2 dx) = \: & [  - \alpha (\gamma^*_{12} -\gamma^*_{14}  + \gamma^*_{24})  \wedge \gamma^*_{123} \wedge(  \gamma^*_{134} - \gamma^*_{124} )]  
   =  [ \alpha \gamma^*_{123} \wedge \gamma^*_{124} \wedge \gamma^*_{134}]
\end{align*}where again we simplified terms coming from $L^{5, -3}$.   The term $ \alpha \gamma^*_{123} \wedge \gamma^*_{124} \wedge \gamma^*_{134}$ belongs to $\Lambda^3(Q^*_{K_4}) \simeq \Lambda^3(\mbb{C}^2 \tens q_{K_4})$ and can be identified with 
$\alpha (dx \tens e_{123}) \wedge (dx \tens e_{124}) \wedge (dx \tens e_{134}) =   \alpha (dx)^3 \tens (e_{123} \wedge e_{124} \wedge e_{134}) \in S^3 \Omega^1_X \tens \Lambda^3 q_{K_4} \subseteq \Lambda^3(\Omega^1_X \tens q_{K_4})$. 

When computing $d_3(\alpha x_{14} y_{14} dx)$, we have, analogously to the previous case that 
$d_2(\alpha x_{14} y_{14} dx)_{\Gamma(\widehat{24})}$ is represented in $L^{5, -2}$ 
by  $  \alpha x_{14} \gamma^*_{123} \wedge \delta^*_{134} $ and hence we have the lifting 
$ \alpha x_{14} \gamma^*_{123} \wedge \delta^*_{134}  = \delta(  \alpha \gamma^*_{14} \wedge \gamma^*_{123} \wedge \delta^*_{134})$; 
moreover $d_2(\alpha x_{14} y_{14} dx)_{\Gamma(\widehat{34})}$ is represented by 
$  \alpha x_{14} \gamma^*_{123} \wedge \delta^*_{134}$ and hence can be lifted to $  \alpha \gamma^*_{14} \wedge \gamma^*_{123} \wedge \delta^*_{134}$ in $L^{5, -2}$; finally, by remark \ref{rmk: handy}, 
$$ d_2(\alpha x_{14} y_{14} dx)_{\Gamma(\widehat{14})} = [ -\alpha x_{14} (\delta^*_{123} - \delta^*_{234}) \wedge \gamma^*_{123} ] = 
[  \alpha x_{14} \gamma^*_{123} (\delta^*_{123}- \delta^*_{234})]$$and we have the lifting 
$  \alpha x_{14} \gamma^*_{123} (\delta^*_{123}- \delta^*_{234}) =   \alpha(\gamma^*_{12} + \gamma^*_{24}) \wedge \gamma^*_{123} \wedge 
(\delta^*_{123} - \delta^*_{234}) $. Then, using that $\delta^*_{234} = \delta^*_{123} + \delta^*_{134} - \delta^*_{124}$, we get that
 $d_3(\alpha x_{14} y_{14} dx)$ is given by the class 
\begin{align*} d_3(\alpha x_{14} y_{14} dx) = \: & [ \alpha \gamma^*_{123} (\delta^*_{134} - \delta^*_{124}) \wedge (\gamma^*_{14} - \gamma^*_{12} - \gamma^*_{24}) ] 
=   [  \alpha \gamma^*_{123} \wedge \gamma^*_{124} \wedge \delta^*_{134}]
\end{align*}were we simplified elements coming from $L^{5, -3}$. 
Analogously $$d_3(\alpha y^2_{14} dx) =  [  \alpha \gamma^*_{123} \wedge \delta^*_{124} \wedge \delta^*_{134}] \;.$$
The computation of all other elements is done by symmetry. We than finally have that, for a differential form $\omega \tens f$ in $\pi_*((\Omega^1_X)_{H_0} \tens \mc{I}_{\Delta_{14}})$ as in the beginning, the differential
$ d_3(\omega \tens f)$  is represented by the class of  $ \omega \tens d^2_\Delta f$ in $( \Omega^1_X \tens S^2 \Omega^1_X)_{K_4} 
\subseteq  \Lambda^3 (\Omega^1_X \tens q_{K_4})_{K_4}$. 

To finish the computation we remark the following two facts. Firstly, the terms $\gamma^*_{123} \wedge \gamma^*_{124} \wedge \delta^*_{134}$, 
satifies the equality $[\gamma^*_{123} \wedge \gamma^*_{124} \wedge \delta^*_{134}] = [\gamma^*_{123} \wedge \delta^*_{124} \wedge \gamma^*_{134}]$. Indeed, simplifying at each step elements coming from $L^{5,-3}$, 
\begin{align*}[\gamma^*_{123} \wedge \gamma^*_{124} \wedge \delta^*_{134}] = \: & [\gamma^*_{123} \wedge (\gamma^*_{123} + \gamma^*_{134} - \gamma^*_{234}) \wedge \delta^*_{134}] =  - [\gamma^*_{123} \wedge \gamma^*_{234} \wedge \delta^*_{134} ] \\
 = \: & - [\gamma^*_{123} \wedge \gamma^*_{234} \wedge (\delta^*_{234} - \delta^*_{123} + \delta^*_{124})] 
 = -  [\gamma^*_{123} \wedge \gamma^*_{234} \wedge \delta^*_{124}] \\
 = \: & -  [\gamma^*_{123} \wedge (\gamma^*_{123}  + \gamma^*_{134} - \gamma^*_{124}) \wedge \delta^*_{124})] \\ 
 = \: & - [\gamma^*_{123} \wedge \gamma^*_{134} \wedge \delta^*_{124}] = [\gamma^*_{123} \wedge \delta^*_{124} \wedge \gamma^*_{134}]
 \end{align*}
The same is true for any nontrivial triple wedge products of vectors associated to $3$-cycles $\gamma^*_{H}$, $\delta^*_K$. 
Secondly, the class in $E^{6, -3}_3$ of any such nontrivial triple wedge product is $\perm_4$-invariant. The proof of this fact is similar to that of the previous fact.  Hence, up to positive  constants
$$ d_3^{\perm_4}(\omega \tens f) =  \sym (\omega \tens d^2_\Delta f) $$in ${w_4}_*(S^3 \Omega^1_X \tens \Lambda^3 q_{K_4})^{\perm_4} \simeq {w_4}_*(S^3 \Omega^1_X) $. 
  \end{proof}

\footnotesize

\vspace{1cm}
\noindent
{Departamento de  Matem\'atica, Puc-Rio, Rua Marqu\^es S\~ao Vicente 225, 22451-900 G\'avea, Rio de Janeiro, RJ, Brazil}

\noindent 
{\it Email address:} {\tt lucascala@mat.puc-rio.br}

\end{document}